\documentclass[12pt,reqno]{article}
\usepackage[a4paper,margin=27mm]{geometry}
\usepackage[T1]{fontenc}
\usepackage[utf8]{inputenc}
\usepackage{lmodern,microtype,amsmath,amssymb,amsthm,mathtools,mathrsfs}
\usepackage{enumitem,booktabs,array,longtable,xcolor}
\usepackage[colorlinks=true,linkcolor=blue!45!black,citecolor=green!35!black,urlcolor=blue!55!black]{hyperref}
\usepackage[nameinlink,capitalise,noabbrev]{cleveref}
\definecolor{equationrefcolor}{RGB}{170,35,35}
\let\originaleqref\eqref
\renewcommand{\eqref}[1]{{\hypersetup{linkcolor=equationrefcolor}\originaleqref{#1}}}
\numberwithin{equation}{section}
\newtheorem{theorem}{Theorem}[section]
\newtheorem{proposition}[theorem]{Proposition}
\newtheorem{lemma}[theorem]{Lemma}
\newtheorem{corollary}[theorem]{Corollary}
\theoremstyle{definition}
\newtheorem{remark}[theorem]{Remark}

\newcommand{\R}{\mathbb R}

\newcommand{\loc}{\mathrm{loc}}

\newcommand{\Sph}{\mathbb S^{N-1}}

\setlist{nosep,leftmargin=2em}
\title{\bf The Hamel–Nadirashvili conjecture for the Fisher–KPP equation}
 \author{\parbox{0.95\textwidth}{\centering
   \normalsize
   Hongjun Guo\textsuperscript{a},
   Kelei Wang\textsuperscript{b},
   Maolin Zhou\textsuperscript{c}\\[0.6em]
   \small
   \textsuperscript{a}School of Mathematical Sciences,
   Key Laboratory of Intelligent Computing and Applications
   (Ministry of Education), Institute for Advanced Study,
   Tongji University, Shanghai, China\\[0.35em]
   \textsuperscript{b}School of Mathematics and Statistics,
   Wuhan University, Wuhan, China\\[0.35em]
   \textsuperscript{c}Chern Institute of Mathematics,
   Nankai University, Tianjin, China
 }}
\date{}
\begin{document}
\maketitle
\begin{abstract}
We resolve the Hamel–Nadirashvili conjecture, which concerns the classification of all \([0,1]\)-valued entire solutions of Fisher–KPP equations with concave \(C^2\) reaction terms in arbitrary space dimension. We prove that every such solution is a \(C_{\mathrm{loc}}^{1,2}\) limit of solutions generated by finitely many planar traveling fronts, possibly together with a spatially homogeneous solution. Our approach reveals an ordered linear structure underlying the nonlinear solution space, leading to a critical–subcritical spectral decomposition. The proof combines linear reconstruction of subcritical approximants with a measure representation (a kind of nonlinear Martin representation) of critical solutions, obtained through small-amplitude asymptotics and comparison with a radially symmetric entire solution.
\end{abstract}
\small
\noindent\textbf{Keywords.}
Fisher-KPP equations; entire solutions; Hamel-Nadirashvili conjecture; critical traveling fronts; nonlinear superposition.\par
\noindent\textbf{2020 Mathematics Subject Classification.}
Primary 35K57; Secondary 35B08, 35C07, 35B40.\par
\tableofcontents
\clearpage

\section{Introduction and main results}\label{sec:introduction}

In this paper, we consider entire solutions of the homogeneous Fisher--KPP equation
\begin{equation}\label{eq:kpp}
 u_t=\Delta u+f(u), 
 \qquad\text{for }(t,x)\in\R\times\R^N,
\end{equation}
 with $N\geq1$ and a reaction satisfying
\begin{equation}\label{eq:general-kpp-assumptions}
 \begin{gathered}
 f\in C^2([0,1]),\qquad f(0)=f(1)=0,\qquad
 f(u)>0\quad\text{for }0<u<1,\\
 f''(u)\leq0\quad\text{for }0\leq u\leq1.
 \end{gathered}
\end{equation}
For convenience, we extend $f$ to a concave
$C^1$ function on $\R$ by setting $f(u)=f'(0)u$ for $u<0$ and
$f(u)=f'(1)(u-1)$ for $u>1$.
Concavity and positivity imply
\[
 f'(0)>0,\qquad f'(1)<0,\qquad f(u)\leq f'(0)u
 \quad\text{for }0\leq u\leq1.
\]
A typical example is the logistic nonlinearity, that is, $f(u)=u(1-u)$ for $u\in[0,1]$.
This equation was introduced
in the pioneering works of Fisher \cite{Fisher1937} and
Kolmogorov, Petrovskii, and Piskunov \cite{KPP1937}. Equation \eqref{eq:kpp} also arises in models of population dynamics and other biological phenomena; see \cite{AronsonWeinberger1978,BarenblattZeldovich1971,Fife1979,Fisher1937,Freidlin1995,Murray1989,Rothe1981,Stokes1976}. 
An entire solution is a classical solution defined for $t\in\R$ and $x\in\R^N$. In this work, we only consider entire solutions between $0$ and $1$. The strong maximum principle implies that it is either identically $0$, identically $1$, or strictly between $0$ and $1$.
Let $\mathcal E$ denote this class of entire solutions.
Our purpose is to prove the Hamel--Nadirashvili closure conjecture under \eqref{eq:general-kpp-assumptions}, that is, every entire solution between $0$ and $1$ can be
approximated by Hamel--Nadirashvili solutions generated by traveling fronts and the spatially homogeneous solution.

A typical entire solution of \eqref{eq:kpp} is the traveling wave solution. 
 It is well known that, in dimension $N=1$, \eqref{eq:kpp} admits traveling wave solutions $u(t,x)=\phi_c(x+ct)$ if and only if $c\ge c_*=2\sqrt{f'(0)}$, where $c_*$ is called the minimal, or critical speed, see \cite{AronsonWeinberger1978,Bramson1983,Fife1979,Freidlin1995,HadelerRothe1975,Kanel1961,Rothe1981,Stokes1976}. The profile $\phi_c$ and the speed $c$ of the traveling wave solution satisfy
\begin{equation}\label{eq:critical-front}
  \phi''-c\phi'+f(\phi)=0,
  \qquad
  \phi(-\infty)=0,\quad \phi(+\infty)=1,\quad
  \phi'>0.
\end{equation}
The profile $\phi_c$ is increasing and unique up to
translation. Since \(\phi_c\) connects the two equilibrium states \(0\) and \(1\), we also call $\phi_c(x+ct)$ traveling front. One of the important features of $\phi_c(s)$ is its leading-edge tail near the unstable equilibrium $0$ as $s\rightarrow -\infty$. By the phase-plane analysis, one knows that the leading-edge tail is asymptotically governed by the linearized  equation $\phi''-c\phi'+f'(0)\phi=0$. This ODE has eigenvalues 
\begin{equation*}
  \lambda(c)
  =\frac{c-\sqrt{c^2-4f'(0)}}{2}>0,\qquad  \lambda^+(c)
  =\frac{c+\sqrt{c^2-4f'(0)}}{2}>0.
\end{equation*}
In particular, $\lambda_*:=\lambda(c_*)=\lambda^+(c_*)=c_*/2=\sqrt{f'(0)}$ is the double root of the quadratic equation $\lambda^2-c_*\lambda +f'(0)=0$.  Up to translation, one has the following leading-edge tail of $\phi_c$
\begin{equation}\label{eq:intro-tail}
\hbox{for } c>c_*, \qquad \phi_c(s)\sim e^{\lambda(c) s}\quad \hbox{as } s\rightarrow -\infty,
\end{equation}
and \begin{equation}\label{eq:intro-critical-tail}
 \phi_*(s):=\phi_{c_*}(s)\sim |s|e^{\lambda_*s}
 \quad\text{as }s\to-\infty.
\end{equation}
More precisely, there exist
$B\in\R$ and $\delta>0$ such that
\begin{equation}\label{eq:intro-critical-expansion}
 \phi_*(s)=e^{\lambda_*s}\bigl(-s+B+O(e^{\delta s})\bigr)
 \quad\text{as }s\to-\infty.
\end{equation}
The standard asymptotic estimates \eqref{eq:intro-tail} and
\eqref{eq:intro-critical-tail} can be found in
\cite{AgmonNirenberg1963, BerestyckiNirenberg1992, Bramson1983,CoddingtonLevinson1955, HadelerRothe1975,Kametaka1976,Pazy1967, Uchiyama1978}. 
We also use the logarithmic derivative estimates
\begin{equation}\label{eq:intro-wave-slope}
 0<\frac{\phi_c'(s)}{\phi_c(s)}<\lambda(c)
 \quad\text{for }s\in\R,\qquad
 \frac{\phi_c'(s)}{\phi_c(s)}\rightarrow\lambda(c)
 \quad\text{as }s\to-\infty,
 \qquad c\geq c_*.
\end{equation}
The estimates \eqref{eq:intro-wave-slope} follow from
\cite{HamelRossi2016}.
In higher dimensions, the corresponding planar traveling fronts are $\phi_c(x\cdot \nu+ct)$ for $\nu\in\Sph$. Throughout, the traveling fronts are fixed by \eqref{eq:intro-tail} and \eqref{eq:intro-critical-tail}.

In past decades, the theory of traveling wave solutions has been highly developed. When the coefficients of the equation depend on space and/or time, translation invariance is lost and such standard traveling fronts generally cease to exist, but new analogous notions have been introduced. In spatially and/or
temporally periodic heterogeneous media, the natural analogue is a pulsating traveling front: the front propagates with a well-defined
average speed while its profile is periodically modulated by the medium.
Existence, uniqueness, stability, and spreading-speed properties of such
fronts for KPP-type equations have been extensively studied, see
\cite{BerestyckiHamel2002,BoutillonHamelRoques2026,DengDucrot2023,
HamelRoques2011,LiangZhao2007,LiangZhao2010,Nadin2009,NolenRuddXin2005,
ShigesadaKawasakiTeramoto1986,Weinberger2002}.
This framework has also been extended to almost-periodic, almost-automorphic,
recurrent, random, and uniquely ergodic settings, see
\cite{HuangShen2009,LiangZhou2022,NadinRossi2017,SalakoShen2019,
Shen2001,Shen2004,Shen2010,Shen2011b,Shen2011a}.
For still more general nonperiodic or nonautonomous heterogeneous media, the
notions of generalized transition front provide a flexible description of entire solutions connecting the invaded and
uninvaded states through an interface of uniformly bounded width, without requiring a rigid profile, a constant instantaneous speed, or periodicity, see
\cite{BerestyckiHamel2007,BerestyckiHamel2012,HamelNadin2022,
HamelRossi2015,HamelZhang2025,Zlatos2012}.

Entire solutions describe complete trajectories of the parabolic dynamics, connecting prescribed asymptotic patterns in the past to their evolution.  After a suitable space-time rescaling, they may also arise as local limiting profiles near regions where several interfaces interact. A natural strategy of  constructing 
entire solutions is taking one or several traveling fronts as their asymptotic
building blocks.  For bistable
equations, where $f$ satisfies $f(0)=f(1)=f(\theta)=0$, $f<0$ on $(0,\theta)$ and $f>0$ on $(\theta,1)$ for some $\theta\in (0,1)$, the classical two front constructions describe oppositely oriented
fronts approaching and merging, see
\cite{ChenGuo2005,ChenGuoNinomiya2006,
ChenGuoNinomiyaYao2018,GuoMorita2005,Yagisita2003}.  Recent higher-dimensional results mix finitely many
planar fronts to obtain transition fronts with changing shape, or obtain
lower-dimensional and polyhedral entire solutions as limits of
multidimensional traveling fronts
\cite{GuoWang2025,NinomiyaTaniguchi2025,Taniguchi2025}.

\subsection*{Hamel--Nadirashvili conjecture}

We first recall the work of Hamel and Nadirashvili and their conjecture \cite{HN1999,HN2001}. In the one-dimensional setting of \eqref{eq:kpp}, they \cite{HN1999} used two basic building blocks to construct entire solutions. The first is the spatially homogeneous solution \(\xi_f(t)\), defined by
\begin{equation}\label{eq:homogeneous-normalization}
 \xi_f'(t)=f\bigl(\xi_f(t)\bigr),\qquad
 0<\xi_f(t)<1,\qquad
 \xi_f(t)\sim e^{f'(0)t}\quad\text{as }t\to-\infty.
\end{equation}
This normalization determines $\xi_f$ uniquely. It is increasing and
converges to $1$ as $t\to+\infty$; see \cite{HN2001}.
For $K>0$, the homogeneous solution with leading coefficient $K$ is $\xi_f(t+\ln K/f'(0))$. The second consists of two counter-propagating traveling fronts,
\[
  u_{c,h}^{+}(t,x)=\phi_c(x+ct+h),
  \qquad
  u_{c',h'}^{-}(t,x)=\phi_{c'}(-x+c't+h'),
\]
where \(c,\,c'>c_*\). As $(c,h)$ and $(c',h')$ vary, these solutions form two dimensional manifolds of entire solutions. By mixing such fronts, with or without a time translate of \(\xi_f\), they constructed finite dimensional manifolds of entire solutions.

In arbitrary dimensions, they \cite{HN2001} extended this construction by mixing planar fronts through suitable Radon measures. More precisely, let
\[
  X:=\bigl(\Sph\times[c_*,+\infty)\bigr)\cup\{\infty\}.
\]
The set \(\Sph\times[c_*,+\infty)\) is endowed with its usual topology, while a subset \(A\subset X\) is a neighborhood of \(\infty\) if \(\infty\in A\) and
\[
  \Sph\times[c_0,+\infty)\subset A
\]
for some \(c_0\ge c_*\). Thus, \(X\) is a compact direction-speed space, and the additional point \(\infty\) represents the spatially homogeneous solution, viewed as a front with infinite speed.
Throughout, a positive measure means a nonnegative measure that is not identically zero.
Let \(\mathcal M\) denote the class of  finite positive Radon measures \(\vartheta\) on \(X\) such that the restriction of \(\vartheta\) to the critical speed set is a finite sum of Dirac masses:
\[
  \left.\vartheta\right|_{\Sph\times\{c_*\}}
  =\sum_{i=1}^{k}m_i\delta_{(\nu_i,c_*)},
\]
where \(k\ge0\), \(m_i>0\), and the directions \(\nu_i\in \Sph\) are pairwise distinct. This finite atomic restriction is built into their admissible class and construction.
For every \(\vartheta\in\mathcal M\), there exists an entire solution \(u_\vartheta(t,x)\) of \eqref{eq:kpp} which can be viewed as a mixture of a measurable set of traveling fronts. Moreover, the map
\[
 \vartheta\mapsto u_\vartheta
\] is one-to-one on $\mathcal M$. Its restriction to
\[
 \widehat{\mathcal M}
 :=\{\vartheta\in\mathcal M:
       \vartheta(\Sph\times\{c_*\})=0\}
\]
is continuous in the topology of \cite[Theorem~1.2]{HN2001}.
Their construction yields an infinite-dimensional family of entire solutions. Finite sums of Dirac masses recover finite mixtures of planar fronts, whereas more general measures produce multidimensional solutions, including nonplanar traveling fronts and radially symmetric entire solutions.
They also proved the following partial uniqueness result. Let \(u(t,x)\) be an entire solution with $0<u<1$. If there exists \(\varepsilon>0\) such that
\begin{equation}\label{eq:intro-hn-past-decay}
  \max_{|x|\le(c_*+\varepsilon)|t|}u(t,x)\rightarrow0
  \qquad\text{as }t\to-\infty,
\end{equation}
then there exists a unique measure \(\vartheta\in\mathcal M\) such that
\[
  u(t,x)=u_\vartheta(t,x)
  \qquad\text{for all }(t,x)\in\mathbb R\times\mathbb R^N,
\]
and $\operatorname{supp}\vartheta
  \subset
  \Sph\times[c_*+\varepsilon,+\infty)\cup\{\infty\}$.

Finally, they conjectured that the full class \(\mathcal E\) of entire solutions satisfying \(0\le u\le1\) is the closure of their measure-generated family:
\begin{equation}\label{eq:intro-hn-conjecture}
  \mathcal E
  =
  \overline{\bigl\{u_\vartheta:\vartheta\in\mathcal M\bigr\}}^{\,\mathcal T},
\end{equation}
where \(\mathcal T\) denotes $C^{1,2}_{\loc}$ topology (locally uniform convergence of the functions, their first two spatial derivatives, and their first time derivative). If this conjecture holds, every entire solution in $\mathcal E$ would be
obtainable, in the topology $\mathcal T$, from mixtures of traveling fronts. Thus, these
finite and infinite speed fronts would constitute a nonlinear generating family for \eqref{eq:kpp}, analogous to the exponential functions in Widder's
representation of nonnegative entire solutions of the linear heat equation
$\partial_t v=\Delta v$ on $\mathbb R^N\times\mathbb R$, see \cite{Widder1975} and Lin and Zhang \cite{LinZhang2019} for a Martin boundary formulation and extensions to complete noncompact manifolds.

\subsection*{The closure theorem}

We prove \eqref{eq:intro-hn-conjecture} under the concavity assumptions \eqref{eq:general-kpp-assumptions}. In fact, every entire solution can be approximated in $C^{1,2}_{\loc}(\R\times\R^N)$ by Hamel--Nadirashvili entire solutions, generated by finitely many planar traveling fronts and possibly one spatially homogeneous solution. The number of fronts and their parameters may vary along the approximating sequence. To state this precisely, let $m\ge0$ be an integer and consider
the planar fronts
\[
 v_i(t,x)=\phi_{c_i}(x\cdot\nu_i+c_it+h_i),
 \qquad i=1,\cdots,m,
\]
where $\nu_i\in\Sph$, $c_i\geq c_*$, and $h_i\in\R$.
We may also include the spatially homogeneous solution
\[
 v_0(t)=\xi_f(t+h_0),\qquad h_0\in\R.
\]
At least one component must be present: if $m=0$, the homogeneous
solution $v_0$ is required; if $m\ge1$, it is optional.
The pairs $(c_i,\nu_i)$, $i=1,\cdots,m$, are required
to be pairwise distinct.
By \cite[Theorem~1.1]{HN2001}, there exists an entire solution $u$
satisfying
\begin{equation}\label{eq:intro-finite-interval}
 \max_i v_i(t,x)\leq u(t,x)
 \leq\min\{\textstyle\sum_i v_i(t,x),1\}
 \qquad\text{for }(t,x)\in\R\times\R^N.
\end{equation}
We call this the \emph{finite HN solution}.
Uniqueness of the solution within these bounds is established in
Theorem~\ref{thm:packet}. The identification of our finite approximants
with these HN solutions, including their wave phases, is given in
the proof of Theorem~\ref{thm:main-closure}.
Equivalently, these finite HN solutions are the solutions
$u_\vartheta$ associated with nonzero finite sums of Dirac masses
on $X$, see \cite{HN2001}.

\begin{theorem}
\label{thm:main-closure}
For every entire solution $u$ of \eqref{eq:kpp} with $0\leq u\leq1$,
there exists a sequence of finite positive measures
\[
 \vartheta_k
 =\sum_{i=1}^{m_k}a_{k,i}\delta_{(\nu_{k,i},c_{k,i})}
   +a_{k,0}\delta_\infty\in\mathcal M,
 \qquad k=1,2,\cdots,
\]
where $m_k\geq0$ is an integer, $a_{k,0}\geq0$, and
\[
 a_{k,i}>0,\qquad \nu_{k,i}\in\Sph,\qquad
 c_{k,i}\in[c_*,+\infty)
 \qquad\text{for }i=1,\cdots,m_k,
\]
such that  
\[
 u_{\vartheta_k}\rightarrow u
 \quad\text{in }C^{1,2}_{\mathrm{loc}}(\R\times\R^N)
 \quad\text{as }k\to\infty.
\]
\end{theorem}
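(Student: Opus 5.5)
The plan is to reduce the closure statement to a representation of an arbitrary $u\in\mathcal E$ by its small-amplitude behaviour as $t\to-\infty$, and then approximate that representation by finite mixtures. The trivial cases $u\equiv0$ and $u\equiv1$ are handled directly. For $u\equiv1$ I would take $\vartheta_k=k\delta_\infty$, which gives homogeneous solutions $\xi_f(t+\ln k/f'(0))\to1$. For $u\equiv0$ I would take a single front with speed $c_k\to\infty$ and phase pushed far back, so that $u_{\vartheta_k}\to0$ locally. So assume $0<u<1$. Since $f(u)\le f'(0)u$, $u$ is a subsolution of the linearized equation $w_t=\Delta w+f'(0)w$. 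Moreover, $u(t,\cdot)\to0$ locally uniformly as $t\to-\infty$; otherwise, by comparison with the spreading of small compactly supported data, $u$ would be close to $1$ on expanding sets, contradicting $u<1$ at earlier times. Set $w=e^{-f'(0)t}u$. Then $w$ is a positive subsolution of the heat equation, and for small amplitude it is close to a solution.

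The first step is to extract the \emph{subcritical} part. For $\varepsilon>0$, I would split $u$ according to the region $|x|\le(c_*+\varepsilon)|t|$ in the past. Using Widder's representation (and its Martin-boundary version) for positive heat solutions close to $w$, the part of $u$ living at speeds strictly above $c_*$ corresponds to a measure on exponentials $e^{\lambda\nu\cdot x+\lambda^2t}$. In the speed parametrization this is a measure on $\Sph\times(c_*,\infty)\cup\{\infty\}$, using $\lambda=\lambda(c)$ and the fact that $\lambda\to0$ corresponds to $\infty$. Approximating this measure weakly by finite atomic measures and invoking the HN uniqueness under \eqref{eq:intro-hn-past-decay}, together with continuity of $\vartheta\mapsto u_\vartheta$ on $\widehat{\mathcal M}$, gives finite HN solutions converging to the subcritical component. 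I would also check that the wave phases agree via the tail normalizations \eqref{eq:intro-tail} and \eqref{eq:homogeneous-normalization}.

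The second step, which I expect to be the main obstacle, is the \emph{critical} part. Near $|x|\approx c_*|t|$ there is a genuinely nonlinear contribution, reflected in the $|s|e^{\lambda_*s}$ tail \eqref{eq:intro-critical-tail}. This part may correspond to a \emph{non-atomic} measure $\mu$ on $\Sph\times\{c_*\}$, which lies outside $\mathcal M$. My approach would be to prove a nonlinear Martin-type representation. Using the expansion \eqref{eq:intro-critical-expansion}, the logarithmic-derivative bounds \eqref{eq:intro-wave-slope}, and comparison with a radially symmetric entire solution (as an upper barrier controlling mass near the critical sphere), I would show that the critical part of $u$ is determined by a finite measure $\mu$ on $\Sph$. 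I would then approximate $\mu$ by finite sums of Dirac masses $\sum m_i\delta_{\nu_i}$ at critical speed. The key estimate is that finite critical mixtures depend continuously, in $C^{1,2}_{\loc}$, on the measure in the weak topology. I would obtain this from the ordered structure: monotonicity in $\vartheta$ together with the bounds \eqref{eq:intro-finite-interval} and uniqueness from Theorem~\ref{thm:packet}, which allows the use of sub- and supersolution sandwiches.

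Finally, I would combine the two parts. Concatenate the subcritical atomic approximants with the critical atomic approximants, making the directions pairwise distinct by small perturbation, to form $\vartheta_k\in\mathcal M$. Then show $u_{\vartheta_k}\to u$ locally uniformly, by squeezing between solutions generated from slightly smaller and slightly larger measures, and let $\varepsilon\to0$ diagonally. The upgrade to $C^{1,2}_{\loc}$ follows from uniform parabolic Schauder estimates, since $0\le u_{\vartheta_k}\le1$ and $f\in C^2$, together with Arzelà–Ascoli and uniqueness of the limit.
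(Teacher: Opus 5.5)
\textbf{The decomposition step has a genuine gap.} You never construct the ``subcritical'' and ``critical'' components of $u$ as entire solutions, and you give no operation that recombines them into $u$.

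\emph{The spatial split does not produce entire solutions.} Splitting $u$ along the region $|x|\le(c_*+\varepsilon)|t|$ is not an operation on entire solutions. Truncated past data give no monotone pullback, and no identity relates the pieces back to $u$.

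\emph{There is no finite linear solution to represent.} In general there is no positive heat solution ``close to $w$'' to which Widder's representation applies. The natural candidates $\lim_T L_{t+T}[u(-T,\cdot)]$ and $\lim_{a\searrow0}D_au/a$ are infinite whenever the critical component is nonzero. Already for $u=\phi_*(x\cdot\nu+c_*t)$, the factor $|s|$ in \eqref{eq:intro-critical-tail} gives $L_T[u(-T,\cdot)](0)\asymp\sqrt T$.

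\emph{The HN uniqueness result does not close the subcritical part.} A solution generated by a measure without critical mass can still fail \eqref{eq:intro-hn-past-decay}; the paper cites such an example. Your $\varepsilon\to0$ limit also leaves the speeds in $(c_*,c_*+\varepsilon)$ unaccounted for.

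\emph{What the paper supplies instead.} Nonlinear dilation $D_a$, superposition $\oplus$, and subtraction are justified by asymptotic uniqueness in the remote past (Theorem~\ref{thm:asymptotic-uniqueness}). They make $\mathcal C$ the positive cone of an ordered vector space. Kakutani's theorem represents $E_U$ as $C(K_U)$, with translations acting as multiplication by $e^{s\lambda+\lambda_*h\cdot\xi}$. The identity $\lambda=f'(0)(1+|\xi|^2)$ of Theorem~\ref{thm:translation-spectral-surface}, plus a cutoff in $|\xi|$, yields $u=A\oplus C$. Here $A$ is an increasing limit of solutions with $|\nabla\ln A_n|\le\lambda_*r_n<\lambda_*$, and $C$ satisfies $C(t+s,\cdot)=D_{e^{2f'(0)s}}C(t,\cdot)$. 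Only under the strict gradient bound is the linear past limit finite (Theorem~\ref{thm:gradient-gap-finite-lift}). The subcritical reconstruction then goes through the capped pullback and the power barrier $h-Ah^p$, not through \eqref{eq:intro-hn-past-decay}.

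\textbf{The critical step fails as proposed.} Weak continuity of the critical family cannot come from \eqref{eq:intro-finite-interval} and Theorem~\ref{thm:packet}. Suppose atomic $\mu_n\rightharpoonup\mu$ with $\mu$ non-atomic. Then $\max_ib_{n,i}\to0$, so the lower barrier $\max_iw_{n,i}\to0$ locally. Meanwhile each critical front contributes about $b_{n,i}\lambda_*^{-1}\ln(1/b_{n,i})e^{\lambda_*(x\cdot\nu_{n,i}+c_*t)}$, so the upper barrier $\min\{\sum_iw_{n,i},1\}\to1$. The sandwich therefore collapses. Monotone approximation is also unavailable, since no nonzero atomic measure lies below a non-atomic one.

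\emph{How the paper gets continuity.} Your instinct to use a radial solution as an upper barrier matches the paper, but the mechanism is missing. The paper defines $u_\mu$ as the minimal solution above $\phi_*(z_\mu)$ and proves $u_\mu\le U^{\rm rad}(z_\mu/c_*,0)$. It then shows that any entire solution between these two envelopes equals $u_\mu$ (Lemma~\ref{lem:faithfulness-envelope-absorption}). That step needs the exact tail $U^{\rm rad}(s,0)\sim(-c_*s)e^{2f'(0)s}$, which comes from the trace theory and minimality.

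\emph{How the paper identifies the critical part.} The claim ``the critical part is determined by a measure'' needs an actual mechanism. The paper extracts $\mu$ from the limit of $D_aC/[a\ln(1/a)]$. It proves this trace is nonzero for $C\ne0$ via the spherical-mean inequality of Lemma~\ref{lem:faithfulness-spherical-mean}, which relies on the $C(K_U)$ representation and Lemma~\ref{lem:faithfulness-evaluation}. It then derives a universal radial upper bound to conclude $C=u_\mu$.

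\textbf{Recombination also needs a result you do not have.} You need continuity of $u_\eta\oplus V_{H_\zeta}$ under simultaneous weak convergence of both measures. The paper proves this uniformly at fixed $r<1$ in Lemma~\ref{lem:final-uniform-mixed-heat} and Proposition~\ref{prop:final-mixed-continuity}. The HN continuity on $\widehat{\mathcal M}$ does not cover critical atoms. ``Squeezing between slightly smaller and larger measures'' has no meaning when the critical measure is non-atomic.
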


The term $a_{k,0}\delta_\infty$ accounts for the optional homogeneous
solution. The constant solutions $0$ and $1$
are included as limits of time translates of $\xi_f$.
The conclusion confirms Hamel--Nadirashvili's conjecture for every reaction satisfying \eqref{eq:general-kpp-assumptions}.

\subsection*{Entire solutions from positive Borel measures on $\Sph\times\{c_*\}$}

A key step in the proof of Theorem~\ref{thm:main-closure} is to
construct and identify the critical part of an entire solution.
This leads to a further result of independent interest: we remove
the finite atomic restriction in the construction of Hamel and
Nadirashvili \cite{HN1999,HN2001} on the critical set
$\Sph\times\{c_*\}$ and construct canonical entire solutions
for arbitrary finite positive Borel measures on this set.
Since $\Sph$ and $X$ are compact metric spaces, every finite
Borel measure on either space is a Radon measure. Thus our use
of Borel measures is consistent with the terminology of
Hamel and Nadirashvili.

Let $\mu$ be a finite positive Borel measure on $\Sph$ and set
$M:=\mu(\Sph)>0$. Define its Laplace transform and log--Laplace phase by
\begin{equation}\label{eq:critical-measure-phase}
 L_\mu(x):=\int_{\Sph}e^{\lambda_*x\cdot\nu}\,d\mu(\nu),
 \qquad z_\mu(t,x):=c_*t+\lambda_*^{-1}\ln L_\mu(x).
\end{equation}
Multiplying $\mu$ by a positive constant translates this phase $z_\mu$ in time.

A natural first attempt is to apply the construction strategy of Hamel
and Nadirashvili directly. Consider the normalized $\mu$-mixture of
critical traveling fronts
\[
 P_\mu(t,x):=\frac1M\int_{\Sph}
 \phi_*\left(x\cdot\nu+c_*t+\frac{\ln M}{\lambda_*}\right)\,d\mu(\nu).
\]
Since $d\mu/M$ has unit mass, concavity of $f$ and Jensen's inequality give
\[
 (P_\mu)_t-\Delta P_\mu
 =\frac1M\int_{\Sph}
 f\left(\phi_*\left(x\cdot\nu+c_*t
 +\frac{\ln M}{\lambda_*}\right)\right)\,d\mu(\nu)
 \leq f(P_\mu),
\]
so $P_\mu$ is a subsolution. The corresponding mixture of critical
exponentials,
\[
 Q_\mu(t,x):=\frac1M\int_{\Sph}
 e^{\lambda_*(x\cdot\nu+c_*t+\lambda_*^{-1}\ln M)}\,d\mu(\nu)
 =e^{\lambda_*c_*t}L_\mu(x)=e^{\lambda_*z_\mu(t,x)},
\]
satisfies $(Q_\mu)_t-\Delta Q_\mu=f'(0)Q_\mu$. Since
$f(q)\leq f'(0)q$ for $0\leq q\leq1$, the truncation
$\min\{1,Q_\mu\}$ is a supersolution of \eqref{eq:kpp}.
One might therefore try to construct an entire solution between
$P_\mu$ and $\min\{1,Q_\mu\}$. However, these barriers are not ordered.
Indeed, at $x=0$,
\[
 P_\mu(t,0)=\phi_*\left(c_*t+\frac{\ln M}{\lambda_*}\right),
 \qquad Q_\mu(t,0)=e^{\lambda_*c_*t+\ln M},
\]
and \eqref{eq:intro-critical-tail} yields
\[
 \frac{P_\mu(t,0)}{Q_\mu(t,0)}\sim c_*|t|\rightarrow+\infty
 \qquad\text{as }t\to-\infty.
\]
Thus $Q_\mu$ cannot dominate $P_\mu$ on the leading edge. The
obstruction is the polynomial factor in the critical tail of $\phi_*$.
Our construction instead starts from $\phi_*(z_\mu)$, which is a global
subsolution for every such measure, and uses a radially symmetric
entire solution to obtain an upper bound strictly below one.

The following theorem gives the critical-sphere construction and
the injectivity and continuity of its measure parametrization.
\begin{theorem}\label{thm:borel-main}
For every finite positive Borel measure $\mu$ on $\Sph$, there exists
a canonical classical entire solution $u_\mu$ of \eqref{eq:kpp} with
$\partial_tu_\mu>0$ and
\begin{equation}\label{eq:borel-main-squeeze}
 0<\phi_*(z_\mu(t,x))\leq u_\mu(t,x)<1
 \qquad\text{for all }(t,x)\in\R\times\R^N.
\end{equation}
Moreover,
\begin{equation}\label{eq:borel-main-extinction}
 \begin{aligned}
 u_\mu(t,\cdot)&\rightarrow0 &&\text{as }t\to-\infty,\\
 u_\mu(t,\cdot)&\rightarrow1 &&\text{as }t\to+\infty,
 \end{aligned}
 \qquad\text{in }C^2_{\loc}(\R^N).
\end{equation}
For every $\alpha\in(0,1)$, there exists $C_\alpha>0$ such that
\begin{equation}\label{eq:borel-main-level-strip}
 \{x\in\R^N:u_\mu(t,x)=\alpha\}
 \subset\{x\in\R^N:|z_\mu(t,x)|\leq C_\alpha\}
 \qquad\text{for every }t\in\R.
\end{equation}
Furthermore, the map $\mu\mapsto u_\mu$ is one-to-one and continuous from
the weak topology\footnote{Here $\mu_n\rightharpoonup\mu$ means
$\int_{\Sph}\varphi\,d\mu_n\to\int_{\Sph}\varphi\,d\mu$
for every $\varphi\in C(\Sph)$.}
of finite positive measures on $\Sph$ to
$C^{1,2}_{\loc}(\R\times\R^N)$.
\end{theorem}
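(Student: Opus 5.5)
The plan is to build $u_\mu$ as a monotone limit of Cauchy problems started from the subsolution $\underline u:=\phi_*(z_\mu)$. The first step is to check that $\underline u$ is a global subsolution. Write $z=z_\mu$. Then $z_t=c_*$, $\nabla z=\lambda_*^{-1}\nabla L_\mu/L_\mu$ and $\Delta z=\lambda_*^{-1}(\Delta L_\mu/L_\mu-|\nabla L_\mu|^2/L_\mu^2)$. Since $\Delta L_\mu=\lambda_*^2L_\mu$, we get $\Delta z=\lambda_*(1-|\nabla z|^2)$. Moreover $|\nabla z|\le1$ by Cauchy--Schwarz, because $\nabla L_\mu=\lambda_*\int\nu e^{\lambda_*x\cdot\nu}d\mu$. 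Hence
\[
\underline u_t-\Delta\underline u-f(\underline u)=\phi_*'(z)\bigl(c_*-\Delta z\bigr)-\phi_*''(z)|\nabla z|^2-f(\phi_*(z))
=(1-|\nabla z|^2)\bigl(\phi_*''-\lambda_*\phi_*'\bigr),
\]
where we used $\phi_*''=c_*\phi_*'-f(\phi_*)$ and $c_*=2\lambda_*$. Now $\phi_*''-\lambda_*\phi_*'=\phi_*'\bigl((\ln\phi_*')'-\lambda_*\bigr)$. I expect $(\ln\phi_*')'\le\lambda_*$; this follows from the analogue of \eqref{eq:intro-wave-slope} for $\phi_*'$, which I would verify by phase-plane analysis using concavity of $f$. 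So $\underline u$ is a subsolution.

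Next, for $n\in\mathbb N$ solve \eqref{eq:kpp} for $t>-n$ with initial datum $\underline u(-n,\cdot)$. By comparison, the solutions $u_n$ increase in $n$, since $u_{n+1}(-n)\ge\underline u(-n)$. Also $\partial_tu_n\ge0$, because the initial datum is a subsolution. Parabolic estimates then give a $C^{1,2}_\loc$ limit $u_\mu\ge\underline u$ with $\partial_tu_\mu\ge0$, and $\partial_tu_\mu>0$ follows from the strong maximum principle applied to $\partial_tu_\mu$ once $u_\mu$ is shown not to be stationary.

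The main obstacle is the \emph{upper bound}, i.e.\ showing $u_\mu\not\equiv1$ and obtaining the level-set strip \eqref{eq:borel-main-level-strip} for large positive $-z$. Following the paper's hint, I would compare with a radially symmetric entire solution $R$. Such an $R$ comes from the HN construction with $\vartheta$ the uniform measure on $\Sph\times\{c\}$ for some $c>c_*$, or else a limit as $c\downarrow c_*$. I would show $\underline u(-n,\cdot)\le R(-n+\tau,\cdot-x_0)$ after suitable shifts, uniformly in $n$. For this I would use two facts: $L_\mu(x)\le Me^{\lambda_*|x|}$, so the level sets of $z$ lie outside the ball $\{|x|\le -c_*t-\lambda_*^{-1}\ln M\}$; and $R$ has tails like $|x|e^{\lambda_*(c_*t-|x|)}$ times an algebraic factor. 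I expect matching the polynomial prefactors in the tails to be delicate. The fallback is a small-amplitude asymptotic analysis: for $t\to-\infty$ on $\{z\le -C\}$, $u_\mu$ should be governed by the linearized equation, and one proves $u_\mu\le C(1+|z|)e^{\lambda_*z}$ there by a supersolution of the form $e^{\lambda_*z}(A-z)$ corrected by a term of order $e^{(\lambda_*+\delta)z}$. With this bound in hand, the strip \eqref{eq:borel-main-level-strip} follows. The $\alpha$-level set cannot lie where $z\ge C_\alpha$, because there $u_\mu\ge\phi_*(z)>\alpha$. It cannot lie where $z\le -C_\alpha$, by the upper bound. The limits \eqref{eq:borel-main-extinction} follow because $z_\mu(t,x)\to\pm\infty$ locally uniformly as $t\to\pm\infty$, combined with monotonicity in $t$ and the fact that limits of $u_\mu$ along time translations are $[0,1]$-valued steady states, hence equal to $0$ or $1$. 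This last point also rules out a stationary $u_\mu$, which gives $\partial_tu_\mu>0$.

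For injectivity, I would show that $\mu$ can be recovered from the tail: $u_\mu e^{-\lambda_*c_*t}/\bigl(|t|L_\mu(x)\bigr)\to c_*$-type constants as $t\to-\infty$. More robustly, for each fixed $x$ the limit of $u_\mu(t,x)/\bigl(|t|e^{\lambda_*c_*t}\bigr)$ as $t\to-\infty$ is a fixed multiple of $L_\mu(x)$, obtained from the two-sided bounds refined by the asymptotics. The Laplace transform $L_\mu$ on $\R^N$ then determines $\mu$, since it is the Laplace transform of a compactly supported measure. For continuity, take $\mu_n\rightharpoonup\mu$. Then $L_{\mu_n}\to L_\mu$ locally uniformly, so the barriers converge uniformly on compacts and the constants in the bounds are uniform. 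Subsequential $C^{1,2}_\loc$ limits of $u_{\mu_n}$ are entire solutions squeezed by the same bounds as $u_\mu$. The canonical uniqueness within these bounds, proved via the same tail identification and a sliding/comparison argument in the spirit of Theorem~\ref{thm:packet}, forces the limit to be $u_\mu$.
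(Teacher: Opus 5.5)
Your construction of $u_\mu$ is the paper's. The identity $(\partial_t-\Delta)\phi_*(z_\mu)-f(\phi_*(z_\mu))=(1-|\nabla z_\mu|^2)(\phi_*''-\lambda_*\phi_*')$ and the increasing pullback from $\phi_*(z_\mu(-n,\cdot))$ are exactly what the paper uses. (Also, $\phi_*''\le\lambda_*\phi_*'$ needs no phase plane: $h=\lambda_*\phi_*'-f(\phi_*)$ satisfies $h'-\lambda_*h=(f'(0)-f'(\phi_*))\phi_*'\ge0$ and $e^{-\lambda_*s}h\to0$ as $s\to+\infty$.)

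The genuine gap is the upper bound, which is the real content of the theorem. For $N\ge2$, neither of your mechanisms can supply it.

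\emph{Global comparison.} A bound $\phi_*(z_\mu(-n,\cdot))\le R(-n+\tau,\cdot-x_0)$ with $\tau,x_0$ independent of $n$ fails for every $\mu$ with an atom, say $\mu\ge m\delta_{e_1}$. Then $\phi_*(z_\mu(-n,c_*ne_1))\ge\phi_*(\lambda_*^{-1}\ln m)>0$. But a supercritical radial HN solution is small at time $-n$ on a ball of radius about $cn>c_*n$. Even the critical radial solution is small on a ball of radius $c_*n+\frac{N-1}{c_*}\ln n-O(1)$. The bound $L_\mu(x)\le Me^{\lambda_*|x|}$ discards exactly this curvature term, since $\lambda_*^{-1}\ln L_\sigma(x)=|x|-\frac{N-1}{2\lambda_*}\ln|x|+O(1)$. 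A limit as $c\downarrow c_*$ is precisely the object whose nontriviality must be proved.

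\emph{Fallback supersolution.} It has the wrong sign. For $\psi(z)=e^{\lambda_*z}(A-z)$ one computes $(\partial_t-\Delta-f'(0))\psi(z_\mu)=-\lambda_*(1-|\nabla z_\mu|^2)e^{\lambda_*z_\mu}\le0$, so it is a subsolution of the linearization. The correction $Be^{(\lambda_*+\delta)z_\mu}$ contributes $B\delta\bigl((\lambda_*+\delta)(1-|\nabla z_\mu|^2)-\delta\bigr)e^{(\lambda_*+\delta)z_\mu}$, which is negligible against that defect as $z_\mu\to-\infty$. In fact no function of $z_\mu$ alone works. Write $\psi=e^{\lambda_*z}\varphi$ and $\epsilon=1-|\nabla z_\mu|^2$; the linearized operator gives $e^{\lambda_*z_\mu}\bigl(\epsilon\lambda_*\varphi'-(1-\epsilon)\varphi''\bigr)$. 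At a fixed $x$ with $\epsilon(x)>0$ (every $x$ unless $\mu$ is a single Dirac mass), nonnegativity for all $z\le-C$ forces $\varphi$ to stay bounded as $z\to-\infty$. That is incompatible with $\psi\ge\phi_*(z)\sim|z|e^{\lambda_*z}$, and the quadratic gain $F_1(\psi)=O(\psi^2)$ does not help.

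You are missing the paper's two devices. First, $U^{\rm rad}=u_\sigma$ comes from a supersolution that is \emph{not} a function of $z_\sigma$. It is the shifted wave $\phi_*(|x|-R(t)+D/\ln R(t))$, cut off at distance $R(t)/(\ln R(t))^2$ behind the front, plus the correction $R(t)^{-p}\int_0^1e^{f'(0)(1+\ell^2)t}L_\sigma(\ell x)\,d\ell$ built from subcritical modes. This correction dominates $\phi_*(z_\sigma)$ in the interior, and its extra growth rate $-pR'(t)/R(t)$ absorbs the cut-off error.

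Second, general $\mu$ is handled pointwise, not by a global comparison. The spherical mean of the Cauchy solution is a subsolution by concavity of $f$. Its data lie below $\phi_*(z_{M\sigma})$ by concavity of $q\mapsto\phi_*(\lambda_*^{-1}\ln q)$ and $\int L_\mu(|x|\xi)\,d\sigma(\xi)=ML_\sigma(x)$, so $u_\mu(t,0)\le u_{M\sigma}(t,0)$. Recentering with $e^{\lambda_*x\cdot\nu}\mu(d\nu)$ then gives $u_\mu(t,x)\le U^{\rm rad}(z_\mu(t,x)/c_*,0)$ everywhere. With this bound, your deductions of $u_\mu<1$, the extinction limits and the strip go through.

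Injectivity and continuity need still more. Identifying $\lim_{t\to-\infty}u_\mu(t,x)/(|t|e^{2f'(0)t})$ as a multiple of $L_\mu(x)$ requires the upper bound to match $\phi_*(z_\mu)$ to leading order, i.e.\ $U^{\rm rad}(s,0)\sim(-c_*s)e^{2f'(0)s}$. Bounds with mismatched constants give only comparability, not even existence of the limit. The paper pins this constant using:
\begin{itemize}
\item the small-amplitude trace $D_au/(a\ln(1/a))\to H_*[u]$;
\item the identity $D_bu_\mu=u_\mu(t+\frac{\ln b}{2f'(0)},\cdot)$;
\item minimality and strict time monotonicity of $U^{\rm rad}$.
\end{itemize}
Likewise, uniqueness between $\phi_*(z_\mu)$ and $U^{\rm rad}(z_\mu/c_*,0)$ is not a finite-front argument: the gap does not split into finitely many products of fronts with distinct exponential rates. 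Instead, the paper uses the sharp tail to bound $Z(-T)\le\phi_*(z_{b\mu}(-T))+\min\{C_be^{-2f'(0)T}L_\mu,1\}$ for $b>1$. It combines this with the extinction of capped critical exponential data, whose linear evolution retains exactly half of the critical field.
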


The solution $u_\mu$ is the least entire solution lying above
$\phi_*(z_\mu)$. Theorem~\ref{thm:canonical-critical-solutions} gives
the construction and a radial upper bound, from which
\eqref{eq:borel-main-level-strip} follows. This phase-strip inclusion
does not, by itself, imply a uniform $O(1)$ Hausdorff-distance bound
between the level sets of $u_\mu$ and the zero level set of $z_\mu$,
since $|\nabla z_\mu|$ need not have a positive lower bound.
Injectivity is proved in Subsection~\ref{sec:critical-canonical-identification},
where Proposition~\ref{prop:faithfulness-weak-continuity} also establishes
weak continuity.
Comparison in the construction also gives
$u_\mu\leq u_\nu$ whenever $\mu\leq\nu$.

Constructing this family alone does not classify all entire
solutions outside the partial uniqueness regime
\eqref{eq:intro-hn-past-decay}. In fact, the failure of
\eqref{eq:intro-hn-past-decay} does not force a solution to be generated
solely by a measure on the critical sphere. An example in
\cite{GuoWangZhou} is generated by a finite positive measure
$\vartheta$ on $X$ with
$\vartheta(\Sph\times\{c_*\})=0$, yet fails
\eqref{eq:intro-hn-past-decay}. Thus the critical-sphere construction
complements Theorem~\ref{thm:main-closure}, whose approximation
statement applies to all entire solutions in $\mathcal E$.

\begin{remark}\label{rem:mixed-family-closure}
The closure can also be described using mixtures of the original
HN solutions with no critical-sphere mass and the critical solutions
of Theorem~\ref{thm:borel-main}. With the convention $u_0=0$,
\[
 \mathcal E
 =
 \overline{\left\{
 u_\vartheta\oplus u_\mu:
 \begin{array}{l}
 \vartheta\in\widehat{\mathcal M}\cup\{0\},\\
 \mu\text{ is a finite nonnegative Borel measure on }\Sph
 \end{array}
 \right\}}^{\,\mathcal T}.
\]
Here $\oplus$ denotes the nonlinear superposition constructed in
Section~\ref{sec:pde-dilations}.
The condition $\vartheta\in\widehat{\mathcal M}$ means
$\vartheta(\Sph\times\{c_*\})=0$.
Every element of the displayed family belongs to $\mathcal E$,
and the family contains all finite HN solutions by
the proof of Theorem~\ref{thm:main-closure}.
The equality therefore follows from Theorem~\ref{thm:main-closure}
and the closedness of $\mathcal E$ in $\mathcal T$.
\end{remark}

\subsection*{Notations and organization}

To compare nonlinear solutions of \eqref{eq:kpp} with solutions of the linearized
equation, we introduce the following notations. For a bounded measurable initial function $h:\R^N\to\R$, let $v$ and $w$ solve $v_t=\Delta v$ and $w_t=\Delta w+f'(0)w$ for $t>0$, $x\in\R^N$, with $v(0,x)=w(0,x)=h(x)$. Define the evolution operators
\[
 \bigl(P_t[h]\bigr)(x):=v(t,x),\qquad
 \bigl(L_t[h]\bigr)(x):=w(t,x)
 \qquad\text{for }t\geq0,\ x\in\R^N.
\]
Then $P_t[h]=e^{t\Delta}h$ and $L_t[h]=e^{f'(0)t}P_t[h]$.
For bounded nonnegative initial data $h$, similarly define
$\bigl(S_t[h]\bigr)(x):=u(t,x)$ for $t\geq0$ and $x\in\R^N$,
where $u$ solves
\begin{equation}\label{eq:cauchy-semigroup-definition}
\begin{cases}
 u_t=\Delta u+f(u), & t>0,\ x\in\R^N,\\
 u(0,x)=h(x), & x\in\R^N.
\end{cases}
\end{equation}
Here $f$ is understood through the concave extension fixed after
\eqref{eq:general-kpp-assumptions}.
The nonlinear semigroup satisfies $S_0[h]=h$ and
$S_{t+s}[h]=S_t[S_s[h]]$ for $t,s\geq0$.
Thus $L_t[h]$ and $S_t[h]$ are the solutions at time $t$ of the
linearized and nonlinear equations, respectively, with the same
initial datum $h$.

We also recall the representation used below: every nonnegative
entire solution $H$ of $H_t=\Delta H+f'(0)H$ admits
\begin{equation}\label{eq:widder-input}
 H(t,x)=\int_{\R^N}
 e^{\lambda_*x\cdot\xi+f'(0)(1+|\xi|^2)t}\,d\mu(\xi),
 \qquad \mu(\R^N)=H(0,0),
\end{equation}
where $\mu$ is a unique finite nonnegative measure. This follows by
applying the nonnegative ancient heat
representation to $e^{-f'(0)t}H$ on each past half-line and identifying
the resulting measures by uniqueness of the Laplace transform;
see \cite[Theorem~1.1]{LinZhang2019}.

We organize the paper as follows.
The differential estimates in
Subsection~\ref{subsec:entire-differential-estimates} allow us to define nonlinear
dilation $D_a$, for $a>0$, and superposition $\oplus$ by Cauchy evolution from times tending to
$-\infty$. Dilation and superposition provide nonlinear analogues of positive
scalar multiplication and addition, respectively, while preserving
the class of entire solutions.
In Subsections~\ref{subsec:nonlinear-constructions}
and~\ref{subsec:asymptotic-uniqueness-laws}, we establish
the algebraic properties of these operations, analogous to those
of scalar multiplication and addition in a linear space.
We also construct subtraction $\ominus$: for entire solutions
$0\leq U\leq V<1$, the solution $V\ominus U$ is uniquely determined by $U\oplus(V\ominus U)=V$.
In Subsection~\ref{subsec:ordered-solution-space}, these operations
identify the solution class with the positive cone of an ordered
vector space. For each nonzero solution $U$, we equip
the subspace $E_U$ consisting of elements bounded in order by scalar
multiples of $U$ with a complete order-unit norm and represent
it as a space of continuous functions. We relate norm convergence of
positive elements to local parabolic convergence of entire solutions
and describe space--time translations as bounded linear operators
on $E_U$.

Building on these estimates and nonlinear operations,
Section~\ref{sec:general-finite-lift} associates a solution
of the linearized equation $u_t=\Delta u+f'(0)u$ with each entire
solution $U$ of \eqref{eq:kpp} satisfying $0<U<1$ and
\begin{equation}\label{eq:intro-linear-finiteness}
 \sup_{T>0}\bigl(L_T[U(-T,\cdot)]\bigr)(0)<\infty.
\end{equation}
We take $U(-T,\cdot)$ as the initial datum at time $-T$ and evolve
it according to the linearized equation up to time $0$.
The condition states that the resulting value at the spatial origin
remains bounded as $T\to\infty$. We prove that this bound yields
a finite entire solution $H$ of $H_t=\Delta H+f'(0)H$, given by
\[
 H(t,\cdot)=\lim_{T\to\infty}L_{t+T}[U(-T,\cdot)].
\]
Moreover, the original nonlinear solution $U$ can be recovered from
$H$ by
\[
 U(t,\cdot)=\lim_{T\to\infty}
 S_{t+T}\bigl[\min\{H(-T,\cdot),1\}\bigr].
\]
Thus $H$ determines $U$. Representing $H$ by a finite measure
and approximating that measure by finite atomic measures then
gives approximation of $U$ by finite HN solutions.

Small-amplitude analysis in the same section shows
that the strict logarithmic gradient bound
\begin{equation}\label{eq:intro-strict-gradient-bound}
 |\nabla\ln U|\leq\lambda_*r
 \qquad\text{on }\R\times\R^N,\quad\text{for some }0\leq r<1,
\end{equation}
is sufficient for \eqref{eq:intro-linear-finiteness}.
Under this bound, we prove
\[
 \frac{D_aU}{a}\rightarrow H
 \quad\text{as }a\to0,
\]
where $H$ is finite and equals the linear limit above.

Section~\ref{sec:degenerate-spectral-decomposition} then determines
the relation between the time and space translation parameters.
In the representation constructed in Section~\ref{sec:pde-dilations},
translation by $(s,h)$ acts as multiplication by
$e^{s\lambda+\lambda_*h\cdot\xi}$. We prove that
\[
 \lambda=f'(0)(1+|\xi|^2).
\]
Using this relation, we decompose an arbitrary entire solution into
a critical part corresponding to $|\xi|=1$ and an increasing limit
of solutions satisfying \eqref{eq:intro-strict-gradient-bound}.

To identify the critical part,
Section~\ref{sec:critical-measure-solutions}
constructs a radially symmetric solution and uses it to build
critical solutions from finite measures on the sphere.
This section proves Theorem~\ref{thm:borel-main}, including
the injectivity and continuity of the measure parametrization.
We then show that the critical part obtained in
Section~\ref{sec:degenerate-spectral-decomposition} belongs to
this constructed family. For a nonzero critical solution $U_*$,
the small-amplitude limit of $D_aU_*/[a\ln(1/a)]$ yields a
nonzero measure on the critical sphere. We prove that $U_*$ is
exactly the solution constructed from the corresponding measure.
Section~\ref{sec:final-native-closure} completes the proof of
Theorem~\ref{thm:main-closure}. We first fix a solution with a strict
logarithmic gradient bound in the decomposition and approximate
both its representing measure and the critical measure by finite
atomic measures. We prove convergence of the corresponding nonlinear
superpositions and identify them as finite HN solutions.
A diagonal sequence then converges to the original entire solution
in $C^{1,2}_{\loc}(\R\times\R^N)$.

\section{The ordered structure of the space of entire solutions}
\label{sec:general-kpp-differential}
\label{sec:ancient-estimates}
\label{sec:pde-dilations}

For a homogeneous linear equation, scalar multiples, sums, and differences
of solutions are again solutions. These pointwise operations do not, in
general, preserve solutions of the nonlinear equation \eqref{eq:kpp}.
We first establish differential estimates for entire solutions and then
use Cauchy evolution to construct nonlinear analogues of these operations
on the class of entire solutions satisfying $0\leq U<1$.
Dilation $D_aU$, for $a>0$, plays the role of positive scalar
multiplication, while superposition $U\oplus V$ plays the role of addition.
For $U\leq V$, subtraction produces the unique entire solution $W$ such
that $U\oplus W=V$.
We prove the corresponding algebraic laws and identify this solution
class with the positive cone of an ordered vector space $E$.
For each nonzero solution $U$, the subspace $E_U$ consisting of
elements bounded in order by scalar multiples of $U$ carries a
complete order-unit norm and admits a representation as a space
of continuous functions. We show that norm convergence of positive
elements implies convergence of the corresponding entire solutions in
$C^{1,2}_{\loc}(\R\times\R^N)$.
Finally, we study space--time translations as bounded linear order
isomorphisms of $E_U$ and describe their multiplication-operator
representation. The relation between their time and space parameters,
and the resulting critical--subcritical decomposition, are established
in Section~\ref{sec:degenerate-spectral-decomposition}.

\subsection{Differential estimates}
\label{subsec:entire-differential-estimates}

Let $U$ be a classical entire solution of \eqref{eq:kpp} with $0<U<1$ on $\R\times\R^N$. In this subsection, we establish Li--Yau-type differential estimates for
$\ln U$, yielding logarithmic gradient bounds and strict monotonicity in time.
For $0<u\leq1$, concavity of $f$ gives
\begin{equation}\label{eq:gkpp-reaction-remainders}
 \begin{aligned}
 f'(0)u-f(u)&=\int_0^u(u-v)(-f''(v))\,dv\geq0,\\
 f(u)-uf'(u)&=\int_0^u v(-f''(v))\,dv\geq0.
 \end{aligned}
\end{equation}
In particular, $0\leq f(u)/u\leq f'(0)$ and $f'(u)\leq f(u)/u$.
Since $-f''$ is continuous and nonnegative,
\begin{equation}\label{eq:gkpp-k-zero}
 f'(u)=\frac{f(u)}u\quad\Longleftrightarrow\quad
 f(v)=f'(0)v\ \hbox{for all }0\leq v\leq u.
\end{equation}
Consequently $f(u)/u<f'(0)$ implies $f'(u)<f(u)/u$.

The following estimates control the spatial and temporal variation of
$\ln U$ and imply time monotonicity and the limits at both ends of time.

\begin{theorem}
\label{thm:gkpp-ancient}
Every classical entire solution $U$ of \eqref{eq:kpp} with
$0<U<1$ on $\R\times\R^N$ satisfies
\begin{equation}\label{eq:gkpp-ancient-bounds}
 |\nabla\ln U|^2\leq\partial_t\ln U,\qquad
 |\nabla\ln U|\leq\lambda_*,\qquad
 0<\partial_t\ln U\leq2f'(0).
\end{equation}
If $|\nabla\ln U|\leq\lambda_*\ell$ for some $0\leq\ell\leq1$,
then $\partial_t\ln U\leq f'(0)(1+\ell^2)$ and
\begin{equation}\label{eq:gkpp-gap-time}
 0<U_t\leq(1+\ell^2)f(U).
\end{equation}
In particular,
\begin{equation}\label{eq:gkpp-phase-deficit}
 0<U_t\leq2f(U).
\end{equation}
Moreover, $U(t,\cdot)\to0$ as $t\to-\infty$ and
$U(t,\cdot)\to1$ as $t\to+\infty$, locally uniformly in $\R^N$.
\end{theorem}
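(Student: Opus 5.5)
The plan is to run every estimate through the logarithm $w:=\ln U$. It satisfies
\[
 w_t=\Delta w+|\nabla w|^2+g(U),\qquad g(u):=f(u)/u .
\]
By \eqref{eq:gkpp-reaction-remainders}, $0\le g\le f'(0)$, and $g'(u)=\bigl(uf'(u)-f(u)\bigr)/u^2\le0$. Set $P:=|\nabla w|^2$ and $F:=w_t$. Differentiating the equation for $w$ gives, with the drift operator $\mathcal L:=\partial_t-\Delta-2\nabla w\cdot\nabla$,
\[
 \mathcal L P=-2|D^2w|^2+2g'(U)U\,P,\qquad
 \mathcal L F=g'(U)U\,F .
\]

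\emph{Step 1: boundedness and compactness.} Write the equation as $U_t=\Delta U+c(t,x)U$ with $c=g(U)\in[0,f'(0)]$. Interior parabolic Harnack and Schauder estimates then give global bounds on $|\nabla w|$, $|w_t|$ and $|D^2w|$. Because of these bounds, suprema of $P$, of $F$, or of combinations of them can be handled by translation. Take a maximizing sequence $(t_n,x_n)$ and translate $U$ there. Normalize by $U(t_n,x_n)$ if $U(t_n,x_n)\to0$; in that case the limit solves the equation with a limiting coefficient $c_\infty$, and the linear case $c_\infty\equiv f'(0)$ is allowed. Along a subsequence, $w$ converges in $C^{1,2}_\loc$ to a limit $w_\infty$, and the supremum is now attained at an interior point $(0,0)$. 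The structural facts $g'\le0$ and $g\le f'(0)$ survive in the limit, so every computation below can be done at a genuine maximum.

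\emph{Step 2: Li--Yau inequality $P\le F$.} Let $Q:=P-F$. The two identities above give
\[
 \mathcal LQ=-2|D^2w|^2+g'(U)U(P+Q).
\]
Suppose $M:=\sup Q>0$, and pass to the limit of Step~1, where $Q$ attains $M$ at an interior point. There $\mathcal LQ\ge0$. On the other hand, $\Delta w=-Q-g$, so $|D^2w|^2\ge(\Delta w)^2/N=(M+g)^2/N\ge M^2/N$. Also $g'\le0$ and $P+Q\ge0$. Hence $\mathcal LQ\le-2M^2/N<0$, a contradiction. So $|\nabla\ln U|^2\le\partial_t\ln U$, and in particular $F\ge0$.

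\emph{Step 3: the gradient bound and the time-derivative bounds.} This is the main obstacle. I would treat the conditional statement first. Assume $P\le\lambda_*^2\ell^2=f'(0)\ell^2$ and set $R:=F-(1+\ell^2)g(U)$. The target is $R\le0$, which is equivalent to $U_t\le(1+\ell^2)f(U)$; since $g\le f'(0)$ it also gives $\partial_t\ln U\le f'(0)(1+\ell^2)$. I would compute $\mathcal LR$ using $\mathcal LF=g'UF$ and
\[
 \mathcal L\,g(U)=g'(U)U\,F-(g'(U)U)'\,U|\nabla w|^2+\dots,
\]
then apply the same sup-and-compactness argument. At a positive interior maximum, the sign of $g'$ together with $F=\Delta w+P+g$ and $P\le f'(0)\ell^2$ should give a contradiction. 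If this particular combination does not close, I would try $F-P-g-\ell^2 f'(0)$ instead.

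For the unconditional bound $|\nabla w|\le\lambda_*$, I would argue by contradiction. Suppose $\sup P=\Lambda^2>\lambda_*^2$, and take a limiting solution, from Step~1, attaining this supremum. At the maximum point, $\nabla P=0$, and Li--Yau gives $F\ge\Lambda^2$. Now use $U<1$: integrating $w_t\ge|\nabla w|^2\ge0$ backward, together with the near-planar structure forced at a maximal slope, should produce a one-dimensional profile $e^{\Lambda x\cdot e+\sigma t}$ in the leading edge. Such a profile with $\Lambda>\lambda_*$ is incompatible with the KPP dispersion relation $\sigma\le\Lambda^2+f'(0)$ combined with $\sigma\ge\Lambda^2$ from Li--Yau, and with a comparison against the critical front $\phi_*$. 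This is the step I expect to be delicate. Once it is established, taking $\ell=1$ in the conditional statement gives $F\le2f'(0)$ and \eqref{eq:gkpp-phase-deficit}.

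\emph{Step 4: strict monotonicity and the limits.} We have $F\ge0$ and $\mathcal LF=g'UF$, which is a linear equation with bounded coefficients. By the strong maximum principle, if $F$ vanished at some point, then $F\equiv0$ for all earlier times. That would make $U$ a stationary solution with $0<U<1$, which contradicts the Liouville property for KPP: bounded positive stationary solutions are identically $1$. Hence $U_t>0$. Monotonicity and parabolic estimates then give locally uniform limits as $t\to\pm\infty$, and these limits are stationary solutions with values in $[0,1]$. The limit at $+\infty$ is positive, so it equals $1$. The limit at $-\infty$ lies strictly below $U$, so it cannot be $1$, and therefore equals $0$.
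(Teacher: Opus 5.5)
The main gap is the unconditional bound $|\nabla\ln U|\le\lambda_*$, which your Step~3 does not prove. Your heuristic yields no contradiction. The function $e^{\Lambda x\cdot e+(\Lambda^2+f'(0))t}$ solves the linearized equation and satisfies $\sigma\ge\Lambda^2$ for every $\Lambda$, including $\Lambda>\lambda_*$. The ``comparison with $\phi_*$'' is never specified.

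The missing idea is a Hamilton-type estimate that uses $U<1$ together with the infinite past. Your own identity gives $\mathcal LP\le0$, and $\mathcal Lw=g(U)-P$. So
\[
 \Theta:=U\bigl((t-T_0)P+w-f'(0)(t-T_0)\bigr)
\]
satisfies $(\partial_t-\Delta)\Theta=g(U)\Theta+U\bigl((t-T_0)\mathcal LP+g(U)-f'(0)\bigr)\le g(U)\Theta$, with $\Theta(T_0)=U\ln U\le0$. Hence $P\le f'(0)+\ln(1/U)/(t-T_0)$, and letting $T_0\to-\infty$ gives $P\le f'(0)$.

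Step~1 is also unjustified, and the maximum-principle arguments of Steps~2--3 rest on it. Parabolic Harnack bounds an earlier supremum by a later infimum. Interior estimates therefore give only $|\nabla U(t,x)|\le C\,U(t+1,x)$, and controlling $U(t+1,x)/U(t,x)$ is exactly the time-derivative bound being proved. Without a priori global bounds, $\sup(P-F)$ may be $+\infty$, and the translated, normalized solutions need not be locally bounded. So Step~2 is not yet a proof, although your identity for $\mathcal L(P-F)$ is correct.

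The paper makes both the Li--Yau inequality and the estimate for $\Theta$ rigorous by regularizing:
\begin{itemize}
\item it solves on $[T_0,t_1]$ from data $\varepsilon+(1-\varepsilon)U(T_0)$, so the drift and $P-\alpha F$ are bounded;
\item it uses $\mathcal L(P-\alpha F)\le-\frac{2}{N\alpha^2}(P-\alpha F)^2$ on the positive set to compare with the solution of $y'=-\frac{2}{N\alpha^2}y^2$;
\item it then lets $\varepsilon\to0$, $T_0\to-\infty$, $\alpha\to1$.
\end{itemize}
Only once $P\le f'(0)$ and $U_t>0$ are known do interior estimates bound $F$ and $D^2w$ globally. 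Only then is your compactness scheme legitimate.

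Your conditional bound also has a gap. For $R=F-(1+\ell^2)g(U)$ one has $\mathcal LR=Ug'(U)R+(1+\ell^2)Uf''(U)P\le Ug'(U)R$. This degenerates wherever $Ug'(U)=0$, for instance along a maximizing sequence with $U\to0$, whose limit solves the linear equation. You first need $F\le f'(0)(1+\ell^2)$ by a separate argument. The paper supposes $M:=\sup F>f'(0)(1+\ell^2)$ and shows that $\widetilde F\equiv M$ for $t\le0$ in the normalized limit. Then $\Delta\widetilde w\ge M-f'(0)(1+\ell^2)>0$, and the divergence theorem on large balls contradicts $|\nabla\widetilde w|\le\lambda_*\ell$.

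That bound keeps a maximizing sequence for $R$ away from $U=0$. It also forces $g(U)<f'(0)$, hence $Ug'(U)<0$, at the maximum, which closes your argument.

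Step~4 is fine; there the hair-trigger Liouville property replaces the paper's use of $|\nabla U_\pm|^2\le0$.
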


\begin{proof}
For convenience, we write
\[
 h(t,x)=\ln U,\qquad p(t,x)=|\nabla h|^2,\qquad r(t,x)=h_t,\qquad
 \mathcal L=\partial_t-\Delta-2\nabla h\cdot\nabla.
\]
For any $t_1<t_2$, $R>0$, and $x_0\in\R^N$, continuity and positivity of $U$ imply a positive lower bound on $[t_1,t_2]\times\overline{B_R(x_0)}$.
A direct calculation gives, on $\R\times\R^N$,
\begin{equation}\label{eq:gkpp-log-equation}
 h_t=\Delta h+p+\frac{f(U)}{U},
\end{equation}
 and
 \begin{align}
 \mathcal Lp=2\left(f'(U)-\frac{f(U)}U\right)p-2|\nabla^2h|^2, \qquad
 \mathcal Lr=\left(f'(U)-\frac{f(U)}U\right)r.
 \label{eq:gkpp-p-r}
\end{align}
Here, $|\nabla^2h|$ denotes the Euclidean norm of the Hessian matrix.

\emph{Step 1: the inequality $p\leq r$.}
Fix $\alpha>1$ and set $F(t,x)=p(t,x)-\alpha r(t,x)$.
Using \eqref{eq:gkpp-p-r} gives
\begin{align*}
 \mathcal LF
 &=\left(f'(U)-\frac{f(U)}U\right)(2p-\alpha r)-2|\nabla^2h|^2\\
 &=\left(f'(U)-\frac{f(U)}U\right)(p+F)-2|\nabla^2h|^2.
\end{align*}
Also, \eqref{eq:gkpp-log-equation} yields
\[
 \Delta h=r-p-\frac{f(U)}U
 =-\frac{F+(\alpha-1)p+\alpha f(U)/U}\alpha.
\]
On $\{F>0\}$, the Cauchy-Schwarz inequality gives
\[
 |\nabla^2h|^2\geq\frac{(\Delta h)^2}{N}
 \geq\frac{(F+(\alpha-1)p)^2}{N\alpha^2},
\]
and we obtain
\begin{equation}\label{eq:gkpp-F-coercive}
 \mathcal LF\leq-\frac{2}{N\alpha^2}(F+(\alpha-1)p)^2.
\end{equation}

We construct a supersolution for $F$.  Since neither $F$ nor the
drift $2\nabla h$ is known to be bounded on the whole space, we first
regularize the initial data away from zero.  Fix $T_0<t_1$. For
$0<\varepsilon<1$, let $U^\varepsilon(t,x)$ solve \eqref{eq:kpp} on $[T_0,t_1]\times\mathbb R^N$ with
\begin{equation}\label{eq:ancient-regularized-data}
 U^\varepsilon(T_0,x)
 =\varepsilon+(1-\varepsilon)U(T_0,x).
\end{equation}
The comparison principle gives
$\varepsilon\leq U^\varepsilon\leq1$ and $U\leq U^\varepsilon$.
Write
\[
 h^\varepsilon=\ln U^\varepsilon,\qquad
 p^\varepsilon=|\nabla h^\varepsilon|^2,\qquad
 r^\varepsilon=\partial_t h^\varepsilon,\qquad
 F^\varepsilon=p^\varepsilon-\alpha r^\varepsilon,\]
 and
 \[
 \mathcal L^\varepsilon=\partial_t-\Delta
                  -2\nabla h^\varepsilon\cdot\nabla.
\]
Interior estimates for $U$ and Cauchy estimates for $U^\varepsilon$
give, on $Q=[T_0,t_1]\times\mathbb R^N$,
\begin{equation}\label{eq:ancient-regularized-derivatives}
 \|\nabla U^\varepsilon\|_{L^\infty(Q)}
 +\|U_t^\varepsilon\|_{L^\infty(Q)}\leq K,
\end{equation}
with $K$ independent of $\varepsilon$.  Since
$U^\varepsilon\geq\varepsilon$, it follows that
\[
 \|\nabla h^\varepsilon\|_{L^\infty(Q)}\leq\frac K\varepsilon,
 \qquad
 \|F^\varepsilon\|_{L^\infty(Q)}
 \leq\frac{K^2}{\varepsilon^2}+\frac{\alpha K}{\varepsilon}.
\]
Thus the drift $2\nabla h^\varepsilon$ and $F^\varepsilon$ are
bounded for each fixed $\varepsilon$.

The calculation for \eqref{eq:gkpp-F-coercive} applies
to $U^\varepsilon$ and it gives that on $\{F^\varepsilon>0\}$,
\[
 \mathcal L^\varepsilon F^\varepsilon
 \leq-\frac{2}{N\alpha^2}(F^\varepsilon+(\alpha-1)p^\varepsilon)^2
 \leq-\frac{2}{N\alpha^2}(F^\varepsilon)^2.
\]
Choose
\[
 M_\varepsilon
 =\max\left\{1,\sup_{x\in\mathbb R^N}F^\varepsilon(T_0,x)\right\},
 \qquad
 \overline F^\varepsilon(t)
 =\frac{N\alpha^2M_\varepsilon}{N\alpha^2+2M_\varepsilon(t-T_0)}.
\]
This positive function $\overline F^\varepsilon(t)$ satisfies
\[
 (\overline F^\varepsilon)'=-\frac{2}{N\alpha^2}(\overline F^\varepsilon)^2,
 \qquad
 F^\varepsilon(T_0,x)\leq\overline F^\varepsilon(T_0).
\]
The comparison principle
gives that for $T_0<t\leq t_1$,
\begin{equation}\label{F-e}
 F^\varepsilon(t,x)
 \leq\overline F^\varepsilon(t)
 =\frac{N\alpha^2}{N\alpha^2M_\varepsilon^{-1}+2(t-T_0)}
 \leq\frac{N\alpha^2}{2(t-T_0)}.
\end{equation}

The difference $D^\varepsilon=U^\varepsilon-U$
satisfies $D^\varepsilon\geq0$ by comparison. Concavity gives
\[
 (\partial_t-\Delta)D^\varepsilon
 =\left(\int_0^1f'\bigl(U+\theta D^\varepsilon\bigr)\,d\theta\right)
 D^\varepsilon
 \leq f'(0)D^\varepsilon.
\]
Together with $0\leq D^\varepsilon(T_0,x)\leq\varepsilon$, this yields
$0\leq D^\varepsilon(t,x)\leq\varepsilon e^{f'(0)(t-T_0)}$.
Thus, for fixed $T_0<t_1$, interior parabolic regularity gives
\begin{equation}\label{eq:ancient-regularized-convergence}
 (U^\varepsilon,\nabla U^\varepsilon,U_t^\varepsilon)(t_1,\cdot)
 \rightarrow (U,\nabla U,U_t)(t_1,\cdot)
 \quad\text{in }C_{\loc}(\mathbb R^N) \hbox{ as }\varepsilon \rightarrow 0.
\end{equation}
Since $U>0$, $F^\varepsilon(t_1,x)\rightarrow F(t_1,x)$ as $\varepsilon\rightarrow0$.
Letting $\varepsilon\rightarrow0$ in \eqref{F-e} gives
\[
 p(t_1,x)-\alpha r(t_1,x)
 \leq\frac{N\alpha^2}{2(t_1-T_0)}.
\]
Let $T_0\to-\infty$ and then let
$\alpha\rightarrow1$. This proves $p\leq r$ everywhere.
In particular, since $p\geq0$, $U_t=Ur\geq0$.
Since $U_t$ solves
$(\partial_t-\Delta)U_t=f'(U)U_t$, either
$U_t\equiv0$ or $U_t>0$ by the strong maximum principle and Cauchy uniqueness.
If $U_t\equiv0$, $p\leq r$ would give $\nabla U=0$, which implies that $U$ is a constant. The
equation \eqref{eq:kpp} would then give $f(U)=0$, contrary to $0<U<1$.
Thus $U_t>0$.

\emph{Step 2: the bound $p\leq f'(0)$.}
Fix $T_0<t_1$ and use the regularization
\eqref{eq:ancient-regularized-data} and the notation of Step 1 on
$[T_0,t_1]\times\mathbb R^N$. Define
\[
 Q^\varepsilon
 =U^\varepsilon\bigl((t-T_0)p^\varepsilon+h^\varepsilon
                         -f'(0)(t-T_0)\bigr).
\]
Then $Q^\varepsilon(T_0,\cdot)=U^\varepsilon h^\varepsilon\leq0$.
Using $f(U^\varepsilon)/U^\varepsilon\leq f'(0)$,
$\mathcal L^\varepsilon h^\varepsilon=f(U^\varepsilon)/U^\varepsilon-p^\varepsilon$,
and $\mathcal L^\varepsilon p^\varepsilon\leq0$, it follows that
\begin{align*}
 (\partial_t-\Delta)Q^\varepsilon
 &=\frac{f(U^\varepsilon)}{U^\varepsilon}Q^\varepsilon
   +U^\varepsilon\left((t-T_0)\mathcal L^\varepsilon p^\varepsilon
                        +\frac{f(U^\varepsilon)}{U^\varepsilon}-f'(0)\right)\\
 &\leq\frac{f(U^\varepsilon)}{U^\varepsilon}Q^\varepsilon.
\end{align*}
By \eqref{eq:ancient-regularized-derivatives},
$\varepsilon\leq U^\varepsilon\leq1$, and $h^\varepsilon\leq0$, one has
\[
 Q^\varepsilon
 \leq\frac{(t_1-T_0)K^2}{\varepsilon}
 \quad\hbox{on }[T_0,t_1]\times\mathbb R^N.
\]
The maximum principle therefore gives $Q^\varepsilon\leq0$,
and hence
\[
 (t-T_0)p^\varepsilon\leq-h^\varepsilon+f'(0)(t-T_0).
\]
At $t=t_1$, let $\varepsilon\rightarrow0$. Using
\eqref{eq:ancient-regularized-convergence} and $U(t_1,x)>0$, we obtain
\[
 p(t_1,x)\leq f'(0)-\frac{h(t_1,x)}{t_1-T_0}.
\]
Letting $T_0\to-\infty$ proves
$p\leq f'(0)$ and hence $|\nabla h|\leq\lambda_*$.

\emph{Step 3: the bound $r\leq f'(0)(1+\ell^2)$.}
We first show that $\sup r$ is finite. Fix $(t_*,x_*)$ and set
\[
 V(t,x)=\frac{U(t,x)}{U(t_*,x_*)}.
\]
On $(t_*-4,t_*]\times B_2(x_*)$, the inequality $U_t>0$
from Step 1 and the estimate $|\nabla\ln U|\leq\lambda_*$ from Step 2 give
\[
 U(t,x)\leq U(t_*,x)
 \leq e^{\lambda_*|x-x_*|}U(t_*,x_*)
 \leq e^{2\lambda_*}U(t_*,x_*).
\]
Thus, $0<V\leq e^{2\lambda_*}$ on $(t_*-4,t_*]\times B_2(x_*)$ and $V$ satisfies
$V_t-\Delta V=(f(U)/U)V$. Since $u\mapsto f(u)/u$ has a $C^1$ extension
to $[0,1]$ and $U$ is an entire solution with $0<U<1$, interior
parabolic estimates applied to the coefficient $f(U)/U$ and then to $V$ give
\[
 |V_t(t_*,x_*)|+|\nabla^2V(t_*,x_*)|
 \leq Ce^{2\lambda_*},
\]
where $C$ is independent of $(t_*,x_*)$ and $U(t_*,x_*)$. This means
\[
 \frac{|U_t(t_*,x_*)|+|\nabla^2U(t_*,x_*)|}{U(t_*,x_*)}\leq Ce^{2\lambda_*}.
\]
Finally,
$\nabla^2h=\nabla^2U/U-\nabla h\otimes\nabla h$ and
$|\nabla h|^2\leq f'(0)$ give
\begin{equation}\label{eq:gkpp-H-bounded}
 |\nabla^2h|\leq Ce^{2\lambda_*}+f'(0)<\infty,
 \qquad 0<r=h_t\leq Ce^{2\lambda_*}<\infty.
\end{equation}

Assume $|\nabla h|\leq\lambda_*\ell$, where $0\leq\ell\leq1$,
and $M:=\sup_{\mathbb R\times\mathbb R^N}r>f'(0)(1+\ell^2)$.
From \eqref{eq:gkpp-H-bounded}, one has $M\leq Ce^{2\lambda_*}<\infty$.
Take a sequence $(t_n,x_n)$ such that
$r(t_n,x_n)\to M$ and define
\[
 w_n(t,x)=\frac{U(t+t_n,x+x_n)}{U(t_n,x_n)}.
\]
Then, $w_n(0,0)=1$ and it follows from $|\nabla h|\leq\lambda_*\ell$ and \eqref{eq:gkpp-H-bounded} that
\begin{align*}
 |\ln w_n(t,x)|
 &=|h(t_n+t,x_n+x)-h(t_n,x_n)|\\
 &\leq\lambda_*\ell|x|+Ce^{2\lambda_*}|t|.
\end{align*}
It implies
\begin{equation}\label{bound-wn}
 e^{-\lambda_*\ell|x|-R_0|t|}\leq w_n(t,x)
 \leq e^{\lambda_*\ell|x|+R_0|t|}.
\end{equation}
where $R_0:=Ce^{2\lambda_*}$.
Set $a_n=U(t_n,x_n)$. The equation for $w_n(t,x)$ reads as
\[
 (w_n)_t-\Delta w_n=\frac{f(a_nw_n)}{a_n}.
\]
By the bound \eqref{bound-wn}, $0<a_n<1$ and parabolic estimates,
the sequence $w_n$, up to an extraction of a subsequence, converges
locally to a positive entire function $w$, with $w(0,0)=1$, satisfying
\[
 w_t=\Delta w+F_a(w),\qquad
 F_a(v)=\begin{cases}f(av)/a,&a>0,\\f'(0)v,&a=0.\end{cases}
\]
Here $a$ is a limit of $a_n$, and $a_nw_n<1$ implies $aw\leq1$.
For $a=0$ this
passage follows from $f(a_nv)/a_n\to f'(0)v$, uniformly on bounded
$v$-intervals together with the first derivative. Write
$\widetilde h=\ln w$, $\widetilde r=\widetilde h_t$, and
$\widetilde{\mathcal L}=\partial_t-\Delta-2\nabla\widetilde h\cdot\nabla$.
Then,
\[
 |\nabla\widetilde h|\leq\lambda_*\ell,\qquad
 \widetilde r\leq M,\qquad \widetilde r(0,0)=M,
\]
and
\begin{align*}
 \widetilde h_t&=\Delta\widetilde h+|\nabla\widetilde h|^2+\frac{f(aw)}{aw},\\
 \widetilde{\mathcal L}\widetilde r
 &=\left(f'(aw)-\frac{f(aw)}{aw}\right)\widetilde r.
\end{align*}
Clearly, $M$ is a supersolution for $\widetilde r$. The strong maximum
principle gives $\widetilde r(t,x)=M$ on $(-\infty,0]\times\mathbb R^N$.
Then, for a fixed negative time $t$, one has
\[
 \Delta\widetilde h
 =M-|\nabla\widetilde h|^2-\frac{f(aw)}{aw}
 \geq M-f'(0)(1+\ell^2)>0.
\]
This contradicts the gradient bound by the divergence
theorem. More precisely, for every $L>0$, with $\nu$ the outward unit normal,
\[
 \bigl(M-f'(0)(1+\ell^2)\bigr)|B_L|
 \leq\int_{B_L}\Delta\widetilde h\,dx
 =\int_{\partial B_L}\nabla\widetilde h\cdot\nu\,dS
 \leq\lambda_*\ell|\partial B_L|.
\]
Since $|\partial B_L|/|B_L|=N/L$, letting $L\to\infty$ contradicts
$M>f'(0)(1+\ell^2)$.
Hence $M\leq f'(0)(1+\ell^2)$.
Taking $\ell=1$, as allowed by Step 2, gives $0<r\leq2f'(0)$.

\emph{Step 4: the bound $U_t\leq(1+\ell^2)f(U)$.}
Assume $|\nabla h|\leq\lambda_*\ell$, where $0\leq\ell\leq1$, and put
$\lambda=1+\ell^2$. Step 3 gives $0<r\leq f'(0)\lambda$.
Differentiating $f(U)/U$ gives the exact identity
\begin{equation}\label{eq:gkpp-Lb}
 \mathcal L\left(\frac{f(U)}U\right)
 =\left(f'(U)-\frac{f(U)}U\right)\frac{f(U)}U-Uf''(U)p.
\end{equation}
For $d=\lambda f(U)/U-r$, equations \eqref{eq:gkpp-p-r} and
\eqref{eq:gkpp-Lb} imply
\begin{equation}\label{eq:gkpp-phase-identity}
 \left(\mathcal L-f'(U)+\frac{f(U)}U\right)d
 =-\lambda Uf''(U)p\geq0.
\end{equation}
Suppose that $m=\inf d<0$.  Since $r\leq f'(0)\lambda$ and
$f(u)/u\to f'(0)$ as $u\searrow0$, a minimizing sequence has
$U\geq a_0>0$.  Translating and passing to a subsequence gives a
positive limiting solution for which $d$ attains the global minimum
$m$ at the origin.  If its value there were one, the strong maximum
principle would make the limit the constant one and $d=0$ there.
Thus this value lies in $(0,1)$.  At this minimum,
$\lambda f(U)/U-r=m<0$ and $r\leq f'(0)\lambda$ imply $f(U)/U<f'(0)$; hence
$f'(U)-f(U)/U<0$ by \eqref{eq:gkpp-k-zero}. Formula
\eqref{eq:gkpp-phase-identity} then gives
\[
 \mathcal Ld=\left(f'(U)-\frac{f(U)}U\right)m-\lambda Uf''(U)p>0.
\]
At a space--time minimum the left side is nonpositive, a
contradiction. Consequently $r\leq\lambda f(U)/U$, proving
\eqref{eq:gkpp-gap-time} and \eqref{eq:gkpp-phase-deficit}.

Finally, since $U_t>0$ and $0<U<1$, the limits
\[
 U_\pm(x):=\lim_{t\to\pm\infty}U(t,x)
\]
exist and satisfy
\[
 0\leq U_-(x)\leq U(0,x)\leq U_+(x)\leq1.
\]
For any sequence $t_n\to\pm\infty$, interior parabolic
estimates give, after passing to a subsequence,
\[
 U(t+t_n,x)\to U_\pm(x)
 \quad\text{in }C^{1,2}_{\loc}(\R\times\R^N).
\]
Passing to the limit in the equation and in
$|\nabla U|^2\leq UU_t$, we obtain
\[
 \Delta U_\pm+f(U_\pm)=0,
 \qquad |\nabla U_\pm|^2\leq0.
\]
Thus $U_\pm$ are constants, each equal to zero or one.
Since $U_-(x)\leq U(0,x)<1$ and $U_+(x)\geq U(0,x)>0$,
one has $U_-\equiv0$ and $U_+\equiv1$.
The same compactness argument yields $U(t,\cdot)\to0$ as $t\to-\infty$
and $U(t,\cdot)\to1$ as $t\to+\infty$,
locally uniformly in $\R^N$.
\end{proof}

\subsection{Nonlinear constructions and amplitude estimates}
\label{subsec:nonlinear-constructions}
\label{subsec:critical-wave-amplitude}

We construct dilation and superposition from the Cauchy evolution
and establish the amplitude estimates needed for their algebraic
laws and small-amplitude limits.

All Cauchy problems below use the concave
extension fixed after \eqref{eq:general-kpp-assumptions}.
Given bounded nonnegative initial data $h_T$ prescribed at time $-T$,
we call $S_{t+T}[h_T]$, for $t>-T$, a \emph{pullback approximation}.
A limit of these solutions as $T\to\infty$, with $t$ fixed,
is called a \emph{pullback limit}.
Since $f'(1)<0$, comparison with the scalar equation bounds
solutions with bounded nonnegative initial data, and its solutions
started above one decrease to one. Thus any pullback limit from
uniformly bounded initial data takes values in $[0,1]$.

Let $\mathcal E$ denote all entire solutions in $[0,1]$ and
$\mathcal C=\{0\}\cup\{U\in\mathcal E:0<U<1\}$.
The preceding theorem gives, for every entire solution $U$ of
\eqref{eq:kpp} with $0<U<1$,
\begin{equation}\label{eq:cone-ancient-input}
 |\nabla\ln U|\leq\lambda_*,\qquad
 0<U_t\leq2f(U)\leq2f'(0)U,\qquad
 U(t,\cdot)\rightarrow0\quad(t\to-\infty).
\end{equation}
All unqualified inequalities between solutions are pointwise.

We first record the comparison, concavity, and stability properties
of the Cauchy operators that will control the nonlinear constructions.

\begin{lemma}
\label{lem:pde-semigroup-inequalities}
For bounded nonnegative initial data $h(x),\,k(x)$ and $t\geq0$,
the following properties hold:
\[
 S_t[h+k]\leq S_t[h]+S_t[k],
 \]
 and
 \begin{equation}\label{sub-concavity}
 S_t[\theta h+(1-\theta)k]\geq\theta S_t[h]+(1-\theta)S_t[k],\quad \hbox{for }0\leq\theta\leq1.
\end{equation}
In particular,
\begin{equation}\label{scaling-inequality}
 S_t[ah]
 \geq aS_t[h],\quad \hbox{for }0<a\leq1,\qquad
  S_t[ah] \leq aS_t[h],\quad \hbox{for }a\geq1.
  \end{equation}
Moreover
\begin{equation}\label{eq:cone-stability}
 |S_t[h]-S_t[k]|\leq S_t[|h-k|]
 \leq e^{f'(0)t}e^{t\Delta}|h-k|.
\end{equation}
\end{lemma}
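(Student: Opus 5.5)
All four properties will be derived from the comparison principle for the Cauchy problem \eqref{eq:cauchy-semigroup-definition}, using the concave $C^1$ extension of $f$ to $\R$. That extension satisfies $f(0)=0$ and $f(u)\le f'(0)u$ for every $u\in\R$. Solutions with bounded nonnegative data stay bounded and nonnegative, so comparison on $[0,t]\times\R^N$ applies to bounded sub- and supersolutions.

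\emph{Subadditivity.} Set $w=S_t[h]+S_t[k]$, with $u_1=S_t[h]\ge0$ and $u_2=S_t[k]\ge0$. Concavity together with $f(0)=0$ makes $f$ subadditive on $[0,\infty)$. Indeed, $f(a)/a$ is nonincreasing for $a>0$, which gives
\[
f(a+b)=a\frac{f(a+b)}{a+b}+b\frac{f(a+b)}{a+b}\le f(a)+f(b).
\]
Hence $w_t-\Delta w=f(u_1)+f(u_2)\ge f(w)$, so $w$ is a bounded supersolution with initial datum $h+k$. Comparison then yields $S_t[h+k]\le w$.

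\emph{Concavity \eqref{sub-concavity}.} Set $z=\theta S_t[h]+(1-\theta)S_t[k]$. Concavity of $f$ gives
\[
z_t-\Delta z=\theta f(u_1)+(1-\theta)f(u_2)\le f(z),
\]
so $z$ is a subsolution with datum $\theta h+(1-\theta)k$, and comparison gives $z\le S_t[\theta h+(1-\theta)k]$. The scaling inequalities \eqref{scaling-inequality} follow from this. For $0<a\le1$, take $k=0$ and $\theta=a$; since $S_t[0]=0$, we get $S_t[ah]\ge aS_t[h]$. For $a\ge1$, apply the same inequality to the datum $ah$ with factor $1/a$: $S_t[h]=S_t[(1/a)(ah)]\ge a^{-1}S_t[ah]$.

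\emph{Stability \eqref{eq:cone-stability}.} Since $h\le k+|h-k|$, monotonicity and subadditivity give
\[
S_t[h]\le S_t[k+|h-k|]\le S_t[k]+S_t[|h-k|].
\]
By symmetry, $|S_t[h]-S_t[k]|\le S_t[|h-k|]$. For the second inequality, $g=|h-k|\ge0$ and $\bar w=e^{f'(0)t}e^{t\Delta}g$ satisfies
\[
\bar w_t-\Delta\bar w=f'(0)\bar w\ge f(\bar w),
\]
using $f(u)\le f'(0)u$. So $\bar w$ is a bounded supersolution with datum $g$, and $S_t[g]\le\bar w$ by comparison.

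The only delicate point is justifying comparison for the extended $f$. This extension is globally Lipschitz ($C^1$ with bounded derivative, since $f'$ is constant outside $[0,1]$). All functions involved are bounded on $[0,t]\times\R^N$, so the Phragmén–Lindelöf-type comparison principle for bounded solutions on $\R^N$ applies. No other obstacle is expected.
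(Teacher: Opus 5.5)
Your proof is correct. For subadditivity, the concavity inequality \eqref{sub-concavity}, the scaling inequalities and the linear bound $S_t[|h-k|]\le e^{f'(0)t}e^{t\Delta}|h-k|$, it is the paper's argument. Your monotonicity of $f(a)/a$ is just another way of writing the paper's identity $f(u)+f(v)-f(u+v)=\int_0^u\bigl(f'(s)-f'(s+v)\bigr)\,ds\geq0$.

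The one genuine difference is the first inequality in \eqref{eq:cone-stability}.

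\begin{itemize}
\item The paper argues at the PDE level. It uses $f(u)-f(v)\le f(u-v)$ on $\{u>v\}$ to show that $\max\{S_t[h]-S_t[k],0\}$ is a subsolution with datum $\max\{h-k,0\}$, and then compares in both directions.
\item You stay at the level of the semigroup. You combine $h\le k+|h-k|$ with order preservation and the subadditivity of $S_t$ already proved.
\end{itemize}

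Both routes rest on the same subadditivity of $f$ on $[0,\infty)$. Yours avoids having to justify that the positive part of a difference of solutions is a (weak) subsolution, which is a Kato-type step. It loses nothing: using $h\le k+(h-k)_+$ in place of $h\le k+|h-k|$ gives the same one-sided bound $(S_t[h]-S_t[k])_+\le S_t[(h-k)_+]$ that the paper's argument produces.
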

\begin{proof}
For $u,v\geq0$, concavity gives
\[
 f(u)+f(v)-f(u+v)
 =\int_0^u\bigl(f'(s)-f'(s+v)\bigr)\,ds\geq0,
\]
so the sum of two solutions is a supersolution. On the other hand, concavity also gives
\[
 \theta f(u)+(1-\theta)f(v)
 -f(\theta u+(1-\theta)v)\leq0.
\]
The first two inequalities follow from the comparison principle. Since $S_t[0]=0$, the scaling inequalities follow by applying \eqref{sub-concavity} to $h$ and $0$ with
$\theta=a$ for $0<a\leq1$ and to $ah$ and $0$ with $\theta=1/a$ for $a\geq1$.
For $u=S_t[h]$ and $v=S_t[k]$, concavity gives
\[
 (u-v)_t-\Delta(u-v)=f(u)-f(v)\leq f(u-v)
 \qquad\text{where }u>v.
\]
Thus $\max\{u-v,0\}$ is a subsolution with initial data
$\max\{h-k,0\}$. Comparison in both directions gives
$|S_t[h]-S_t[k]|\leq S_t[|h-k|]$. The remaining inequality in
\eqref{eq:cone-stability} follows from $f(u)\leq f'(0)u$ for $u\geq0$.
It also proves continuity for uniformly bounded, locally
uniformly convergent initial data, by dominated convergence in the
Gaussian integral at each positive elapsed time.
\end{proof}

For $U,V\in\mathcal C$ and an amplitude parameter $a>0$, define the nonlinear dilation
$D_aU$ and superposition $U\oplus V$ by
\begin{equation}\label{eq:basic-pullbacks}
 D_aU(t)=\lim_{T\to\infty}S_{t+T}[aU(-T)],\qquad
 U\oplus V=\lim_{T\to\infty}S_{t+T}[U(-T)+V(-T)].
\end{equation}
Here and below $T>-t$. The first sequence increases in $T$ for
$a\leq1$ and decreases for $a\geq1$; the second decreases.
Indeed compare the evolution from $-T_2$ to $-T_1$, for $T_2>T_1$,
using the preceding lemma, then evolve to $t$.
Scalar comparison and interior regularity give entire limits with
convergence in $C^{1,2}_{\loc}(\R\times\R^N)$, and
\begin{equation}\label{eq:cone-basic-bounds}
 \begin{gathered}
 aU\leq D_aU\leq U\quad\text{for }0<a\leq1,\qquad
 U\leq D_aU\leq aU\quad\text{for }a\geq1,\\
 \max(U,V)\leq U\oplus V\leq\min(U+V,1).
 \end{gathered}
\end{equation}
The past limits and the strong maximum principle put all these
solutions in $\mathcal C$. Comparison shows that $U\oplus V$ is
the greatest entire solution below the ordinary sum $U+V$.
The operations preserve order and commute with translations and
rotations. The map $a\mapsto D_aU(t,x)$ is nondecreasing and concave.

The spatial bound proved above must also hold after changing the
amplitude $a$. The following lemma shows that dilation preserves any
logarithmic gradient bound.

\begin{lemma}\label{lem:gap-preservation}
Let $U$ be a classical entire solution of \eqref{eq:kpp}
with $0<U<1$. If $|\nabla\ln U|\leq\lambda_*r$ for some $r\geq0$, then the same bound
holds for $D_aU$ and the corresponding Cauchy solutions
$S_{t+T}[aU(-T,\cdot)]$, for every $a>0$, $T>0$, and $t>-T$.
\end{lemma}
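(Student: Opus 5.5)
We show that the bound $p=|\nabla h|^2\leq\lambda_*^2r^2$, with $h=\ln u$, propagates forward along the Cauchy evolution $u=S_{t+T}[aU(-T,\cdot)]$. It holds at the initial time because $\nabla\ln(aU)=\nabla\ln U$. The bound for $D_aU$ then follows by passing to the $C^{1,2}_{\loc}$ pullback limit, since $D_aU>0$. Write $\kappa=\lambda_*^2r^2$.

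The key identity is the one already used in Theorem~\ref{thm:gkpp-ancient}: with $\mathcal L=\partial_t-\Delta-2\nabla h\cdot\nabla$, \eqref{eq:gkpp-p-r} gives
\[
 \mathcal Lp=2\Bigl(f'(u)-\frac{f(u)}u\Bigr)p-2|\nabla^2h|^2.
\]
This derivation uses only the equation $u_t=\Delta u+f(u)$ with $u>0$, so it holds for the Cauchy solution with the concave extension of $f$. For $u>1$ we have $f'(u)-f(u)/u=f'(1)-f'(1)(u-1)/u=f'(1)/u<0$, and by \eqref{eq:gkpp-reaction-remainders} the coefficient is $\leq0$ for $0<u\leq1$. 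Hence the coefficient $c(t,x):=2(f'(u)-f(u)/u)$ is nonpositive everywhere, and $\mathcal Lp\leq c\,p$. It follows that $q=p-\kappa$ satisfies
\[
 \mathcal Lq-c\,q\leq c\,\kappa\leq0,
\]
so $q$ is a subsolution of a linear equation with nonpositive zeroth-order coefficient and initial value $q\leq0$. The maximum principle should therefore give $q\leq0$ for all later times.

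The main obstacle is justifying this maximum principle on $\R^N$. The drift $2\nabla h$ and $p$ itself need not be bounded, since $u$ may decay at spatial infinity. I would regularize exactly as in Theorem~\ref{thm:gkpp-ancient}. Replace the initial datum by $\varepsilon+(1-\varepsilon)aU(-T,\cdot)$; its log-gradient is still at most $\lambda_*r$, because $|\nabla(\varepsilon+(1-\varepsilon)g)|\leq(1-\varepsilon)|\nabla g|\leq\lambda_*r\,(1-\varepsilon)g\leq\lambda_*r(\varepsilon+(1-\varepsilon)g)$. The resulting solution $u^\varepsilon$ stays $\geq\min\{\varepsilon,\cdot\}$-type bounded below on finite time intervals. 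This follows by comparison with the ODE $\dot y=f(y)$, whose solution from $\varepsilon$ remains positive, together with $f(y)=f'(0)y$ for $y<0$. Parabolic estimates then make $\nabla h^\varepsilon$ and $p^\varepsilon$ bounded on $[-T,t_1]\times\R^N$, and $c^\varepsilon$ is bounded. With bounded coefficients, the standard maximum principle for bounded subsolutions on $[-T,t_1]\times\R^N$ gives $p^\varepsilon\leq\kappa$. A Phragmén–Lindelöf-type penalization $q-\delta(|x|^2+Ct)$ works if needed.

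Finally, let $\varepsilon\to0$. Since $|u^\varepsilon-u|\leq\varepsilon e^{f'(0)(t+T)}$ by \eqref{eq:cone-stability}, interior regularity yields local convergence of $\nabla u^\varepsilon$, and $u>0$ gives $p^\varepsilon\to p$ pointwise. This proves the bound for $S_{t+T}[aU(-T)]$. Letting $T\to\infty$ through the monotone pullback convergence in $C^{1,2}_{\loc}$ proves it for $D_aU$.
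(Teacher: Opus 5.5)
Your argument is correct, but it takes a different route from the paper.

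The paper never differentiates the equation. It integrates the hypothesis into the two-point inequality $U(-T,x+y)\le e^{\lambda_*r|y|}U(-T,x)$. Translation invariance, comparison, and the sublinear scaling $S_\tau[bh]\le bS_\tau[h]$ for $b\ge1$ (Lemma~\ref{lem:pde-semigroup-inequalities}) then give $S_{t+T}[aU(-T,\cdot)](x+y)\le e^{\lambda_*r|y|}S_{t+T}[aU(-T,\cdot)](x)$. This is an integrated form of the gradient bound, and it survives the pullback limit with no regularity input.

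You instead propagate the infinitesimal form by a Bernstein-type maximum principle for $|\nabla\ln u|^2$. Both proofs rest on the same structural fact: $u\mapsto f(u)/u$ is nonincreasing for the concave extension. The paper uses it through $f(bu)\le bf(u)$ for $b\ge1$; you use it through $f'(u)-f(u)/u\le0$, including $f'(1)/u<0$ for $u>1$.

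The paper's route is shorter. It avoids the $\varepsilon$-regularization, the maximum principle on $\R^N$ with a drift that is bounded only after regularizing, and the limit $\varepsilon\to0$. Your route reuses the machinery of Theorem~\ref{thm:gkpp-ancient}, and it would still work in settings with a Bernstein identity but no scaling comparison.

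Three details need repair, none of them serious:
\begin{itemize}
\item The penalization $\delta(|x|^2+Ct)$ does not absorb a bounded nonzero drift, because $2\nabla h^\varepsilon\cdot\nabla|x|^2$ grows linearly in $|x|$. Use $\delta e^{K(t+T)}(1+|x|^2)$ with $K$ large, or simply cite the maximum principle for bounded subsolutions with bounded coefficients.
\item For $a>1$ the stability estimate reads $|u^\varepsilon-u|\le\varepsilon\max\{1,a\}e^{f'(0)(t+T)}$ rather than $\varepsilon e^{f'(0)(t+T)}$. This changes nothing in the argument.
\item The lower bound you need is simply $u^\varepsilon\ge\varepsilon$, by comparison with the increasing ODE solution starting from $\varepsilon$.
\end{itemize}
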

\begin{proof}
Integrating the assumed bound along a line segment at time $-T$ gives
\[
 U(-T,x+y)\leq e^{\lambda_*r|y|}U(-T,x)
 \qquad\text{for }x,y\in\R^N.
\]
For $a>0$ and $t>-T$, translation invariance, comparison, and
\eqref{scaling-inequality} therefore yield
\[
 \begin{aligned}
 \bigl(S_{t+T}[aU(-T,\cdot)]\bigr)(x+y)
 &=\bigl(S_{t+T}[aU(-T,\cdot+y)]\bigr)(x)\\
 &\leq\bigl(S_{t+T}[e^{\lambda_*r|y|}aU(-T,\cdot)]\bigr)(x)\\
 &\leq e^{\lambda_*r|y|}
       \bigl(S_{t+T}[aU(-T,\cdot)]\bigr)(x).
 \end{aligned}
\]
Taking logarithms and interchanging $x$ and $x+y$ gives the claimed
gradient bound for these Cauchy solutions. Passing to the limit
$T\to\infty$ in the spatial comparison gives the same bound for $D_aU$.
\end{proof}

We next determine the endpoint limits of the amplitude parameter $a$.
These limits will allow us to normalize a nonzero solution by its
value at a fixed space--time point.

\begin{lemma}\label{lem:cone-small-amplitudes}
For every entire solution $U$ of \eqref{eq:kpp} with $0<U<1$,
\[
 \lim_{a\searrow0}D_aU=0,\qquad\lim_{a\to\infty}D_aU=1
 \quad\text{in }C^{1,2}_{\loc}.
\]
\end{lemma}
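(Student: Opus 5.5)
The plan is to exploit the monotone and concave dependence of $a\mapsto D_aU$ recorded after \eqref{eq:cone-basic-bounds}, together with the classification of constant-in-space limits supplied by Theorem~\ref{thm:gkpp-ancient}. The monotone limits
\[
 U_0:=\lim_{a\searrow0}D_aU,\qquad U_\infty:=\lim_{a\to\infty}D_aU
\]
exist pointwise, and interior parabolic estimates upgrade the convergence to $C^{1,2}_{\loc}$. Both limits are entire solutions with values in $[0,1]$. By the strong maximum principle each is identically $0$, identically $1$, or lies in $\mathcal C$ with values strictly in $(0,1)$. The key extra structure is scale invariance, which I will use to exclude the intermediate alternative.

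\emph{Step 1: semigroup laws.} First I will verify the laws $D_b(D_aU)=D_{ab}U$, at least in the inequality directions needed below. For $a,b\leq1$ one can get them by comparing pullback approximations with Lemma~\ref{lem:pde-semigroup-inequalities}: the pullback of $bD_aU(-T)$ is compared with that of $abU(-T')$ for $T'>T$. If full equality is not yet available, I will use only monotonicity: for each fixed $b\in(0,1)$, the family $D_b(D_aU)$ has the same limit as $a\searrow0$ because $D_{ab}U$ is sandwiched.

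\emph{Step 2: scale invariance of the limit.} I will show $D_bU_0=U_0$ for every $b>0$. Continuity of $D_b$ under monotone limits follows from stability \eqref{eq:cone-stability} and dominated convergence in the pullback. Similarly, $D_bU_\infty=U_\infty$.

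\emph{Step 3: ruling out the intermediate case.} Suppose $U_0\in\mathcal C$ is nonzero. Then $D_bU_0\geq bU_0$ for $b\leq1$ gives nothing useful, but $D_bU_0\geq U_0$ for $b>1$ combined with an upper bound from the pullback is more promising. Take $b>1$ and $T$ large. Since $U_0(-T)\to0$ locally as $T\to\infty$ by \eqref{eq:cone-ancient-input}, the solution $S_{t+T}[bU_0(-T)]$ is approximately linear, so it is close to $bL_{t+T}[U_0(-T)]$. The idea is to show that this strictly exceeds $U_0$ somewhere in a way incompatible with $D_bU_0=U_0$. A cleaner route is to use $|\nabla\ln U_0|\leq\lambda_*$ and $U_t\leq2f(U)$ via concavity: $D_bU_0=U_0$ for all $b$ forces $U_0=D_bU_0\geq bU_0$ for $b<1$ trivially, while for $b>1$ it gives $U_0=D_bU_0$, and with $b\to\infty$ this would give $U_0=U_\infty$. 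I expect this step to be the main obstacle. The most direct approach I would try first is this: at a point where $U_0$ is small, i.e.\ very negative time, the amplitude should scale linearly. I would show that
\[
 \bigl(D_bU_0-U_0\bigr)(t,x)\geq c\,(b-1)\,U_0(t,x)
\]
for $t\ll0$, using $f(u)\geq f'(0)u-Cu^2$ and the bound $U_0(-T)\leq e^{-\delta T}$-type decay coming from $U_t\geq$ something. If that decay is unavailable, I would instead use concavity of $a\mapsto D_aU(t,x)$. A concave nondecreasing function on $(0,\infty)$ with $D_aU\leq aU$ for $a\geq1$ and $D_aU\geq aU$ for $a\leq1$ has finite slopes. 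Its limit at $0^+$ being positive while concavity and $D_aU\leq\min\{1,aU\}$-type bounds hold forces $D_aU\geq U_0>0$. Then I would compare with the small-data decay $S_{t+T}[aU(-T)]\leq ae^{f'(0)(t+T)}e^{(t+T)\Delta}U(-T)$, which tends to $0$ as $a\to0$ for each fixed $T$. The monotonicity in $T$ for $a\leq1$, however, goes the wrong way, so this must be combined with the sublinear bound $D_aU\leq a\,\sup_T L_{t+T}[U(-T)]$ where finite. I expect that a direct argument, rather than this bound, is needed in general.

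\emph{Step 4: the limit $a\to\infty$.} For $U_\infty$ the argument is easier. We have $U_\infty\geq D_aU\geq U$ for all $a\geq1$, and at $t=-T$ the initial data $aU(-T)$ is eventually at least $1$ on any ball. So $S_{t+T}[aU(-T)]$ is bounded below by the solution started from $\min\{1,aU(-T)\}$, which converges to $1$ on compact sets by the hair-trigger effect (spreading at speed $c_*$ from large balls). Hence $U_\infty\equiv1$.
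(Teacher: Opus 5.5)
There is a genuine gap. Your Step~3, where the whole argument for $a\searrow0$ has to happen, is left open: you list several attempts, but none is carried out. The idea that closes it is missing, and you were one integration away from it, since you already cite $U_t\le 2f(U)$.

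Integrate the time bound $0<\partial_t\ln U\le2f'(0)$ of Theorem~\ref{thm:gkpp-ancient} over $[t+\frac{\ln a}{2f'(0)},t]$. For $0<a\le1$ this gives
\[
 aU(t,x)\le U\Bigl(t+\tfrac{\ln a}{2f'(0)},x\Bigr),
\]
and the reverse inequality holds for $a\ge1$. The right side is itself an entire solution. Comparison from time $-T$ therefore bounds every pullback approximation $S_{t+T}[aU(-T)]$, which gives $D_aU(t,x)\le U(t+\frac{\ln a}{2f'(0)},x)$ for $a\le1$ and $D_aU(t,x)\ge U(t+\frac{\ln a}{2f'(0)},x)$ for $a\ge1$. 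The past and future limits of $U$ in Theorem~\ref{thm:gkpp-ancient} then give both assertions, and parabolic estimates upgrade the convergence to $C^{1,2}_{\loc}$. This is the paper's proof. It needs no scale invariance of the limits $U_0,U_\infty$ and no exclusion of an intermediate alternative; the same bound gives $U_0\le U(t+\frac{\ln a}{2f'(0)},x)\to0$ at once.

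The remaining steps would not stand as written either. You cannot import the group law or the continuity of $D_b$ under monotone limits from Propositions~\ref{prop:pde-full-dilation} and~\ref{prop:cone-monotone-dilation}. These rest on \eqref{eq:amplitude-low-value}, whose proof uses the endpoint values $D_aU\to0$ and $D_aU\to1$, i.e.\ the present lemma. Direct comparison does give $D_bD_aU=D_{ab}U$ for $a,b\le1$. However, ``stability and dominated convergence in the pullback'' does not let you pass $a\searrow0$ inside $D_b$, because the limits $T\to\infty$ and $a\searrow0$ need not commute. Finite-time stability yields only the one-sided inequalities of Lemma~\ref{lem:cone-semicontinuity}. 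Here these reproduce exactly the trivial bounds $D_bU_0\le U_0$ for $b<1$ and $D_bU_0\ge U_0$ for $b>1$ from \eqref{eq:cone-basic-bounds}, not the reverse inequalities you need.

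Step~4 has an order-of-limits error. For $a\ge1$ the approximations decrease in $T$, so $D_aU(t)=\inf_TS_{t+T}[aU(-T)]$. Making $aU(-T)\ge1$ on a ball by taking $a$ large at fixed $T$ therefore says nothing about $D_aU$. At fixed $a$, the set $\{aU(-T)\ge1\}$ leaves every compact set as $T\to\infty$:
\begin{itemize}
\item for $U=\xi_f$ it is empty once $T$ is large;
\item for $U=\phi_c(x\cdot\nu+ct)$ with $c>c_*$ it is a half-space at distance about $cT$, which spreading at speed $c_*$ does not reach by time $t$.
\end{itemize}
So the hair-trigger effect gives no bound uniform in $T$. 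The uniform lower bound comes again from the time-translate comparison above.
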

\begin{proof}
Integration of $\partial_t\ln U\leq2f'(0)$ from
$t+\frac{\ln a}{2f'(0)}$ to $t$, for $a\leq1$, gives
$aU(t,x)\leq U(t+\frac{\ln a}{2f'(0)},x)$.
Cauchy comparison therefore gives
$D_aU\leq U(t+\frac{\ln a}{2f'(0)},x)$.
For $a\geq1$ integration on the reversed interval gives
$D_aU\geq U(t+\frac{\ln a}{2f'(0)},x)$. Apply the past and future limits
of Theorem~\ref{thm:gkpp-ancient}.
\end{proof}

For the subsequent amplitude comparisons, define the critical-profile
slope as a function of the solution value:
\[
 q(u)=\phi_*'(\phi_*^{-1}(u))\quad\text{for }0<u<1.
\]
Then $qq'=c_*q-f$, $0<q(u)<\lambda_*u$,
$q(u)/u\to\lambda_*$ at zero,
and $\lim_{u\nearrow1}q(u)=0$.
These properties follow from \eqref{eq:intro-wave-slope}
and the traveling-wave equation \eqref{eq:critical-front}.

The spatial gradient bound also yields a second-order differential
inequality for the amplitude map $a\mapsto D_aU(t,x)$. The following inequality is the starting point for
the small-amplitude limits studied later.

\begin{lemma}\label{lem:finite-amplitude}
If $|\nabla\ln U|\leq\lambda_*r$ for some $0\leq r\leq1$ and
$G(a)=D_aU(t,x)$, then for every $\gamma\in[r^2,1]$,
\begin{equation}\label{eq:trace-F-inequality}
 \gamma a^2G''\leq aG'-\frac{f(G)}{f'(0)}
\end{equation}
in the distributional sense on $(0,\infty)$.
\end{lemma}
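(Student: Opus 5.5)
The plan is to prove the inequality first for the pullback approximations
\[
 G_T(a):=\bigl(S_{t+T}[aU(-T,\cdot)]\bigr)(x),
\]
which are smooth in $a$. Then we pass to the limit $T\to\infty$, using $D_aU=\lim_T S_{t+T}[aU(-T)]$ from \eqref{eq:basic-pullbacks}.

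It is convenient to use the logarithmic amplitude $s=\ln a$. We have $aG'=\partial_sG$ and $a^2G''=\partial_s^2G-\partial_sG$. So \eqref{eq:trace-F-inequality} is equivalent to
\[
 Z:=(1+\gamma)\partial_sG-\gamma\,\partial_s^2G-\frac{f(G)}{f'(0)}\ \geq0 .
\]
As a sanity check, in the linear case $D_aU=aU$ this holds with equality.

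Fix $T$ and write $W(\tau,y;s)=S_{\tau+T}[e^sU(-T,\cdot)](y)$ for $\tau>-T$. Differentiability in $s$ follows from standard dependence on initial data, and the concave extension of $f$ is $C^1$ with $f''$ defined almost everywhere. The derivatives satisfy:
\begin{itemize}
 \item $W_s$ solves the linearized equation $(\partial_\tau-\Delta)W_s=f'(W)W_s$, with initial datum $e^sU(-T)$;
 \item $W_{ss}$ solves $(\partial_\tau-\Delta)W_{ss}=f'(W)W_{ss}+f''(W)W_s^2$, with the same initial datum;
 \item $(\partial_\tau-\Delta)f(W)=f'(W)f(W)-f''(W)|\nabla W|^2$.
\end{itemize}
Setting $Z=(1+\gamma)W_s-\gamma W_{ss}-f(W)/f'(0)$, a direct computation gives
\[
 (\partial_\tau-\Delta)Z=f'(W)Z+\bigl(-f''(W)\bigr)\Bigl(\gamma W_s^2-\frac{|\nabla W|^2}{f'(0)}\Bigr).
\]
At the initial time $\tau=-T$ we have $Z=e^sU-f(e^sU)/f'(0)\geq0$, by $f(u)\leq f'(0)u$.

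The key step, and the main obstacle, is showing that the source term is nonnegative, that is,
\[
 |\nabla W|\leq\lambda_*\sqrt\gamma\,W_s .
\]
Lemma~\ref{lem:gap-preservation} gives only $|\nabla W|\leq\lambda_*rW$. This is insufficient, because concavity in $a$ gives $W_s\leq W$, which is the wrong direction. Instead, we compare $\nabla W$ with $W_s$ directly.

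For a unit vector $e$, the function $e\cdot\nabla W$ solves the same linear equation $(\partial_\tau-\Delta)\phi=f'(W)\phi$ as $W_s$. Its initial datum is $e^s e\cdot\nabla U(-T)$, which is bounded in absolute value by $\lambda_*r\,e^sU(-T)$, the initial datum of $\lambda_*rW_s$. The potential $f'(W)$ is bounded, and all quantities are bounded for fixed $T$ by Cauchy estimates. The linear comparison principle on $(-T,t]\times\R^N$ therefore gives $|e\cdot\nabla W|\leq\lambda_*rW_s$. Hence
\[
 |\nabla W|^2\leq\lambda_*^2r^2W_s^2\leq f'(0)\gamma W_s^2,
\]
using $\gamma\geq r^2$ and $\lambda_*^2=f'(0)$. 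With $-f''\geq0$, the source is nonnegative. The maximum principle for bounded solutions with bounded zeroth-order coefficient then yields $Z\geq0$, that is,
\[
 \gamma a^2G_T''\leq aG_T'-\frac{f(G_T)}{f'(0)}\qquad\text{pointwise on }(0,\infty).
\]

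Finally, we let $T\to\infty$. By the monotonicity in $T$ noted after \eqref{eq:basic-pullbacks}, $G_T(a)\to G(a)$ for every $a$. Each $G_T$ is concave and nondecreasing in $a$ (Lemma~\ref{lem:pde-semigroup-inequalities}) and bounded by $\max\{a,1\}$. So the convergence is locally uniform on $(0,\infty)$, and $G$ is concave.

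We test the inequality against a nonnegative $\psi\in C_c^\infty(0,\infty)$ and integrate by parts, moving all derivatives onto $\psi$, $a\psi$ and $a^2\psi$. Every term then converges: $\int G_T(a^2\psi)''\,da$ and $\int G_T(a\psi)'\,da$ by uniform convergence, and $\int f(G_T)\psi\,da$ by continuity of $f$. This gives \eqref{eq:trace-F-inequality} in the distributional sense.
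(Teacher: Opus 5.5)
This is essentially the paper's proof: your $W_s$ and $W_{ss}-W_s$ are its $v=a\partial_au$ and $w=a^2\partial_a^2u$, and your $Z$ is minus its bracket $\gamma w+f(u)/f'(0)-v$. The bound $|\nabla u|\leq\lambda_*r\,v$ is propagated by the same linear comparison of directional derivatives, and the limit $T\to\infty$ is taken in the tested (distributional) form, as you do.

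The one point you pass over too quickly is regularity. The extended $f$ is only $C^{1,1}$: its second derivative jumps at $u=1$, and for $a>1$ the data $aU(-T)$ may exceed $1$. So twice-differentiability in $a$ and the pointwise inequality for $G_T$ are not automatic. The paper therefore runs your computation with a concave mollification $f_\varepsilon$ satisfying $f_\varepsilon(0)=0$, $f_\varepsilon'\leq f'(0)$, $f_\varepsilon''\leq0$, and lets $\varepsilon\to0$ in the tested inequality at fixed $T$. Alternatively, one-sided $a$-derivatives also work: both one-sided values of $f''$ at $1$ are nonpositive, and $\nabla W=0$ almost everywhere on $\{W=1\}$.
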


\begin{proof}
The extended $f$ is $C^{1,1}(\R)$. Let $\rho_\varepsilon$ be a standard
nonnegative mollifier of unit integral and set
\[
 f_\varepsilon=\rho_\varepsilon*f-(\rho_\varepsilon*f)(0).
\]
Then $f_\varepsilon\to f$ in $C^1_{\loc}$,
$f_\varepsilon(0)=0$, $f_\varepsilon'\leq f'(0)$, and
$f_\varepsilon''\leq0$. For finite $T$, let $u$ solve
$u_t=\Delta u+f_\varepsilon(u)$ with $u(-T)=aU(-T)$, and set
$v=a\partial_a u$,
$w=a^2\partial_a^2u$ and
$\mathscr L_u=\partial_t-\Delta-f_\varepsilon'(u)$.
Differentiating the Cauchy problem twice with respect to the
amplitude parameter $a$ gives
\[
 \mathscr L_uv=0,\qquad
 \mathscr L_uw=f_\varepsilon''(u)v^2,\qquad
 \mathscr L_uf_\varepsilon(u)=-f_\varepsilon''(u)|\nabla u|^2.
\]
Initially $v=u$, $w=0$.
Each directional derivative of $u$ satisfies the same linear equation
as $v$. The initial bound $|\nabla u|\leq\lambda_*rv$ thus propagates.
Consequently
\[
 \mathscr L_u\left\{\gamma w+\frac{f_\varepsilon(u)}{f'(0)}-v\right\}
   =f_\varepsilon''(u)\left(\gamma v^2-\frac{|\nabla u|^2}{f'(0)}\right)\leq0.
\]
Its initial value is $f_\varepsilon(aU)/f'(0)-aU\leq0$, so comparison proves
$\gamma a^2u_{aa}\leq au_a-f_\varepsilon(u)/f'(0)$.
For a nonnegative test function $\psi\in C_c^\infty((0,\infty))$,
\[
 \int_0^\infty\left(\gamma u(a^2\psi)''+u(a\psi)'
       +\frac{f_\varepsilon(u)}{f'(0)}\psi\right)\,da\leq0.
\]
For fixed $T$, the bound
$0\leq u(t)\leq a e^{f'(0)(t+T)}\|U(-T)\|_\infty$ and Cauchy stability
allow $\varepsilon\searrow0$, giving the same inequality for
$S_{t+T}[aU(-T)]$ with $f$ in place of $f_\varepsilon$.
These finite pullbacks are concave and locally bounded in $a$, so their
convergence to $G$ is locally uniform on $(0,\infty)$.
Letting $T\to\infty$ proves the distributional inequality.
\end{proof}

The following proposition compares nonlinear dilation with a phase
shift of the critical traveling front and gives its asymptotic
behavior.

\begin{proposition}\label{prop:inverse-wave}\label{prop:inverse-wave-bounds}
Every entire solution $U$ of \eqref{eq:kpp} with $0<U<1$ on
$\R\times\R^N$ satisfies, for every $(t,x)\in\R\times\R^N$,
\begin{equation}\label{eq:inverse-wave-upper}
 D_aU(t,x)\leq\phi_*(\lambda_*^{-1}\ln a+\phi_*^{-1}(U(t,x)))
 \qquad\text{for }0<a\leq1,
\end{equation}
and
\begin{equation}\label{eq:inverse-wave-lower}
 D_aU(t,x)\geq\phi_*(\lambda_*^{-1}\ln a+\phi_*^{-1}(U(t,x)))
 \qquad\text{for }a\geq1.
\end{equation}
Moreover,
\begin{equation}\label{eq:amplitude-low-value}
 D_aU(t,x)=aU(t,x)+o(U(t,x))\quad\text{as }U(t,x)\searrow0,
\end{equation}
uniformly with respect to the entire solution $0<U<1$ and
$(t,x)\in\R\times\R^N$, and locally uniformly in $a\in(0,\infty)$.
\end{proposition}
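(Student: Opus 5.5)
The plan is to derive both bounds from the amplitude inequality of Lemma~\ref{lem:finite-amplitude} with $r=1$, $\gamma=1$, which holds by Theorem~\ref{thm:gkpp-ancient}. It will be used as an ODE comparison in the variable $s=\lambda_*^{-1}\ln a$. Fix $(t,x)$ and set $G(a)=D_aU(t,x)$ and $g(s)=G(e^{\lambda_*s})$. Then $a\,d/da=\lambda_*^{-1}d/ds$, so $a^2G''=\lambda_*^{-2}(g''-\lambda_*g')$. With $f'(0)=\lambda_*^2$ and $c_*=2\lambda_*$, inequality \eqref{eq:trace-F-inequality} with $\gamma=1$ becomes
\[
 g''-c_*g'+f(g)\le 0
\]
in the distributional sense on $\R$. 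Thus $g$ is a supersolution of the critical traveling-wave ODE \eqref{eq:critical-front}. The comparison function is $\psi(s)=\phi_*(s+\phi_*^{-1}(U(t,x)))$, which solves the ODE exactly and satisfies $\psi(0)=U(t,x)=g(0)$. The claim is $g\le\psi$ for $s\le0$ and $g\ge\psi$ for $s\ge0$.

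To compare, I would pass to the phase plane using the slope function $q$. Set $\sigma(s)=\phi_*^{-1}(g(s))$; this is well defined since $0<g<1$. The traveling wave has $\psi'=q(\psi)$, and the identity $qq'=c_*q-f$ converts the supersolution inequality for $g$ into a first-order differential inequality. The expected form is that $\eta(s):=g'(s)-q(g(s))$ satisfies $\eta'\le(c_*-q'(g))\eta$ in the distributional sense, using concavity of $G$. The key claim is then $g'\ge q(g)$, i.e.\ $\sigma'\ge1$. Integrating this from $0$ gives $\sigma(s)\le s+\sigma(0)$ for $s\le0$ and $\sigma(s)\ge s+\sigma(0)$ for $s\ge0$, which are exactly \eqref{eq:inverse-wave-upper} and \eqref{eq:inverse-wave-lower}.

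To prove $\eta\ge0$, I would use the boundary behaviour. As $s\to-\infty$ ($a\to0$), Lemma~\ref{lem:cone-small-amplitudes} gives $g\to0$, and by \eqref{eq:cone-basic-bounds} and concavity $G'(a)\ge G(a)/a\cdot$(something near $1$). More precisely, concavity with $G(0^+)=0$ gives $aG'\le G$, hence $g'\le\lambda_*g$. This bound goes the wrong direction, so the lower bound must come from the differential inequality itself. Integrating $\eta'\le(c_*-q'(g))\eta$ backward from any point with $\eta<0$ makes $\eta$ more negative as $s\to-\infty$ at an exponential rate. The exponent is roughly $c_*-q'(0)=\lambda_*$, and this should contradict $|g'|\le\lambda_*g\to0$ unless the bounds are balanced against $g$. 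The sharp form I would actually use is to compare the ratio $\eta/g$, or equivalently to apply a Sturm-type argument to $g$ versus shifted copies $\phi_*(s+h)$. For the shifted copies, take the largest $h$ with $\phi_*(\cdot+h)\le g$ on a half-line and use the maximum principle for the supersolution. \textbf{This step is the main obstacle.} The critical front is the slowest, so the exponential rates of $g$ and $\phi_*$ nearly coincide (a double root), and the comparison at $-\infty$ must exploit the $|s|e^{\lambda_*s}$ factor in \eqref{eq:intro-critical-tail} against $g\le e^{\lambda_*s}\cdot$const, which follows from $G\le aU$-type bounds ($G\ge aU$ for $a\le1$ and concavity).

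For \eqref{eq:amplitude-low-value}, there are two cases. When $a\le1$, combine $aU\le D_aU$ from \eqref{eq:cone-basic-bounds} with \eqref{eq:inverse-wave-upper}. The tail expansion \eqref{eq:intro-critical-expansion} gives
\[
 \phi_*(\lambda_*^{-1}\ln a+\phi_*^{-1}(u))=au\,\frac{-\phi_*^{-1}(u)-\lambda_*^{-1}\ln a+B+o(1)}{-\phi_*^{-1}(u)+B+o(1)}=au\bigl(1+o(1)\bigr)
\]
as $u\to0$, since $\phi_*^{-1}(u)\to-\infty$; this is uniform for $a$ in compact sets. When $a\ge1$, use $U\le D_aU\le aU$ from \eqref{eq:cone-basic-bounds} together with \eqref{eq:inverse-wave-lower}. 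The same expansion bounds $D_aU$ below by $aU(1+o(1))$, while the upper bound $aU$ is already exact. Uniformity in $U$ and $(t,x)$ holds because the bounds depend only on the value $U(t,x)$.
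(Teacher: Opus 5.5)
\paragraph{Verdict.} There is a genuine gap at the decisive step, $\eta\ge0$: you leave it unproved, and the direction you propose for it cannot work.

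\paragraph{What matches the paper.} Your reduction is the paper's own. In the variable $s=\lambda_*^{-1}\ln a$, the amplitude inequality with $\gamma=1$ becomes $g''-c_*g'+f(g)\le0$. The identity $qq'=c_*q-f$ then turns this into $\eta'\le\frac{f(g)}{q(g)}\,\eta$ for $\eta=g'-q(g)$, in distributions, with $\eta$ of locally bounded variation since $G$ is concave. Showing $\eta\ge0$ and integrating gives both bounds. Your derivation of \eqref{eq:amplitude-low-value} from the two bounds and the critical tail is also the paper's argument.

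\paragraph{Why the backward direction fails.} The coefficient $f(g)/q(g)$ is nonnegative, so the inequality propagates negativity only \emph{forward} in $s$. Put $K'=f(g)/q(g)$; then $e^{-K}\eta$ is nonincreasing. For $s<s_0$ this yields only the lower bound $\eta(s)\ge e^{K(s)-K(s_0)}\eta(s_0)$, which tends to $0$ and contradicts nothing as $s\to-\infty$. Your claim that backward integration makes $\eta$ ``more negative at an exponential rate'' has the sign reversed.

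There is also no slack to exploit at $-\infty$. For $U=\phi_*(x\cdot\nu+c_*t)$, Lemma~\ref{lem:wave-tail-action} gives $D_aU=\phi_*(\cdot+\lambda_*^{-1}\ln a)$. Hence $\eta\equiv0$ and $g(s)\sim|s|e^{\lambda_*s}$. So your asserted bound $g\le Ce^{\lambda_*s}$ is false in general. Concavity with $G(0^+)=0$ only makes $G(a)/a$ nonincreasing, which gives the lower bound $G(a)\ge aU$ for $a\le1$, not an upper bound. Any Sturm, sliding, or double-root analysis at the $u\to0$ end would therefore have to be extremely delicate, and you leave it unresolved.

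\paragraph{The missing idea.} Integrate forward, toward $s\to+\infty$ (that is, $a\to\infty$). Suppose $\eta(s_0)=-\delta<0$ for the right-continuous representative. Then $\eta(s)\le-\delta$ for all $s\ge s_0$. By Lemma~\ref{lem:cone-small-amplitudes}, $g(s)\to1$, hence $q(g(s))\to0$. Also $g'\ge0$, because $a\mapsto D_aU$ is nondecreasing. Therefore
\[
0\le g'(s)=q(g(s))+\eta(s)\le q(g(s))-\delta\to-\delta,
\]
a contradiction. This short argument at the $u\to1$ end is exactly what the paper does, and it makes any comparison at the degenerate $u\to0$ end unnecessary.
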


\begin{proof}
Fix $(t,x)$ and put $G(a)=D_aU(t,x)$, $u(s)=G(e^s)$, and
$w(s)=e^sG'_+(e^s)$, where $G'_+$ denotes the right derivative of $G$.
Since $G$ is concave, $u$ is locally absolutely continuous,
$w$ has locally bounded variation, and $u'=w$ almost everywhere.
The change of variables $a=e^s$
in \eqref{eq:trace-F-inequality}, with $\gamma=1$, gives
\[
 Dw\leq\bigl(2w-f(u)/f'(0)\bigr)\,ds
\]
in the sense of distributions.
The endpoints are $u(-\infty)=0$ and $u(+\infty)=1$.
Since $q(u)$ is locally absolutely continuous,
$d=w-q(u)/\lambda_*$ satisfies
\[
 Dd\leq\frac{f(u)}{\lambda_*q(u)}d\,ds.
\]
Since $d$ is locally of bounded variation, the integrating-factor
argument applies to this distributional inequality.
If $d(s_0)=-\delta<0$ for its right-continuous representative, then
\[
 d(s)\leq-\delta\exp\left(\int_{s_0}^s
 \frac{f(u(\sigma))}{\lambda_*q(u(\sigma))}\,d\sigma\right)
 \leq-\delta\qquad\text{for }s\geq s_0.
\]
Consequently $0\leq w(s)\leq q(u(s))/\lambda_*-\delta\rightarrow-\delta$
as $s\to+\infty$, a contradiction.
Thus $u'\geq q(u)/\lambda_*$ almost everywhere, and
\[
 \frac{d}{ds}\phi_*^{-1}(u(s))
 =\frac{u'(s)}{q(u(s))}\geq\frac1{\lambda_*}
 \qquad\text{for almost every }s.
\]
For $0<a\leq1$, integration over $[\ln a,0]$ yields
\[
 \phi_*^{-1}(U(t,x))-\phi_*^{-1}(D_aU(t,x))\geq-\lambda_*^{-1}\ln a.
\]
For $a\geq1$, integration over $[0,\ln a]$ yields
\[
 \phi_*^{-1}(D_aU(t,x))-\phi_*^{-1}(U(t,x))\geq\lambda_*^{-1}\ln a.
\]
Applying the increasing function $\phi_*$ proves
\eqref{eq:inverse-wave-upper} and \eqref{eq:inverse-wave-lower}.
Finally
\[
 \frac{\phi_*(z+\lambda_*^{-1}\ln a)}{a\phi_*(z)}\rightarrow1
 \quad(z\to-\infty)
\]
uniformly for $a$ in a fixed compact positive interval. Squeezing
between these estimates and the elementary bounds
\eqref{eq:cone-basic-bounds} proves~\eqref{eq:amplitude-low-value}.
\end{proof}

The preceding amplitude estimates imply that superposition agrees
with ordinary addition to first order when the solutions are small.

 \begin{lemma}\label{lem:addition-low-value}
Uniformly over all $U,V\in\mathcal C$,
\[
 U\oplus V=U+V+o(U+V)\quad\text{as }U+V\searrow0.
\]
\end{lemma}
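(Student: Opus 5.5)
The upper bound $U\oplus V\le U+V$ is already contained in \eqref{eq:cone-basic-bounds}. So the task is the lower bound: we want $U\oplus V\ge (1-\eta)(U+V)$ wherever $U+V$ is small enough, with the smallness threshold depending only on $\eta$ and not on $U$, $V$ or the point. The case where $U$ or $V$ is $0$ is trivial. The plan is to reduce superposition to dilation, where Proposition~\ref{prop:inverse-wave} already supplies uniform small-amplitude control.

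The key observation is a comparison between $U\oplus V$ and dilations of $U$ and $V$. Fix $\theta\in(0,1)$. By the concavity inequality \eqref{sub-concavity},
\[
S_{t+T}[U(-T)+V(-T)]=S_{t+T}\bigl[\theta\,\theta^{-1}U(-T)+(1-\theta)(1-\theta)^{-1}V(-T)\bigr]
\]
is bounded below by
\[
\theta S_{t+T}[\theta^{-1}U(-T)]+(1-\theta)S_{t+T}[(1-\theta)^{-1}V(-T)].
\]
Letting $T\to\infty$ gives
\[
U\oplus V\ge \theta D_{1/\theta}U+(1-\theta)D_{1/(1-\theta)}V .
\]
Now apply \eqref{eq:amplitude-low-value}. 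Since the convergence there is uniform in the solution and locally uniform in the amplitude, we have $D_aW=aW+o(W)$ as $W\searrow0$. This holds uniformly for $a$ in a compact subset of $(0,\infty)$ and over all $W$. Hence $\theta D_{1/\theta}U=U+o(U)$ as $U\searrow0$, and similarly for $V$. A complication is that only one of $U$, $V$ may be comparable to $U+V$. However, $U\le U+V$ and $V\le U+V$, so smallness of $U+V$ forces both to be small, and the errors $o(U)+o(V)$ are $o(U+V)$.

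The main obstacle is the choice of $\theta$. The estimate above is uniform only for amplitudes $1/\theta$, $1/(1-\theta)$ in a fixed compact set. Yet the natural choice $\theta=U/(U+V)$, taken pointwise, depends on $(t,x)$ and degenerates when $U\ll V$. Moreover, $\theta$ must be a constant in the pullback construction. I would avoid this by not optimizing $\theta$ at all. Instead I would use the lower bound $D_aW\ge W$ for $a\ge1$ from \eqref{eq:cone-basic-bounds}. This already gives the trivial bound $U\oplus V\ge\max(U,V)$ for extreme ratios. Fix a single constant $\theta$ (say $\theta=1/2$), or a finite grid of $\theta$'s. At a point where $U/(U+V)$ lies within $\eta$ of some grid value, the choice $\theta$ equal to that value is accurate to relative error $O(\eta)$ up to $o(U+V)$. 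More precisely, one uses the concavity of $a\mapsto D_aW(t,x)$ together with the first-order expansion $D_aW\approx aW$, valid uniformly for $a\in[1,1/\eta]$.

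The remaining case is a ratio $U/(U+V)<\eta$. There $U\oplus V\ge V\ge(1-\eta)(U+V)$ directly, and symmetrically when $V/(U+V)<\eta$. Combining the finitely many grid cases, we obtain $U\oplus V\ge(1-C\eta)(U+V)-o(U+V)$ uniformly. Letting $\eta\to0$ finishes the proof.
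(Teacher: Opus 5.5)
Your argument is correct and is essentially the paper's proof. The paper takes exactly your concavity bound with $\theta=\tfrac12$, namely $\tfrac12(D_2U+D_2V)\le U\oplus V\le U+V$, and then applies \eqref{eq:amplitude-low-value}. The ``main obstacle'' about choosing $\theta$ is illusory: your preceding paragraph already shows that any single fixed $\theta\in(0,1)$ gives $\theta D_{1/\theta}U+(1-\theta)D_{1/(1-\theta)}V=U+V+o(U+V)$ uniformly, whatever the ratio $U/(U+V)$, so you can drop the grid and the extreme-ratio case analysis.
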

\begin{proof}
Concavity of each Cauchy operator gives
\[
 \tfrac12(D_2U+D_2V)\leq U\oplus V\leq U+V.
\]
Applying~\eqref{eq:amplitude-low-value} gives the conclusion.
\end{proof}

\subsection{Asymptotic uniqueness and algebraic laws}
\label{subsec:asymptotic-uniqueness-laws}

We first prove that suitable comparisons in the remote past determine
the order of entire solutions. We use this result to establish the
algebraic laws of dilation and superposition, construct subtraction,
and prove the amplitude regularity needed below.

Write
\begin{equation}\label{eq:reaction-defects}
 F_1(u)=f'(0)u-f(u),\qquad F_2(u)=f(u)-uf'(u),
\end{equation}
both nonnegative on $[0,1]$.
For a classical entire solution $U$ of \eqref{eq:kpp} with $0<U<1$, write
$\mathcal P_U=\partial_t-\Delta-f'(U)$ and let $P_U(t,s)$ denote
the positive evolution for $\mathcal P_Uv=0$. Since
$(\partial_t-\Delta-f'(U))U=F_2(U)\geq0$,
\begin{equation}\label{eq:linearized-submarkov}
 P_U(t,s)[U(s)]\leq U(t).
\end{equation}

To prove the comparison and uniqueness results below, we consider
two entire solutions $X,Y$ of \eqref{eq:kpp} with $0<X,Y<1$.
We assume that $(X-Y)_+=o(Y)$ uniformly as $Y\searrow0$,
and bound this difference by $Y\omega(Y)$, where
$\omega\geq0$ is continuous on $[0,1]$ and $\omega(0)=0$.
The following lemma provides the averaged linearized estimate
needed to deduce $X\leq Y$.

\begin{lemma}\label{lem:cesaro-error}
For every entire solution $U$ of \eqref{eq:kpp} with $0<U<1$,
every continuous $\omega:[0,1]\to\R$ with $\omega(0)=0$,
and every fixed $(t,x)$,
\begin{equation}\label{eq:cesaro-error}
 \lim_{T\to\infty}\frac1T\int_{t-T}^t
 P_U(t,s)[U(s)|\omega(U(s))|](x)\,ds=0.
\end{equation}
\end{lemma}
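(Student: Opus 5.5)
The plan is a split into small and non-small values of $U$, using \eqref{eq:linearized-submarkov} to bound the total mass of the averaged quantity.

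Fix $(t,x)$ and $\varepsilon>0$. Since $\omega$ is continuous with $\omega(0)=0$, choose $\delta\in(0,1)$ with $|\omega(u)|\leq\varepsilon$ for $0\leq u\leq\delta$, and set $C_\omega=\max_{[0,1]}|\omega|$. Split pointwise:
\[
 U(s)|\omega(U(s))|\leq \varepsilon U(s)+C_\omega U(s)\mathbf 1_{\{U(s)>\delta\}} .
\]
By positivity of $P_U(t,s)$ and \eqref{eq:linearized-submarkov}, the first term contributes at most
\[
 \frac1T\int_{t-T}^t\varepsilon U(t,x)\,ds=\varepsilon U(t,x).
\]
It remains to show that
\[
 \frac1T\int_{t-T}^tP_U(t,s)\bigl[U(s)\mathbf 1_{\{U(s)>\delta\}}\bigr](x)\,ds\to0 .
\]

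For the second term, I exploit the defect $F_2$. Since $(\partial_t-\Delta-f'(U))U=F_2(U)$, Duhamel's formula on $[s_0,t]$ gives
\[
 U(t)=P_U(t,s_0)[U(s_0)]+\int_{s_0}^tP_U(t,s)[F_2(U(s))]\,ds .
\]
Letting $s_0\to-\infty$, the nonnegative integral is therefore bounded by $U(t,x)$:
\[
 \int_{-\infty}^tP_U(t,s)[F_2(U(s))](x)\,ds\leq U(t,x)<\infty .
\]
If $F_2(u)\geq c_\delta u$ on $[\delta,1]$ for some $c_\delta>0$, this gives
\[
 \int_{-\infty}^tP_U(t,s)\bigl[U\mathbf 1_{\{U>\delta\}}\bigr]ds\leq U(t,x)/c_\delta ,
\]
so after division by $T$ the second term tends to zero. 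Then $\limsup\leq\varepsilon U(t,x)$, and letting $\varepsilon\to0$ proves the lemma.

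The obstacle is that $F_2$ may vanish: if $f$ is linear on $[0,u_0]$, then $F_2=0$ there. Positivity of $F_2$ on $(u_0,1]$ still holds, because $F_2(u)=0$ forces linearity on $[0,u]$ by \eqref{eq:gkpp-k-zero}, and $f(1)=0$ excludes $u=1$. So the argument above works whenever $\delta\geq u_0$, in particular in the strictly concave case. On $[0,u_0]$, however, $f'(U)=f'(0)$, and the quantity $P_U(t,s)[U(s)]$ is then not decaying. In that regime I would instead try the defect $F_1$: one has $(\partial_t-\Delta-f'(0))U=-F_1(U)$, so
\[
 \int_{-\infty}^{t}L\text{-evolved }F_1(U(s))\,ds\leq \liminf_{T\to\infty} L_T[U(-T)]
\]
when the latter is finite. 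But this is not available in general; indeed the critical solutions are exactly those for which it fails.

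So the hardest step is the case where $\omega$ is not small near values $u\le u_0$. Note, though, that $\omega(0)=0$ and continuity only give smallness on $[0,\delta]$. The case $\delta<u_0$, with $\delta$ small, is where the genuine difficulty sits. There I would try a Cesàro argument using monotonicity in $s$. By \eqref{eq:linearized-submarkov}, $s\mapsto P_U(t,s)[U(s)](x)$ is nondecreasing and bounded by $U(t,x)$. I would then aim to show that the mass carried by the region $\{\delta<U(s)\}$ is eventually a vanishing fraction, since $U(s)\to0$ locally uniformly as $s\to-\infty$ (Theorem~\ref{thm:gkpp-ancient}). Concretely, I would use Gaussian bounds for $P_U(t,s)\leq e^{f'(0)(t-s)}e^{(t-s)\Delta}$ together with the gradient bound $|\nabla\ln U|\le\lambda_*$. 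The goal is to show that the contribution of $\{U(s)>\delta\}$, which lies at distance $\gtrsim c_*|s|$ at least roughly, is $o(1)$ as $s\to-\infty$; the average then vanishes. This last localization estimate is the step I am least sure of and would work on first.
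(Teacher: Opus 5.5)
There is a genuine gap: your argument only covers reactions that are not linear near $0$. The parts you carry out are correct. The small-value piece follows from \eqref{eq:linearized-submarkov}, and the Duhamel identity for $(\partial_t-\Delta-f'(U))U=F_2(U)$ gives $\int_{-\infty}^tP_U(t,s)[F_2(U(s))](x)\,ds\le U(t,x)$.

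Hence, if $f$ is not linear on any interval $[0,\epsilon]$, then $F_2\ge F_2(\delta)\,u$ on $[\delta,1]$ for every $\delta>0$, and your argument proves the lemma. It even gives integrability, which is stronger than a vanishing Ces\`aro mean, and it is exactly the paper's argument with the weight $\Psi_R$ replaced by a constant (then $F_R=U$ and $-U\mathcal A_p\Psi=F_2(U)$).

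When instead $f(u)=f'(0)u$ on $[0,\theta]$ with $\theta>0$, you need $\delta>\theta$ strictly. Since $\omega$ is an arbitrary continuous function vanishing only at $0$, the slab $\{\delta<U\le\theta\}$ is left uncovered. On that slab $F_1=F_2=0$ and $P_U$ acts as the undamped evolution $L$.

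The localization you propose for this slab cannot work with the tools you list.
\begin{itemize}
\item From $|\nabla\ln U|\le\lambda_*$ you only get $\operatorname{dist}(x,\{U(s)>\delta\})\ge\lambda_*^{-1}\ln(\delta/U(s,x))$. Because $U(s,x)\ge U(t,x)e^{-2f'(0)(t-s)}$, this lower bound never exceeds $c_*(t-s)+O(1)$.
\item At distance $D=c_*\tau$, the factor $e^{f'(0)\tau}$ in $P_U\le e^{f'(0)\tau}e^{\tau\Delta}$ exactly cancels the Gaussian decay $e^{-D^2/(4\tau)}$. What remains is $e^{f'(0)\tau}e^{\tau\Delta}\mathbf 1_{\{|y-x|\ge c_*\tau\}}(x)\asymp\tau^{N/2-1}$, which does not tend to zero for $N\ge2$.
\item For a critical front, $U(s,x)\approx|s|e^{2f'(0)s}$, so the gradient-derived distance falls short by $\lambda_*^{-1}\ln|s|$. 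The resulting bound then grows, like $|s|^{1/2}$ already for $N=1$.
\item So any $O(\ln|s|)$ error in locating $\{U(s)>\delta\}$ is fatal. Bramson-level control of the level sets of an arbitrary entire solution is not available at this stage; it is essentially what the later classification produces. The qualitative decay $U(s)\to0$ carries no rate and does not help.
\end{itemize}

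The missing idea is the paper's weighted Lyapunov function $F_R=U\Psi_R(\ln U)$, which satisfies $(\partial_t-\Delta-f'(U))F_R=-U\mathcal A_p\Psi_R$.
\begin{itemize}
\item On $\{U\ge\delta\}$ one takes $\Psi_R'(h)=1-h$, so $\Psi_R''=-1$. The Li--Yau-type bound $p=|\nabla\ln U|^2\le f'(0)$ from Theorem~\ref{thm:gkpp-ancient} then gives $p\Psi_R''+(p-f'(0))\Psi_R'=-f'(0)+(f'(0)-p)h\le-f'(0)$. This dissipation holds even where $f$ is linear, which is exactly what you lack.
\item The constant $A_R\ge1+\ell\sup_{[e^{-R},1]}F_1/F_2$ ensures $F_2\Psi_R\ge F_1\Psi_R'$ for $\ln U\ge-R$. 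This supremum is finite by the bound $F_1/F_2\le(u-\theta)/\theta$.
\item The ramp of $\Psi_R'$ on $(-2R,-R)$ costs only $\varepsilon_RU$, with $\varepsilon_R\to0$.
\item Duhamel then gives $f'(0)\int_{t-T}^tP_U(t,s)[U\mathbf 1_{\{U\ge\delta\}}](x)\,ds\le F_R(t,x)+\varepsilon_RTU(t,x)$. Divide by $T$, let $T\to\infty$, then let $R\to\infty$.
\end{itemize}
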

\begin{proof}
We first show that, for every fixed $\delta\in(0,1)$,
\[
 \lim_{T\to\infty}\frac1T\int_{t-T}^t
 P_U(t,s)[U(s)\mathbf1_{\{U(s)\geq\delta\}}](x)\,ds=0.
\]
To construct an auxiliary function controlling this integral, we
need that $F_1/F_2$ is bounded on $[c,1]$ for every $0<c<1$.
Here and below, we use the convention
\begin{equation}\label{eq:reaction-defect-quotient}
 \left(\frac{F_1}{F_2}\right)(u):=
 \begin{cases}
 F_1(u)/F_2(u),&F_2(u)>0,\\
 0,&F_2(u)=0.
 \end{cases}
\end{equation}
This convention is consistent with $F_1(u)=0$ at such points,
as follows from the integral identities
\eqref{eq:gkpp-reaction-remainders}, which read
\[
 F_1(u)=\int_0^u(u-v)(-f''(v))\,dv,\qquad
 F_2(u)=\int_0^u v(-f''(v))\,dv.
\]
If $F_2(c)>0$, then $F_2(u)\geq F_2(c)$ for $u\geq c$, so the
quotient is bounded. Otherwise, let $\theta\geq c$ be the endpoint
of the interval on which $f(u)=f'(0)u$.
We have $\theta<1$ since $f(1)=0<f'(0)$. Both $F_1$ and $F_2$
vanish on $[0,\theta]$, while for $u>\theta$ the integral formulas give
\[
 F_1(u)\leq(u-\theta)\int_\theta^u(-f''(v))\,dv,
 \qquad
 F_2(u)\geq\theta\int_\theta^u(-f''(v))\,dv>0.
\]
Thus $F_1(u)/F_2(u)\leq(u-\theta)/\theta$, proving the required bound.

Keep $\delta$ fixed, set $d=\ln\delta$, $\ell=1-d$, and choose
$R>-d$. Define a bounded nonnegative $C^1$ function on $(-\infty,0]$ by
\[
 \Psi_R'(h)=
 \begin{cases}
 0,&h\leq-2R,\\
 \ell(h+2R)/R,&-2R<h<-R,\\
 \ell,&-R\leq h<d,\\
 1-h,&d\leq h\leq0,
 \end{cases}
\]
\[
 \Psi_R(h)=A_R+\int_{-2R}^h\Psi_R'(z)\,dz,\qquad
 A_R=1+\ell\sup_{\substack{e^{-R}\leq u\leq1\\F_2(u)>0}}
                   \frac{F_1(u)}{F_2(u)}.
\]
The supremum defining $A_R$ is finite by the boundedness of the
quotient \eqref{eq:reaction-defect-quotient} on $[e^{-R},1]$.
Since $\Psi_R\geq A_R>0$ and $0\leq\Psi'_R\leq\ell$, this choice of
$A_R$ ensures
$F_2(e^h)\Psi_R(h)\geq F_1(e^h)\Psi'_R(h)$ for $h\geq-R$.
Where $F_2(e^h)=0$, the same inequality holds because $F_1(e^h)=0$.
Put $h=\ln U$ and $p=|\nabla h|^2\leq f'(0)$, using
Theorem~\ref{thm:gkpp-ancient}. For a function of $h$, write
\[
 \mathcal A_p\Psi=p\Psi''+\left(p-\frac{f(U)}U\right)\Psi'
   -\frac{F_2(U)}U\Psi
  =p\Psi''+(p-f'(0))\Psi'
   +\frac{F_1(U)}U\Psi'-\frac{F_2(U)}U\Psi.
\]
On $h\geq d$ the first two terms equal
$-f'(0)+(f'(0)-p)h\leq-f'(0)$, and the last two have nonpositive sum.
On $-R\leq h<d$, we have $\Psi_R''=0$,
$(p-f'(0))\Psi_R'\leq0$, and again the last two terms have
nonpositive sum. On $-2R<h<-R$, we use
$\Psi_R''=\ell/R$, $0\leq\Psi_R'\leq\ell$, and $F_2\Psi_R\geq0$
to bound the positive terms by
\[
 \varepsilon_R=f'(0)\ell/R+
             \ell\sup_{0<u\leq e^{-R}}F_1(u)/u\rightarrow0.
\]
Here $\varepsilon_R\to0$ as $R\to\infty$ with $\delta$ fixed,
because $F_1(u)/u\to0$ as $u\searrow0$.
For $h<-2R$, both derivatives of $\Psi_R$ vanish and
$\mathcal A_p\Psi_R=-F_2(U)\Psi_R/U\leq0$.
Combining these estimates gives, almost everywhere,
\[
 \mathcal A_p\Psi_R\leq-f'(0)\mathbf1_{\{U\geq\delta\}}+\varepsilon_R.
\]
For $F_R=U\Psi_R(\ln U)$ the product rule gives
\[
 (\partial_t-\Delta-f'(U))F_R
 =-U\mathcal A_p\Psi_R
 \geq f'(0)U\mathbf1_{\{U\geq\delta\}}-\varepsilon_RU.
\]
The differential inequality holds in the sense of distributions.
Indeed, $u\mapsto u\Psi_R(\ln u)$, extended by zero at zero,
is continuously differentiable with Lipschitz derivative on $[0,1]$:
it equals $A_Ru$ for $u\leq e^{-2R}$ and is piecewise twice
continuously differentiable with bounded second derivative elsewhere.
Thus the junctions introduce no singular terms in the second derivative.
Variation of constants and positivity of $P_U$ give
\[
 \begin{aligned}
 f'(0)\int_{t-T}^t
 P_U(t,s)[U(s)\mathbf1_{\{U(s)\geq\delta\}}](x)\,ds
 &\leq F_R(t,x)-\bigl(P_U(t,t-T)[F_R(t-T)]\bigr)(x)\\
 &\quad+\varepsilon_R\int_{t-T}^t P_U(t,s)[U(s)](x)\,ds\\
 &\leq F_R(t,x)+\varepsilon_RT U(t,x).
 \end{aligned}
\]
The last inequality uses $F_R\geq0$ and
\eqref{eq:linearized-submarkov}. Dividing by $f'(0)T$ yields
\[
 \frac1T\int_{t-T}^tP_U(t,s)[U(s)\mathbf1_{\{U(s)\geq\delta\}}](x)\,ds
 \leq\frac{F_R(t,x)}{f'(0)T}
       +\frac{\varepsilon_R}{f'(0)}U(t,x).
\]
For fixed $R$, $F_R(t,x)$ is finite, so its contribution vanishes
as $T\to\infty$. Letting $R\to\infty$ next proves the claim for
$\{U\geq\delta\}$.

Finally, split $U|\omega(U)|$ over $\{U<\delta\}$ and
$\{U\geq\delta\}$. By positivity and \eqref{eq:linearized-submarkov},
the averaged contribution of the first set is at most
$U(t,x)\sup_{[0,\delta]}|\omega|$. The contribution of the second
set is at most $\|\omega\|_{L^\infty([0,1])}$ times the integral
average just shown to vanish. Hence
\[
 \limsup_{T\to\infty}\frac1T\int_{t-T}^t
 P_U(t,s)[U(s)|\omega(U(s))|](x)\,ds
 \leq U(t,x)\sup_{[0,\delta]}|\omega|.
\]
Letting $\delta\searrow0$ proves the lemma, since $\omega$ is
continuous and $\omega(0)=0$.
\end{proof}

The following theorem shows that a sufficiently small error relative
to an entire solution near zero determines the order of the solutions
and, in particular, yields uniqueness.

\begin{theorem}\label{thm:asymptotic-uniqueness}
If entire solutions $X,Y$ of \eqref{eq:kpp} with $0<X,Y<1$ satisfy
$(X-Y)_+\leq Y\omega(Y)$, where $\omega\geq0$ is continuous and $\omega(0)=0$,
then $X\leq Y$.
In particular, suppose a positive function $Z$ satisfies
$cZ\leq X,Y\leq CZ$ for constants $c,C>0$, and
$|X-Y|=o(Z)$ uniformly as $Z\searrow0$. Then $X=Y$.
\end{theorem}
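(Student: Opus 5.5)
The plan is to linearize around $Y$ and control the positive part of the difference with Lemma~\ref{lem:cesaro-error}. Set $W=X-Y$. Concavity of $f$ gives
\[
 f(X)-f(Y)\leq f'(Y)(X-Y),
\]
so $W_t-\Delta W\leq f'(Y)W$. Wherever $W>0$ this gives $\mathcal P_Y W\leq0$, hence $\mathcal P_Y W_+\leq0$ in the distributional (Kato) sense. This uses $f'$ nonincreasing and $f'(Y)\geq f'(X)$ there; the sign is the useful one. Comparison for the positive evolution $P_Y$ on $[s,t]$ then gives
\[
 W_+(t)\leq P_Y(t,s)[W_+(s)]\leq P_Y(t,s)\bigl[Y(s)\omega(Y(s))\bigr]
 \qquad\text{for every }s<t.
\]
Comparison on all of $\R^N$ is legitimate because $W_+$ is bounded and the coefficient $f'(Y)$ is bounded.

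The estimate holds for every $s$, so I average it over $s\in[t-T,t]$:
\[
 W_+(t,x)\leq\frac1T\int_{t-T}^tP_Y(t,s)\bigl[Y(s)\omega(Y(s))\bigr](x)\,ds.
\]
Lemma~\ref{lem:cesaro-error} with $U=Y$ says the right side tends to $0$ as $T\to\infty$. Hence $W_+(t,x)=0$, i.e.\ $X\leq Y$. I expect the main difficulty to be already absorbed in the Ces\`aro lemma. A single $s$ is not enough, because $P_Y(t,s)[Y(s)]\leq Y(t)$ alone yields only $W_+\leq Y(t)\sup\omega$ over the range of $Y(s)$, and $Y(s)$ need not be small everywhere. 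The only remaining technical point is justifying the comparison for $W_+$, which I handle by approximating $r\mapsto r_+$ by smooth convex functions, or by applying the maximum principle directly to $W$ on the set where it is positive.

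For the second statement, let $c Z\leq X,Y\leq CZ$ and $|X-Y|\leq Z\eta(Z)$ with $\eta(z)\to0$ as $z\searrow0$. I construct a continuous $\omega$ with $\omega(0)=0$ such that $(X-Y)_+\leq Y\omega(Y)$.
\begin{itemize}
\item Since $Z\leq Y/c$, smallness of $Y$ forces smallness of $Z$.
\item This gives $|X-Y|\leq (Y/c)\,\tilde\eta(Y/c)$, where $\tilde\eta(z)=\sup_{0<\zeta\leq z}\eta(\zeta)$, after replacing $\eta$ by a continuous nondecreasing majorant that vanishes at $0$.
\item For $Y$ bounded away from zero, the trivial bound $|X-Y|\leq1\leq Y/\delta$ suffices, and any continuous $\omega$ exceeding $1/\delta$ there, still vanishing at $0$, works.
\end{itemize}
The first part then gives $X\leq Y$. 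Exchanging the roles of $X$ and $Y$, with the same construction using $Z\leq X/c$, gives $Y\leq X$, hence $X=Y$.
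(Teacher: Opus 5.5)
Your proposal is correct and follows essentially the same route as the paper. Both arguments use concavity and Kato's inequality to get $(X-Y)_+(t)\le P_Y(t,s)[Y(s)\omega(Y(s))]$ for every $s<t$, average over $s\in[t-T,t]$ and apply Lemma~\ref{lem:cesaro-error} with $U=Y$, and obtain the second assertion by building a continuous $\omega$ from $Z\le Y/c$ (the paper uses the global bound $|X-Y|\le CZ\le (C/c)Y$ for large $Y$ where you use $|X-Y|\le 1\le Y/\delta$; either works) before interchanging $X$ and $Y$.
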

\begin{proof}
Concavity and Kato's inequality give
\[
 (\partial_t-\Delta)(X-Y)_+\leq f'(Y)(X-Y)_+.
\]
Hence $(X-Y)_+(t)\leq P_Y(t,s)[Y(s)\omega(Y(s))]$ for every $s<t$.
Average over $s\in[t-T,t]$ and apply Lemma~\ref{lem:cesaro-error}.
For the second assertion, $Z\leq Y/c$ and the uniform
small-$Z$ assumption give
\[
 \frac{|X-Y|}{Y}\leq\frac1c\frac{|X-Y|}{Z}\rightarrow0
 \qquad\text{uniformly as }Y\searrow0.
\]
Moreover, $X\leq CZ\leq(C/c)Y$, so
$|X-Y|/Y\leq C/c+1$ everywhere. We may therefore choose a continuous
function $\omega\geq0$, with $\omega(0)=0$, such that
$|X-Y|\leq Y\omega(Y)$. The first assertion yields $X\leq Y$.
Interchanging $X$ and $Y$ gives $Y\leq X$, and hence $X=Y$.
\end{proof}

Theorem~\ref{thm:asymptotic-uniqueness} allows us to establish the
algebraic laws of nonlinear dilation and superposition, analogous
to those of scalar multiplication and addition.

\begin{proposition}\label{prop:pde-full-dilation}
For $a,b>0$ and $U,V,W\in\mathcal C$,
\begin{align}
 D_aD_bU&=D_{ab}U,&D_1U&=U,\label{eq:pde-full-group}\\
 (U\oplus V)\oplus W&=U\oplus(V\oplus W),&
 U\oplus V&=V\oplus U,\label{eq:pde-superposition-assoc}\\
 D_a(U\oplus V)&=D_aU\oplus D_aV,&
 D_aU\oplus D_bU&=D_{a+b}U.\label{eq:pde-superposition-laws}
\end{align}
Thus $D_{1/a}$ is the order-preserving inverse of $D_a$.
\end{proposition}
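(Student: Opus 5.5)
Every identity in this proposition will be derived from the uniqueness statement of Theorem~\ref{thm:asymptotic-uniqueness}. The plan has two ingredients. First, both sides of each identity are entire solutions in $\mathcal C$: this holds because the operations preserve $\mathcal C$, and the degenerate case where one side is $0$ can be settled directly. Second, both sides agree to first order near zero. We then take $Z$ to be a convenient comparison solution and check the two-sided bounds $cZ\leq X,Y\leq CZ$ and the error estimate $|X-Y|=o(Z)$. The first-order information comes from the uniform expansions \eqref{eq:amplitude-low-value} and Lemma~\ref{lem:addition-low-value}. The bounds $cZ\le\cdot\le CZ$ come from \eqref{eq:cone-basic-bounds}.

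\textbf{Dilation group.} $D_1U=U$ is immediate, since $S_{t+T}[U(-T)]=U(t)$. For $D_aD_bU=D_{ab}U$, put $Z=U$. By \eqref{eq:cone-basic-bounds}, $D_bU$ lies between $\min(1,b)U$ and $\max(1,b)U$, and $D_aD_bU$ lies between corresponding multiples of $D_bU$. Hence both sides are comparable to $U$. Two applications of \eqref{eq:amplitude-low-value} give $D_a(D_bU)=aD_bU+o(D_bU)=abU+o(U)$. Here we use two facts: $D_bU\to0$ uniformly as $U\to0$, by comparability; and the expansion holds uniformly over all entire solutions, so it can be applied to $D_bU$ itself. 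Similarly $D_{ab}U=abU+o(U)$, and Theorem~\ref{thm:asymptotic-uniqueness} gives equality. Consequently $D_{1/a}D_a=D_1=\mathrm{id}$, so $D_{1/a}$ is the inverse of $D_a$. It is order preserving because each $D_a$ is order preserving.

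\textbf{Superposition laws.} Commutativity holds by definition. For associativity, set $Z=U+V+W$. By \eqref{eq:cone-basic-bounds}, both $(U\oplus V)\oplus W$ and $U\oplus(V\oplus W)$ lie between $\max(U,V,W)\ge Z/3$ and $Z$. Two applications of Lemma~\ref{lem:addition-low-value} give $(U\oplus V)\oplus W=U+V+W+o(Z)$, and the same holds for the other bracketing. For the first law in \eqref{eq:pde-superposition-laws}, take $Z=U+V$. We have $D_a(U\oplus V)=a(U\oplus V)+o(U\oplus V)=a(U+V)+o(Z)$. On the other side, $D_aU\oplus D_aV=D_aU+D_aV+o(Z)=a(U+V)+o(Z)$. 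The comparability bounds again follow from \eqref{eq:cone-basic-bounds}. For $D_aU\oplus D_bU=D_{a+b}U$, take $Z=U$; both sides equal $(a+b)U+o(U)$.

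\textbf{Main obstacle.} The key point is uniformity. Theorem~\ref{thm:asymptotic-uniqueness} needs the error to be $o(Z)$ uniformly as $Z\searrow0$ over all of $\R\times\R^N$. Pointwise smallness is not enough. This uniformity is exactly why \eqref{eq:amplitude-low-value} and Lemma~\ref{lem:addition-low-value} were stated uniformly over the whole class of solutions. We must also check that each intermediate solution, such as $D_bU$ or $U\oplus V$, tends to zero uniformly whenever $Z$ does. This follows from the comparability bounds, which are what allow the expansions to be chained. Finally, the expansion \eqref{eq:amplitude-low-value} is only locally uniform in $a$. This causes no difficulty, since $a$ and $b$ are fixed throughout.
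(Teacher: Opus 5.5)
Your proposal is correct and follows essentially the same route as the paper: each identity comes from the second assertion of Theorem~\ref{thm:asymptotic-uniqueness} with the same comparison functions ($Z=U$, $Z=U+V+W$, $Z=U+V$), the uniform expansions \eqref{eq:amplitude-low-value} and Lemma~\ref{lem:addition-low-value} supply the first-order agreement, and \eqref{eq:cone-basic-bounds} supplies comparability. Your lower constant $1/3$ in the associativity step, instead of the paper's $1/4$, is immaterial, and the zero cases and the inverse/order-preservation conclusion are handled as in the paper.
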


\begin{proof}
We apply the second assertion of Theorem~\ref{thm:asymptotic-uniqueness}
to each pair of entire solutions below. The bounds
\eqref{eq:cone-basic-bounds} make both members of each pair comparable
to the indicated positive function, with constants depending only on
$a$ and $b$.

For the dilation group law, take $Z=U$. By
\eqref{eq:amplitude-low-value},
\[
 D_a(D_bU)=aD_bU+o(D_bU)=abU+o(U),\qquad
 D_{ab}U=abU+o(U)
 \quad\text{as }U\searrow0.
\]
Theorem~\ref{thm:asymptotic-uniqueness} therefore gives
$D_aD_bU=D_{ab}U$. The identity $D_1U=U$ follows directly from
the Cauchy evolution of the entire solution $U$.

For associativity, take $Z=U+V+W$. Since
$\max(U,V)\geq(U+V)/2$, the elementary bounds give
\[
 \tfrac14Z\leq (U\oplus V)\oplus W,\,
 U\oplus(V\oplus W)\leq Z.
\]
Applying Lemma~\ref{lem:addition-low-value} twice yields
\[
 (U\oplus V)\oplus W=Z+o(Z),\qquad
 U\oplus(V\oplus W)=Z+o(Z)
 \quad\text{as }Z\searrow0.
\]
The same theorem proves associativity. Commutativity follows directly
from the definition \eqref{eq:basic-pullbacks}.

For distributivity, take $Z=U+V$. Combining
\eqref{eq:amplitude-low-value} with
Lemma~\ref{lem:addition-low-value}, we obtain
\[
 D_a(U\oplus V)=aZ+o(Z),\qquad
 D_aU\oplus D_aV=aZ+o(Z)
 \quad\text{as }Z\searrow0.
\]
Theorem~\ref{thm:asymptotic-uniqueness} gives
$D_a(U\oplus V)=D_aU\oplus D_aV$.
Finally, taking $Z=U$, the same estimates give
\[
 D_aU\oplus D_bU=(a+b)U+o(U),\qquad
 D_{a+b}U=(a+b)U+o(U)
 \quad\text{as }U\searrow0.
\]
Another application of Theorem~\ref{thm:asymptotic-uniqueness}
proves the last identity.
If any of $U,V,W$ is zero, the same argument applies whenever the
comparison function is nonzero; the remaining cases follow from
$D_a0=0$ and $U\oplus0=U$.
The group law gives $D_{1/a}D_a=D_aD_{1/a}=D_1$, and order preservation
was established in the construction of $D_a$.
\end{proof}

We next prove an order cancellation law: adding the same solution by
nonlinear superposition preserves and reflects the order.

\begin{proposition}\label{prop:cone-cancellation}
For $U,\,V,\,W\in\mathcal C$, it holds
\[
 U\oplus W\leq V\oplus W \quad\Leftrightarrow\quad U\leq V.
\]
In particular, $U\oplus W=V\oplus W$ implies $U=V$.
\end{proposition}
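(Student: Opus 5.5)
The implication $U\leq V\Rightarrow U\oplus W\leq V\oplus W$ is the order preservation of $\oplus$ noted after \eqref{eq:basic-pullbacks}. For the converse I would not compare $U$ and $V$ directly. The first-order expansions only give $U-V\leq o(U+V+W)$, and the error is measured against $W$ rather than $V$, so Theorem~\ref{thm:asymptotic-uniqueness} cannot be applied to the pair $(U,V)$. The plan is to enlarge $V$ by a small multiple of $W$, apply the theorem to the enlarged comparison solution, and then let the enlargement vanish.

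Assume $U\oplus W\leq V\oplus W$ and fix $\varepsilon\in(0,1]$. Set $Y_\varepsilon:=V\oplus D_\varepsilon W\in\mathcal C$; the degenerate cases where one of $U,V,W$ or $Y_\varepsilon$ is zero are handled separately and are trivial. By \eqref{eq:cone-basic-bounds}, \eqref{eq:amplitude-low-value} and Lemma~\ref{lem:addition-low-value}, all holding uniformly,
\[
 Y_\varepsilon\geq\max(V,\varepsilon W)\geq\tfrac{\varepsilon}{2}(V+W),\qquad
 Y_\varepsilon=V+\varepsilon W+o(V+W)\quad\text{as }V+W\searrow0 .
\]
On the other side, $U\leq U\oplus W\leq V\oplus W\leq V+W$. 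Lemma~\ref{lem:addition-low-value} applied to both superpositions gives
\[
 U+W+o(U+W)\leq V+W+o(V+W),
\]
hence $U\leq V+o(V+W)$, using $U\leq V+W$ to absorb $o(U+W)$ into $o(V+W)$.

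Combining these estimates,
\[
 (U-Y_\varepsilon)_+\leq\bigl(-\varepsilon W+o(V+W)\bigr)_+=o(V+W)=o(Y_\varepsilon)\quad\text{as }Y_\varepsilon\searrow0,
\]
since $Y_\varepsilon$ small forces $V+W\leq 2Y_\varepsilon/\varepsilon$ small. Globally, $(U-Y_\varepsilon)_+\leq V+W\leq(2/\varepsilon)Y_\varepsilon$, so the ratio $(U-Y_\varepsilon)_+/Y_\varepsilon$ is bounded. As in the proof of Theorem~\ref{thm:asymptotic-uniqueness}, this produces a continuous $\omega\geq0$ with $\omega(0)=0$ and $(U-Y_\varepsilon)_+\leq Y_\varepsilon\,\omega(Y_\varepsilon)$. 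The first assertion of Theorem~\ref{thm:asymptotic-uniqueness} then gives $U\leq V\oplus D_\varepsilon W$.

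Finally, let $\varepsilon\searrow0$. We have $V\leq V\oplus D_\varepsilon W\leq\min(V+D_\varepsilon W,1)$, and $D_\varepsilon W\to0$ locally uniformly by Lemma~\ref{lem:cone-small-amplitudes}, so $V\oplus D_\varepsilon W\to V$ pointwise and therefore $U\leq V$. The equality statement follows by applying the inequality in both directions. The only delicate point is uniformity of the $o(\cdot)$ terms, so that an error small relative to $V+W$ becomes small relative to $Y_\varepsilon$. This is supplied by the uniformity over $\mathcal C$ in Lemma~\ref{lem:addition-low-value} and Proposition~\ref{prop:inverse-wave}, together with the lower bound $Y_\varepsilon\geq\frac{\varepsilon}{2}(V+W)$.
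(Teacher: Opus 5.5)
Your proof is correct, but it obtains the key inequality $U\leq V\oplus D_\varepsilon W$ by a different mechanism than the paper.

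The paper's argument is purely algebraic. Starting from $U\oplus W\leq V\oplus W$, it replaces one copy of $U$ by $V$ at a time, using order preservation, associativity and commutativity, to reach the $n$-fold inequality $D_nU\oplus W\leq D_nV\oplus W$. It then applies the order-preserving map $D_{1/n}$, with distributivity and the group law of Proposition~\ref{prop:pde-full-dilation}, to get $U\oplus D_{1/n}W\leq V\oplus D_{1/n}W$. It finishes exactly as you do, from $U\leq V+D_{1/n}W$ and Lemma~\ref{lem:cone-small-amplitudes}.

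You bypass those algebraic laws and go back to their analytic source. You apply the first assertion of Theorem~\ref{thm:asymptotic-uniqueness} directly to the pair $(U,V\oplus D_\varepsilon W)$. The paper's route is shorter once Proposition~\ref{prop:pde-full-dilation} is available, and it is the standard cancellation argument in a cone with a divisible scalar action. Yours needs only order preservation, the uniform first-order expansions, and asymptotic uniqueness, and it pinpoints why a perturbation of $V$ is needed.

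Two small remarks.
\begin{itemize}
\item Your argument can be trimmed. The expansion $Y_\varepsilon=V+\varepsilon W+o(V+W)$ is never actually used, since $Y_\varepsilon\geq V$ already gives $(U-Y_\varepsilon)_+\leq(U-V)_+=o(V+W)$. The only role of $D_\varepsilon W$ is the lower bound $Y_\varepsilon\geq\frac{\varepsilon}{2}(V+W)$, which turns $o(V+W)$ into $o(Y_\varepsilon)$.
\item The case $V=0\neq W$ is not trivial on its own, since it reads $U\oplus W=W$. However, your main argument already covers it, because then $Y_\varepsilon=D_\varepsilon W\neq0$. Only $U=0$ or $W=0$ need separate, genuinely trivial, treatment.
\end{itemize}
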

\begin{proof}
The implication from right to left follows from order preservation.
Conversely, assume $U\oplus W\leq V\oplus W$. For integers $n\geq1$
and $0\leq j<n$, associativity, commutativity, and order preservation give
\[
 \underbrace{U\oplus\cdots\oplus U}_{n-j}
 \oplus\underbrace{V\oplus\cdots\oplus V}_{j}\oplus W
 \leq
 \underbrace{U\oplus\cdots\oplus U}_{n-j-1}
 \oplus\underbrace{V\oplus\cdots\oplus V}_{j+1}\oplus W.
\]
Iterating over $j$ yields
\[
 \underbrace{U\oplus\cdots\oplus U}_{n}\oplus W
 \leq\underbrace{V\oplus\cdots\oplus V}_{n}\oplus W.
\]
Applying the order-preserving map $D_{1/n}$ and using
Proposition~\ref{prop:pde-full-dilation},
we obtain
\[
 U\oplus D_{1/n}W\leq V\oplus D_{1/n}W.
\]
The elementary bounds for superposition imply
\[
 U\leq U\oplus D_{1/n}W
 \leq V\oplus D_{1/n}W\leq V+D_{1/n}W.
\]
Letting $n\to\infty$ proves $U\leq V$.
\end{proof}

The dilation group law also makes the amplitude parameter a strict
parametrization of the solution value at each fixed space--time point.

\begin{corollary}\label{cor:cone-normalization}
For an entire solution $U\in (0,1)$ and any $(t,x)$, the map
$a\mapsto D_aU(t,x)$ is strictly increasing from $(0,\infty)$ onto
$(0,1)$.
Also, for $0<a<b$,
\begin{equation}\label{eq:amplitude-lipschitz}
 0\leq D_bU-D_aU
 \leq (b/a-1)D_aU\leq(b-a)/a.
\end{equation}
\end{corollary}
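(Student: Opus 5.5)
The plan is to derive everything from the group and superposition laws of Proposition~\ref{prop:pde-full-dilation}, together with the elementary bounds \eqref{eq:cone-basic-bounds}. Fix $0<a<b$. Write $b=a\cdot(b/a)$ and use $D_bU=D_{b/a}D_aU$. Since $b/a\geq1$ and $D_aU\in\mathcal C$ with $0<D_aU<1$, the bounds for dilation with factor at least one give
\[
 D_aU\leq D_{b/a}D_aU\leq (b/a)D_aU .
\]
This is exactly $0\leq D_bU-D_aU\leq(b/a-1)D_aU$. The final inequality in \eqref{eq:amplitude-lipschitz} then follows from $D_aU<1$, since $(b/a-1)D_aU\leq (b-a)/a$.

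For strict monotonicity, I would use the superposition law $D_bU=D_aU\oplus D_{b-a}U$. Suppose that $D_bU(t_0,x_0)=D_aU(t_0,x_0)$ at some point. Then the nonnegative function $W=D_bU-D_aU$ satisfies a linear parabolic equation
\[
 W_t-\Delta W=c(t,x)W
\]
with bounded coefficient $c$, since it is a difference of two solutions. The strong maximum principle forces $W\equiv0$ for $t\leq t_0$, and backward uniqueness or the Cauchy uniqueness argument used earlier extends this to all times. This means $D_aU\oplus D_{b-a}U=D_aU=D_aU\oplus0$, and cancellation (Proposition~\ref{prop:cone-cancellation}) gives $D_{b-a}U=0$. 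That contradicts $D_{b-a}U\geq\min(b-a,1)\,U>0$, which follows from \eqref{eq:cone-basic-bounds}.

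A simpler route avoids the maximum principle entirely. Cancellation applied pointwise is not directly available, so instead I would apply the strict inequality $\max(U,V)\leq U\oplus V$ together with an argument that $U\oplus V>U$ whenever $V>0$. This holds because $U\oplus V-U\geq0$ solves a linear equation and is not identically zero, by cancellation, so it is strictly positive by the strong maximum principle.

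For surjectivity onto $(0,1)$, I would combine three facts: continuity of $a\mapsto D_aU(t,x)$, which follows from the Lipschitz bound \eqref{eq:amplitude-lipschitz} (or from concavity); the limits $D_aU\to0$ as $a\searrow0$ and $D_aU\to1$ as $a\to\infty$ from Lemma~\ref{lem:cone-small-amplitudes}; and the strict bounds $0<D_aU<1$, which hold since $D_aU\in\mathcal C$ is nonzero. The intermediate value theorem then gives the bijection onto $(0,1)$.

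The only delicate step is strictness. It needs the strong maximum principle applied to a nonnegative, nontrivial difference of entire solutions. Nontriviality comes from cancellation, and strict positivity at every point, including later times, comes from positivity of the linear evolution. I expect this to be routine given the earlier results.
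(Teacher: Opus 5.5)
Your proof is correct. The Lipschitz bound and the surjectivity argument are the same as the paper's: the group law $D_bU=D_{b/a}D_aU$ with $Z\le D_cZ\le cZ$ for $c\ge1$, followed by continuity, the limits of Lemma~\ref{lem:cone-small-amplitudes}, and the intermediate value theorem.

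For strictness you take a different algebraic route after a shared first step. Both arguments first apply the strong maximum principle to the bounded nonnegative difference $D_bU-D_aU$, then use forward Cauchy uniqueness, so that equality at one point becomes $D_aU\equiv D_bU$. Backward uniqueness, which you offer as an alternative, is not what is needed here.

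\begin{itemize}
\item The paper then applies $D_{1/b}$ and the group law to get $U=D_{a/b}U=D_{(a/b)^n}U$, and lets $n\to\infty$ using Lemma~\ref{lem:cone-small-amplitudes}.
\item You instead write $D_bU=D_aU\oplus D_{b-a}U$ and compare with $D_aU\oplus0$. Cancellation (Proposition~\ref{prop:cone-cancellation}) then gives $D_{b-a}U=0$, which contradicts $D_{b-a}U\ge\min\{b-a,1\}U>0$.
\end{itemize}

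The paper's route is more economical: it needs only the multiplicative group law. Your route depends on the additive laws and on cancellation, and the proof of cancellation itself already uses the group law, distributivity, and the same small-amplitude limit $D_{1/n}W\to0$. In exchange, your contradiction comes directly from the elementary lower bound \eqref{eq:cone-basic-bounds}, with no iteration.

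Your paragraph on a route that ``avoids the maximum principle entirely'' should be dropped. It restates the same argument and still invokes the strong maximum principle.
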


\begin{proof}
Concavity and order preservation give continuity and monotonicity in $a$.
Fix $0<a<b$. If
$D_aU(t_0,x_0)=D_bU(t_0,x_0)$, the strong maximum principle and
Cauchy uniqueness give $D_aU\equiv D_bU$. By
Proposition~\ref{prop:pde-full-dilation} and
Lemma~\ref{lem:cone-small-amplitudes},
\[
 U=D_{a/b}U=D_{(a/b)^n}U\rightarrow0
 \qquad\hbox{for }n\to\infty,
\]
contradicting $U>0$. Thus $D_aU(t,x)<D_bU(t,x)$ for every $(t,x)$.

Lemma~\ref{lem:cone-small-amplitudes} gives
\[
 \lim_{a\searrow0}D_aU(t,x)=0,\qquad
 \lim_{a\to\infty}D_aU(t,x)=1.
\]
Together with continuity and strict monotonicity, these limits prove
the assertion. The Lipschitz estimate follows from the group law and
$D_{b/a}Z\leq(b/a)Z$.
\end{proof}

We now show that nonlinear dilation commutes with monotone limits
of entire solutions.

\begin{proposition}\label{prop:cone-monotone-dilation}
Suppose \(U_n\in\mathcal C\) converges pointwise monotonically to an entire solution \(U\in\mathcal C\). Then, for every \(a>0\),
\[
D_aU_n\rightarrow D_aU
\quad\text{in }C^{1,2}_{\mathrm{loc}}(\mathbb R\times\mathbb R^N)
\quad\text{as }n\to\infty.
\]
\end{proposition}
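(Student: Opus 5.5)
By order preservation of $D_a$, the sequence $D_aU_n$ is monotone in $n$, in the same direction as $U_n$. By \eqref{eq:cone-basic-bounds} it is bounded between $0$ and $1$. Interior parabolic estimates therefore give convergence in $C^{1,2}_{\loc}$ of the full sequence to an entire solution $W$ with $0\leq W\leq1$; the limit is unique because the sequence is monotone. The task is to identify $W=D_aU$. If $U=0$, then $D_aU_n\leq\max(1,a)U_n\to0$, so we may assume $0<U<1$. By the group law $D_{1/a}D_a=D_1$, it suffices to treat $0<a\leq1$, or else to handle $a\geq1$ by the symmetric argument below.

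\emph{Increasing case, $U_n\nearrow U$.} Order preservation gives $D_aU_n\leq D_aU$, hence $W\leq D_aU$. For the reverse inequality I would use the pullback characterization. Take $a\leq1$. For fixed $T$, Cauchy stability \eqref{eq:cone-stability} together with dominated convergence in the Gaussian integral gives $S_{t+T}[aU_n(-T)]\to S_{t+T}[aU(-T)]$ pointwise as $n\to\infty$. Since the pullbacks increase in $T$ for $a\leq1$, we have $S_{t+T}[aU_n(-T)]\leq D_aU_n\leq W$. Letting $n\to\infty$ gives $S_{t+T}[aU(-T)]\leq W$, and then letting $T\to\infty$ gives $D_aU\leq W$. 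For $a\geq1$ the pullbacks decrease in $T$, so this argument fails. There I would write $D_aU_n=D_{1/b}\bigl(D_{ab}U_n\bigr)$ with $ab\leq1$, which does not help directly. The cleaner route is to use the asymptotic uniqueness argument described next, which works for either monotonicity direction.

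\emph{General route via Theorem~\ref{thm:asymptotic-uniqueness}.} I would show that $W$ and $D_aU$ are both comparable to $U$ and agree to first order as $U\searrow0$. Comparability: $W$ and $D_aU$ lie between $\min(a,1)\,U$-type and $\max(a,1)\,U$-type bounds. For $W$ this follows from \eqref{eq:cone-basic-bounds} applied to $U_n$ in the limit, which gives $\min(a,1)U\leq W\leq\max(a,1)U$. First order: by \eqref{eq:amplitude-low-value}, which is uniform over all entire solutions, we have $|D_aU_n-aU_n|\leq U_n\,\omega_a(U_n)$ with a modulus $\omega_a$ independent of $n$. Passing to the limit gives $|W-aU|\leq U\omega_a(U)$, and likewise $|D_aU-aU|\leq U\omega_a(U)$. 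Hence $|W-D_aU|=o(U)$ uniformly as $U\searrow0$, and the second assertion of Theorem~\ref{thm:asymptotic-uniqueness} with $Z=U$ yields $W=D_aU$. This also disposes of the decreasing case.

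\emph{Main obstacle.} The delicate point is the uniformity in \eqref{eq:amplitude-low-value}: the modulus must be independent of the solution, so that it survives the limit $n\to\infty$. It must also be applied at points where $U$ is small, while $U_n$ may be smaller still or larger. I would handle this by taking $\omega_a$ nondecreasing and continuous with $\omega_a(0)=0$, obtained by taking the supremum in the uniform statement. The bound $|D_aU_n-aU_n|\leq U_n\omega_a(U_n)$ then passes to the pointwise limit, since both sides converge.
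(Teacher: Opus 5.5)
Your argument is correct, but you identify the limit by a different mechanism than the paper. The paper uses monotonicity to sandwich the limit $V$; for instance, when $U_n\searrow U$ it has $D_aU\le V\le D_aU_n$. It then checks $V\in\mathcal C$ and applies the order-preserving inverse $D_{1/a}$ together with the group law of Proposition~\ref{prop:pde-full-dilation}. This gives $U\le D_{1/a}V\le U_n$, hence $D_{1/a}V=U$ and $V=D_aU$, a short and purely order-algebraic argument.

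You bypass the group law and return to the mechanism by which it was proved. The solution-independent expansion \eqref{eq:amplitude-low-value} passes to the limit, so both $W$ and $D_aU$ equal $aU+o(U)$ and are comparable to $U$. Theorem~\ref{thm:asymptotic-uniqueness} with $Z=U$ then forces $W=D_aU$.

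This costs some bookkeeping you left implicit.
\begin{enumerate}
\item The supremum modulus is nondecreasing and satisfies $\omega_a\le|1-a|$ by \eqref{eq:cone-basic-bounds}, but it need not be continuous. You should replace it by a continuous majorant vanishing at $0$, or pass to the limit using its right limits.
\item Before invoking Theorem~\ref{thm:asymptotic-uniqueness} you must check $W<1$. This follows from $W\le D_aU_1$ or $W\le D_aU$. Alternatively, use the strong maximum principle together with $W\le\max(a,1)U$ and the past decay of $U$.
\end{enumerate}

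In exchange, your identification step never uses monotonicity. Combined with subsequence compactness, your argument shows $D_aU_n\to D_aU$ whenever $U_n\to U$ pointwise in $\mathcal C$. This upgrades the one-sided dilation inequalities of Lemma~\ref{lem:cone-semicontinuity} to equality.

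Your preliminary pullback argument for the increasing case with $a\le1$ is correct but unnecessary once the general route is in place.
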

\begin{proof}
Fix $a>0$. Order preservation and $0\leq D_aU_n\leq1$ give a
pointwise monotone limit $V=\lim_{n\to\infty}D_aU_n$.
By interior parabolic compactness,
$V\in\mathcal E$ and
\[
 D_aU_n\rightarrow V
 \quad\text{in }C^{1,2}_{\mathrm{loc}}(\R\times\R^N).
\]
For a decreasing sequence $V\leq D_aU_1\in\mathcal C$, and for an
increasing sequence $V\leq D_aU\in\mathcal C$; hence $V\in\mathcal C$.
It remains to identify $V$. If $U_n\searrow U$, then
\[
 D_aU\leq V\leq D_aU_n.
\]
Applying the order-preserving map $D_{1/a}$ and using
Proposition~\ref{prop:pde-full-dilation} gives
\[
 U\leq D_{1/a}V\leq U_n.
\]
Letting $n\to\infty$ yields $D_{1/a}V=U$.
If $U_n\nearrow U$, the same argument gives
\[
 D_aU_n\leq V\leq D_aU
 \quad\Rightarrow\quad
 U_n\leq D_{1/a}V\leq U,
\]
and again $D_{1/a}V=U$ as $n\to\infty$.
In both cases, Proposition~\ref{prop:pde-full-dilation} yields $
 V=D_a(D_{1/a}V)=D_aU$,
which proves the assertion for every $a>0$.
\end{proof}

We next extend monotone continuity from dilation to superposition
for decreasing sequences.

\begin{lemma}
\label{lem:cone-decreasing-superposition}
If $U_n\searrow U$ and $V_n\searrow V$ in $\mathcal C$, then
\[
 U_n\oplus V_n\searrow U\oplus V
 \quad\text{in }C^{1,2}_{\mathrm{loc}}(\R\times\R^N)
 \quad\text{as }n\to\infty.
\]
\end{lemma}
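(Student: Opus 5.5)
Since $U_n\oplus V_n$ is order preserving in each argument, the sequence $W_n:=U_n\oplus V_n$ is pointwise nonincreasing and bounded below by $U\oplus V$. Hence it has a pointwise limit $W\geq U\oplus V$. Interior parabolic compactness upgrades this convergence to $C^{1,2}_{\loc}(\R\times\R^N)$ and shows that $W\in\mathcal E$. Because $W\leq W_1\in\mathcal C$, and either $W\geq U\oplus V>0$ or $U\oplus V=0$, we get $W\in\mathcal C$. The whole task is therefore to prove the reverse inequality $W\leq U\oplus V$.

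The plan is to use the characterization that $U\oplus V$ is the greatest entire solution below the ordinary sum $U+V$, which was noted after \eqref{eq:cone-basic-bounds}. It suffices to show $W\leq U+V$. By \eqref{eq:cone-basic-bounds}, $W\leq W_n\leq U_n+V_n$ for every $n$. Letting $n\to\infty$ pointwise gives $W\leq U+V$. Then $W$ is an entire solution in $[0,1]$ with $W\leq U+V$.

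It remains to justify maximality. At each time $-T$ we have $W(-T)\leq U(-T)+V(-T)$. Comparison for the Cauchy problem then gives
\[
 W(t)=S_{t+T}[W(-T)]\leq S_{t+T}[U(-T)+V(-T)].
\]
Letting $T\to\infty$ and using definition \eqref{eq:basic-pullbacks} yields $W\leq U\oplus V$. Together with the first paragraph this gives $W=U\oplus V$, and the convergence $U_n\oplus V_n\searrow U\oplus V$ holds in $C^{1,2}_{\loc}$.

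No real obstacle is expected. The only points needing care are that the pullback definition of $\oplus$ uses the true sum $U(-T)+V(-T)$ as initial datum, so that the comparison step applies exactly, and the degenerate cases in which $U$ or $V$ vanishes. Those cases are covered by the same argument with the convention $U\oplus0=U$.
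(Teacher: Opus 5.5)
Your proof is correct, but it identifies the limit by a different route from the paper's. The paper fixes $(t,x)$ and notes that $\bigl(S_{t+T}[U_n(-T,\cdot)+V_n(-T,\cdot)]\bigr)(x)$ is nonincreasing in both $n$ and $T$, so the two infima commute. It then uses finite-time continuity of the semigroup (from \eqref{eq:cone-stability} and dominated convergence) to replace $U_n+V_n$ by $U+V$ at each fixed $T$, before letting $T\to\infty$.

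You instead sandwich the monotone limit $W$. Order preservation gives $W\geq U\oplus V$. The bound $W\leq U_n+V_n$ from \eqref{eq:cone-basic-bounds} passes to $W\leq U+V$. The greatest-minorant property of $\oplus$ (comparison from time $-T$ against the datum $U(-T)+V(-T)$ in \eqref{eq:basic-pullbacks}) then gives $W\leq U\oplus V$.

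Your route uses only comparison. It needs neither continuity of $S_\tau$ in the initial data nor an interchange of limits. It also shows why the decreasing case is easy: the nontrivial inequality is exactly the one that maximality supplies. For increasing sequences the same argument only reproduces the trivial upper bound, which is why Corollary~\ref{cor:cone-increasing-superposition} needs subtraction and cancellation. The paper's limit-interchange argument, by contrast, is the same mechanism as in Lemma~\ref{lem:cone-semicontinuity}, and it does not rely on the extremal characterization of $\oplus$.
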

\begin{proof}
For fixed $(t,x)$, the quantity
$\bigl(S_{t+T}[U_n(-T,\cdot)+V_n(-T,\cdot)]\bigr)(x)$ is nonincreasing in both $n$ and
$T>\max\{0,-t\}$. Thus the two monotone limits commute, and
finite-time semigroup continuity gives
\begin{align*}
 \lim_{n\to\infty}(U_n\oplus V_n)(t,x)
 &=\lim_{n\to\infty}\lim_{T\to\infty}
 \bigl(S_{t+T}[U_n(-T,\cdot)+V_n(-T,\cdot)]\bigr)(x)\\
 &=\lim_{T\to\infty}\lim_{n\to\infty}
 \bigl(S_{t+T}[U_n(-T,\cdot)+V_n(-T,\cdot)]\bigr)(x)\\
 &=\lim_{T\to\infty}\bigl(S_{t+T}[U(-T,\cdot)+V(-T,\cdot)]\bigr)(x)\\
 &=(U\oplus V)(t,x).
\end{align*}
Parabolic compactness gives convergence in
$C^{1,2}_{\mathrm{loc}}(\R\times\R^N)$.
\end{proof}

For general locally convergent sequences, we record the one-sided
limit inequalities needed in the construction of subtraction.

\begin{lemma}
\label{lem:cone-semicontinuity}
Suppose $U_n,V_n\in\mathcal C$ converge locally uniformly to $U,V\in\mathcal C$.
At each $(t,x)$,
\[
 D_aU\leq\liminf_nD_aU_n\quad\hbox{for }0<a<1,\qquad
 D_aU\geq\limsup_nD_aU_n\quad\hbox{for }a>1,
\]
\[
 U\oplus V\geq\limsup_n(U_n\oplus V_n).
\]
\end{lemma}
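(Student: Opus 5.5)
The plan is to exploit the monotone structure of the pullback definitions \eqref{eq:basic-pullbacks} and interchange limits only in the favourable direction. Fix $(t,x)$ and $T>\max\{0,-t\}$. For $0<a<1$ the pullback sequence $T\mapsto S_{t+T}[aU(-T)](x)$ is nondecreasing. Hence
\[
 D_aU_n(t,x)\geq \bigl(S_{t+T}[aU_n(-T,\cdot)]\bigr)(x)
\]
for every fixed $T$. For $a>1$, and for the superposition, the pullback sequences are nonincreasing. This gives
\[
 D_aU_n(t,x)\leq \bigl(S_{t+T}[aU_n(-T,\cdot)]\bigr)(x),\qquad
 (U_n\oplus V_n)(t,x)\leq \bigl(S_{t+T}[U_n(-T,\cdot)+V_n(-T,\cdot)]\bigr)(x).
\]
So each finite-$T$ pullback is a one-sided bound for the limit object, valid uniformly in $n$.

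Next I let $n\to\infty$ with $T$ fixed. The initial data $aU_n(-T,\cdot)$ and $U_n(-T,\cdot)+V_n(-T,\cdot)$ are uniformly bounded by $\max\{a,2\}$. They converge locally uniformly on $\R^N$ to $aU(-T,\cdot)$ and $U(-T,\cdot)+V(-T,\cdot)$, respectively. By the continuity statement at the end of the proof of Lemma~\ref{lem:pde-semigroup-inequalities}, the finite-time semigroup is continuous for such data at positive elapsed time $t+T>0$. The estimate \eqref{eq:cone-stability} gives this explicitly: $|S_{t+T}[h_n]-S_{t+T}[h]|\leq e^{f'(0)(t+T)}e^{(t+T)\Delta}|h_n-h|$, and the right side tends to zero at $x$ by dominated convergence in the Gaussian integral. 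Therefore, for $0<a<1$,
\[
 \liminf_n D_aU_n(t,x)\geq \bigl(S_{t+T}[aU(-T,\cdot)]\bigr)(x),
\]
and the analogous $\limsup$ upper bounds hold for $a>1$ and for $\oplus$.

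Finally I let $T\to\infty$ on the right-hand sides. By definition \eqref{eq:basic-pullbacks} they converge to $D_aU(t,x)$ and $(U\oplus V)(t,x)$, which yields the three inequalities.

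The only point needing care is the finite-time continuity step. Local uniform convergence alone does not give uniform convergence of the data on $\R^N$, so the proof must use the uniform $L^\infty$ bound on the data together with Gaussian decay to control the tails. Beyond that there is no obstacle. The statement is one-sided precisely because the monotonicity direction of the pullback lets us bound the limit by a finite-$T$ approximant only from one side.
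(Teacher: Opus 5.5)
Your proposal is correct and follows essentially the same route as the paper. The paper also bounds $D_aU_n$ and $U_n\oplus V_n$ from one side by the finite-$T$ pullback using monotonicity in $T$, lets $n\to\infty$ by finite-time semigroup continuity (the Gaussian estimate \eqref{eq:cone-stability} with dominated convergence, as you spell out), and then lets $T\to\infty$.
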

\begin{proof}
Fix $(t,x)$ and $T>\max\{0,-t\}$. Finite-time semigroup continuity
and monotonicity give
\begin{align*}
 \liminf_{n\to\infty}(D_aU_n)(t,x)
 &\geq\lim_{n\to\infty}\bigl(S_{t+T}[aU_n(-T,\cdot)]\bigr)(x)\\
 &=\bigl(S_{t+T}[aU(-T,\cdot)]\bigr)(x)
 \qquad\hbox{for }0<a<1,\\
 \limsup_{n\to\infty}(D_aU_n)(t,x)
 &\leq\lim_{n\to\infty}\bigl(S_{t+T}[aU_n(-T,\cdot)]\bigr)(x)\\
 &=\bigl(S_{t+T}[aU(-T,\cdot)]\bigr)(x)
 \qquad\hbox{for }a>1,\\
 \limsup_{n\to\infty}(U_n\oplus V_n)(t,x)
 &\leq\lim_{n\to\infty}\bigl(S_{t+T}[U_n(-T,\cdot)+V_n(-T,\cdot)]\bigr)(x)\\
 &=\bigl(S_{t+T}[U(-T,\cdot)+V(-T,\cdot)]\bigr)(x).
\end{align*}
Letting $T\to\infty$ in these three inequalities proves the assertions.
\end{proof}

The cancellation law gives uniqueness of a nonlinear difference.
The following theorem establishes its existence whenever the two
solutions are ordered.

\begin{theorem}\label{thm:cone-subtraction}
If $U,V\in\mathcal C$ and $U\leq V$, there is a unique $W\in\mathcal C$
with $U\oplus W=V$. It is the increasing pullback from $V-U$.
\end{theorem}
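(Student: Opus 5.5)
Uniqueness is immediate from Proposition~\ref{prop:cone-cancellation}: if $U\oplus W=V=U\oplus W'$, then $W=W'$. It remains to prove existence. We also need the identification of $W$ as the increasing pullback from $V-U$.

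\emph{Step 1: monotonicity of the pullback.} Set $W_T(t):=S_{t+T}[V(-T)-U(-T)]$ for $t>-T$. The datum $V-U$ is nonnegative and bounded. For $T_2>T_1$, evolve from $-T_2$ to $-T_1$. Since $V=S[V]$ and $U=S[U]$ over this interval, the subadditivity $S_\tau[h+k]\le S_\tau[h]+S_\tau[k]$ of Lemma~\ref{lem:pde-semigroup-inequalities}, applied with $h=U(-T_2)$ and $k=(V-U)(-T_2)$, gives
\[
 V(-T_1)\le U(-T_1)+S_{T_2-T_1}[(V-U)(-T_2)].
\]
Thus $W_{T_2}(-T_1)\ge (V-U)(-T_1)$. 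Comparison then gives $W_{T_2}\ge W_{T_1}$ for $t>-T_1$, so the pullback increases in $T$. It is bounded by $1$. Interior estimates give an entire limit $W\in\mathcal E$ with $C^{1,2}_{\loc}$ convergence, and $W\ge V-U\ge0$. The same subadditivity applied from $-T$ to $t$ gives $V\le U+W_T\le U+W$. Since $W\le V<1$ (the pullback of $V-U\le V$ is at most $V$), either $W\equiv0$ (then $V=U$) or $W\in\mathcal C$ by the strong maximum principle.

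\emph{Step 2: $U\oplus W\ge V$.} The sum satisfies $V\le U+W$. Recall that $U\oplus W$ is the greatest entire solution below $U+W$. Since $V$ is an entire solution below $U+W$, we get $V\le U\oplus W$.

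\emph{Step 3: $U\oplus W\le V$.} This is the main obstacle, since $W$ is defined only through a limit. I plan to use Theorem~\ref{thm:asymptotic-uniqueness} with $X=U\oplus W$ and $Y=V$, which requires
\[
 (U\oplus W-V)_+=o(V)\quad\text{uniformly as }V\searrow0.
\]
By Lemma~\ref{lem:addition-low-value}, $U\oplus W=U+W+o(U+W)$, and $U+W\le 2V$. It therefore suffices to show $W\le V-U+o(V)$ as $V\searrow0$. To get this, I would use the concavity estimate \eqref{eq:cone-stability} in reverse. Concavity makes $V-U$ a supersolution where $V\ge U$, because $f(V)-f(U)\ge f(V-U)$ fails only in the wrong direction. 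More carefully, $(\partial_t-\Delta)(V-U)=f(V)-f(U)\ge f'(V)(V-U)$. Each $W_T$, started from the same datum and with $f(W_T)\le f'(0)W_T$, is then controlled by the linearized evolution. This leads to comparing $W_T-(V-U)$ through $P_V$ with a source of size $(f'(0)-f'(V))W_T\le V\,\omega(V)$, and then averaging in time with Lemma~\ref{lem:cesaro-error} as in the proof of Theorem~\ref{thm:asymptotic-uniqueness}.

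If that bookkeeping fails, the fallback is a Kato-type argument applied directly to $(U\oplus W-V)_+$. It satisfies
\[
 (\partial_t-\Delta)(U\oplus W-V)_+\le f'(V)(U\oplus W-V)_+,
\]
with pullback data at $-T$ bounded by $(U\oplus W-U-W)_++(U+W-V)(-T)$. The second term would be handled by showing $W(-T)-(V-U)(-T)\to0$ relative to $V$ using Step 1's monotone construction. Once Step 3 holds, $U\oplus W=V$, which completes the proof.
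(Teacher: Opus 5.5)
Your uniqueness argument and Steps 1--2 are correct. They are exactly the last paragraph of the paper's proof: for the increasing pullback $\widetilde W$ of $V-U$ you have $V-U\le\widetilde W\le V$, and therefore $V\le U\oplus\widetilde W$. The remaining inequality $U\oplus\widetilde W\le V$ in Step 3, however, is the whole existence content of the theorem, and neither of your routes proves it.

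In your main route, $d_T:=W_T-(V-U)\ge0$ satisfies $(\partial_t-\Delta-f'(V))d_T\le(f'(0)-f'(V))W_T$ with $d_T(-T)=0$. All this yields is the Duhamel bound
\[
 d_T(t)\le\int_{-T}^{t}P_V(t,s)\bigl[(f'(0)-f'(V))W_T\bigr](s)\,ds
 \le\int_{-T}^{t}P_V(t,s)\bigl[V\omega(V)\bigr](s)\,ds,\qquad \omega(v)=v\max_{[0,1]}|f''|.
\]
This is a full time integral, not an average. Lemma~\ref{lem:cesaro-error} only tells you that it is $o(T)$. It gives no bound uniform in $T$, let alone $o(V(t))$ uniformly as $V(t)\searrow0$.

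The averaging in Theorem~\ref{thm:asymptotic-uniqueness} works only because $(X-Y)_+$ satisfies a source-free inequality. That gives $(X-Y)_+(t)\le P_Y(t,s)[(X-Y)_+(s)]$ for each single $s$, and these bounds can then be averaged. With a source term this mechanism is lost.

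Your fallback is circular. Bounding the data of $(U\oplus\widetilde W-V)_+$ by $(U+\widetilde W-V)(-T)$ requires precisely $\widetilde W=V-U+o(V)$, which is the claim to be proved.

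For nice reactions the main route could be rescued by an idea that is not in your proposal. Suppose $u(f'(0)-f'(u))\le CF_2(u)$ on $(0,1]$, as happens for instance when $f''(0)<0$. Then the exact identity $\int_{-T}^tP_V(t,s)[F_2(V(s))]\,ds=V(t)-P_V(t,-T)[V(-T)]$, combined with \eqref{eq:trace-generator-equations} and the lower bound in \eqref{eq:amplitude-C1-bounds}, gives $d_T\le C\,V\,E(V)=o(V)$, where $E(u)=1-q(u)/(\lambda_*u)$ and $q$ is the critical slope function. But this ratio bound fails for some admissible concave $C^2$ reactions, for example when $-f''$ is a sum of small bumps centred at $2^{-2^n}$.

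The paper never estimates $\widetilde W$ directly. Instead it builds the difference from the supersolution side. The function $F_b=(1+b)V-bU$ is a supersolution, and its decreasing pullback $A_b$ satisfies $A_b+D_bU\ge D_{1+b}V$ by subadditivity. Hence
\[
 0\le F_b-A_b\le(1+b)V-D_{1+b}V+D_bU-bU=o(V)
\]
by the uniform dilation estimate \eqref{eq:amplitude-low-value}. Only dilations enter here, and for dilations uniform small-value control is already available.

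Asymptotic uniqueness then gives $D_sU\oplus A_b=D_{s-b}U\oplus D_{1+b}V$. Letting $b\nearrow s$ gives $D_sU\oplus A_s=D_{1+s}V$, and normalizing gives $D_{s/(1+s)}U\oplus W_s=V$ with $W_s=D_{1/(1+s)}A_s$. Letting $s\to\infty$, using cancellation, decreasing continuity of $\oplus$, and a final $a\nearrow1$ argument, produces $W$ with $U\oplus W=V$.

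Only after that is the increasing pullback identified. From $V-U\le W$, minimality gives $\widetilde W\le W$. Hence $V\le U\oplus\widetilde W\le U\oplus W=V$, and cancellation yields $\widetilde W=W$. So the inequality you need in Step 3 should come from comparison with an already constructed difference $W$, not from an asymptotic estimate on $\widetilde W$ itself.
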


\begin{proof}
The cases $U=0$ and $U=V$ are immediate. Otherwise $0<U<V$
by the strong maximum principle.
For $b>0$ the ordinary function $F_b=(1+b)V-bU$ is a bounded
supersolution: write $V=(F_b+bU)/(1+b)$ and apply concavity.
Its decreasing pullback $A_b$ satisfies
\[
 V\leq A_b\leq F_b\leq(1+b)V,\qquad
 A_b+D_bU\geq D_{1+b}V.
\]
The last inequality follows from subadditivity since
$F_b+bU=(1+b)V$. Therefore
\[
 0\leq F_b-A_b
 \leq (1+b)V-D_{1+b}V+D_bU-bU=o(V)
 \quad(V\searrow0).
\]
For $b\geq1$ the last term is nonpositive; for $b<1$ use
$U\leq V$ and the uniform amplitude error modulus.

Fix $s>b$. The solutions
$D_sU\oplus A_b$ and $D_{s-b}U\oplus D_{1+b}V$
are bounded between $V$ and a fixed multiple of $V$, and both equal
$(s-b)U+(1+b)V+o(V)$ as $V\searrow0$.
Asymptotic uniqueness gives
\begin{equation}\label{eq:affine-cancellation}
 D_sU\oplus A_b=D_{s-b}U\oplus D_{1+b}V.
\end{equation}
To let $b\nearrow s$, note that for $b<c$,
\[
 F_b\leq F_c\leq(1+c-b)F_b,\qquad
 A_b\leq A_c\leq(1+c-b)A_b.
\]
The last inequalities follow by Cauchy comparison and scaling.
They also hold for $D_sU\oplus A_b$, with the same multiplicative
upper factor, by applying comparison to its initial sums.
Thus its limit as $b\nearrow s$ is $D_sU\oplus A_s$.
The right side of \eqref{eq:affine-cancellation} tends to
$D_{1+s}V$, squeezed between that term with parameter $1+b$ and its
ordinary sum with $D_{s-b}U$. Hence
\[
 D_sU\oplus A_s=D_{1+s}V.
\]
Put $\alpha_s=s/(1+s)$ and $W_s=D_{1/(1+s)}A_s$.
Then $D_{\alpha_s}U\oplus W_s=V$.
Cancellation shows that $W_s$ decreases as $s$ increases.
Its limit $W\leq V$ belongs to $\mathcal C$.
Since $V\leq U\oplus W_s$, decreasing continuity gives
$V\leq U\oplus W$. For fixed $a<1$ and sufficiently large $s$,
$D_aU\oplus W\leq V$ and therefore $D_a(U\oplus W)\leq V$.
Let $a\nearrow1$ to obtain the opposite inequality.

Finally $q=V-U$ is a subsolution, since
$f(V)-f(U)\leq f(V-U)$. Its increasing pullback $\widetilde W$ is its least
entire majorant, with $q\leq \widetilde W\leq V$.
From $V=U\oplus W\leq U+W$ one has $q\leq W$, so $\widetilde W\leq W$.
Also $V\leq U+\widetilde W$, and the greatest-minorant property of $\oplus$
gives $V\leq U\oplus \widetilde W\leq U\oplus W=V$.
Cancellation yields $\widetilde W=W$, and uniqueness follows likewise.
\end{proof}

Nonlinear subtraction allows us to pass to increasing limits in
superposition, as required for the decompositions below.

\begin{corollary}\label{cor:cone-increasing-superposition}
If $U_n\nearrow U$ in $\mathcal C$ and $V\in\mathcal C$, then
\[
 U_n\oplus V\nearrow U\oplus V
 \quad\text{in }C^{1,2}_{\mathrm{loc}}(\R\times\R^N)
 \quad\text{as }n\to\infty.
\]
\end{corollary}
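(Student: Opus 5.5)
The plan is to use subtraction (Theorem~\ref{thm:cone-subtraction}) to turn an increasing limit into a decreasing one, and then apply Lemma~\ref{lem:cone-decreasing-superposition}. Monotonicity is immediate: $U_n\leq U_{n+1}$ and order preservation of $\oplus$ give $U_n\oplus V\leq U_{n+1}\oplus V\leq U\oplus V$. So $U_n\oplus V$ increases pointwise to some $Y\leq U\oplus V$. Interior parabolic compactness makes $Y\in\mathcal E$ and upgrades the convergence to $C^{1,2}_{\loc}$. Moreover $Y\in\mathcal C$, since $Y\leq U\oplus V$ and we may discard the trivial case $U=0$. It therefore only remains to show $Y\geq U\oplus V$.

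Since $U_n\leq U$, Theorem~\ref{thm:cone-subtraction} supplies $W_n\in\mathcal C$ with $U_n\oplus W_n=U$. By cancellation (Proposition~\ref{prop:cone-cancellation}) the $W_n$ decrease. Indeed, $U_n\oplus W_n=U=U_{n+1}\oplus W_{n+1}$, and writing $U_{n+1}=U_n\oplus R_n$ gives $U_n\oplus W_n=U_n\oplus(R_n\oplus W_{n+1})$. Hence $W_n=R_n\oplus W_{n+1}\geq W_{n+1}$. Let $W=\lim W_n\in\mathcal C$. I claim $W=0$.

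To see this, use $U=U_n\oplus W_n\geq U_m\oplus W_n$ for $m\leq n$. Letting $n\to\infty$ with Lemma~\ref{lem:cone-decreasing-superposition} (constant sequence $U_m$) gives $U\geq U_m\oplus W$. Then $U_m\leq U_m\oplus W\leq U$, and letting $m\to\infty$ gives $U_m\oplus W\to U$.

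Showing $W=0$ from this is the main obstacle, because we need continuity of $\oplus$ along increasing sequences, which is exactly what is being proved. I would instead argue directly from the asymptotics. We have $U\oplus W\geq U_m\oplus W$, while $U\oplus W\geq U$ by the basic bounds. The difference is controlled by Lemma~\ref{lem:addition-low-value}: near zero, $U_m\oplus W\approx U_m+W$, so $U_m+W\lesssim U(1+o(1))$ for every $m$, and passing $m\to\infty$ pointwise suggests $U+W\leq U+o(U)$ on small-amplitude regions. To make this rigorous, note that $U\oplus W$ and $U$ both lie between $U$ and $2U$ up to the bound $W\leq U$, which comes from $W_n\leq U$. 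Then I would try Theorem~\ref{thm:asymptotic-uniqueness} to conclude $U\oplus W\leq U$, i.e.\ $U\oplus W=U=U\oplus0$, and cancellation gives $W=0$. The delicate point is the uniformity of the $o(U)$ error in $m$. If it fails, I would fall back on the limsup semicontinuity of Lemma~\ref{lem:cone-semicontinuity} combined with a dilation trick: compare $D_aU\leq U_m$ on small regions for $a<1$, using the uniform asymptotics of Proposition~\ref{prop:inverse-wave}.

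Granting $W=0$, the conclusion follows. Associativity and commutativity give
\[
 U\oplus V=(U_n\oplus W_n)\oplus V=(U_n\oplus V)\oplus W_n\leq Y\oplus W_n.
\]
Letting $n\to\infty$ with Lemma~\ref{lem:cone-decreasing-superposition} yields $U\oplus V\leq Y\oplus W=Y$. Together with $Y\leq U\oplus V$ this identifies the limit, and the $C^{1,2}_{\loc}$ convergence follows from compactness.
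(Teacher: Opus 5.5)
You correctly set up the reduction: the monotonicity of $U_n\oplus V$, the construction of $W_n$, their decrease by cancellation, the bound $U\ge U_m\oplus W$, and the final step granting $W=0$. But $W=0$ is the only nontrivial step, and you leave it unproved. You sketch an asymptotic argument, say its uniformity in $m$ might fail, and offer a fallback that is not worked out. Note also that comparisons such as $D_aU\le U_m$ holding only on a subregion say nothing about the order of $E$, which is global pointwise order. As submitted, the proof has a hole exactly there.

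Your worry about uniformity is unfounded, and your first idea can be completed. Lemma~\ref{lem:addition-low-value} is uniform over all pairs in $\mathcal C$. Take a nondecreasing universal modulus $\epsilon$ and use $U_m+W\le 2U$ (recall $W\le W_1\le U$). Then $U\ge U_m\oplus W\ge (U_m+W)(1-\epsilon(2U))$. Letting $m\to\infty$ pointwise gives $W\le U\epsilon(2U)/(1-\epsilon(2U))$ wherever $\epsilon(2U)<1$, with no dependence on $m$. Hence $(U\oplus W-U)_+\le W=o(U)$, and Theorem~\ref{thm:asymptotic-uniqueness} gives $U\oplus W\le U$. So $U\oplus W=U\oplus 0$, and Proposition~\ref{prop:cone-cancellation} yields $W=0$.

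There is a much cheaper closure that uses only the fact that the order on $\mathcal C$ is pointwise order. Since $W\le U$, Theorem~\ref{thm:cone-subtraction} gives $Q\in\mathcal C$ with $W\oplus Q=U$. From $U_m\oplus W\le U=Q\oplus W$, cancellation gives $U_m\le Q$ pointwise for every $m$, hence $U\le Q$. Then $U\le U\oplus W\le Q\oplus W=U$, so $W=0$.

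Once you see this, the whole $W_n$ detour, including the use of Lemma~\ref{lem:cone-decreasing-superposition}, becomes unnecessary. This is the paper's proof. Since $Y\ge V$, apply subtraction directly to the limit and write $Y=V\oplus X$. Cancellation turns $U_n\oplus V\le V\oplus X$ into $U_n\le X$, the pointwise limit gives $U\le X$, and hence $U\oplus V\le Y$. The idea you were missing is that an increasing pointwise limit passes through the order trivially once the inequality is put in cancelled form, $U_n\le(\text{fixed solution})$. No continuity of $\oplus$ along increasing sequences is needed, so there is no circularity to avoid.
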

\begin{proof}
Since $U_n\oplus V$ is increasing in $n$ and bounded by $1$,
interior parabolic estimates give an entire solution $Z$ such that
\[
 U_n\oplus V\rightarrow Z
 \qquad\hbox{in }C^{1,2}_{\mathrm{loc}}(\R\times\R^N)
 \quad\hbox{as }n\to\infty.
\]
 Since $0\leq Z\leq U\oplus V\in\mathcal C$, it belongs
to $\mathcal C$. Since $Z\geq V$, Theorem~\ref{thm:cone-subtraction}
gives a unique $X\in\mathcal C$ such that $Z=V\oplus X$. Order cancellation yields $X\geq U_n$
for every $n$, so $X\geq U$. Thus $Z\geq U\oplus V$, and hence
$Z=U\oplus V$, which proves the assertion.
\end{proof}

To relate the ordered-space operations to ordinary values of
solutions, we compare nonlinear convex combinations with their
pointwise averages.

\begin{lemma}
\label{lem:faithfulness-evaluation}
For $Z_1,\cdots,Z_m\in\mathcal C$ and positive numbers
$\theta_j$ with $\sum_j\theta_j=1$,
\begin{equation}\label{eq:faithfulness-physical-jensen}
 \left(\bigoplus_{j=1}^m D_{\theta_j}Z_j\right)(t,x)
\geq\sum_{j=1}^m\theta_j Z_j(t,x).
\end{equation}
\end{lemma}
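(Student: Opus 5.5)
The idea is to identify the left side with a single pullback limit. Then the Jensen-type inequality \eqref{sub-concavity} of Lemma~\ref{lem:pde-semigroup-inequalities} applies at each finite pullback time, and we pass to the limit $T\to\infty$. We may assume every $Z_j$ is nonzero, since zero terms contribute nothing to either side.

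\emph{Step 1: a pullback formula for the left side.} Write $W=\bigoplus_{j}D_{\theta_j}Z_j$. First I would check that
\[
 W(t)=\lim_{T\to\infty}S_{t+T}\Bigl[\textstyle\sum_j D_{\theta_j}Z_j(-T)\Bigr],
\]
which extends the definition \eqref{eq:basic-pullbacks} from two summands to $m$ summands. The right side is a decreasing pullback, by subadditivity. By comparison it is the greatest entire solution below $\sum_jD_{\theta_j}Z_j$. Iterating the two-term definition together with associativity (Proposition~\ref{prop:pde-full-dilation}) identifies it with $W$. Alternatively, one can invoke Theorem~\ref{thm:asymptotic-uniqueness}: both solutions lie between $\tfrac1m\sum_j\theta_jZ_j$-type multiples of $Z=\sum_j Z_j$, and both equal $\sum_j\theta_jZ_j+o(Z)$, by Lemma~\ref{lem:addition-low-value} and \eqref{eq:amplitude-low-value}.

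\emph{Step 2: compare initial data.} By \eqref{eq:cone-basic-bounds}, $D_{\theta_j}Z_j\ge\theta_jZ_j$ because $\theta_j\le1$. Monotonicity of $S$ therefore gives
\[
 S_{t+T}\Bigl[\textstyle\sum_jD_{\theta_j}Z_j(-T)\Bigr]\ \ge\ S_{t+T}\Bigl[\textstyle\sum_j\theta_jZ_j(-T)\Bigr].
\]
Next, apply the concavity inequality \eqref{sub-concavity}, iterated to $m$-term convex combinations by induction on $m$ in the usual way. This yields
\[
 S_{t+T}\Bigl[\textstyle\sum_j\theta_jZ_j(-T)\Bigr]\ \ge\ \textstyle\sum_j\theta_jS_{t+T}[Z_j(-T)].
\]
Since each $Z_j$ is an entire solution, $S_{t+T}[Z_j(-T)]=Z_j(t)$.

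\emph{Step 3: pass to the limit.} The right side of the chain equals $\sum_j\theta_jZ_j(t,x)$ for every $T$, so letting $T\to\infty$ with Step~1 gives \eqref{eq:faithfulness-physical-jensen}.

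The only point needing care is Step~1, the identification of the iterated nonlinear sum with a single $m$-term pullback. If that identification feels delicate, I would bypass it by induction on $m$. The two-term case follows from the same chain applied directly to the definition of $\oplus$. For the inductive step, write $W=D_{\theta_1}Z_1\oplus D_{1-\theta_1}Y$ with $Y=\bigoplus_{j\ge2}D_{\theta_j/(1-\theta_1)}Z_j$, using distributivity and the group law. The two-term case then gives $W\ge\theta_1Z_1+(1-\theta_1)Y$, and the induction hypothesis bounds $Y$ from below.
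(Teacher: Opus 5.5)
Your proof is correct and is essentially the paper's. The paper proves the two-term case by exactly your chain: $F=\theta Z_1+(1-\theta)Z_2$ is a subsolution with $F\leq D_\theta Z_1+D_{1-\theta}Z_2$, so $F(t)\leq S_{t+T}[F(-T)]\leq S_{t+T}[D_\theta Z_1(-T)+D_{1-\theta}Z_2(-T)]$, and then one lets $T\to\infty$. It then inducts using associativity and distributivity, which is your fallback and avoids Step 1 entirely. If you keep the direct $m$-term version, the nontrivial direction of Step 1 (pullback $\leq$ iterated superposition) follows from subadditivity, $S_{t+T}[\sum_{j\leq m}V_j(-T)]\leq S_{t+T}[\sum_{j<m}V_j(-T)]+V_m(t)$, together with the fact that $U\oplus V$ is the greatest entire solution below $U+V$.
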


 \begin{proof}
First take two terms, with weights $\theta$ and $1-\theta$.
The ordinary convex combination
$F=\theta Z_1+(1-\theta)Z_2$ is a subsolution and satisfies
$F\leq D_\theta Z_1+D_{1-\theta}Z_2$.
For $0<\theta<1$, comparison gives
\[
 F(t)\leq S_{t+T}[F(-T)]
 \leq S_{t+T}[D_\theta Z_1(-T)+D_{1-\theta}Z_2(-T)].
\]
Let $T\to\infty$ to obtain the asserted two-term inequality.
The endpoints are immediate. Induction, using associativity and
distributivity of dilation, proves the finite version.
\end{proof}

To justify the later small-amplitude analysis, we establish
first-order regularity in the amplitude parameter and convergence
of the corresponding derivatives.

\begin{lemma}\label{lem:amplitude-C1-degenerate}
For every entire solution $U$ of \eqref{eq:kpp} with $0<U<1$,
the map $a\mapsto D_aU$ is
$C^1$ on $(0,\infty)$, with values locally in
$C^{1,2}(\mathbb R\times\mathbb R^N)$. If $W=D_aU$, then
\begin{equation}\label{eq:amplitude-C1-bounds}
 \frac{q(W)}{\lambda_*a}\leq\partial_aD_aU\leq\frac Wa,
 \qquad
 (\partial_t-\Delta-f'(W))\partial_aD_aU=0.
\end{equation}
If $D_aU_n\to D_aU$ pointwise for every $a>0$, then
\[
 \partial_aD_aU_n\rightarrow\partial_aD_aU
 \quad\text{in }C^{1,2}_{\mathrm{loc}}(\mathbb R\times\mathbb R^N),
\]
locally uniformly in $a\in(0,\infty)$. The same derivative convergence
holds for the finite pullbacks $S_{t+T}[aU(-T)]$ as $T\to\infty$.
In particular,
\begin{equation}\label{eq:trace-generator-equations}
 V[U]:=\left.\partial_aD_aU\right|_{a=1}
 =\lim_{T\to\infty}P_U(t,-T)[U(-T)],
 \qquad \mathcal P_UV[U]=0.
\end{equation}
\end{lemma}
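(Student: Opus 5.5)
Since $a\mapsto D_aU(t,x)$ is concave (stated after \eqref{eq:basic-pullbacks}), one-sided derivatives $\partial_a^\pm D_aU$ exist everywhere. The plan is to obtain the derivative at the level of the finite pullbacks $u_T(a)=S_{t+T}[aU(-T)]$, where smooth dependence on $a$ is classical, and then pass to the limit $T\to\infty$ using concavity and parabolic compactness.

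For fixed $T$, the derivative $v_T=\partial_a u_T$ solves $(\partial_t-\Delta-f'(u_T))v_T=0$ with $v_T(-T)=U(-T)$. Concavity of $a\mapsto u_T(a)$ together with $u_T(0)=0$ gives $v_T\le u_T/a$. The lower bound $a v_T\ge q(u_T)/\lambda_*$ is the finite-$T$ analogue of the bound $u'\ge q(u)/\lambda_*$ in the proof of Proposition~\ref{prop:inverse-wave}. I would rerun that integrating-factor argument, or derive it in the limit from Proposition~\ref{prop:inverse-wave}. Concretely, \eqref{eq:inverse-wave-upper} and \eqref{eq:inverse-wave-lower}, applied with the group law at base point $W=D_aU$, give $D_{ab}U\ge\phi_*(\lambda_*^{-1}\ln b+\phi_*^{-1}(W))$ for $b\ge1$, with the reverse inequality for $b\le1$. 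Differentiating at $b=1$ yields $a\,\partial_a^\pm D_aU\ge q(W)/\lambda_*$. The upper bound $\partial_a^\pm D_aU\le W/a$ follows from \eqref{eq:amplitude-lipschitz}.

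\textbf{Differentiability.} By the chain rule through the group law, $D_{a+h}U=D_{1+h/a}W$, so it suffices to show that $b\mapsto D_bW$ is differentiable at $b=1$. The difference quotients $(D_{1+h}W-W)/h$ solve a linear equation $(\partial_t-\Delta-c_h)\,\cdot=0$ with $c_h=\int_0^1f'(W+\theta(D_{1+h}W-W))\,d\theta\to f'(W)$. By the bounds above, these quotients are squeezed between $q(W)/(2\lambda_*)$ and $2W$ for small $|h|$. Interior parabolic estimates, with locally bounded coefficients, then give $C^{1,2}_{\loc}$ precompactness. Every subsequential limit $V$ solves $\mathcal P_WV=0$ and satisfies $q(W)/\lambda_*\le V\le W$.

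\textbf{Main obstacle: uniqueness of the limit.} Limits along $h\searrow0$ and $h\nearrow0$ coincide with the right and left derivatives, $V^+\le V^-$ by concavity. Both $V^\pm$ behave like $W+o(W)$ as $W\searrow0$, by \eqref{eq:amplitude-low-value} and the near-zero behaviour $q(u)/u\to\lambda_*$. I would then show $V^--V^+\ge0$ vanishes by the linear version of Theorem~\ref{thm:asymptotic-uniqueness}. The difference $V^--V^+$ solves $\mathcal P_W(\cdot)=0$ and is bounded by $W\omega(W)$, so $V^--V^+\le P_W(t,s)[W\omega(W)(s)]$. Averaging in $s$ and applying Lemma~\ref{lem:cesaro-error} gives $V^-=V^+$. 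The same argument identifies every subsequential limit, giving differentiability. Continuity of $a\mapsto\partial_aD_aU$ then follows from the same compactness plus uniqueness argument, combined with continuity of $a\mapsto D_aU$.

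\textbf{Convergence statements.} Suppose $D_aU_n\to D_aU$ pointwise for all $a$. Concavity upgrades this to local uniform convergence in $a$ and forces the derivatives to converge wherever the limit is differentiable. Parabolic compactness upgrades the convergence to $C^{1,2}_{\loc}$, locally uniformly in $a$. The same argument applies to $u_T\to D_aU$. Finally, for \eqref{eq:trace-generator-equations}: at $a=1$ one has $u_T=U$, hence $v_T=P_U(t,-T)[U(-T)]$, and taking $T\to\infty$ gives the stated limit for $V[U]$ and $\mathcal P_UV[U]=0$.
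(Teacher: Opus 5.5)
Your proposal is correct and follows the paper's proof essentially step for step. The shared steps are: one-sided derivatives from concavity; the bounds $q(W)/\lambda_*\le a\,\partial_a^\pm D_aU\le W$ obtained from the group law $D_{a+h}U=D_{1+h/a}W$ and Proposition~\ref{prop:inverse-wave}; the estimate $0\le v_--v_+\le (W/a)E(W)$ with $E(u)=1-q(u)/(\lambda_*u)$, which averaging and Lemma~\ref{lem:cesaro-error} force to vanish; secant inequalities for concave functions plus parabolic compactness for the convergence claims; and the identification $\partial_a u_{a,T}|_{a=1}=P_U(t,-T)[U(-T)]$.

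The only inessential deviation is your aside about a finite-$T$ lower bound $a v_T\ge q(u_T)/\lambda_*$. It is not needed, and it does not directly make sense for large $a$, where $u_T$ exceeds one and $q$ is undefined. You rightly replace it by the limit derivation.
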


\begin{proof}
At a fixed space--time point, $a\mapsto D_aU$ is concave and increasing.
Fix $a>0$, set $W=D_aU$, and denote its left and right amplitude
derivatives by $v_-$ and $v_+$, respectively. The group law gives
$D_{a+h}U=D_{1+h/a}W$. Applying
\eqref{eq:inverse-wave-upper} and
\eqref{eq:inverse-wave-lower} to $W$, then letting $h\to0$ from
the two sides, gives
\[
 \frac{q(W)}{\lambda_*a}\leq v_+\leq v_-\leq\frac Wa.
\]
The last bound follows also from concavity and $D_0U=0$.

For $0<|h|<a/2$, the difference quotient
\[
 v_h=\frac{D_{a+h}U-D_aU}{h}
\]
is nonnegative, bounded by $2/a$, and satisfies
\[
 (\partial_t-\Delta)v_h=c_hv_h,\qquad
 c_h=\int_0^1f'\bigl(W+\theta(D_{a+h}U-W)\bigr)\,d\theta.
\]
Local convergence $c_h\to f'(W)$ and interior parabolic estimates
show that $v_\pm$ are bounded entire solutions of
$\mathcal P_Wv_\pm=0$, where
$\mathcal P_W=\partial_t-\Delta-f'(W)$.
Consequently $\eta=v_--v_+\geq0$ satisfies
\[
 \mathcal P_W\eta=0,\qquad
 0\leq\eta\leq\frac Wa E(W),\qquad
 E(u)=1-\frac{q(u)}{\lambda_*u}\quad\text{for }0<u<1.
\]
Extend $E$ continuously to $[0,1]$ by $E(0)=0$ and $E(1)=1$.
If $P_W(t,s)$ is the positive evolution of $\mathcal P_W$, then
for every $T>0$,
\[
 0\leq\eta(t)
 =\frac1T\int_{t-T}^tP_W(t,s)[\eta(s)]\,ds
 \leq\frac1{aT}\int_{t-T}^t
 P_W(t,s)\bigl[W(s)E(W(s))\bigr]\,ds.
\]
Lemma~\ref{lem:cesaro-error} makes the last expression tend to zero.
Thus $v_-=v_+$, proving differentiability and
\eqref{eq:amplitude-C1-bounds}.

For differentiable concave functions $G_n$, the secant inequalities give
\[
 \frac{G_n(a+\delta)-G_n(a)}\delta
 \leq G_n'(a)
 \leq\frac{G_n(a)-G_n(a-\delta)}\delta
 \qquad\text{for }0<\delta<a.
\]
If $G_n\to G$ locally uniformly and $G$ is differentiable, letting
$n\to\infty$ and then $\delta\to0$ yields $G_n'(a)\to G'(a)$.
Since $G_n'$ is nonincreasing and $G'$ is continuous, this convergence
is uniform on compact subintervals of $(0,\infty)$.
Apply this to $G_n(a)=D_aU_n(t,x)$. The bound
$0\leq\partial_aD_aU_n\leq1/a$, the linear equations
\eqref{eq:amplitude-C1-bounds}, and interior parabolic estimates give
\[
 \sup_{a\in I}
 \|\partial_aD_aU_n-\partial_aD_aU\|_{C^{1,2}(K)}\rightarrow0
 \qquad\text{as }n\to\infty
\]
for every compact interval $I\subset(0,\infty)$ and compact cylinder
$K\subset\R\times\R^N$. Taking
$G_n(a)=D_{a+a_n-a_0}U(t,x)$ for $a_n\to a_0>0$ also proves
continuity of $a\mapsto\partial_aD_aU$.

For $u_{a,T}(t,\cdot)=S_{t+T}[aU(-T,\cdot)]$, we similarly have
\[
 0\leq\partial_au_{a,T}\leq\frac{u_{a,T}}a
 \leq\frac{\max\{a,1\}}a,\qquad
 (\partial_t-\Delta-f'(u_{a,T}))\partial_au_{a,T}=0.
\]
The same argument gives $\partial_au_{a,T}\to\partial_aD_aU$
in $C^{1,2}_{\loc}$, locally uniformly in $a>0$, as $T\to\infty$.
At $a=1$, $u_{1,T}=U$ and
\[
 \left.\partial_au_{a,T}(t,\cdot)\right|_{a=1}
 =P_U(t,-T)[U(-T,\cdot)],
\]
which proves \eqref{eq:trace-generator-equations}.
\end{proof}

\subsection{The ordered solution space and translations}
\label{subsec:ordered-solution-space}

Using the algebraic laws and subtraction from
Subsection~\ref{subsec:asymptotic-uniqueness-laws}, we identify
$\mathcal C$ with the positive cone of an ordered vector space $E$.
We equip each $E_U$ with a complete order-unit norm, represent it
by continuous functions, and relate norm convergence to local
parabolic convergence. We then describe space--time translations
on $E_U$ as multiplication operators in this representation.

Let $E$ consist of the formal differences $U\ominus V$, with
$U,\,V\in\mathcal C$, defined by
\[
 U\ominus V=U'\ominus V'\quad\Leftrightarrow\quad
 U\oplus V'=U'\oplus V.
\]
This rule defines an equivalence relation on the pairs $(U,V)$:
reflexivity and symmetry are immediate, while transitivity follows
from Proposition~\ref{prop:cone-cancellation}. Here $U\ominus V$
denotes the equivalence class represented by $(U,V)$. Define
\begin{align}
 (U\ominus V)\oplus(U'\ominus V')&=(U\oplus U')\ominus(V\oplus V'),\notag\\
 a\odot(U\ominus V)&=D_aU\ominus D_aV\quad\hbox{for } a>0,\label{eq:ordered-space-scalar-multiplication}\\
 (U\ominus V)\ominus(U'\ominus V')&=(U\oplus V')\ominus(V\oplus U'),\notag\\
 0\odot(U\ominus V)&=0\ominus0.\notag
\end{align}
For negative scalars use $(-a)\odot x=0\ominus(a\odot x)$, for $a>0$ and $x\in E$.
Since $\mathcal C$ is closed under $\oplus$ and $D_a$ for $a>0$,
these operations take values in $E$. The commutativity, associativity,
and distributive laws in
Proposition~\ref{prop:pde-full-dilation}, and cancellation in
Proposition~\ref{prop:cone-cancellation} ensure that these operations
are independent of the representatives and satisfy the vector-space
axioms. Thus $(E,\oplus,\odot)$ is a real vector space, with zero
$0\ominus0$ and additive inverse $V\ominus U$ for $U\ominus V$.
Moreover, $U\mapsto U\ominus0$ is injective, since
$U\ominus0=V\ominus0$ implies $U\oplus0=V\oplus0$, hence $U=V$.
Define its order by
\begin{equation}\label{eq:ordered-space-order}
 U\ominus V\leq U'\ominus V'
 \quad\Leftrightarrow\quad U\oplus V'\leq U'\oplus V.
\end{equation}
For $U,\,V,\,\widetilde U,\,\widetilde V,\,U',\,V'\in\mathcal C$ with
$U\ominus V=\widetilde U\ominus\widetilde V$,
Proposition~\ref{prop:cone-cancellation} gives
\[
 U\oplus V'\leq U'\oplus V
 \quad\Leftrightarrow\quad
 \widetilde U\oplus V'\leq U'\oplus\widetilde V.
\]
The same argument applies when the representation of $U'\ominus V'$
is changed. Thus the comparison depends only on the two elements of
$E$, not on the pairs chosen to represent them. This order makes $E$
an ordered vector space. We identify
$U\in\mathcal C$ with $U\ominus0\in E$. On this copy of $\mathcal C$,
the order is the original order, vector addition is $\oplus$,
and $a\odot U=D_aU$ for $a>0$. We use $\oplus$, $\ominus$, and $\odot$ for addition, subtraction,
and scalar multiplication in $E$, respectively. These operations extend
those on $\mathcal C$. 

By Theorem~\ref{thm:cone-subtraction}, every positive element
$U\ominus V$ is represented by the unique $W\in\mathcal C$ satisfying
$V\oplus W=U$. Thus the positive cone of $E$ is precisely $\mathcal C$.

An ordered vector space is called a \emph{vector lattice} if every
pair of elements $x,y$ has a least upper bound $x\vee y$ and a
greatest lower bound $x\wedge y$ with respect to its order.
We next establish this property for $E$, together with the
Archimedean property stated below.

\begin{proposition}\label{prop:cone-vector-lattice}
The space $E$ is a vector lattice, and its order is Archimedean:
if $n\odot x\leq y$ for every positive integer $n$, then $x\leq0$.
\end{proposition}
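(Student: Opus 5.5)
The plan is to reduce the lattice property to the existence of suprema of pairs in the cone $\mathcal C$, and to get the Archimedean property from the dilation group law together with Lemma~\ref{lem:cone-small-amplitudes}.

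\textbf{Lattice property.} Since $E$ is an ordered vector space, translations $z\mapsto z\oplus w$ are order isomorphisms. It therefore suffices to show that every pair $U,V\in\mathcal C$ has a least upper bound in $E$. Indeed, for general $x=U\ominus B$ and $y=V\ominus B$, written with a common second entry after adding representatives, one then sets $x\vee y=(U\vee V)\ominus B$. Meets follow from $x\wedge y=0\ominus\bigl((0\ominus x)\vee(0\ominus y)\bigr)$.

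For $U,V\in\mathcal C$, define $W=U\vee V$ as the increasing pullback of $\max\{U,V\}$, that is,
\[
 W(t)=\lim_{T\to\infty}S_{t+T}\bigl[\max\{U(-T),V(-T)\}\bigr].
\]
The function $\max\{U,V\}$ is a subsolution, so this limit is increasing and $W$ is the least entire solution lying above $\max\{U,V\}$. Comparison with the supersolution $U\oplus V$ gives $W\leq U\oplus V$, so $W\in\mathcal C$, and $W$ is an upper bound of $U$ and $V$ in $E$.

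It remains to prove that $W$ is the least upper bound. Suppose $z=A\ominus B\in E$ satisfies $z\geq U$ and $z\geq V$; by \eqref{eq:ordered-space-order} this means $A\geq U\oplus B$ and $A\geq V\oplus B$. Let $Y$ be the increasing pullback of $\max\{U\oplus B,V\oplus B\}$, which is the least entire solution above this subsolution, so $Y\leq A$. Since $Y\geq U\oplus B\geq B$, Theorem~\ref{thm:cone-subtraction} provides $X\in\mathcal C$ with $Y=B\oplus X$. Proposition~\ref{prop:cone-cancellation}, applied to $X\oplus B\geq U\oplus B$ and $X\oplus B\geq V\oplus B$, gives $X\geq U$ and $X\geq V$. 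Because $X$ is an entire solution above $\max\{U,V\}$, the minimality of $W$ yields $X\geq W$. Hence
\[
 W\oplus B\leq X\oplus B=Y\leq A,
\]
that is, $W\leq z$ in $E$. The one point I would verify carefully is that both pullbacks are genuine solutions in $\mathcal C$, i.e.\ strictly below $1$; this follows from the bounds $W\leq U\oplus V$ and $Y\leq A$.

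\textbf{Archimedean property.} Write $x=U\ominus V$ and $y=A\ominus B$. The assumption $n\odot x\leq y$ means
\[
 D_nU\oplus B\leq D_nV\oplus A .
\]
Apply the order-preserving map $D_{1/n}$ and use Proposition~\ref{prop:pde-full-dilation} to obtain
\[
 U\oplus D_{1/n}B\leq V\oplus D_{1/n}A .
\]
By the elementary bounds \eqref{eq:cone-basic-bounds},
\[
 U\leq U\oplus D_{1/n}B\leq V\oplus D_{1/n}A\leq V+D_{1/n}A .
\]
Lemma~\ref{lem:cone-small-amplitudes} gives $D_{1/n}A\to0$ pointwise, so letting $n\to\infty$ yields $U\leq V$, which is $x\leq0$. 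The degenerate cases where $A$ or $B$ is $0$ are trivial. The main obstacle is thus the minimality step in the lattice part, which is exactly where subtraction and cancellation are needed.
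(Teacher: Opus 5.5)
Your proof is correct, and its lattice part is dual to the paper's. The paper constructs meets. For $U,V\in\mathcal C$ it takes the decreasing pullback of the supersolution $\min\{U,V\}$, which a priori is only the greatest lower bound among positive elements. It then proves $(U\oplus W)\wedge(V\oplus W)=(U\wedge V)\oplus W$ via subtraction and cancellation, so as to handle lower bounds that need not be positive, before translating to general $x,y$ and obtaining joins by negation.

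You construct joins from the increasing pullback of the subsolution $\max\{U,V\}$, and this is the more economical direction. An upper bound $z$ of $U,V\geq0$ is automatically positive. Since the positive cone of $E$ is $\mathcal C$ with the pointwise order, $z$ is then an entire solution lying above $\max\{U,V\}$, and minimality of $W$ finishes at once. Your subtraction-and-cancellation step simply re-derives this identification. Even there the auxiliary pullback $Y$ is unnecessary: $A\geq U\oplus B\geq B$ already gives $A=B\oplus X$ by Theorem~\ref{thm:cone-subtraction}.

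For the Archimedean property, the paper first uses the lattice structure to pass to positive parts, $n\odot x^+\leq y^+$, which reduces to $U\leq D_{1/n}V\to0$ inside the cone. Your argument works with arbitrary representatives and repeats the mechanism of Proposition~\ref{prop:cone-cancellation}: apply $D_{1/n}$, then bound $\oplus$ by the ordinary sum. It is a few lines longer but does not depend on the lattice part.
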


\begin{proof}
For $U,V\in\mathcal C$, form the ordinary pointwise minimum
$m(t,x)=\min\{U(t,x),V(t,x)\}$. Order preservation gives
\[
 S_h[m(t)]\leq\min\{S_h[U(t)],S_h[V(t)]\}=m(t+h).
\]
For $R>T$, applying this inequality from time $-R$ to $-T$ and then
evolving to time $t$ shows
\[
 S_{t+R}[m(-R)]\leq S_{t+T}[m(-T)].
\]
Thus the pullbacks decrease. Their limit $M$ is entire by parabolic
compactness and satisfies $0\leq M\leq U,V$. In particular
$M\in\mathcal C$. If $Z\in\mathcal C$ satisfies $Z\leq U,V$,
then comparison from $-T$ gives
$Z(t)\leq S_{t+T}[m(-T)]$, hence $Z\leq M$.
Therefore $M$ is the greatest positive lower bound of $U,V$;
write it as $U\wedge V$.

For $W\in\mathcal C$, let $R=(U\oplus W)\wedge(V\oplus W)$. Since $W\leq R$, natural
subtraction writes $R=W\oplus X$ with $X\in\mathcal C$.
Order cancellation gives $X\leq U,V$, and therefore
$R\leq W\oplus(U\wedge V)$. Conversely,
$W\oplus(U\wedge V)$ is a positive common lower bound of $U\oplus W$ and
$V\oplus W$. Hence
\begin{equation}\label{eq:cone-meet-translation}
 (U\oplus W)\wedge(V\oplus W)=(U\wedge V)\oplus W.
\end{equation}
This meet is greatest even among signed lower bounds. To see this,
let $z\in E$ satisfy $z\leq U,V$, and choose $W\in E_+$ with
$z\oplus W\geq0$; such a $W$ exists by the construction of $E$ as
differences of positive elements. Then
\[
 z\oplus W\leq(U\oplus W)\wedge(V\oplus W)=(U\wedge V)\oplus W,
\]
so $z\leq U\wedge V$.

For arbitrary $x,y\in E$, choose $W\geq0$ such that $x\oplus W$ and
$y\oplus W$ are positive and define
\[
 x\wedge y=((x\oplus W)\wedge(y\oplus W))\ominus W.
\]
The preceding greatest-lower-bound property shows directly that this
is the greatest lower bound of $x,y$, so it is independent of $W$.
Joins are given by $x\vee y=0\ominus((0\ominus x)\wedge(0\ominus y))$. Thus $E$ is a
vector lattice.

To prove the Archimedean property, suppose that $n\odot x\leq y$ for all
positive integers $n$. Taking positive parts in the vector lattice
gives $n\odot x^+\leq y^+$. Identify $x^+=U$ and $y^+=V$ with elements
of $\mathcal C$. Then
\[
 U\leq D_{1/n}V.
\]
Lemma~\ref{lem:cone-small-amplitudes} gives $U=0$ on letting
$n\to\infty$. Consequently $x\leq0$, as required.
\end{proof}

Fix a nonzero $U\in\mathcal C$ and define its principal ideal by
\[
 E_U=\{x\in E:|x|\leq c\odot U\text{ for some }c\geq0\},
 \qquad
 \|x\|_U=\inf\{c\geq0:|x|\leq c\odot U\}.
\]
Here $|x|=x\vee(0\ominus x)$ is the lattice absolute value.
The element $U$ is an \emph{order unit} of $E_U$: for every
$x\in E_U$, there exists $c>0$ such that
$(-c)\odot U\leq x\leq c\odot U$.
The corresponding norm $\|\cdot\|_U$ is called the order-unit norm.

The following proposition proves completeness of the order-unit norm
and represents $E_U$ by continuous functions, preserving the norm
and the order operations.

\begin{proposition}\label{prop:cone-principal-ideal}
The space $E_U$, equipped with the order-unit norm $\|\cdot\|_U$,
is complete. Moreover, there exist a compact Hausdorff space $K_U$
and a linear bijection $J:E_U\rightarrow C(K_U)$ such that
\[
 J(U)=1,\qquad \|Jx\|_\infty=\|x\|_U,\qquad
 J(x\vee y)=\max\{Jx,Jy\}
 \quad\text{for }x,y\in E_U.
\]
\end{proposition}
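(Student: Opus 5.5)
The plan is to reduce the statement to the Kakutani (Krein–Kakutani) representation theorem for Archimedean vector lattices with an order unit whose order-unit norm is complete, that is, abstract M-spaces with unit. By Proposition~\ref{prop:cone-vector-lattice}, $E$ is an Archimedean vector lattice. Since $E_U$ is an order ideal ($|y|\leq|x|$ with $x\in E_U$ forces $y\in E_U$, and $E_U$ is closed under $\oplus$, $\odot$, $\vee$, $\wedge$ because $|x\oplus y|\leq|x|\oplus|y|$), $E_U$ is an Archimedean vector lattice with order unit $U$.

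First, check that $\|\cdot\|_U$ is a lattice norm. The Archimedean property gives that $\|x\|_U=0$ implies $x=0$, and that the infimum is attained, i.e.\ $|x|\leq\|x\|_U\odot U$. Homogeneity and the triangle inequality follow from $|a\odot x|=|a|\odot|x|$ and $|x\oplus y|\leq|x|\oplus|y|$. Moreover $\||x|\vee|y|\|_U=\max\{\|x\|_U,\|y\|_U\}$ for positive elements, which is the M-norm property.

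The main step is completeness. Given a Cauchy sequence $(x_n)$, pass to a subsequence with $\|x_{n+1}\ominus x_n\|_U\leq 2^{-n}$. Decompose $x_n=x_1\oplus\bigoplus_{k<n}(y_k\ominus z_k)$, where $y_k,z_k\in\mathcal C$ are the positive and negative parts of $x_{k+1}\ominus x_k$, so $y_k,z_k\leq D_{2^{-k}}U$. The partial superpositions $Y_n=\bigoplus_{k<n}y_k$ form an increasing sequence in $\mathcal C$ bounded by $D_{\sum 2^{-k}}U\leq D_1U$, using Proposition~\ref{prop:pde-full-dilation} and order preservation. They converge locally to some $Y\in\mathcal C$, and likewise $Z_n\nearrow Z$.

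To identify $Y\ominus Y_n$ one argues in the cone. By Corollary~\ref{cor:cone-increasing-superposition} and subtraction (Theorem~\ref{thm:cone-subtraction}), $Y=Y_n\oplus R_n$, where $R_n$ is the increasing limit of $\bigoplus_{n\leq k<m}y_k\leq D_{2^{1-n}}U$. Hence $R_n\leq D_{2^{1-n}}U$ and $\|Y\ominus Y_n\|_U\leq 2^{1-n}$. This identification is the delicate point: it needs that increasing limits commute with $\oplus$ in both arguments and that the bound $R_n\leq D_{2^{1-n}}U$ survives the limit. Both follow from Corollary~\ref{cor:cone-increasing-superposition} and the closedness of the order under pointwise limits. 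With this, $x=x_1\oplus(Y\ominus Z)$ is the norm limit, and a Cauchy sequence with a convergent subsequence converges.

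Finally, apply Kakutani's theorem. Take $K_U$ to be the set of lattice homomorphisms $\omega:E_U\to\R$ with $\omega(U)=1$, with the weak$^*$ topology. It is compact as a closed subset of the product $\prod_x[-\|x\|_U,\|x\|_U]$. Define $Jx(\omega)=\omega(x)$. Then $J$ is a linear lattice homomorphism with $J(U)=1$. It is isometric because for $x\geq0$ there is a maximal ideal/real-valued lattice homomorphism attaining $\|x\|_U$, which uses the Archimedean property and Zorn. By Stone–Weierstrass in its lattice form, the image is a closed (by completeness and isometry), point-separating sublattice containing constants, hence all of $C(K_U)$.
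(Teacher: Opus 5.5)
Your proposal is correct and follows essentially the paper's route: a dyadic subsequence whose positive and negative increments are summed by $\oplus$, monotone limits in $\mathcal C$ dominated by multiples of $U$ giving the tail bound $2^{1-n}\odot U$, and then Kakutani's representation theorem. The differences are cosmetic. The paper identifies the pointwise monotone limit as the supremum in $E$ and takes suprema of $P_k\ominus P_m\leq 2^{-m}\odot U$, where you invoke Corollary~\ref{cor:cone-increasing-superposition}; and the paper simply cites Kakutani, where you sketch its proof via real lattice homomorphisms, Zorn's lemma, and the lattice Stone--Weierstrass theorem.
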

\begin{proof}
For $x,y\in E_U$ and $\alpha\in\R$, the lattice inequalities
\[
 |\alpha\odot x|=|\alpha|\odot|x|,\qquad
 |x\oplus y|\leq|x|\oplus|y|
\]
give homogeneity and the triangle inequality. By
Proposition~\ref{prop:cone-vector-lattice},
\[
 \|x\|_U\leq c
 \ \Longrightarrow\
 |x|\leq(c+n^{-1})\odot U\quad\text{for every }n\geq1
 \ \Longrightarrow\
 |x|\leq c\odot U.
\]
Thus $\|\cdot\|_U$ is a norm, its defining infimum is attained, and
\[
 \begin{gathered}
 |x|\leq|y|\ \Longrightarrow\ \|x\|_U\leq\|y\|_U,\\
 \|x\vee y\|_U=\max\{\|x\|_U,\|y\|_U\}
 \qquad\text{for }x,y\geq0.
 \end{gathered}
\]
The second identity follows from
$x\vee y\leq c\odot U\iff x,y\leq c\odot U$.

If $0\leq A_n\nearrow$ and $A_n\leq c\odot U$, with $c>0$,
interior parabolic compactness gives
\[
 A_n\rightarrow A\quad\text{in }C^{1,2}_{\loc}(\R\times\R^N),
 \qquad 0\leq A\leq D_cU<1,\qquad A\in\mathcal C\cap E_U.
\]
For every upper bound $B\in E$,
\[
 B\geq A_n\geq0\quad\text{for all }n
 \ \Longrightarrow\ B\in\mathcal C,\quad B\geq A.
\]
Hence $A=\sup_n A_n$ in $E$. The case $c=0$ is immediate.

Let $\{x_n\}_{n\geq1}\subset E_U$ be norm Cauchy. Choose a subsequence
and put $d_j=x_{n_{j+1}}\ominus x_{n_j}$ so that
\[
 |d_j|\leq2^{-j}\odot U,\qquad
 P_m:=\bigoplus_{j=1}^m d_j^+,\qquad
 N_m:=\bigoplus_{j=1}^m d_j^-.
\]
Then $0\leq P_m,N_m\nearrow\leq U$, so the preceding argument gives
$P=\sup_mP_m$ and $N=\sup_mN_m$ in $E_U$. For $k\geq m$,
\[
 0\leq P_k\ominus P_m,\ N_k\ominus N_m
 \leq\left(\sum_{j=m+1}^k2^{-j}\right)\odot U
 \leq2^{-m}\odot U.
\]
Taking suprema yields
\[
 0\leq P\ominus P_m,\ N\ominus N_m\leq2^{-m}\odot U.
\]
Since $x_{n_{m+1}}=x_{n_1}\oplus P_m\ominus N_m$,
\[
 \|x_{n_{m+1}}\ominus(x_{n_1}\oplus P\ominus N)\|_U
 \leq2^{1-m}\rightarrow0\qquad\text{as }m\to\infty.
\]
The Cauchy property therefore gives
$x_n\to x_{n_1}\oplus P\ominus N$ in $E_U$, proving completeness.

Finally,
\[
 \{x\in E_U:\|x\|_U\leq1\}=[(-1)\odot U,U].
\]
Kakutani's representation theorem \cite{Kakutani1941}, applied
with the norm properties established above, gives $K_U$ and $J$
with the asserted properties.
\end{proof}

The norm constructed above must also control convergence of the
physical solutions. The next lemma connects the order-unit norm
to local parabolic convergence.

\begin{lemma}
\label{lem:cone-unit-norm-local}
If $W,V\in E_U\cap\mathcal C$ and
$\|W\ominus V\|_U\leq\varepsilon$ with $\varepsilon>0$, then their
ordinary pointwise difference satisfies
\[
 |W(t,x)-V(t,x)|\leq D_\varepsilon U(t,x).
\]
In particular, for $V_n,V\in E_U\cap\mathcal C$,
\[
 \|V_n\ominus V\|_U\rightarrow0
 \quad\Longrightarrow\quad
 V_n\rightarrow V\quad\text{in }C^{1,2}_{\loc}(\R\times\R^N)
 \qquad\text{as }n\to\infty.
\]
\end{lemma}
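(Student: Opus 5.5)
The plan is to reduce the order-norm bound to a statement about positive elements of $\mathcal C$. Then I convert the resulting $\oplus$-inequalities into pointwise inequalities using the elementary bounds \eqref{eq:cone-basic-bounds}. By Proposition~\ref{prop:cone-principal-ideal} the infimum defining $\|\cdot\|_U$ is attained, so $|W\ominus V|\leq\varepsilon\odot U=D_\varepsilon U$. In a vector lattice this gives
\[
 (-\varepsilon)\odot U\leq W\ominus V\leq\varepsilon\odot U,
\]
that is, by the definition \eqref{eq:ordered-space-order} of the order,
\[
 W\leq V\oplus D_\varepsilon U,\qquad V\leq W\oplus D_\varepsilon U
\]
as elements of $\mathcal C$. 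Since on the copy of $\mathcal C$ the order of $E$ is the original pointwise order, these are pointwise inequalities between entire solutions.

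Next I use $X\oplus Y\leq X+Y$ from \eqref{eq:cone-basic-bounds}. This gives
\[
 W(t,x)\leq V(t,x)+D_\varepsilon U(t,x)
\]
and symmetrically $V\leq W+D_\varepsilon U$. Together these yield $|W-V|\leq D_\varepsilon U$ pointwise, which is the first assertion.

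For the convergence statement, I set $\varepsilon_n=\|V_n\ominus V\|_U\to0$. If $\varepsilon_n=0$ then $V_n=V$. Otherwise I apply the first part, using $\varepsilon_n'=\max\{\varepsilon_n,1/n\}>0$ if needed; monotonicity of $a\mapsto D_aU$ keeps the bound valid. This gives $|V_n-V|\leq D_{\varepsilon_n'}U$. By Lemma~\ref{lem:cone-small-amplitudes}, $D_aU\to0$ locally uniformly as $a\searrow0$, so $V_n\to V$ locally uniformly. Finally, all $V_n$ are entire solutions with values in $[0,1]$, so interior parabolic estimates give uniform $C^{1+\alpha,2+\alpha}$ bounds on compact cylinders. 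Every subsequence therefore has a $C^{1,2}_{\loc}$-convergent subsequence, whose limit must be $V$. Hence the whole sequence converges in $C^{1,2}_{\loc}$.

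The only delicate point is the first step: identifying $|x|\leq c\odot U$ with the two-sided bound $-c\odot U\leq x\leq c\odot U$, and translating this into $\oplus$-statements between representatives. This is standard lattice algebra, namely $x\leq|x|$ and $-x\leq|x|$, combined with the definition of the order on formal differences. No real obstacle is expected.
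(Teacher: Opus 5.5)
Your proof is correct and follows the paper's argument. The norm bound gives $W\leq V\oplus D_\varepsilon U$ and $V\leq W\oplus D_\varepsilon U$, the elementary bound $X\oplus Y\leq X+Y$ turns these into the pointwise estimate, and Lemma~\ref{lem:cone-small-amplitudes} together with parabolic compactness yields the $C^{1,2}_{\loc}$ convergence.
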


\begin{proof}
The norm inequality gives
$W\leq V\oplus D_\varepsilon U$ and $V\leq W\oplus D_\varepsilon U$ in $E$.
In terms of entire solutions these become
\[
 W\leq V\oplus D_\varepsilon U\leq V+D_\varepsilon U,
 \qquad
 V\leq W\oplus D_\varepsilon U\leq W+D_\varepsilon U,
\]
where the final sums are ordinary sums of functions. This proves
the pointwise bound. Lemma~\ref{lem:cone-small-amplitudes} and
parabolic compactness prove the convergence assertion.
\end{proof}

We now study space--time translations on the ordered solution space
and on its principal ideals.
For $s\in\mathbb R$ and $h\in\mathbb R^N$, define
\[
 (T_{s,h}V)(t,x)=V(t+s,x+h)\qquad\text{for }V\in\mathcal C.
\]
Translations preserve superposition and dilation, so they extend to
linear order isomorphisms of $E$, with inverse $T_{-s,-h}$.

The differential estimates now give order bounds for translations.
These bounds show that translations preserve the subspaces $E_U$
and will lead to their multiplication-operator representation.

\begin{lemma}\label{lem:cone-translation-bounds}
The following operator inequalities hold on $E_+=\mathcal C$:
\[
 e^{-\lambda_*|h|}I\leq T_{0,h}\leq e^{\lambda_*|h|}I,
\]
and, for $s\geq0$,
\[
 I\leq T_{s,0}\leq e^{2f'(0)s}I.
\]
Every $T_{s,h}$ preserves every principal ideal $E_U$.
\end{lemma}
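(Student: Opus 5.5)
The idea is to turn the pointwise differential estimates of Theorem~\ref{thm:gkpp-ancient} into pointwise comparisons between translated solutions and dilated solutions, using the characterization of dilation by asymptotic uniqueness. Fix $V\in\mathcal C$, $V\neq0$.

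\emph{Spatial translations.} Integrating $|\nabla\ln V|\leq\lambda_*$ along the segment from $x$ to $x+h$ gives
\[
 e^{-\lambda_*|h|}V\leq T_{0,h}V\leq e^{\lambda_*|h|}V
\]
as ordinary functions. We must upgrade these to inequalities against $D_{e^{\pm\lambda_*|h|}}V$. The natural approach is the pullback comparison. Set $a=e^{\lambda_*|h|}\geq1$. Then $T_{0,h}V(-T)\leq aV(-T)$ at every time, so
\[
 T_{0,h}V(t)=S_{t+T}[T_{0,h}V(-T)]\leq S_{t+T}[aV(-T)]\to D_aV(t).
\]
For the lower bound, use $T_{0,h}V(-T)\geq a^{-1}V(-T)$ and $S_{t+T}[a^{-1}V(-T)]\to D_{a^{-1}}V$, which gives $T_{0,h}V\geq D_{1/a}V$. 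In $E$ these read $e^{-\lambda_*|h|}\odot V\leq T_{0,h}V\leq e^{\lambda_*|h|}\odot V$, which is the operator inequality on $E_+$.

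\emph{Time translations.} For $s\geq0$, $\partial_tV>0$ gives $V\leq T_{s,0}V$ directly. For the upper bound, $\partial_t\ln V\leq2f'(0)$ gives $V(t+s)\leq e^{2f'(0)s}V(t)$ pointwise. Applying the same pullback argument with $a=e^{2f'(0)s}$ yields $T_{s,0}V\leq D_aV$. I expect no real obstacle in either case. The only care needed is that each pullback is started from an entire solution, so that $S_{t+T}[V(-T)]=V(t)$, and that comparison is monotone in the initial data. Both hold by definition.

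\emph{Preservation of $E_U$.} Translations are linear order isomorphisms of $E$ and commute with $\odot$. Compose $T_{s,h}=T_{s,0}T_{0,h}$. For $s<0$, invert $I\leq T_{-s,0}\leq e^{2f'(0)|s|}I$, which gives $e^{-2f'(0)|s|}I\leq T_{s,0}\leq I$ on $E_+$. Combining these, every $T_{s,h}$ satisfies $T_{s,h}U\leq C\odot U$ with $C=e^{\lambda_*|h|+2f'(0)|s|}$.

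Now let $x\in E_U$ with $|x|\leq c\odot U$. A lattice isomorphism commutes with $|\cdot|$. Indeed, $T_{s,h}$ is a positive linear bijection with positive inverse, so it preserves $\vee$. Hence
\[
 |T_{s,h}x|=T_{s,h}|x|\leq c\odot T_{s,h}U\leq cC\odot U,
\]
so $T_{s,h}x\in E_U$. Applying the same argument to $T_{-s,-h}$ shows the inverse also preserves $E_U$.
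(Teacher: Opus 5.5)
Your proposal is correct and follows essentially the same route as the paper. Both arguments start from the pointwise bounds of the ancient estimates and turn them into order inequalities in $E$ through the pullback comparison $Z\leq aV\Rightarrow Z\leq D_aV$ and its reverse, handle negative $s$ by inversion, and obtain preservation of $E_U$ from $|T_{s,h}x|=T_{s,h}|x|\leq c\odot T_{s,h}U$ because $T_{s,h}$ is a lattice isomorphism. The one small difference is that you justify the lattice-isomorphism property explicitly from the positivity of $T_{s,h}$ and its inverse, which the paper takes for granted.
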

\begin{proof}
The bounds \eqref{eq:cone-ancient-input} imply the ordinary
pointwise inequalities
\[
 e^{-\lambda_*|h|}V(t,x)\leq V(t,x+h)\leq e^{\lambda_*|h|}V(t,x),
\]
\[
 V(t,x)\leq V(t+s,x)\leq e^{2f'(0)s}V(t,x)\qquad\text{for }s\geq0.
\]
To convert these into inequalities for vector-space scalar
multiplication, observe that if entire solutions $Z,V$ satisfy
$Z\leq aV$ as ordinary functions, then comparison from $-T$ yields
\[
 Z(t)\leq S_{t+T}[aV(-T)],
\]
and hence $Z\leq D_aV$. The reversed ordinary inequality gives
$Z\geq D_aV$ by the same argument. Applying this observation to
the preceding translation bounds proves the two operator inequalities.
For negative $s$, inversion gives
$e^{2f'(0)s}I\leq T_{s,0}\leq I$. In particular, for all $s,h$,
\[
 e^{-2f'(0)|s|-\lambda_*|h|}I\leq T_{s,h}
 \leq e^{2f'(0)|s|+\lambda_*|h|}I.
\]
If $V\in E_U$ and $|V|\leq c\odot U$, then, because $T_{s,h}$ is a lattice
isomorphism,
\[
 |T_{s,h}V|=T_{s,h}|V|
 \leq c\odot T_{s,h}U
 \leq\bigl(ce^{2f'(0)|s|+\lambda_*|h|}\bigr)\odot U.
\]
Thus $T_{s,h}$ maps $E_U$ into itself; its inverse does also.
\end{proof}

In this representation, space--time translations become multiplication
by positive continuous functions. We identify their exponential form.

\begin{proposition}\label{prop:cone-translation-multipliers}
For the representation $J:E_U\to C(K_U)$, there are continuous
functions
\[
 \lambda:K_U\to[0,2f'(0)],\qquad
 \xi:K_U\to\overline B_1\subset\mathbb R^N
\]
such that
\begin{equation}\label{eq:cone-joint-multipliers}
 J(T_{s,h}V)(k)
 =e^{s\lambda(k)+\lambda_*h\cdot\xi(k)}J(V)(k)
\end{equation}
for every $V\in E_U$, $k\in K_U$, and $(s,h)\in\mathbb R^{1+N}$.
Moreover,
\[
 \|T_{s,h}-I\|_{E_U\to E_U}
 \leq e^{2f'(0)|s|+\lambda_*|h|}-1.
\]
\end{proposition}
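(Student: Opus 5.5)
The plan is to treat translations as lattice homomorphisms of the AM-space $C(K_U)$ and to show that each one is multiplication by a positive continuous function. The multiplier property then gives the exponential form.

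Fix $(s,h)$ and set $A=J\,T_{s,h}\,J^{-1}$ on $C(K_U)$. By Lemma~\ref{lem:cone-translation-bounds}, $T_{s,h}$ is a linear lattice isomorphism of $E_U$. Since $J$ preserves order and the lattice operations (Proposition~\ref{prop:cone-principal-ideal}), $A$ is a lattice isomorphism of $C(K_U)$. The same lemma gives
\[
 e^{-\beta}g\le Ag\le e^{\beta}g\quad(g\ge0),\qquad \beta=2f'(0)|s|+\lambda_*|h|.
\]

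A lattice homomorphism of $C(K)$ that is dominated in this way by a multiple of the identity is an orthomorphism. Concretely, if $g\ge0$ vanishes on an open set $O$, then $0\le Ag\le e^\beta g$ also vanishes on $O$. Approximating an arbitrary $g$ by $g\cdot\chi$ with Urysohn cutoffs gives $Ag=(A1)\,g$. I would verify this directly. Write $m=A1$ and fix $k$ with $g(k)=c$. Apply the domination to $(g-c)^{\pm}$, both of which vanish at $k$, to get $(Ag)(k)=c\,m(k)$; here linearity is used, and $0\le A\varphi\le e^\beta\varphi$ forces $A\varphi(k)=0$ whenever $\varphi\ge0$ and $\varphi(k)=0$. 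Thus $J(T_{s,h}V)=m_{s,h}\,J(V)$ with $m_{s,h}=J(T_{s,h}U)\in C(K_U)$ and $e^{-\beta}\le m_{s,h}\le e^\beta$.

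Since $T_{s,h}T_{s',h'}=T_{s+s',h+h'}$, we get $m_{s+s',h+h'}=m_{s,h}m_{s',h'}$ pointwise. So for each $k$ the map $(s,h)\mapsto \ln m_{s,h}(k)$ is additive on $\R^{1+N}$, and it is locally bounded by $\beta$. Hence it is linear, and we may write
\[
 \ln m_{s,h}(k)=s\lambda(k)+\lambda_*h\cdot\xi(k).
\]
The operator inequalities give the ranges. From $I\le T_{s,0}\le e^{2f'(0)s}I$ for $s\ge0$ we get $0\le\lambda\le 2f'(0)$. From $|\lambda_*h\cdot\xi(k)|\le\lambda_*|h|$ for all $h$ we get $|\xi(k)|\le1$. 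Continuity follows because $\lambda(k)=\ln m_{1,0}(k)$ and $\xi_i(k)=\lambda_*^{-1}\ln m_{0,e_i}(k)$, where each $m$ is continuous and positive.

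Finally, $\|J x\|_\infty=\|x\|_U$, so
\[
 \|T_{s,h}-I\|\le\sup_k\bigl|e^{s\lambda(k)+\lambda_*h\cdot\xi(k)}-1\bigr|\le e^{\beta}-1,
\]
using $|e^y-1|\le e^{|y|}-1$. The main step is the orthomorphism identification, $A=$ multiplication by $A1$. Everything else is bookkeeping of the order bounds already proved in Lemma~\ref{lem:cone-translation-bounds}.
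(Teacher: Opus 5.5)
Your argument is correct and is essentially the paper's proof. The paper likewise shows that a positive operator on $C(K_U)$ dominated by $\beta I$ kills functions vanishing at a point; it does this via $|Tf|\le T|f|\le\beta|f|$ rather than your splitting into $(g-c)^{\pm}$, which is equivalent, so such an operator is multiplication by $T1$. It then uses $g_{s+r,h+\ell}=g_{s,h}g_{r,\ell}$ together with $|\ln g_{s,h}|\le 2f'(0)|s|+\lambda_*|h|$ to obtain linearity of the logarithm, the ranges, continuity, and the norm bound exactly as you do.
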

\begin{proof}
We first prove an elementary fact about positive operators. Let
$T:C(K_U)\to C(K_U)$ be linear and satisfy $0\leq T\leq\beta I$.
For real $f\in C(K_U)$,
\[
 |Tf|\leq T|f|\leq\beta|f|.
\]
Fix $k\in K_U$. If $f(k)=0$, this inequality gives $(Tf)(k)=0$.
Apply this to $f-f(k)1$ and obtain
\[
 (Tf)(k)=g(k)f(k),\qquad g=T1\in C(K_U).
\]
Thus $T$ is multiplication by the continuous function $g$.
If also $T\geq\alpha I$, then $\alpha\leq g\leq\beta$.

Transport each translation to $C(K_U)$ and apply this fact using
Lemma~\ref{lem:cone-translation-bounds}. Its multiplier
$g_{s,h}$ is strictly positive, continuous in $k$, and satisfies
\[
 g_{s+r,h+\ell}=g_{s,h}g_{r,\ell},\qquad
 |\ln g_{s,h}(k)|\leq2f'(0)|s|+\lambda_*|h|.
\]
For a fixed $k$, the logarithm is therefore an additive function of
$(s,h)$ that is continuous at the origin. It is a real linear
functional, so
\[
 \ln g_{s,h}(k)=s\lambda(k)+\lambda_*h\cdot\xi(k).
\]
One may define
$\lambda(k)=\ln g_{1,0}(k)$ and
$\xi_i(k)=\lambda_*^{-1}\ln g_{0,e_i}(k)$, which shows their continuity in $k$.
The inequalities $1\leq g_{s,0}\leq e^{2f'(0)s}$ for $s\geq0$ give
$0\leq\lambda\leq2f'(0)$. The inequalities
$e^{-\lambda_*|h|}\leq g_{0,h}\leq e^{\lambda_*|h|}$ give
$|\lambda_*h\cdot\xi(k)|\leq\lambda_*|h|$ for all $h$, hence $|\xi(k)|\leq1$.
This proves \eqref{eq:cone-joint-multipliers}.
Finally, the norm of a multiplication operator on $C(K_U)$ is the
supremum norm of its multiplier. The bound on the logarithm therefore
gives the stated operator-norm estimate directly.
\end{proof}

\section{Linear representations and small-amplitude limits}
\label{sec:general-finite-lift}
\label{sec:generator-critical-trace}

This section relates nonlinear entire solutions to nonnegative
entire solutions of the linearized equation and their representing
measures. We first assume the finiteness condition
\eqref{eq:general-finite-lift-hypothesis}. Under this condition,
linear evolution of the past data $U(-T,\cdot)$ has a finite limit
$H$ as $T\to\infty$. We represent $H$ by a finite measure, recover
$U$ by nonlinear Cauchy evolution, and approximate $U$ by
Hamel--Nadirashvili solutions associated with finite atomic measures.

We then establish a sufficient condition for this reconstruction.
If $|\nabla\ln U|\leq\lambda_*r$ for some $0\leq r<1$, the
small-amplitude quotient $D_aU/a$ has a finite limit represented
by a measure on $\overline B_r$. We prove that it equals the
linear limit obtained from the past data. Thus the strict gradient
bound implies the finiteness condition needed for reconstruction
and finite atomic approximation.

\subsection{Linear reconstruction and finite-front approximation}
\label{subsec:linear-reconstruction}

We prove reconstruction and finite HN approximation under
\eqref{eq:general-finite-lift-hypothesis}. The preliminary estimates
treat capped critical exponential fields and finite families of fronts;
they will be used to control the reconstruction and identify its
finite atomic approximants.

Throughout the section, $f$ satisfies \eqref{eq:general-kpp-assumptions}
with $f'(0)>0$, and $S^f_\tau=S_\tau$ denotes its Cauchy evolution.
We suppress the superscript $f$ in the notation $V_h=V_h^f$ for the
capped pullback constructed below.

Concavity implies $0\leq f(u)\leq f'(0)u$ and
$f'(u)\leq f'(0)$. For $F_1$ defined in
\eqref{eq:reaction-defects}, Taylor's formula gives
\begin{equation}\label{eq:general-quadratic-error}
 0\leq F_1(u)\leq M u^2\quad\text{for }0\leq u\leq1,
 \qquad M=\tfrac12\|f''\|_{L^\infty([0,1])}.
\end{equation}
 In particular $M\geq f'(0)$, since $f'(0)-f(1)=f'(0)$. Subtracting two Cauchy equations and applying
the maximum principle gives
\begin{equation}\label{eq:general-cauchy-stability}
 |S^f_\tau[a]-S^f_\tau[b]|\leq L_\tau[|a-b|],
 \qquad 0\leq a,b\leq1.
\end{equation}
The concave extension fixed after \eqref{eq:general-kpp-assumptions}
is used if a sum exceeds one. Lemma~\ref{lem:pde-semigroup-inequalities}
and comparison give
\begin{equation}\label{eq:general-subadditivity}
 S^f_\tau[\min\{a+b,1\}]\leq S^f_\tau[a]+S^f_\tau[b].
\end{equation}

We first consider linear solutions generated by finite measures on the critical sphere.
Let $\mu$ be a finite positive Borel measure on $\mathbb S^{N-1}$ and set
\begin{equation}\label{def-h}
 H_{\mu}(t,x)
 =e^{2f'(0)t}\int_{\mathbb S^{N-1}}
   e^{\lambda_*\nu\cdot x}\,d\mu(\nu)
 =\int_{\mathbb S^{N-1}}
   e^{\lambda_*\nu\cdot x+2f'(0)t}\,d\mu(\nu).
\end{equation}
For each $\nu\in\mathbb S^{N-1}$, the integrand is a planar
exponential traveling wave of the linearized equation
$u_t=\Delta u+f'(0)u$, propagating with speed $c_*$ in the
direction $-\nu$.
Thus $H_{\mu}$ is a superposition of linear traveling waves at the
critical speed and is itself an entire solution of the
linearized equation.

Let $L_t=e^{f'(0)t}P_t$ denote the semigroup
associated with the linearized equation
$u_t=\Delta u+f'(0)u$. The semigroup identity gives
$\bigl(L_{t+n}[aH_{\mu}(-n,\cdot)]\bigr)(x)=aH_{\mu}(t,x)$ for $t+n>0$.
The following calculation shows that capping the past data at one
retains exactly half of the critical linear field in the limit.
We will use this loss to prove that the corresponding nonlinear
pullback limit is zero.

\begin{lemma}\label{lem:critical-gaussian-half}
For every $a>0$ and
$(t,x)\in\R\times\R^N$, it holds
\[
 \bigl(L_{t+n}[\min\{aH_{\mu}(-n,\cdot),1\}]\bigr)(x)\rightarrow\frac a2H_{\mu}(t,x)
 \qquad\hbox{as $n\in\mathbb{N}_+$ and }n\to +\infty.
\] 
\end{lemma}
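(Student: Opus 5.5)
The plan is to reduce the claim to an explicit Gaussian computation, one critical direction at a time, and then integrate over $\mu$. Put $\tau=t+n$. We have
\[
 \min\{aH_\mu(-n,y),1\}=aH_\mu(-n,y)\,\mathbf 1_{A_n}(y)+\mathbf 1_{A_n^c}(y),
 \qquad A_n=\{y: aH_\mu(-n,y)\le1\}.
\]
The second term lies below $aH_\mu(-n,\cdot)\mathbf 1_{A_n^c}$. We will check that both $L_\tau[\mathbf 1_{A_n^c}]$ and $L_\tau[aH_\mu(-n)\mathbf 1_{A_n^c}]$ contribute nothing in the limit.

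For the first of these, note that $\mathbf 1_{A_n^c}\le (aH_\mu(-n))^{\theta}$ for any $\theta\in(0,1)$. A tilted Gaussian computation bounds $L_\tau[(aH_\mu(-n))^\theta](x)$ by $C\,e^{n(f'(0)(1+\theta^2)-2\theta f'(0))}$. The exponent $f'(0)(1-\theta)^2$ still grows in $n$, so this term needs the finer estimate below. I will handle it by the same tilted-measure representation with exponent $\theta$ close to $1$. Under the tilt, the weight concentrates where $aH_\mu(-n)\approx1$, and only a fluctuation window of width $O(\sqrt n)$ contributes. I expect this contribution to be $o(1)$ by the same probabilistic estimate as for the main term, and I treat it as a secondary check.

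The main term is $L_\tau[aH_\mu(-n)\mathbf 1_{A_n}]$. Fubini over $\nu$ and the Cameron--Martin (exponential tilt) identity give
\[
 e^{f'(0)\tau}\!\int G_\tau(x-y)e^{\lambda_*\nu\cdot y-2f'(0)n}\mathbf 1_{A_n}(y)\,dy
 =e^{\lambda_*\nu\cdot x+2f'(0)t}\,\mathbb P\bigl(Y_n^\nu\in A_n\bigr),
\]
where $Y_n^\nu=x+2\lambda_*\tau\nu+\sqrt{2\tau}\,Z$ and $Z$ is a standard Gaussian in $\R^N$. Hence
\[
 L_\tau[aH_\mu(-n)\mathbf 1_{A_n}](x)=a\int e^{\lambda_*\nu\cdot x+2f'(0)t}\,p_n(\nu)\,d\mu(\nu),
 \qquad p_n(\nu):=\mathbb P(Y_n^\nu\in A_n).
\]
Since $0\le p_n\le1$, dominated convergence reduces the lemma to showing $p_n(\nu)\to\frac12$ in $\mu$-measure. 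This uses $2f'(0)=\lambda_*c_*=2\lambda_*^2$.

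To get $p_n\to\frac12$, I compare $\ln\bigl(aH_\mu(-n,Y_n^\nu)\bigr)=\ln a-2f'(0)n+\ln L_\mu(Y_n^\nu)$ with $\lambda_*\sqrt{2\tau}\,Z\cdot\nu$.

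For the upper bound, $\ln L_\mu(y)\le\lambda_*|y|+\ln M$. Expanding $|Y_n^\nu|$ gives $2\lambda_*\tau+\sqrt{2\tau}\,Z\cdot\nu+O(1+|Z|^2)$, and the transversal part $|Z_\perp|^2$ only costs $O(1)$ after division by $\tau$. So the log of $aH_\mu(-n,Y_n^\nu)$ is at most $\lambda_*\sqrt{2\tau}\,Z\cdot\nu+O_{\mathbb P}(1)$.

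For the lower bound, restrict the integral to $\nu'\in B_r(\nu)$ with $r=n^{-1/2}$. There $\nu'\cdot Y_n^\nu\ge \nu\cdot Y_n^\nu-O(1+|Z|)$, since $1-\nu\cdot\nu'=O(r^2)$ and $\tau r^2=O(1)$. This yields
\[
 \ln L_\mu(Y_n^\nu)\ge \lambda_*\nu\cdot Y_n^\nu+\ln\mu(B_r(\nu))-O(1+|Z|).
\]

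The main obstacle is that $\ln\mu(B_{n^{-1/2}}(\nu))$ must be $o(\sqrt n)$ for most $\nu$, even though $\mu$ is arbitrary. I would handle it by a covering argument. Cover $\Sph$ by $O(r^{-(N-1)})$ balls of radius $r/2$. Any $\nu$ with $\mu(B_r(\nu))<r^{N+1}$ lies in a cover ball of mass $<r^{N+1}$, so the bad set has $\mu$-measure $O(r^2)\to0$. Off the bad set the loss is $O(\ln n)=o(\sqrt n)$.

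Combining the two bounds, on the good set
\[
 \ln\bigl(aH_\mu(-n,Y_n^\nu)\bigr)=\lambda_*\sqrt{2\tau}\,Z\cdot\nu+o_{\mathbb P}(\sqrt n).
\]
Since $Z\cdot\nu$ is a standard one-dimensional Gaussian, it follows that $p_n(\nu)\to\mathbb P(Z\cdot\nu\le0)=\frac12$ uniformly on the good set. This gives convergence in $\mu$-measure, and the limit $\frac a2H_\mu(t,x)$ follows.
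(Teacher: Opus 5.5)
Your treatment of the main term $L_\tau[aH_\mu(-n)\mathbf 1_{A_n}]$ is sound. Your local-mass estimate is a valid variant of the paper's. The paper uses the threshold $e^{-\varepsilon\sqrt n}$ and Borel--Cantelli to get $\ln\mu(B(\nu,n^{-1/2}))=o(\sqrt n)$ for $\mu$-a.e.\ $\nu$. Your covering with the polynomial threshold $r^{N+1}$ gives a bad set of $\mu$-measure $O(1/n)$, and convergence of $p_n$ in $\mu$-measure is enough for bounded convergence.

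The gap is the region $A_n^c=\{aH_\mu(-n,\cdot)>1\}$. Handling it is exactly the upper bound $\limsup_n L_{t+n}[\min\{aH_\mu(-n,\cdot),1\}](x)\le\frac a2H_\mu(t,x)$, which is the only direction used afterwards in Proposition~\ref{prop:critical-field-vanishes}.

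First, your claim that $L_\tau[aH_\mu(-n)\mathbf 1_{A_n^c}]$ contributes nothing is false. Since $L_\tau[aH_\mu(-n,\cdot)](x)=aH_\mu(t,x)$ exactly, your own main-term computation forces this term to converge to $\frac a2H_\mu(t,x)$. Dominating $\mathbf 1_{A_n^c}$ by it therefore only gives the trivial bound $aH_\mu(t,x)$.

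Second, the bound $\mathbf 1_{A_n^c}\le(aH_\mu(-n))^\theta$ with $\theta<1$ discards the indicator. As you note yourself, it then produces a factor $e^{f'(0)(1-\theta)^2n}$ that diverges for every fixed $\theta<1$. The remaining step is stated only as an expectation, so the proposal does not prove the lemma.

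The missing step is to tilt with exponent exactly one. Write $\mathbf 1_{A_n^c}=aH_\mu(-n,\cdot)\,(aH_\mu(-n,\cdot))^{-1}\mathbf 1_{A_n^c}$ and apply the same Cameron--Martin identity:
\[
 L_\tau[\mathbf 1_{A_n^c}](x)=a\int_{\Sph}e^{\lambda_*\nu\cdot x+2f'(0)t}\,
 \mathbb E\bigl[e^{-\Lambda_n^\nu}\mathbf 1_{\{\Lambda_n^\nu>0\}}\bigr]\,d\mu(\nu),
 \qquad \Lambda_n^\nu:=\ln\bigl(aH_\mu(-n,Y_n^\nu)\bigr).
\]
The expectation lies in $[0,1]$ and is at most $e^{-K}+\mathbb P(0<\Lambda_n^\nu\le K)$ for every $K>0$. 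On your good set you have $\Lambda_n^\nu=\lambda_*\sqrt{2\tau}\,Z\cdot\nu+o_{\mathbb P}(\sqrt n)$, so this probability tends to zero uniformly. Hence the term vanishes.

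Equivalently, you can avoid the splitting altogether, as the paper does. Write $\min\{aH_\mu,1\}=aH_\mu\,q_n$ with $q_n=\min\{1,1/(aH_\mu)\}\in[0,1]$. After the tilt, $q_n(Y_n^\nu)$ tends to $1$ where $\nu\cdot Z<0$ and to $0$ where $\nu\cdot Z>0$. A single dominated-convergence argument then gives both the upper and the lower bound.
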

\begin{proof}
The result is immediate if $\mu=0$, so assume $M=\mu(\mathbb S^{N-1})>0$.
We first verify that for $\mu$-almost every $\nu$,
\begin{equation}\label{eq:critical-local-mass}
 \frac{\ln\mu(B(\nu,n^{-1/2}))}{\sqrt n}\rightarrow0.
\end{equation}
Equivalently, there exists a Borel set \(E\subset\mathbb S^{N-1}\)
with \(\mu(E)=0\) such that \eqref{eq:critical-local-mass} holds
for every \(\nu\in\mathbb S^{N-1}\setminus E\).
Balls here are intersected with the sphere.

For each $n\geq1$, intersect the sphere with a grid of half-open cubes
of side length $1/(2\sqrt{Nn})$. The nonempty intersections form a
Borel partition $Q_{n,1},\cdots,Q_{n,K_n}$ satisfying
\[
 K_n\leq C_Nn^{N/2},\qquad
 \operatorname{diam}Q_{n,j}\leq\frac1{2\sqrt n}<n^{-1/2}.
\]

Fix $\varepsilon>0$ and define the set of points with small local mass by
\[
 E_{n,\varepsilon}
 :=\{\nu\in\mathbb S^{N-1}:\mu(B(\nu,n^{-1/2}))
                              <e^{-\varepsilon\sqrt n}\}.
\]
If $Q_{n,j}\cap E_{n,\varepsilon}\neq\varnothing$, choose
$\nu\in Q_{n,j}\cap E_{n,\varepsilon}$. The diameter bound gives
\[
 Q_{n,j}\subset B(\nu,n^{-1/2}),\qquad
 \mu(Q_{n,j})\leq\mu(B(\nu,n^{-1/2}))<e^{-\varepsilon\sqrt n}.
\]
Summing over the partition cells that meet $E_{n,\varepsilon}$, we obtain
\[
 \mu(E_{n,\varepsilon})
 \leq\sum_{j:\,Q_{n,j}\cap E_{n,\varepsilon}\neq\varnothing}
       \mu(Q_{n,j})
 \leq C_Nn^{N/2}e^{-\varepsilon\sqrt n}.
\]
In particular, $\sum_{n=1}^{\infty}\mu(E_{n,\varepsilon})<\infty$.

The set of points belonging to infinitely many $E_{n,\varepsilon}$ is
\[
 E_\varepsilon:=\bigcap_{m=1}^{\infty}\bigcup_{n=m}^{\infty}
 E_{n,\varepsilon}.
\]
For every $m\geq1$, countable subadditivity gives
\[
 \mu(E_\varepsilon)\leq\sum_{n=m}^{\infty}\mu(E_{n,\varepsilon})
 \rightarrow0\qquad(m\to\infty),
\]
so $\mu(E_\varepsilon)=0$. Set $E:=\bigcup_{k=1}^{\infty}E_{1/k}$.
Then $\mu(E)=0$. If $\nu\notin E$, for every $k\geq1$ there is
$n_0=n_0(\nu,k)$ such that $\nu\notin E_{n,1/k}$ for all $n\geq n_0$.
Using also $\mu(B(\nu,n^{-1/2}))\leq M$, we obtain
\[
 -\frac1k
 \leq\frac{\ln\mu(B(\nu,n^{-1/2}))}{\sqrt n}
 \leq\frac{\ln M}{\sqrt n}
 \qquad\text{for }n\geq n_0.
\]
Letting first $n\to\infty$ and then $k\to\infty$ proves
\eqref{eq:critical-local-mass} for every $\nu\notin E$.

Fix $a>0$ and $t\in\R$. For fixed \(\nu\in\mathbb S^{N-1}\setminus E\), fixed $x,z\in\mathbb R^N$, and
$y_n=x+2\lambda_*(t+n)\nu+\sqrt{t+n}\,z$, one has
\begin{equation}\label{eq:critical-log-asymptotic}
 \frac{\ln(aH_{\mu}(-n,y_n))}{\sqrt{t+n}}
 \rightarrow\lambda_*\nu\cdot z
 \qquad\hbox{as }n\to\infty.
\end{equation}
For the upper bound, $H_{\mu}(-n,y_n)\leq M e^{-2f'(0)n+\lambda_*|y_n|}$ and
$|y_n|=2\lambda_*(t+n)+\sqrt{t+n}\,\nu\cdot z+O_{x,z}(1)$.
For the lower bound, restrict the  integral defining $H_{\mu}$ to
$|\zeta-\nu|<n^{-1/2}$. For all sufficiently large $n$, one has
$0<t+n\leq2n$, and hence
\begin{align*}
 2f'(0)(t+n)\nu\cdot\zeta
 &=2f'(0)(t+n)-f'(0)(t+n)|\nu-\zeta|^2
   \geq2f'(0)(t+n)-2f'(0),\\
 \lambda_*\sqrt{t+n}\,\zeta\cdot z
 &\geq\lambda_*\sqrt{t+n}\,\nu\cdot z-\sqrt2\lambda_*|z|,
 \qquad \lambda_*\zeta\cdot x\geq-\lambda_*|x|.
\end{align*}
This gives
\[
 H_{\mu}(-n,y_n)\geq\mu(B(\nu,n^{-1/2}))
 e^{2f'(0)t+\lambda_*\sqrt{t+n}\,\nu\cdot z-2f'(0)-\sqrt2\lambda_*|z|-\lambda_*|x|},
\]
which, with \eqref{eq:critical-local-mass} and $(t+n)/n\to1$, proves
\eqref{eq:critical-log-asymptotic}.

Set $q_n(y)=\min\{1,1/(aH_{\mu}(-n,y))\}$. Then, \(\min\{aH_{\mu}(-n,y),1\}=aH_{\mu}(-n,y)q_n(y).\)
A direct calculation gives
\begin{align*}
 \bigl(L_{t+n}[\min\{aH_{\mu}(-n,\cdot),1\}]\bigr)(x)
 &=ae^{2f'(0)t}\int_{\mathbb S^{N-1}}e^{\lambda_*\nu\cdot x}\\
 &\quad\times\left(\int_{\mathbb R^N}(4\pi)^{-N/2}e^{-|z|^2/4}
 q_n(y_n)\,dz\right)d\mu(\nu).
\end{align*}
By \eqref{eq:critical-log-asymptotic}, \(q_n(y_n)\) tends to $1$ if
$\nu\cdot z<0$ and to $0$ if $\nu\cdot z>0$. The hyperplane
$\nu\cdot z=0$ has Lebesgue measure zero. Since $0\leq q_n\leq1$,
dominated convergence gives, for $\mu$-almost every $\nu$,
\begin{align*}
 I_n(\nu)&:=\int_{\mathbb R^N}(4\pi)^{-N/2}e^{-|z|^2/4}
 q_n(y_n)\,dz\\
 &\rightarrow\int_{\{z:\,\nu\cdot z<0\}}
 (4\pi)^{-N/2}e^{-|z|^2/4}\,dz=\frac12.
\end{align*}
Moreover,
\[
 0\leq e^{\lambda_*\nu\cdot x}I_n(\nu)
 \leq e^{\lambda_*|x|},\qquad \mu(\mathbb S^{N-1})<\infty.
\]
Applying dominated convergence to the outer integral therefore yields
\[
 \bigl(L_{t+n}[\min\{aH_{\mu}(-n,\cdot),1\}]\bigr)(x)
 \rightarrow\frac a2e^{2f'(0)t}
 \int_{\mathbb S^{N-1}}e^{\lambda_*\nu\cdot x}\,d\mu(\nu)
 =\frac a2H_{\mu}(t,x).
\]
as $n\rightarrow +\infty$.
\end{proof}

The preceding Gaussian estimate rules out a nonzero nonlinear entire
solution bounded above by a critical exponential field.

\begin{proposition}\label{prop:critical-field-vanishes}
If an entire solution $0\leq V\leq1$ of \eqref{eq:kpp} satisfies $V(t,x)\leq H_{\mu}(t,x)$ with $H_{\mu}$ defined by \eqref{def-h}, then $V\equiv0$.
\end{proposition}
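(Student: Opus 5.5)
We plan to iterate the halving in Lemma~\ref{lem:critical-gaussian-half}. Since $V\leq1$ and $V\leq H_\mu$, we have $V(-n,\cdot)\leq\min\{H_\mu(-n,\cdot),1\}$ for every $n$. The first step is to transfer this bound forward in time. Comparison with the Cauchy evolution from time $-n$, together with $f(u)\leq f'(0)u$ (so $S_\tau\leq L_\tau$ on nonnegative data), gives for $t>-n$
\[
 V(t,x)=\bigl(S_{t+n}[V(-n,\cdot)]\bigr)(x)\leq\bigl(L_{t+n}[\min\{H_\mu(-n,\cdot),1\}]\bigr)(x).
\]
Letting $n\to\infty$ and applying Lemma~\ref{lem:critical-gaussian-half} with $a=1$ yields $V\leq\tfrac12H_\mu=H_{\mu/2}$ on $\R\times\R^N$.

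The second step is the iteration. The hypothesis now holds with $\mu$ replaced by $\mu/2$, since $H_{\mu/2}$ is again of the form \eqref{def-h}. Repeating the first step with general amplitude $a$, or equivalently with the measure $2^{-k}\mu$, gives
\[
 V\leq2^{-k}H_\mu\qquad\text{for every }k\geq1.
\]
Letting $k\to\infty$ gives $V\equiv0$, because $H_\mu(t,x)<\infty$ at each point.

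The step requiring care is the first one, namely that comparison holds for the linear evolution with the capped datum. This is routine, since $\min\{H_\mu,1\}$ is bounded and the concave extension keeps solutions with data in $[0,1]$ inside $[0,1]$. The real difficulty, the Gaussian half-space computation, is already contained in Lemma~\ref{lem:critical-gaussian-half}. We also note that the lemma is stated for integer $n$, which is exactly the sequence of times used in the first step, so no additional interpolation is needed.
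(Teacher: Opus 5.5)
Your proposal is correct and matches the paper's proof: you bound $V(-n,\cdot)$ by the capped field, dominate the nonlinear evolution by the linear one, apply Lemma~\ref{lem:critical-gaussian-half} to halve the amplitude, and iterate. The only difference is cosmetic. You rescale the measure to $2^{-k}\mu$ where the paper carries a general amplitude $a$, and the two are equivalent because $H_{2^{-k}\mu}=2^{-k}H_\mu$.
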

\begin{proof}
More generally, suppose $V\leq aH_{\mu}$ for a constant $a>0$. Remember that $\partial_t H_{\mu}-\Delta H_{\mu}=f'(0)H_{\mu}$ and $aH_{\mu}$ is a supersolution of \eqref{eq:kpp}.
For fixed $(t,x)$ and $n>\max\{0,-t\}$, the comparison principle gives
\[
 V(t,x)\leq \bigl(L_{t+n}[\min\{aH_{\mu}(-n,\cdot),1\}]\bigr)(x).
\]
Lemma~\ref{lem:critical-gaussian-half} gives
$V(t,x)\leq (a/2)H_{\mu}(t,x)$.
Iteration forces $V(t,x)=0$.
\end{proof}

Consequently, evolution from capped critical exponential data
converges to zero as the initial time tends to the remote past.

\begin{corollary}\label{cor:critical-field-pullback}
For every $a>0$, it holds
\begin{equation}\label{eq:general-critical-extinction}
 S_{t+T}[\min\{aH_{\mu}(-T,\cdot),1\}]\rightarrow0
 \qquad\hbox{in $C^{1,2}_{\mathrm{loc}}(\R\times\R^N)$ as }T\to\infty.
\end{equation}
\end{corollary}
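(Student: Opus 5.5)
The approach is to show that these pullback approximations are monotone in $T$, so that they have an entire limit, and then to identify that limit as zero using Proposition~\ref{prop:critical-field-vanishes}. Write $g_T:=\min\{aH_{\mu}(-T,\cdot),1\}$ and $u_T(t,\cdot):=S_{t+T}[g_T]$ for $t>-T$.

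The first step is to check that $\min\{aH_\mu,1\}$ is a supersolution of \eqref{eq:kpp}. It is the minimum of two supersolutions. The function $aH_\mu$ is one because $f(u)\le f'(0)u$ for $u\ge0$ under the concave extension. The constant $1$ is one because $f(1)=0$. Hence, for $T_2>T_1$, comparison on $[-T_2,-T_1]$ gives
\[
 S_{T_2-T_1}[g_{T_2}]\le \min\{aH_\mu(-T_1,\cdot),1\}=g_{T_1}.
\]
Evolving this inequality to time $t$ with the semigroup property yields $u_{T_2}(t,\cdot)\le u_{T_1}(t,\cdot)$. So for each fixed $(t,x)$ the sequence $u_T(t,x)$ is nonincreasing in $T$ and satisfies $0\le u_T\le 1$.

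The second step passes to the limit. The monotone limit $V(t,x)=\lim_{T\to\infty}u_T(t,x)$ exists. Interior parabolic estimates, applied uniformly because $0\le u_T\le1$ on any compact cylinder once $T$ is large, give convergence in $C^{1,2}_{\mathrm{loc}}(\R\times\R^N)$ along the full family. This is the same compactness argument used for the pullbacks in \eqref{eq:basic-pullbacks}. The limit $V$ is therefore an entire solution of \eqref{eq:kpp} with $0\le V\le1$.

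The third step bounds $V$ by the critical field. From the supersolution property, $u_T(t,\cdot)\le\min\{aH_\mu(t,\cdot),1\}\le aH_\mu(t,\cdot)$. Letting $T\to\infty$ gives $V\le aH_\mu=H_{a\mu}$. Since $a\mu$ is again a finite positive Borel measure on $\Sph$, Proposition~\ref{prop:critical-field-vanishes} gives $V\equiv0$. Because the limit is identified independently of the subsequence, the whole family converges to $0$ in $C^{1,2}_{\mathrm{loc}}$.

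None of these steps is a real obstacle once Lemma~\ref{lem:critical-gaussian-half} and Proposition~\ref{prop:critical-field-vanishes} are available. The only points needing care are routine. One is that the supersolution property of the minimum holds in the weak (viscosity or comparison) sense, which suffices for the comparison principle. The other is that $T$ may range over reals rather than integers; monotonicity in $T$ removes this distinction.
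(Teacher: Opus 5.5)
Your proposal is correct and follows essentially the same route as the paper. Both arguments show that $\min\{aH_\mu,1\}$ is a supersolution, take the decreasing pullback limit to obtain an entire solution lying below $aH_\mu$, and then invoke Proposition~\ref{prop:critical-field-vanishes} to conclude that the limit is zero. Your version simply writes out the monotonicity and compactness details that the paper leaves implicit.
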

\begin{proof}
Since $aH_{\mu}$ solves the linearized equation and
$f'(0)aH_{\mu}-f(aH_{\mu})\geq0$ on $\{aH_{\mu}<1\}$, the function
$\min\{aH_{\mu},1\}$ is a supersolution. The decreasing limit of $S_{t+T}[\min\{aH_{\mu}(-T,\cdot),1\}]$ exists and is an entire solution of \eqref{eq:kpp} which is less than
$aH_{\mu}$. Then, the limit is $0$ by Proposition~\ref{prop:critical-field-vanishes}.
\end{proof}

We next recall the solutions generated by finite families of traveling fronts
and establish their uniqueness between the corresponding barriers.

Write
\[
 X:=\bigl(\Sph\times[c_*,+\infty)\bigr)\cup\{\infty\},\qquad
 \lambda(c):=\frac{c-\sqrt{c^2-4f'(0)}}2\quad\text{for }c\geq c_*.
\]
The point $\infty$ represents the spatially homogeneous solution.
Let
\[
 \vartheta=\sum_{i=1}^{m}a_i\delta_{(\nu_i,c_i)}+a_0\delta_\infty
\]
be a finite positive atomic measure on $X$, where $m\geq0$,
$a_0\geq0$, and $a_i>0$, $\nu_i\in\Sph$, $c_i\geq c_*$ for
$i=1,\cdots,m$. Assume that the pairs $(\nu_i,c_i)$ are distinct.
Let $h_i$ be the phases corresponding to the weights $a_i$ in the
Hamel--Nadirashvili parameterization identified in
the proof of Theorem~\ref{thm:main-closure}, and set
\[
 v_i(t,x)=\phi_{c_i}(x\cdot\nu_i+c_it+h_i),
 \qquad i=1,\cdots,m.
\]
If $a_0>0$, also include the homogeneous solution
$v_0(t,x)=\xi_f(t+h_0)$, where $h_0$ corresponds to $a_0$.
All maxima and sums below range over $i=1,\cdots,m$, with the index
$0$ included precisely when $a_0>0$.
Distinct speeds in the same direction are allowed.
The maximum of the components is a bounded subsolution.
Concavity and $f(0)=0$ make their sum a supersolution wherever it
is below one; hence its minimum with one is a bounded
supersolution. The pullbacks from the maximum therefore increase,
remain below the capped sum, and have an entire limit $u_\vartheta$
satisfying
\[
 \max_i v_i\leq u_\vartheta\leq\min\{\textstyle\sum_i v_i,1\}.
\]
This is the finite-front construction of
\cite[Theorem~1.1 and Section~2]{HN2001}, namely
\begin{equation}\label{eq:transfer-packet-pullback}
 u_\vartheta(t,x)
 =\lim_{T\to\infty}\bigl(S_{t+T}[\max_i v_i(-T,\cdot)]\bigr)(x).
\end{equation}
We record the uniqueness property of this solution.

\begin{theorem}
\label{thm:packet}\label{lem:packet}
Any entire solution $U$ satisfying
\[
 \max_i v_i\leq U\leq\min\{\textstyle\sum_i v_i,1\}
\]
coincides with
$u_\vartheta$.
\end{theorem}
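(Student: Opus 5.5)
The plan is to apply the asymptotic uniqueness result, Theorem~\ref{thm:asymptotic-uniqueness}, to the pair $X=U$, $Y=u_\vartheta$. Both lie between the same barriers. So, with $Z:=\sum_i v_i$, we have $Z/(m+1)\le \max_i v_i\le U,u_\vartheta\le Z$, where the number of components is at most $m+1$. Hence $cZ\le U,u_\vartheta\le CZ$. It then suffices to show that
\[
|U-u_\vartheta|\le \min\{\textstyle\sum_i v_i,1\}-\max_i v_i=o(Z)
\qquad\text{uniformly as }Z\searrow0 .
\]
This is false in general: at an interaction point two fronts have comparable small values, and there the gap between $\max$ and the sum is of order $Z$. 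A direct squeeze therefore fails, and I will instead compare both solutions with a common refined object.

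The refined approach is to use the nonlinear superposition from Section~\ref{sec:pde-dilations}. Each component $v_i$ lies in $\mathcal C$, so $W:=v_0\oplus v_1\oplus\cdots\oplus v_m$ is well defined. By Lemma~\ref{lem:addition-low-value} and associativity, $W=Z+o(Z)$ as $Z\searrow0$. I will prove that any entire $U$ between the barriers equals $W$, in two one-sided steps.

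\emph{Upper bound $U\le W$.} The superposition $W$ is the greatest entire solution below the ordinary sum (comparison from $-T$ with data $\sum_i v_i(-T)$, using that sums of solutions are supersolutions). Since $U\le \min\{Z,1\}\le Z$, comparison from $-T$ gives $U(t)\le S_{t+T}[Z(-T)]$, and letting $T\to\infty$ yields $U\le W$.

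\emph{Lower bound $W\le U$.} This is the main obstacle, since $U\ge\max_i v_i$ carries only max information. I would use Theorem~\ref{thm:asymptotic-uniqueness} with $X=W$, $Y=U$, which requires $(W-U)_+\le U\omega(U)$. For this I need $U\ge Z-o(Z)$ in the small region, i.e.\ that the fronts are asymptotically additive wherever $U$ is small. The idea is that in the region where $Z$ is small, the fronts with distinct pairs $(\nu_i,c_i)$ have leading edges with different exponential rates or directions, given by $\lambda(c_i)$ and $\nu_i$. I would try to show that off a set where one component dominates all others by a factor $\to\infty$, $Z$ cannot be small in the remote past. More precisely, I would check that $\max_iv_i/Z\to1$ uniformly as $Z\to0$. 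Then $U\ge\max v_i=Z+o(Z)$ and $U\le Z$ give $|U-W|=o(Z)$, and the second assertion of Theorem~\ref{thm:asymptotic-uniqueness} concludes.

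The dominance claim is where the work lies. Heuristically, the set where two components $v_i,v_j$ are comparable is a region where $(\lambda(c_i)\nu_i-\lambda(c_j)\nu_j)\cdot x+(\lambda(c_i)c_i-\lambda(c_j)c_j)t=O(1)$, and one checks via the dispersion relation that $Z$ stays bounded below there after accounting for the other fronts. If this fails, for instance when $f(u)=f'(0)u$ near $0$ or when the interaction zones do reach small values, I would fall back to the Ces\`aro argument. That means estimating $(W-U)_+\le W-\max v_i$ and proving directly, via Lemma~\ref{lem:cesaro-error} with $\omega$ replaced by the relative defect, that its linearized average vanishes. Failing that, I would use the subtraction $W\ominus$ and order cancellation (Proposition~\ref{prop:cone-cancellation}) to reduce to a single front, for which $\phi_c$-uniqueness between $v_i$ and $v_i$ is trivial. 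Finally, $u_\vartheta$ itself lies between the barriers, so $u_\vartheta=W=U$.
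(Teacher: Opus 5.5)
\textbf{There is a genuine gap.} Your upper bound $U\leq W$ is essentially fine. The lower bound $W\leq U$ carries the whole content of the theorem, and it is not established; the route you propose for it cannot work.

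The dominance claim, that $\max_iv_i/Z\to1$ uniformly as $Z\searrow0$, is false. The reason is exactly the one you gave in your first paragraph. In $N=1$, take $v_1=\phi_c(x+ct)$ and $v_2=\phi_c(-x+ct)$ with $c>c_*$. At $x=0$ the ratio equals $1/2$ for every $t$, while $Z=2\phi_c(ct)\to0$ as $t\to-\infty$. Interaction zones of distinct fronts reach arbitrarily small values in the remote past, so no argument based on $U\geq Z-o(Z)$ is available.

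The Ces\`aro fallback fails for the same reason. Lemma~\ref{lem:cesaro-error} only handles weights of the form $U\omega(U)$ with $\omega$ continuous and $\omega(0)=0$. The relative defect $(W-\max_iv_i)/U$ stays of order one along these interaction zones as $U\to0$. The third fallback (reduce to a single front by cancellation) is not an argument.

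Nothing in your proposal uses the distinctness of the pairs $(\nu_i,c_i)$ quantitatively, yet the statement fails without it. For example, two phase-shifted copies of one front admit a continuum of fronts between the barriers.

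\textbf{The missing ingredient} is exponential decay of the linear evolution of the pairwise overlaps. At a given point choose $k$ with $v_k=\max_iv_i$; then
\[
 0\leq\min\Bigl\{\sum_iv_i,1\Bigr\}-\max_iv_i\leq\sum_{i\neq k}v_i\leq\sum_{i\neq k}\sqrt{v_iv_k}\leq\sum_{i<j}\sqrt{v_iv_j}.
\]
The tails give
\[
 \sqrt{v_iv_j}(-T,y)\leq K(1+T+|y|)\exp\bigl(\tfrac12(\lambda(c_i)\nu_i+\lambda(c_j)\nu_j)\cdot y-\tfrac12(c_i\lambda(c_i)+c_j\lambda(c_j))T\bigr),
\]
where $v_0$ contributes the vector $0$ and the rate $f'(0)$. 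The identity $c\lambda(c)=f'(0)+\lambda(c)^2$ turns the Gaussian computation into
\[
 \sup_{(t,x)\in[-R,R]\times\overline B_R}L_{T+t}\bigl[\sqrt{v_i(-T,\cdot)v_j(-T,\cdot)}\bigr](x)\leq K_{i,j,R}(1+T)e^{-|\lambda(c_i)\nu_i-\lambda(c_j)\nu_j|^2T/4}.
\]
This is where distinctness enters: $\lambda$ is positive and strictly decreasing, so distinct pairs give distinct vectors $\lambda(c_i)\nu_i$.

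With this estimate, apply \eqref{eq:general-cauchy-stability} to any two solutions $U,V$ between the barriers. It yields
\[
 |U-V|(t,x)\leq\sum_{i<j}L_{T+t}\bigl[\sqrt{v_i(-T,\cdot)v_j(-T,\cdot)}\bigr](x)\to0,
\]
which is the paper's proof. The superposition $W$ and Theorem~\ref{thm:asymptotic-uniqueness} are not needed.

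Within your own framework, the same estimate would also close the lower bound. Indeed, $(W-U)_+\leq\min\{Z,1\}-\max_iv_i$. Kato's inequality, together with concavity and $f'(U)\leq f'(0)$, gives $(W-U)_+(t)\leq L_{t+T}[(W-U)_+(-T)]$.
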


\begin{proof}
Let $U,\,V$ be two entire solutions satisfying these bounds.
Fix $(t,x)$ and choose $k$ such that $v_k(t,x)=\max_i v_i(t,x)$.
Suppressing the variables, we have $v_k\leq\sum_i v_i$ and $v_k\leq1$.
Moreover, $0\leq v_i\leq v_k$ implies $v_i\leq\sqrt{v_iv_k}$.
Consequently,
\[
 0\leq\min\{\textstyle\sum_i v_i,1\}-\max_i v_i
 \leq\sum_{i\ne k}v_i\leq\sum_{i\ne k}\sqrt{v_iv_k}
 \leq\sum_{i<j}\sqrt{v_iv_j}.
\]
Estimate \eqref{eq:general-cauchy-stability} gives,
for $T>-t$,
\[
 |U(t,x)-V(t,x)|
 \leq\sum_{i<j}\bigl(L_{T+t}[\sqrt{v_i(-T,\cdot)v_j(-T,\cdot)}]\bigr)(x).
\]
The normalized tails established in \eqref{eq:intro-tail}--\eqref{eq:intro-wave-slope},
together with $0<\phi_c<1$, give
\[
 \phi_c(s)\leq C_c e^{\lambda(c)s}\quad\text{for }c>c_*,
 \qquad
 \phi_*(s)\leq C(1+|s|)e^{\lambda_*s},
 \qquad\text{for }s\in\R.
\]
Substituting $s=y\cdot\nu_i-c_iT+h_i$ and using
$1+|s|\leq1+|y|+c_iT+|h_i|$, we obtain, for $T\geq1$ and $y\in\R^N$,
\[
 v_i(-T,y)=\phi_{c_i}(y\cdot\nu_i-c_iT+h_i)\leq K_i(1+T+|y|)
 e^{\lambda(c_i)y\cdot\nu_i-c_i\lambda(c_i)T}.
\]
Here $K_i$ absorbs the fixed phase factor $e^{\lambda(c_i)h_i}$
and is independent of $T$ and $y$. The polynomial factor is only needed for critical fronts.
If the homogeneous solution is present, its normalized tail
\eqref{eq:homogeneous-normalization} gives
$v_0(-T,y)=\xi_f(-T+h_0)\leq K_0e^{-f'(0)T}$.
Using $c_i\lambda(c_i)=f'(0)+\lambda(c_i)^2$ and the definition of $L_{t}$, we obtain, for $1\leq i<j\leq m$,
every fixed $R>0$, and all sufficiently large $T$,
\[
 \sup_{(t,x)\in[-R,R]\times\overline B_R}
 \bigl(L_{T+t}[\sqrt{v_i(-T,\cdot)v_j(-T,\cdot)}]\bigr)(x)\leq K_{i,j,R}(1+T)
 e^{-|\lambda(c_i)\nu_i-\lambda(c_j)\nu_j|^2T/4}.
\]
For a pair involving the homogeneous solution, the same computation gives
\[
 \sup_{(t,x)\in[-R,R]\times\overline B_R}
 \bigl(L_{T+t}[\sqrt{v_0(-T,\cdot)v_i(-T,\cdot)}]\bigr)(x)
 \leq K_{0,i,R}(1+T)e^{-\lambda(c_i)^2T/4}.
\]
Since $\lambda(c)$ is positive and strictly decreasing for $c\geq c_*$,
the distinct pairs $(\nu_i,c_i)$ give distinct vectors
$\lambda(c_i)\nu_i$. Letting $T\to\infty$ in above inequalities gives $U=V$.
\end{proof}

We now prove approximation by solutions associated with finite atomic measures
under a finiteness condition on the linear evolution from the remote past.

\begin{theorem}\label{thm:general-finite-lift}
Let $U$ be a classical entire solution of \eqref{eq:kpp} with
$0<U<1$ on $\R\times\R^N$, and suppose that
\begin{equation}\label{eq:general-finite-lift-hypothesis}
 \sup_{T>0}\bigl(L_T[U(-T,\cdot)]\bigr)(0)<\infty.
\end{equation}
Then there exists a sequence of finite positive measures on $X$ of the form
\begin{equation}\label{eq:finite-hn-atomic-measures}
 \vartheta_k
 =\sum_{i=1}^{m_k}a_{k,i}\delta_{(\nu_{k,i},c_{k,i})}
   +a_{k,0}\delta_\infty,
\end{equation}
where $m_k\geq0$ is an integer, $a_{k,0}\geq0$, and
\[
 a_{k,i}>0,\qquad \nu_{k,i}\in\Sph,\qquad c_{k,i}\geq c_*
 \qquad\text{for }i=1,\cdots,m_k,
\]
with the pairs $(\nu_{k,i},c_{k,i})$ pairwise distinct for each $k$,
such that 
\[
 u_{\vartheta_k}\rightarrow U
 \qquad\text{in }C^{1,2}_{\mathrm{loc}}(\R\times\R^N)
 \quad\text{as }k\to\infty.
\]
\end{theorem}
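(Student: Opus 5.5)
We follow the route announced in the introduction: build the linear limit $H$, represent it by a measure, recover $U$ from $H$, and approximate the measure by atoms.

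\emph{Step 1: the linear limit $H$.} Set $H_T(t,\cdot):=L_{t+T}[U(-T,\cdot)]$ for $t>-T$. Since $f(u)\le f'(0)u$, the solution $U$ is a subsolution of the linearized equation. Hence $H_T$ is nondecreasing in $T$ and satisfies $H_T\ge U$. By hypothesis \eqref{eq:general-finite-lift-hypothesis} the family is bounded at $(0,0)$. A parabolic Harnack inequality for nonnegative solutions of $w_t=\Delta w+f'(0)w$ then bounds $H_T$ locally uniformly in $t$ and $x$. The monotone limit $H=\lim_{T\to\infty}H_T$ is therefore a finite nonnegative entire solution of the linearized equation with $H\ge U$. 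By \eqref{eq:widder-input} we may write
\[
 H(t,x)=\int_{\R^N}e^{\lambda_*x\cdot\xi+f'(0)(1+|\xi|^2)t}\,d\mu(\xi)
\]
for a finite measure $\mu$.

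\emph{Step 2: the reconstruction $U=\lim_{T\to\infty}S_{t+T}[\min\{H(-T),1\}]$.} Since $\min\{H,1\}$ is a supersolution lying above $U$, the pullbacks decrease to an entire solution $\widetilde U\ge U$. For the reverse inequality, apply \eqref{eq:general-cauchy-stability} on $[-T,-T']$:
\[
 |S[\min\{H(-T),1\}]-S[U(-T)]|\le L[H(-T)-U(-T)].
\]
This only shows that the error is at most $H-H_{T'}$ at time $-T'$, evaluated in the past. So we need a sharper comparison, and we would first try the following. Write $H=U+R$ with $R:=\lim_{T\to\infty}L_{t+T}[\,\cdot\,]$ applied to the defect $F_1(U)$, so that $R(t)=\int_{-\infty}^tL_{t-s}[F_1(U(s))]\,ds$. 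Then $\widetilde U\le U\oplus(\text{pullback of }R)$ by \eqref{eq:general-subadditivity}. Using \eqref{eq:general-quadratic-error}, $F_1(U)\le MU^2$, we aim to show that $R$ is $o(H)$ in the regime where $H$ is small. Asymptotic uniqueness (Theorem~\ref{thm:asymptotic-uniqueness}) would then give $\widetilde U=U$. The quantitative small-amplitude comparison needed here is the main obstacle. If it fails in this form, we would instead argue with the operator of Lemma~\ref{lem:cesaro-error} applied to the difference $\widetilde U-U$.

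\emph{Step 3: split $\mu$ and approximate by atoms.} Decompose $\mu=\mu_{<1}+\mu_{=1}+\mu_{>1}$ according to $|\xi|<1$, $|\xi|=1$, $|\xi|>1$. An atom at $\xi$ with $|\xi|<1$ corresponds to a front of speed $c>c_*$, since $\lambda(c)\nu=\lambda_*\xi$ and $c\lambda(c)=f'(0)(1+|\xi|^2)$. An atom at $\xi=0$ corresponds to the homogeneous solution. The parts $\mu_{>1}$ and $\mu_{=1}$ must be handled separately:
\begin{itemize}
\item $\mu_{>1}$ must vanish. Its contribution grows too fast in space to stay compatible with $U\le1$ after capping, and the same Gaussian argument as in Lemma~\ref{lem:critical-gaussian-half} shows that it produces no contribution.
\item $\mu_{=1}$ gives a critical field $H_{\mu_{=1}}$. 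By Corollary~\ref{cor:critical-field-pullback} and subadditivity \eqref{eq:general-subadditivity}, it can be dropped from the reconstruction.
\end{itemize}
Hence $U$ is reconstructed from $\mu_{<1}$ alone. Next choose finite atomic measures $\mu_k\to\mu_{<1}$ weakly, supported in $\overline B_{1-1/k}$, with the tail mass near $|\xi|=1$ cut off. Each atom yields a planar front $\phi_c$ whose leading coefficient matches the exponential, by \eqref{eq:intro-tail} and \eqref{eq:homogeneous-normalization}. Theorem~\ref{thm:packet} then identifies the pullback of $\min\{H_{\mu_k},1\}$ with $u_{\vartheta_k}$: both satisfy the two barrier inequalities and the asymptotic matching given by Theorem~\ref{thm:asymptotic-uniqueness}.

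\emph{Step 4: convergence.} Convergence $u_{\vartheta_k}\to U$ follows from three ingredients. Stability \eqref{eq:general-cauchy-stability} controls the error in finite time by $L[|H_{\mu_k}-H_{\mu}|]=|H_{\mu_k}-H_\mu|$ evaluated at the present time, which is locally small. Monotonicity in the measure handles the truncation. The general principle that the pullback is continuous under increasing limits of the data gives the rest. Finally, a diagonal extraction yields the sequence $\vartheta_k$ required in the statement.
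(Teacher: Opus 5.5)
Your outline (build $H$, represent it by a measure, reconstruct $U$, approximate by atoms) is the paper's, but two steps fail as written, and the second is where the real content of the theorem lies.

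\textbf{Step~2 is left open because of a misreading of the stability estimate.} Apply \eqref{eq:general-cauchy-stability} on the whole interval from $-T$ to $t$, not on $[-T,-T']$. Four facts are available: $U(-T)\le\min\{H(-T),1\}$; $S_{t+T}[U(-T)]=U(t)$; $L_{t+T}[H(-T)]=H(t)$ by the representation of $H$; and $L_{t+T}[U(-T)]=H_T(t)$ by definition. Together they give
\[
 0\le S_{t+T}\bigl[\min\{H(-T,\cdot),1\}\bigr]-U(t,\cdot)
 \le L_{t+T}\bigl[H(-T,\cdot)-U(-T,\cdot)\bigr]=H(t,\cdot)-H_T(t,\cdot)\rightarrow0 .
\]
The error is evaluated at the present time, so $V_H=U$ is immediate. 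The detour through $R=o(H)$ and asymptotic uniqueness, which you do not carry out, is unnecessary.

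Two smaller points. A Harnack inequality only bounds $H_T$ at earlier times in terms of $H_T(0,0)$. For $t>0$ you need $\partial_t\ln U\le2f'(0)$, which gives $H_T(t,0)\le e^{2f'(0)|t|}H_{T+t}(0,0)$. Also, $\mu(\R^N\setminus\overline B_1)=0$ follows at once by passing $|\nabla\ln U|\le\lambda_*$ to $H_T$ and $H$; no capping argument is needed.

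\textbf{Step~4, and the identification in Step~3, contain the serious gap.} The identity $L[|H_{\mu_k}-H_\mu|]=|H_{\mu_k}-H_\mu|$ is false: $L_{t+T}$ is linear and positive but does not commute with absolute values. Take $\mu=\delta_\xi$ and $\mu_k=\delta_{\xi_k}$ with $\xi_k\ne\xi$. Then $L_{t+T}\bigl[\sqrt{H_{\mu_k}(-T,\cdot)H_\mu(-T,\cdot)}\bigr]=O(e^{-f'(0)|\xi-\xi_k|^2T/4})$. Hence $L_{t+T}[|H_{\mu_k}(-T,\cdot)-H_\mu(-T,\cdot)|]\to H_{\mu_k}(t)+H_\mu(t)$, so no cancellation survives the pullback. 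Likewise, continuity of the pullback under increasing data is an interchange of $\sup_r$ and $\inf_T$ that needs proof. Nothing in your argument shows any of the following:
\begin{itemize}
\item that the capped pullback of a single subcritical exponential is nonzero (for a critical one it is zero, by Corollary~\ref{cor:critical-field-pullback});
\item that this pullback equals the front $\phi_c$;
\item that $V_h\le\min\{\sum_jv_j,1\}$, which Theorem~\ref{thm:packet} requires.
\end{itemize}

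\textbf{The missing idea is a power barrier at a fixed radius $r<1$.} Let $h$ be generated by a measure on $\overline B_r$, and take $1<p\le2$ with $pr^2<1$. Jensen's inequality and a Gaussian integration give $L_{t+T}[h(-T)^p]\le e^{-c(t+T)}h(t)^p$ with $c=f'(0)(p-1)(1-pr^2)>0$. Moreover $(h-Ah^p)_+$ is a subsolution for $A\ge\max\{1,M/c\}$, independent of the measure. This gives the band $0\le h-V_h\le Ah^p$ and uniqueness of entire solutions within it. From these follow:
\begin{itemize}
\item weak continuity of $\zeta\mapsto V_{H_\zeta}$ on $\overline B_r$, by compactness plus uniqueness;
\item the single-atom identification, since translation invariance makes $V_h$ a wave with tail $be^{\lambda(c)z}+O(e^{p\lambda(c)z})$;
\item the upper HN barrier, via $\min\{h,1\}-\min\{\sum_jv_j,1\}\le\sum_j(h_j-v_j)\le Ah^p$;
\item the limit $r\nearrow1$, through $0\le U-V_{H_r}\le H-H_r$.
\end{itemize}
This last bound tends to zero only if $\mu(\partial B_1)=0$, which you must prove rather than merely drop the critical part from the reconstruction. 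It follows from $U=V_H=V_{H_{<1}}\le H_{<1}$, which gives $H_T\le H_{<1}$ and hence $H\le H_{<1}$.
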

\begin{proof}
We give the construction and each limiting step.

\emph{Step 1: the associated linear solution and exact reconstruction.}
Put $H_T(t,\cdot)=L_{t+T}[U(-T,\cdot)]$. The equation gives
\[
 \partial_TH_T=L_{t+T}[(f'(0)U-f(U))(-T,\cdot)]\geq0,
 \qquad
 H_T-U=\int_{-T}^tL_{t-s}[(f'(0)U-f(U))(s,\cdot)]\,ds.
\]
To obtain a local bound for $H_T$, Theorem~\ref{thm:gkpp-ancient}
first gives
$U(-T,y+x)\leq e^{\lambda_*|x|}U(-T,y)$.
By positivity and translation invariance of the Gaussian convolution,
\begin{equation}\label{H_T-spatial}
 H_T(t,x)=\bigl(L_{t+T}[U(-T,\cdot+x)]\bigr)(0)\leq e^{\lambda_*|x|}\bigl(L_{t+T}[U(-T,\cdot)]\bigr)(0)
 =e^{\lambda_*|x|}H_T(t,0).
\end{equation}
Next, the estimate $0<\partial_t\ln U\leq2f'(0)$ gives
\[
 U(-T,y)\leq e^{2f'(0)|t|}U(-T-t,y).
\]
Applying the same positive operator $L_{T+t}$ to both sides yields
\[
 H_T(t,0)\leq e^{2f'(0)|t|}\bigl(L_{T+t}[U(-T-t,\cdot)]\bigr)(0)=e^{2f'(0)|t|}H_{T+t}(0,0).
\]
Thus, for every $R>0$ and $T>R$,
\[
 \sup_{(t,x)\in[-R,R]\times\overline B_R}H_T(t,x)
 \leq e^{(\lambda_*+2f'(0))R}
 \sup_{S>0}\bigl(L_S[U(-S,\cdot)]\bigr)(0)<\infty
\]
by \eqref{eq:general-finite-lift-hypothesis}. This bound is independent
of $T$. Linear interior estimates and
monotonicity give $H_T\to H$ in
$C^\infty_{\mathrm{loc}}(\R\times\R^N)$, with $H$ finite.
Applying the positive heat representation underlying
\eqref{eq:widder-input} to the positive heat solution
$e^{-f'(0)t}H(t,x)$, and scaling its spectral variable by $\lambda_*$, we obtain
a finite positive Borel measure $\mu$ on the dimensionless spectral space $\R^N$ such that
\[
 H(t,x)=\int_{\R^N}
 e^{\lambda_*x\cdot\xi+f'(0)(1+|\xi|^2)t}\,d\mu(\xi),
 \qquad \mu(\R^N)=H(0,0)<\infty.
\]
We next show that $\mu$ is supported in $\overline B_1$.
Passing to the limit in \eqref{H_T-spatial} gives
$H(0,se)\leq e^{\lambda_*s}H(0,0)$ for $s\geq0$ and every unit vector $e$.
For any $\varepsilon>0$, the integral representation therefore yields
\[
 e^{\lambda_*s(1+\varepsilon)}
 \mu\bigl(\{\xi:e\cdot\xi\geq1+\varepsilon\}\bigr)
 \leq H(0,se)\leq e^{\lambda_*s}H(0,0).
\]
Dividing by $e^{\lambda_*s(1+\varepsilon)}$ and letting $s\to\infty$
gives $\mu(\{\xi:e\cdot\xi\geq1+\varepsilon\})=0$.
 Taking a countable dense set of unit vectors $e$ and positive
rational $\varepsilon$, we conclude that
$\mu(\R^N\setminus\overline B_1)=0$, and the representation reduces to
\[
 H(t,x)=\int_{\overline B_1}
 e^{\lambda_*x\cdot\xi+f'(0)(1+|\xi|^2)t}\,d\mu(\xi),
 \qquad \mu(\overline B_1)=H(0,0)<\infty.
\]
In particular, $L_{t+T}[H(-T,\cdot)]=H(t,\cdot)$ for every
$T>\max\{0,-t\}$.

Since $H_t-\Delta H-f(H)=F_1(H)\geq0$ on $\{H<1\}$ and $f(1)=0$,
$\min\{H,1\}$ is a supersolution. Hence, the following limit exists and is an entire solution
\[
 V_H(t,\cdot):=\lim_{T\to\infty}
 S_{t+T}[\min\{H(-T,\cdot),1\}],
\]
with convergence in $C^{1,2}_{\mathrm{loc}}(\R\times\R^N)$.
Since $U\leq H$ and $U\leq1$, we have
$U(-T,\cdot)\leq\min\{H(-T,\cdot),1\}$. For
$T>\max\{0,-t\}$, the comparison principle and \eqref{eq:general-cauchy-stability} therefore give
\begin{align*}
 0\leq S_{t+T}[\min\{H(-T,\cdot),1\}]-U(t,\cdot) \leq L_{t+T}[H(-T,\cdot)-U(-T,\cdot)]=H(t,\cdot)-H_T(t,\cdot).
\end{align*}
Letting $T\to\infty$ and using $H_T\to H$ yields $V_H=U$.

\emph{Step 2: reconstruction inside the critical sphere and exclusion of boundary mass.}
For a finite positive Borel measure $\eta$ supported in $\overline B_r$,
where $0\leq r<1$, define
\begin{equation}\label{def-h-B}
 h(t,x):=\int_{\overline B_r}
 e^{\lambda_*x\cdot\xi+f'(0)(1+|\xi|^2)t}\,d\eta(\xi),
 \qquad (t,x)\in\R\times\R^N.
\end{equation}
Choose
\begin{equation}\label{eq:power-barrier-constants}
 1<p\leq2,\quad pr^2<1,\quad
 c=f'(0)(p-1)(1-pr^2)>0,\quad A\geq\max\{1,M/c\}.
\end{equation}
Then $(\partial_t-\Delta-f'(0))h^p\geq c h^p$ and, for
$T>\max\{0,-t\}$,
\begin{equation}\label{eq:general-power-decay}
 L_{t+T}[h(-T,\cdot)^p]\leq e^{-c(t+T)}h(t,\cdot)^p.
\end{equation}
To prove \eqref{eq:general-power-decay}, the case $\eta=0$ is immediate.
Otherwise $h>0$. Fix $(t,x)$ and $T>\max\{0,-t\}$, and define the
measure
\[
 d\pi_{t,x}(\xi)
 :=\frac{e^{\lambda_*x\cdot\xi+f'(0)(1+|\xi|^2)t}}{h(t,x)}\,d\eta(\xi),
 \qquad \pi_{t,x}(\overline B_r)=1.
\]
Since $p>1$, Jensen's inequality gives, for every $y\in\R^N$,
\begin{align*}
 \left(\frac{h(-T,y)}{h(t,x)}\right)^p
 &=\left(\int_{\overline B_r}
 e^{\lambda_*\xi\cdot(y-x)-f'(0)(1+|\xi|^2)(t+T)}
 \,d\pi_{t,x}(\xi)\right)^p\\
 &\leq\int_{\overline B_r}
 e^{p\lambda_*\xi\cdot(y-x)-pf'(0)(1+|\xi|^2)(t+T)}
 \,d\pi_{t,x}(\xi).
\end{align*}
Notice that
\[
 \frac1{(4\pi(t+T))^{N/2}}\int_{\R^N}
 e^{-\frac{|y-x|^2}{4(t+T)}+p\lambda_*\xi\cdot(y-x)}\,dy
 =e^{p^2f'(0)|\xi|^2(t+T)}.
\]
Thus, using $L_{t+T}=e^{f'(0)(t+T)}P_{t+T}$ and Tonelli's theorem, we obtain
\begin{align*}
 \frac{\bigl(L_{t+T}[h(-T,\cdot)^p]\bigr)(x)}{h(t,x)^p}
 &\leq\int_{\overline B_r}
 e^{f'(0)[1-p(1+|\xi|^2)+p^2|\xi|^2](t+T)}
 \,d\pi_{t,x}(\xi)\\
 &=\int_{\overline B_r}
 e^{-f'(0)(p-1)(1-p|\xi|^2)(t+T)}\,d\pi_{t,x}(\xi)\\
 &\leq e^{-c(t+T)}.
\end{align*}
This proves
\eqref{eq:general-power-decay}.

Define
\[
 \omega:=\max\{h-Ah^p,0\}.
\]
On $\{\omega>0\}$, we have $\omega=h-Ah^p$, $Ah^{p-1}<1$, and hence
$0<\omega\leq h<1$. Since $1<p\leq2$ and $Ac\geq M$, on this set
\[
 \omega_t-\Delta\omega-f(\omega)
 =-A(\partial_t-\Delta-f'(0))h^p+F_1(\omega)
 \leq-Ac h^p+M h^2\leq(M-Ac)h^p\leq0.
\]
Also $h_t-\Delta h-f(h)=F_1(h)\geq0$ on $\{h<1\}$. Thus, since $f(0)=f(1)=0$,
$\omega$ and $\min\{h,1\}$ are ordered 
subsolutions and supersolutions, respectively. The comparison principle gives, for
$T_2>T_1>\max\{0,-t\}$,
\begin{align*}
 \omega(t,x)
 &\leq \bigl(S_{t+T_2}[\min\{h(-T_2,\cdot),1\}]\bigr)(x)\\
 &\leq \bigl(S_{t+T_1}[\min\{h(-T_1,\cdot),1\}]\bigr)(x)
 \leq\min\{h(t,x),1\}.
\end{align*}
Monotonicity and parabolic estimates therefore yield an entire solution
\begin{equation}\label{eq:subcritical-capped-pullback}
 V_h(t,\cdot):=\lim_{T\to\infty}
 S_{t+T}[\min\{h(-T,\cdot),1\}],
 \qquad\text{in }C^{1,2}_{\mathrm{loc}}(\R\times\R^N),
\end{equation}
with $\omega\leq V_h\leq\min\{h,1\}$. In particular,
\[
 0\leq h-V_h\leq h-\omega=\min\{h,Ah^p\}\leq Ah^p.
\]

For uniqueness, let $W_1,W_2\in\mathcal E$ satisfy
$0\leq h-W_i\leq A_i h^p$ for some constants $A_i>0$.
For $T>\max\{0,-t\}$, \eqref{eq:general-cauchy-stability} and
\eqref{eq:general-power-decay} give
\begin{align*}
 |W_1(t,x)-W_2(t,x)|
 &\leq \bigl(L_{t+T}[|W_1(-T,\cdot)-W_2(-T,\cdot)|]\bigr)(x)\\
 &\leq(A_1+A_2)\bigl(L_{t+T}[h(-T,\cdot)^p]\bigr)(x)\\
 &\leq(A_1+A_2)e^{-c(t+T)}h(t,x)^p\rightarrow0,
 \qquad \hbox{as }T\to\infty.
\end{align*}
Hence $W_1=W_2$.

For continuity, let $\eta_k\rightharpoonup\eta$ be finite positive
measures on the same $\overline B_r$, and let $h_k,\,h$ be defined as \eqref{def-h-B}. Compactness of $\overline B_r$ gives
\[
 h_k\rightarrow h\quad\text{in }C^\infty_{\mathrm{loc}}(\R\times\R^N).
\]
The constants $p,\,c,\,A$ above are independent of $k$, so
\[
 0\leq V_{h_k}\leq1,\qquad 0\leq h_k-V_{h_k}\leq A h_k^p.
\]
Parabolic estimates make $(V_{h_k})$ precompact in
$C^{1,2}_{\mathrm{loc}}(\R\times\R^N)$. Every subsequential limit
$W\in\mathcal E$ satisfies $0\leq h-W\leq Ah^p$, so uniqueness
gives $W=V_h$. Consequently,
\[
 V_{h_k}\rightarrow V_h\quad\text{in }
 C^{1,2}_{\mathrm{loc}}(\R\times\R^N).
\]
If $\eta=0$, then $h=0$ and the same bound gives $W=V_h=0$.

Split $H=H_{\mathrm{int}}+H_{\mathrm{bd}}$ according to $|\xi|<1$ and $|\xi|=1$.
The sphere component is
\[
 H_{\mathrm{bd}}(t,x)=\int_{\partial B_1}
 e^{\lambda_*x\cdot\xi+2f'(0)t}\,d\mu(\xi).
\]
Since $\mu(\partial B_1)<\infty$, Proposition~\ref{prop:critical-field-vanishes}
and \eqref{eq:general-critical-extinction} give
\[
 V_{H_{\mathrm{bd}}}(t,\cdot)=\lim_{T\to\infty}
 S_{t+T}[\min\{H_{\mathrm{bd}}(-T,\cdot),1\}]=0,
\]
with convergence in $C^{1,2}_{\mathrm{loc}}(\R\times\R^N)$.
Order and Lemma~\ref{lem:pde-semigroup-inequalities} now give
$V_H=V_{H_{\mathrm{int}}}$.
Since $U(-T,\cdot)\leq H_{\mathrm{int}}(-T,\cdot)$, positivity of
$L_{t+T}$ gives, for $T>\max\{0,-t\}$,
\[
 H_T(t,\cdot)=L_{t+T}[U(-T,\cdot)]
 \leq L_{t+T}[H_{\mathrm{int}}(-T,\cdot)]
 =H_{\mathrm{int}}(t,\cdot).
\]
Letting $T\to\infty$ yields $H\leq H_{\mathrm{int}}$.
Thus $H_{\mathrm{bd}}=0$ and $\mu(\partial B_1)=0$.

\emph{Step 3: finite atomic approximation and identification with HN solutions.}
For $0<r<1$, set
\[
 H_r(t,x):=\int_{\overline B_r}
 e^{\lambda_*x\cdot\xi+f'(0)(1+|\xi|^2)t}\,d\mu(\xi).
\]
Since $H_r\leq H$, the comparison principle gives
$V_{H_r}\leq V_H=U$. For each fixed $r$, the construction above provides
constants $p_r>1$, $A_r>0$, and $c_r>0$ such that
$0\leq H_r-V_{H_r}\leq A_rH_r^{p_r}$ and
\eqref{eq:general-power-decay} holds with these constants.
Combining this bound with $U\leq H$, we obtain, for
$T>\max\{0,-t\}$,
\[
 \begin{aligned}
 0\leq U(-T,\cdot)-V_{H_r}(-T,\cdot)
 &\leq H(-T,\cdot)-V_{H_r}(-T,\cdot)\\
 &=H(-T,\cdot)-H_r(-T,\cdot)
   +H_r(-T,\cdot)-V_{H_r}(-T,\cdot)\\
 &\leq H(-T,\cdot)-H_r(-T,\cdot)
   +A_rH_r(-T,\cdot)^{p_r}.
 \end{aligned}
\]
Applying \eqref{eq:general-cauchy-stability} from time $-T$ to $t$, it follows from \eqref{eq:general-power-decay} that
\[
 \begin{aligned}
 0\leq U(t,x)-V_{H_r}(t,x)
 &\leq \bigl(L_{t+T}[U(-T,\cdot)-V_{H_r}(-T,\cdot)]\bigr)(x)\\
 &\leq H(t,x)-H_r(t,x)
   +A_r e^{-c_r(t+T)}H_r(t,x)^{p_r}.
 \end{aligned}
\]
First let $T\to\infty$ at fixed $r$ to obtain
$0\leq U-V_{H_r}\leq H-H_r$. For every $R>0$,
\[
 \begin{aligned}
 \sup_{(t,x)\in[-R,R]\times\overline B_R}|H(t,x)-H_r(t,x)|
 &=\sup_{(t,x)\in[-R,R]\times\overline B_R}
 \int_{\{r<|\xi|\leq1\}}
 e^{\lambda_*x\cdot\xi+f'(0)(1+|\xi|^2)t}\,d\mu(\xi)\\
 &\leq e^{(\lambda_*+2f'(0))R}
 \mu\bigl(\{r<|\xi|\leq1\}\bigr).
 \end{aligned}
\]
Since $\mu$ is finite, continuity from above gives
$\mu(\{r<|\xi|\leq1\})\rightarrow\mu(\partial B_1)=0$ as $r\nearrow1$.
Thus $H_r\rightarrow H$ locally uniformly in $\R\times\R^N$.
Parabolic estimates then give $V_{H_r}\to U$ in
$C^{1,2}_{\mathrm{loc}}(\R\times\R^N)$ as $r\nearrow1$.

Fix $r<1$ such that $\mu(\overline B_r)>0$. Choose finite atomic measures
\[
 \eta_n=\sum_{j=1}^{m_n}b_{n,j}\delta_{\xi_{n,j}}
 \rightharpoonup\mu|_{\overline B_r},
 \qquad b_{n,j}>0,\quad \xi_{n,j}\in\overline B_r,
\]
with distinct atoms and $\sum_j b_{n,j}=\mu(\overline B_r)$, and set
\[
 h_n(t,x)=\sum_{j=1}^{m_n}b_{n,j}
 e^{\lambda_*x\cdot\xi_{n,j}+f'(0)(1+|\xi_{n,j}|^2)t}.
\]
The continuity proved above gives
$V_{h_n}\rightarrow V_{H_r}$ in
$C^{1,2}_{\mathrm{loc}}(\R\times\R^N)$ as $n\to\infty$.

We show that each $V_{h_n}$ is of the form $u_\vartheta$ defined in
\eqref{eq:transfer-packet-pullback}, for some nonzero finite atomic
measure $\vartheta$ on $X$. For a single atom
$b\delta_\xi$, where $b>0$ and $0<|\xi|<1$, put
\[
 \nu=\frac{\xi}{|\xi|},\qquad
 c=\lambda_*\frac{1+|\xi|^2}{|\xi|}>c_*,\qquad
 z=x\cdot\nu+ct.
\]
Then $\lambda(c)=\lambda_*|\xi|$ and let
$h(t,x)=b e^{\lambda(c)z}$. Fix $a\in\R$ and $y\in\R^N$ with
$y\cdot\nu+ca=0$. Then
\[
 h(-T,x+y)
 =b e^{\lambda(c)(x\cdot\nu+y\cdot\nu-cT)}
 =h(-(T+a),x).
\]
Using spatial translation invariance of $S_\tau$ and the definition of
$V_h$, we obtain
\[
 \begin{aligned}
 V_h(t+a,x+y)
 &=\lim_{T\to\infty}\bigl(S_{t+a+T}[\min\{h(-T,\cdot+y),1\}]\bigr)(x)\\
 &=\lim_{T\to\infty}\bigl(S_{t+(T+a)}[\min\{h(-(T+a),\cdot),1\}]\bigr)(x)\\
 &=V_h(t,x).
 \end{aligned}
\]
Thus $V_h(t,x)=\psi(z)$ for some profile $\psi$.
The equation and the estimate $0\leq h-V_h\leq Ah^p$ imply
\[
 \psi''-c\psi'+f(\psi)=0,\qquad
 \psi(z)=b e^{\lambda(c)z}+O(e^{p\lambda(c)z})
 \quad\text{as }z\to-\infty.
\]
Thus $V_h$ is nonconstant and the strong maximum principle gives
$0<V_h<1$. Theorem~\ref{thm:gkpp-ancient} yields
$\psi'>0$ and $\psi(-\infty)=0$, $\psi(+\infty)=1$.
By uniqueness of the traveling fronts and the
normalization $\phi_c(z)\sim e^{\lambda(c)z}$ from
{\eqref{eq:intro-tail}--\eqref{eq:intro-wave-slope}},
\[
 V_h(t,x)=\phi_c\left(x\cdot\nu+ct+\frac{\ln b}{\lambda(c)}\right).
\]
For an atom $b\delta_0$, spatial translation invariance makes $V_h$
independent of $x$, and the same tail estimate gives
\[
 V_h(t,x)=\xi_f\left(t+\frac{\ln b}{f'(0)}\right).
\]

Now let $m\geq1$, choose $b_j>0$ and distinct
$\xi_j\in\overline B_r$ for $j=1,\cdots,m$, and set
\[
 h_j(t,x):=b_j e^{\lambda_*x\cdot\xi_j+f'(0)(1+|\xi_j|^2)t},
 \qquad h(t,x):=\sum_{j=1}^{m}h_j(t,x).
\]
For each $j$, let $v_j:=V_{h_j}$ be the entire solution constructed
from $h_j$ above. Then, $v_j$ is a
planar traveling front if $\xi_j\ne0$, and a spatially homogeneous
solution if $\xi_j=0$. Since $h_j\leq h$, the comparison principle gives $\max_{1\leq j\leq m}v_j\leq V_h$.
Choose $p,A,c$ as in \eqref{eq:power-barrier-constants}.
Since all $\xi_j$ lie in $\overline B_r$, the preceding construction
applies to every $h_j$ with the same constants and gives
$0\leq h_j-v_j\leq Ah_j^p$ for $j=1,\cdots,m$.
These constants are independent of the masses $b_j$ and the number of
atoms, and \eqref{eq:general-power-decay} also holds for $h=\sum_jh_j$
with the same $p$ and $c$. Consequently,
\[
 0\leq\min\{h,1\}-\min\{\textstyle\sum_jv_j,1\}
 \leq\sum_j(h_j-v_j)\leq A\sum_jh_j^p\leq Ah^p.
\]
Since $\min\{\sum_jv_j,1\}$ is a supersolution,
\eqref{eq:general-cauchy-stability} and \eqref{eq:general-power-decay} imply
\[
 \begin{aligned}
 \bigl(S_{t+T}[\min\{h(-T,\cdot),1\}]\bigr)(x)
 &\leq\min\{\textstyle\sum_jv_j(t,x),1\}
     +A \bigl(L_{t+T}[h(-T,\cdot)^p]\bigr)(x)\\
 &\leq\min\{\textstyle\sum_jv_j(t,x),1\}
     +Ae^{-c(t+T)}h(t,x)^p.
 \end{aligned}
\]
Letting $T\to\infty$ yields
\[
 \max_jv_j\leq V_h\leq\min\{\textstyle\sum_jv_j,1\}.
\]
Let $\vartheta$ be the finite positive atomic measure on $X$ corresponding
to $v_1,\cdots,v_m$, and let $u_\vartheta$ be defined by
\eqref{eq:transfer-packet-pullback}. Both $V_h$ and $u_\vartheta$ lie
between $\max_jv_j$ and $\min\{\sum_jv_j,1\}$. The uniqueness in
Theorem~\ref{thm:packet} therefore gives
\[
 V_h(t,x)=u_\vartheta(t,x)
 \qquad\text{for all }(t,x)\in\R\times\R^N.
\]
In particular, for each fixed $r$ and $n$, write
$V_{h_n}=u_{\vartheta_{r,n}}$, where $\vartheta_{r,n}$ is the corresponding
finite positive atomic measure on $X$.

Finally, choose $r_k\nearrow1$ with $\mu(\overline B_{r_k})>0$ such that
\[
 \|V_{H_{r_k}}-U\|_{C^{1,2}([-k,k]\times\overline B_k)}<\frac1{2k}.
\]
For each $k$, choose $n_k$ sufficiently large that
\[
 \|u_{\vartheta_{r_k,n_k}}-V_{H_{r_k}}\|_{C^{1,2}([-k,k]\times\overline B_k)}
 <\frac1{2k}.
\]
Setting $\vartheta_k=\vartheta_{r_k,n_k}$, we obtain
\[
 \|u_{\vartheta_k}-U\|_{C^{1,2}([-k,k]\times\overline B_k)}<\frac1k,
\]
which proves the asserted convergence.
\end{proof}

\subsection{Small-amplitude limits under a strict gradient bound}
\label{subsec:wave-amplitude-limits}
\label{subsec:small-amplitude-limits}

We first identify the action of dilation on traveling fronts and
the homogeneous solution and derive a scalar small-amplitude criterion.
We then use it to prove that a strict logarithmic gradient bound
gives a finite limit of $D_aU/a$, equal to the linear evolution limit
from the remote past. This verifies the finiteness condition required
in Subsection~\ref{subsec:linear-reconstruction}.

Throughout this subsection, $U$ is an entire solution with $0<U<1$.
We use the amplitude regularity and derivative convergence established
in Lemma~\ref{lem:amplitude-C1-degenerate}, and retain $F_1$ and $M$
from \eqref{eq:general-quadratic-error}.
The critical profile $\phi_*$ has the normalization
\eqref{eq:intro-critical-expansion}:
\[
 \phi_*(z)=e^{\lambda_*z}\bigl(-z+O(1)\bigr)\quad\text{as }z\to-\infty.
\]
Write $q(u)=\phi_*'(\phi_*^{-1}(u))$ for $0<u<1$. In particular,
$0<q(u)<\lambda_*u$ and $q(u)/u\to\lambda_*$ as $u\searrow0$.

The following lemma records the phase shifts produced by dilation
and the critical-profile inequalities used in the measure construction.

\begin{lemma}\label{lem:wave-tail-action}
For every $c\geq c_*$, the traveling front $\phi_c$ satisfies
\[
 0<\frac{\phi_c'(z)}{\phi_c(z)}<\lambda(c)
 \qquad\text{for }z\in\R,
 \qquad
 \frac{\phi_c'(z)}{\phi_c(z)}\rightarrow\lambda(c)
 \quad\text{as }z\rightarrow-\infty.
\]
For every $a>0$ and $\nu\in\Sph$, it holds
\begin{equation}\label{eq:wave-amplitude-action}
 D_a[\phi_c(x\cdot\nu+ct)]
 =\phi_c(x\cdot\nu+ct+\lambda(c)^{-1}\ln a).
\end{equation}
The critical profile $\phi_*$ additionally satisfies
$\phi_*''-\lambda_*\phi_*'\leq0$ on $\R$.
Consequently, the function defined by
$g(q)=\phi_*(\lambda_*^{-1}\ln q)$ for $q>0$ and $g(0)=0$
is increasing and concave on $[0,+\infty)$.
 For the spatially homogeneous orbit $\xi_f$, one similarly has
$D_a\xi_f(t)=\xi_f(t+f'(0)^{-1}\ln a)$.
\end{lemma}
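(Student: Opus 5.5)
The lemma has four parts: the logarithmic-derivative bounds, the dilation action on fronts and on $\xi_f$, the inequality $\phi_*''\leq\lambda_*\phi_*'$, and concavity of $g$. The first part is exactly \eqref{eq:intro-wave-slope}, quoted from \cite{HamelRossi2016}, so nothing new is needed there.

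\textbf{Dilation action.} I will use the asymptotic uniqueness principle, Theorem~\ref{thm:asymptotic-uniqueness}, rather than computing pullbacks. Set $U(t,x)=\phi_c(x\cdot\nu+ct)$ and $Y(t,x)=\phi_c(x\cdot\nu+ct+\lambda(c)^{-1}\ln a)$; both are entire solutions in $(0,1)$. By \eqref{eq:cone-basic-bounds}, $D_aU$ is comparable to $U$ with constants depending on $a$. Since $\phi_c$ is monotone, $\phi_c(s+\lambda(c)^{-1}\ln a)$ lies between $\min\{1,a\}\phi_c(s)$ and $\max\{1,a\}\phi_c(s)$ (use the slope bound $\phi_c'/\phi_c<\lambda(c)$), so $Y$ is comparable to $U$ as well.

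For the small-value asymptotics, \eqref{eq:amplitude-low-value} gives $D_aU=aU+o(U)$ as $U\searrow0$. The tails \eqref{eq:intro-tail}--\eqref{eq:intro-critical-tail} give $\phi_c(s+\lambda(c)^{-1}\ln a)/\phi_c(s)\to a$ as $s\to-\infty$. For $c=c_*$ this still holds, because the factor $|s|$ satisfies $|s+O(1)|/|s|\to1$. Hence $Y=aU+o(U)$. Taking $Z=U$ in the second assertion of Theorem~\ref{thm:asymptotic-uniqueness} yields $D_aU=Y$. The homogeneous orbit is handled identically, using $\xi_f(t)\sim e^{f'(0)t}$ from \eqref{eq:homogeneous-normalization}.

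\textbf{The inequality $\phi_*''\leq\lambda_*\phi_*'$.} This is the main point. Set $w=\phi_*'$ and $\psi=w'-\lambda_*w$. The profile equation with $c_*=2\lambda_*$ reads $w'=2\lambda_*w-f(\phi_*)$, so
\[
 \psi=\lambda_*w-f(\phi_*).
\]
I will show $\psi\leq0$ everywhere. Differentiating,
\[
 \psi'=\lambda_*w'-f'(\phi_*)w
 =\lambda_*(\psi+\lambda_*w)-f'(\phi_*)w
 =\lambda_*\psi+(f'(0)-f'(\phi_*))w.
\]
By concavity, $f'(0)-f'(\phi_*)\geq0$, and $w>0$, so $(e^{-\lambda_*z}\psi)'\geq0$. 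It therefore suffices to show $e^{-\lambda_*z}\psi\to0$ as $z\to+\infty$; monotonicity then gives $e^{-\lambda_*z}\psi\leq0$ for all $z$. As $z\to+\infty$ we have $\phi_*\to1$ and $w\to0$, hence $f(\phi_*)\to0$ and $\psi\to0$, so $e^{-\lambda_*z}\psi\to0$. This closes the argument, and it needs no information about the behaviour at $-\infty$.

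\textbf{Concavity of $g$.} With $s=\lambda_*^{-1}\ln q$,
\[
 g'(q)=\frac{\phi_*'(s)}{\lambda_*q}>0,\qquad
 g''(q)=\frac{\phi_*''(s)-\lambda_*\phi_*'(s)}{\lambda_*^2q^2}\leq0,
\]
so $g$ is increasing and concave on $(0,\infty)$. Continuity at $0$ holds since $\phi_*(-\infty)=0$. Concavity extends to $[0,\infty)$ because a function that is continuous on $[0,\infty)$ and concave on the interior is concave on the closure.
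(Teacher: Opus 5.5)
Your proof is correct. The slope bounds, the inequality $\phi_*''\le\lambda_*\phi_*'$ (same auxiliary function $\lambda_*\phi_*'-f(\phi_*)$, same integrating factor $e^{-\lambda_*z}$, same limit at $+\infty$) and the concavity of $g$ are handled exactly as in the paper. The genuine difference is in the dilation identity.

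\textbf{The paper's route.} It first treats $0<a<1$.
\begin{itemize}
\item It integrates $\phi_c'/\phi_c<\lambda(c)$ to obtain the sandwich $a\phi_c\le D_a\phi_c\le\phi_c(\cdot+\lambda(c)^{-1}\ln a)$.
\item It uses translation invariance of $D_a$ to see that $D_a\phi_c$ is a speed-$c$ profile.
\item It applies Theorem~\ref{thm:gkpp-ancient} and classical uniqueness of fronts up to translation to write this profile as $\phi_c(\cdot+b)$, and reads off $b$ from the tail.
\item It obtains $a>1$ from the group law.
\end{itemize}
The orbit $\xi_f$ is handled analogously, via uniqueness of homogeneous orbits up to time translation.

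\textbf{Your route.} You never identify $D_a\phi_c$ as a front. Instead you apply the second assertion of Theorem~\ref{thm:asymptotic-uniqueness} with $Z=U$. You combine the uniform expansion \eqref{eq:amplitude-low-value} with the tail ratio $\phi_c(s+b)/\phi_c(s)\to e^{\lambda(c)b}$. Both ingredients are proved in Section~\ref{sec:pde-dilations} without using this lemma, so there is no circularity.

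This treats all $a>0$, all $c\ge c_*$, and $\xi_f$ in one stroke, with no ODE classification of profiles. The cost is that everything rests on the Ces\`aro-averaging uniqueness theorem. The paper's route depends on that theorem as well, through the group law for $a>1$, so nothing is lost.

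\textbf{Two minor points.}
\begin{itemize}
\item For $\xi_f$, you should state the comparability explicitly. For example, $0<\xi_f'/\xi_f=f(\xi_f)/\xi_f\le f'(0)$ gives $\min\{1,a\}\xi_f\le\xi_f(\cdot+f'(0)^{-1}\ln a)\le\max\{1,a\}\xi_f$.
\item At $+\infty$ you only need $\lambda_*\phi_*'-f(\phi_*)$ to be bounded. This follows from $0<\phi_*'<\lambda_*\phi_*$, so your unproved claim $\phi_*'\to0$ can simply be dropped.
\end{itemize}
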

\begin{proof}
The bounds on $\phi_c'/\phi_c$ and its limit at $-\infty$ follow from \eqref{eq:intro-tail}--\eqref{eq:intro-wave-slope}. Integrating the logarithmic derivative gives
\begin{equation}\label{eq:wave-fixed-shift}
 \frac{\ln\phi_c(z)}z\to\lambda(c),
 \qquad \frac{\phi_c(z+b)}{\phi_c(z)}\to e^{\lambda(c) b}
 \qquad\text{as }z\rightarrow-\infty,
\end{equation}
for every fixed $b$.

For $0<a<1$, we have $\lambda(c)^{-1}\ln a<0$. Integrating
$\phi_c'/\phi_c<\lambda(c)$ from $z+\lambda(c)^{-1}\ln a$ to $z$ gives
\[
 \ln\frac{\phi_c(z)}{\phi_c(z+\lambda(c)^{-1}\ln a)}
 =\int_{z+\lambda(c)^{-1}\ln a}^{z}
 \frac{\phi_c'(s)}{\phi_c(s)}\,ds
 <-\ln a.
\]
Thus $a\phi_c(z)<\phi_c(z+\lambda(c)^{-1}\ln a)$.
Comparison and \eqref{eq:cone-basic-bounds} yield
\[
 a\phi_c(z)\leq D_a\phi_c(z)\leq\phi_c(z+\lambda(c)^{-1}\ln a).
\]
Translation invariance of $D_a$ and the preceding bounds show that
$\psi:=D_a\phi_c$ satisfies
\[
 \psi''-c\psi'+f(\psi)=0,\qquad 0<\psi<1.
\]
Since $c>0$, applying Theorem~\ref{thm:gkpp-ancient} to the entire solution
$\psi(x\cdot\nu+ct)$ gives
\[
 \psi'>0,\qquad \lim_{z\to-\infty}\psi(z)=0,\qquad
 \lim_{z\to+\infty}\psi(z)=1.
\]
Uniqueness of traveling fronts therefore gives $\psi(z)=\phi_c(z+b)$
for some $b\in\R$. Dividing the preceding bounds by $\phi_c(z)$ and
letting $z\to-\infty$ in \eqref{eq:wave-fixed-shift}, we obtain
\[
 a\leq\lim_{z\to-\infty}\frac{\psi(z)}{\phi_c(z)}
 =e^{\lambda(c)b}
 \leq\lim_{z\to-\infty}
 \frac{\phi_c(z+\lambda(c)^{-1}\ln a)}{\phi_c(z)}=a.
\]
Thus $b=\lambda(c)^{-1}\ln a$, proving \eqref{eq:wave-amplitude-action}
for $0<a<1$. The case $a=1$ is immediate. For $a>1$, the case
$a^{-1}<1$ gives
$D_{1/a}\bigl[\phi_c(\cdot+\lambda(c)^{-1}\ln a)\bigr]=\phi_c$.
Applying $D_a$ and the group law in
Proposition~\ref{prop:pde-full-dilation}, we obtain $D_a\phi_c=\phi_c(\cdot+\lambda(c)^{-1}\ln a)$,
which proves \eqref{eq:wave-amplitude-action} for $a>1$.

Finally put $h=\phi_*''-\lambda_*\phi_*'=\lambda_*\phi_*'-f(\phi_*)$.
Differentiating
the wave equation gives
\[
 h'-\lambda_*h=(f'(0)-f'(\phi_*))\phi_*'\geq0.
\]
The profile and its first two derivatives are bounded, so
$e^{-\lambda_*z}h(z)\to0$ as $z\to+\infty$. Integration of
$(e^{-\lambda_*z}h)'\geq0$ gives $h\leq0$, hence
$\lambda_*\phi_*'\leq f(\phi_*)$.
Since $q^2g''(q)=\lambda_*^{-2}h(\lambda_*^{-1}\ln q)\le 0$,
$g$ is concave. 
For $\xi_f$, spatial invariance makes $D_a\xi_f$ another nonzero
homogeneous orbit, hence a time translate of $\xi_f$.
The inequalities
$a\xi_f(t)\leq D_a\xi_f(t)\leq\xi_f(t+f'(0)^{-1}\ln a)$ for $0<a<1$
follow from $f(u)/u\leq f'(0)$ by the same integration and comparison.
Its positive $e^{f'(0)t}$ tail determines the shift to be $f'(0)^{-1}\ln a$.
Again the group law extends the identity to all $a>0$.
\end{proof}

We next turn the amplitude differential inequality into a scalar
criterion for the limits at scales $a$ and $a\ln(1/a)$.
It will be applied to $G(a)=D_aU(t,x)$.

\begin{lemma}\label{lem:scalar-trace-degenerate}
\label{lem:small-amplitude-calculus}
 Let $G\in C^1((0,\infty);(0,1))$ be increasing and concave, and satisfy
$\displaystyle\lim_{a\searrow0}G(a)=0$ and $G-aG'\geq0$. Suppose that, for some
$0<\gamma\leq1$ and finite $M>0$,
\begin{equation}\label{eq:scalar-trace-distribution}
 \gamma a^2G''\leq aG'-G+M G^2
 \qquad\text{in distributions on }(0,\infty).
\end{equation}
If $\gamma=1$, there exists a finite nonnegative constant $A$ such that
\[
 \lim_{a\searrow0}\frac{G(a)}{a\ln(1/a)}=A.
\]
If $0<\gamma<1$, there exists a finite positive constant $A$ such that
\[
 \lim_{a\searrow0}\frac{G(a)}{a}=A.
\]
\end{lemma}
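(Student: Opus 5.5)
The change of variables $a=e^s$, used in the proof of Proposition~\ref{prop:inverse-wave}, turns the problem into an ODE inequality. Put $u(s)=G(e^s)$, so that $u'=aG'$ and $u''=a^2G''+aG'$. Then \eqref{eq:scalar-trace-distribution} becomes
\[
 \gamma u''\leq(1+\gamma)u'-u+Mu^2
\]
in distributions on $\R$. The hypotheses give $0<u<1$, $u(-\infty)=0$, and $0\leq u'\leq u$ (concavity and $G-aG'\geq0$). The linear part $\gamma X^2-(1+\gamma)X+1$ has roots $X=1$ and $X=1/\gamma$. For $\gamma<1$ these are simple roots, with $1$ the smaller one; for $\gamma=1$ the root $1$ is double. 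This exactly mirrors the subcritical/critical front tails, and I would treat $Q:=u e^{-s}=G(a)/a$ directly.

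\emph{Case $0<\gamma<1$.} Write $w=u'-u\leq0$, so that $w=e^{s}Q'$ and $Q$ is nonincreasing in $s$. The inequality becomes $\gamma w'-w\leq Mu^2$ in distributions, i.e. $(e^{-s/\gamma}w)'\leq (M/\gamma)e^{-s/\gamma}u^2$. Since $0\leq Q\leq Q(s_0)$ for $s\leq s_0$, we have $u\leq Ce^{s}$ there. Integrate from $s$ to $s_0$, using the BV integrating-factor argument of Proposition~\ref{prop:inverse-wave}. The $u^2$ term contributes $O(e^{2s})$, and the boundary term at $s_0$ contributes $e^{(s-s_0)/\gamma}|w(s_0)|\lesssim e^{s/\gamma}$. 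This yields $|w(s)|\leq C(e^{2s}+e^{s/\gamma})$, hence $|Q'(s)|\leq C(e^{s}+e^{(1/\gamma-1)s})$, which is integrable at $-\infty$ because $1/\gamma>1$. So $Q$ has a finite limit $A$ as $s\to-\infty$, and $A\geq Q(0)>0$ by monotonicity. This gives $G(a)/a\to A\in(0,\infty)$.

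\emph{Case $\gamma=1$.} Here I want $G(a)/(a\ln(1/a))=Q(s)/|s|\to A$. Set $w=u'-u=e^sQ'\leq0$; the inequality is now $w'-w\leq Mu^2$, i.e. $(e^{-s}w)'\leq Me^{-s}u^2$, equivalently
\[
 Q''\leq Me^{s}Q^2 \quad\text{(distributionally)}.
\]
First I would establish an a priori bound $Q(s)\leq C(1+|s|)$ for $s\leq0$. I would get this either directly from Proposition~\ref{prop:inverse-wave}-type comparison, or by integrating the previous display twice from $0$; the latter gives $Q'(s)\geq Q'(0^+)-M\int_s^0 e^{\sigma}Q^2\,d\sigma$, and a bootstrap shows that $e^{\sigma}Q^2$ is integrable. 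Once $e^sQ^2\in L^1(-\infty,0)$, the function $Q'(s)+M\int_s^0e^{\sigma}Q(\sigma)^2\,d\sigma$ is nonincreasing and bounded above by $0$ as $s\to-\infty$. Hence $Q'$ has a finite limit $-A\leq0$ at $-\infty$, and L'Hôpital gives $Q(s)/|s|\to A$. Nonnegativity of $A$ is automatic.

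The main obstacle is the a priori growth bound in the critical case. One must exclude $Q$ growing superlinearly, e.g. like $|s|^2$, since then $e^sQ^2$ is still integrable but the argument for a finite limit of $Q'$ needs monotonicity to be set up correctly. I would first try the substitution above: $Q''\leq Me^sQ^2$ shows that $Q'$ plus an integrable correction is monotone. So $Q'$ is bounded below near $-\infty$ once we know $\int_{-\infty}^0e^sQ^2<\infty$, and this integrability follows from the crude bound $Q\leq e^{-s}u\leq e^{-s}$ giving only $e^{-s}$, which is insufficient. Instead I would use the concavity bound $u\leq G$ with $G(a)\geq a G'(a)$ iteratively: a bound $Q\leq Ce^{-\epsilon s}$ with small $\epsilon$ makes $e^sQ^2$ integrable and closes the loop. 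If needed, such a sub-exponential bound can be taken from the comparison with $\phi_*$ in \eqref{eq:inverse-wave-upper}, which gives $u(s)\lesssim |s|e^{s}$ directly.
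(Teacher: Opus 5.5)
The change of variables, the factorization of the linear part, and both of your endgames are fine. The gap is exactly where the lemma has content: you never obtain an a priori growth bound for $Q(s)=G(e^s)e^{-s}=G(a)/a$ as $s\to-\infty$.

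\emph{Case $0<\gamma<1$.} You write ``$0\le Q\le Q(s_0)$ for $s\le s_0$''. But you have just shown $Q'=e^{-s}w\le0$, so $Q$ is nonincreasing and in fact $Q(s)\ge Q(s_0)$ for $s\le s_0$. You use this correct direction yourself at the end to get $A\ge Q(0)$. The estimate $u\le Ce^{s}$ near $-\infty$ is equivalent to boundedness of $G(a)/a$, i.e.\ to the conclusion, so the integrating-factor computation that follows is circular.

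\emph{Case $\gamma=1$.} You correctly flag the missing bound, but none of your remedies works from the stated hypotheses.
Concavity and $G-aG'\ge0$ give no subexponential control at all: $G(a)=\sqrt a$ near $0$ satisfies both, with $Q=e^{-s/2}$, and only the differential inequality rules this out.
The ``bootstrap'' is not specified.
\eqref{eq:inverse-wave-upper} is a property of $D_aU$. It comes from the specific reaction $f$ and the endpoint value $1$ at $a=\infty$, and neither is among the lemma's assumptions.

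The missing idea is to use $G(a)\to0$ so that the quadratic term becomes a small perturbation: $Me^{s}Q^2=MuQ\le\varepsilon Q$ for $s$ near $-\infty$. The paper does this in the variable $\tau=-s$ with $\widetilde G(\tau)=Q(-\tau)$, in four steps:
\begin{enumerate}
\item Since $\widetilde G'\ge0$ (this is where $G-aG'\ge0$ enters), the damping term $(\gamma^{-1}-1)\widetilde G'$ can be dropped. This gives the Riccati inequality $(\widetilde G'/\widetilde G)'\le (M/\gamma)G(e^{-\tau})-(\widetilde G'/\widetilde G)^2$.
\item Comparison with $\varepsilon+1/(\tau-\tau_\varepsilon)$ yields $\widetilde G'/\widetilde G\to0$.
\item Hence $\widetilde G\le Ce^{\tau/4}$ and $e^{-\tau}\widetilde G^2\le Ce^{-\tau/2}$.
\item Only now is $\widetilde G'+2Ce^{-\tau/2}$ nonincreasing, which handles both cases at once. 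For $\gamma<1$ the damping term also makes $\widetilde G'$ integrable, so $\widetilde G$ is bounded.
\end{enumerate}
Any bound $Q\lesssim e^{-\delta s}$ with $\delta<1/2$ suffices for your own endgames in both cases. Inserting this step, or an equivalent Gronwall argument based on $u\to0$, repairs your proof and makes it essentially the paper's.
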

\begin{proof}
Set $a=e^{-\tau}$ and $\widetilde G(\tau)=e^\tau G(e^{-\tau})$. Then
\[
 \widetilde G'=G/a-G'\geq0,\qquad \widetilde G''=\widetilde G'+aG'',
\]
where the second identity holds in distributions. Since $G$ is
concave, $G'$ and $\widetilde G'$ are locally of bounded variation.
The hypothesis becomes
\begin{equation}\label{eq:small-amplitude-damping}
 \widetilde G''+(\gamma^{-1}-1)\widetilde G'\leq\frac M\gamma e^{-\tau}\widetilde G^2.
\end{equation}
Since $\widetilde G>0$ and $\gamma\leq1$,
\[
 \left(\frac{\widetilde G'}{\widetilde G}\right)'
 \leq\frac M\gamma G(e^{-\tau})-\left(\frac{\widetilde G'}{\widetilde G}\right)^2.
\]
Since $G(e^{-\tau})\to0$ as $\tau\to+\infty$, for every
$\varepsilon>0$ there is $\tau_\varepsilon$ such that
$(M/\gamma)G(e^{-\tau})\leq\varepsilon^2$ for all
$\tau\geq\tau_\varepsilon$. Comparison with
$\varepsilon+1/(\tau-\tau_\varepsilon)$ gives
\[
 0\leq\frac{\widetilde G'(\tau)}{\widetilde G(\tau)}
 \leq\varepsilon+\frac1{\tau-\tau_\varepsilon}
 \qquad\text{for }\tau>\tau_\varepsilon.
\]
The comparison remains valid in distributions: the singular part of
the derivative is nonpositive, and the positive integrating factor
for the difference preserves its sign. Consequently
$\widetilde G'/\widetilde G\to0$.
Choose $\tau_0$ so that $\widetilde G'/\widetilde G\leq1/4$ for $\tau\geq\tau_0$.
Integration and \eqref{eq:small-amplitude-damping} give, for some $C>0$,
\[
 \widetilde G(\tau)\leq \widetilde G(\tau_0)e^{(\tau-\tau_0)/4},\qquad
 \widetilde G''+(\gamma^{-1}-1)\widetilde G'\leq Ce^{-\tau/2}.
\]
Hence
\[
 \frac{d}{d\tau}\bigl(\widetilde G'+2Ce^{-\tau/2}\bigr)
 \leq-(\gamma^{-1}-1)\widetilde G'\leq0.
\]
The nonnegative function $\widetilde G'+2Ce^{-\tau/2}$ has a finite limit,
and therefore so does $\widetilde G'$.
If $\gamma=1$, write $A=\lim_{\tau\to\infty}\widetilde G'(\tau)\geq0$. Then
\[
 \frac{G(a)}{a\ln(1/a)}=\frac{\widetilde G(\tau)}\tau
 =\frac{\widetilde G(\tau_0)}\tau+
 \frac1\tau\int_{\tau_0}^{\tau}\widetilde G'(s)\,ds
 \rightarrow A.
\]
If $0<\gamma<1$, integration gives
\[
 (\gamma^{-1}-1)\int_{\tau_0}^{\tau}\widetilde G'(s)\,ds
 \leq \widetilde G'(\tau_0)+2Ce^{-\tau_0/2}.
\]
Thus $\widetilde G$ is positive, nondecreasing, and bounded. So, for some $A\in(0,\infty)$,
\[
 \frac{G(a)}a=\widetilde G(\tau)\rightarrow A
 \qquad\text{as }\tau\to\infty\ \text{or, equivalently, }a\searrow0.
\]
This completes the proof.
\end{proof}

We now give a sufficient condition for
\eqref{eq:general-finite-lift-hypothesis}: a strict logarithmic
gradient bound ensures that $D_aU/a$ has a finite limit, which
coincides with the linear evolution limit from the remote past.

\begin{theorem}\label{thm:gradient-gap-finite-lift}
If $|\nabla\ln U|\leq\lambda_*r$ for some $0\leq r<1$ everywhere, then
\begin{equation}\label{eq:gap-lift-equality}
 H(t,x):=\lim_{T\to\infty}\bigl(L_{t+T}[U(-T)]\bigr)(x)
 =\lim_{a\searrow0}\frac{D_aU(t,x)}a<\infty.
\end{equation}
Both limits hold in
$C^{1,2}_{\mathrm{loc}}(\mathbb R\times\mathbb R^N)$.
There is a finite positive measure $\mu$ supported in
$\overline B_r$ such that
\[
 H(t,x)=\int_{\overline B_r}
 e^{\lambda_*x\cdot\xi+f'(0)(1+|\xi|^2)t}\,\mu(d\xi).
\]
In particular, Theorem~\ref{thm:general-finite-lift} reconstructs
$U$ from this field and approximates it by finite HN solutions.
\end{theorem}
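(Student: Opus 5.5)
The plan is to work with $G(a)=D_aU(t,x)$ at each fixed $(t,x)$ and to obtain the limit of $G(a)/a$ from the scalar criterion. We then identify this limit with the linear pullback limit and read off the support of the representing measure from the gradient bound.

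\emph{Step 1: the limit of $G(a)/a$.} By Lemma~\ref{lem:gap-preservation}, every $D_aU$ inherits the bound $|\nabla\ln D_aU|\leq\lambda_*r$. Lemma~\ref{lem:finite-amplitude} with $\gamma=\max\{r^2,1/2\}<1$, together with $f(G)\geq f'(0)G-MG^2$ from \eqref{eq:general-quadratic-error}, gives
\[
 \gamma a^2G''\leq aG'-G+\tfrac{M}{f'(0)}G^2 .
\]
Corollary~\ref{cor:cone-normalization} and Lemma~\ref{lem:amplitude-C1-degenerate} show that $G$ is $C^1$, increasing, concave, valued in $(0,1)$, with $G(0+)=0$ and $G-aG'\geq0$. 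Lemma~\ref{lem:scalar-trace-degenerate}, case $\gamma<1$, then yields a finite positive limit $\widetilde H(t,x):=\lim_{a\searrow0}G(a)/a$. Its proof shows that $\widetilde G(\tau)=e^{\tau}G(e^{-\tau})$ is nondecreasing, so $D_aU/a$ increases as $a\searrow0$.

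\emph{Step 2: local uniformity and the linear equation.} For local uniformity I would use the translation bounds: $D_aU/a$ at $(t,x)$ is controlled by $e^{2f'(0)|t|+\lambda_*|x|}$ times its value at $(0,0)$. Indeed, Lemma~\ref{lem:cone-translation-bounds} applied to $D_aU$ gives pointwise $D_aU(t,x)\leq e^{2f'(0)|t|+\lambda_*|x|}D_aU(0,0)$. The quotients $V_a=D_aU/a$ solve $\partial_tV_a-\Delta V_a=\frac{f(aV_a)}{a}$, and the right side tends to $f'(0)V_a$ on bounded sets. Parabolic estimates together with monotone convergence therefore give $C^{1,2}_{\loc}$ convergence to $\widetilde H$, which solves $\widetilde H_t=\Delta\widetilde H+f'(0)\widetilde H$.

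\emph{Step 3: identification $H=\widetilde H$.} This is the main obstacle. I would prove the two inequalities separately.

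For $H\leq\widetilde H$: we have $aU\leq D_aU$ and $L_{t+T}[U(-T)]=H_T$. I would compare $D_aU/a$ with linear evolution via the identity
\[
 \frac{D_aU(t)}{a}\geq\frac1aS_{t+T}[aU(-T)],
\]
valid for $a<1$. By Cauchy stability and the quadratic error, $S_\tau[aU(-T)]\geq L_\tau[aU(-T)]-\int(\dots)$, with an error of size $a^2$ times $L$-evolutions of $H_T^2$-type quantities. Fixing $T$ and letting $a\to0$ gives $S_{t+T}[aU(-T)]/a\to L_{t+T}[U(-T)]$, since the linearization at $0$ is exact at the level of derivatives. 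Hence $\widetilde H\geq H_T$ for every $T$, and so $H\leq\widetilde H<\infty$. In particular the hypothesis \eqref{eq:general-finite-lift-hypothesis} holds.

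For $\widetilde H\leq H$: Theorem~\ref{thm:general-finite-lift}, Step 1, now applies and gives $U=V_H$ with $U\leq H$. Since $D_aU\leq aH$ fails to be automatic, I would instead use the asymptotic uniqueness Theorem~\ref{thm:asymptotic-uniqueness}. Both $D_aU$ and $V_{aH}$ (the capped pullback of $aH$) behave like $aU+o(U)$ as $U\searrow0$, by \eqref{eq:amplitude-low-value} for the first, and for the second since $U\leq H$ with $H-U=o(\cdot)$ to be checked. This would give $D_aU\leq\min\{aH,1\}$, hence $\widetilde H\leq H$. If that route is delicate, an alternative is to pass $a\to0$ in Lemma~\ref{lem:amplitude-C1-degenerate}: $\partial_aD_aU\to\widetilde H$ and $\partial_aD_aU|_{a}$ equals a linearized pullback bounded by the $L$-pullback because $f'(W)\leq f'(0)$.

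\emph{Step 4: support of the measure.} Finally, represent $H$ by \eqref{eq:widder-input}. The inequality $H(t,x+y)\leq e^{\lambda_*r|y|}H(t,x)$, inherited from $|\nabla\ln D_aU|\leq\lambda_*r$ in the limit, gives $\mu(\{e\cdot\xi\geq r+\varepsilon\})=0$ exactly as in Step 1 of Theorem~\ref{thm:general-finite-lift}. Thus $\mu$ is supported in $\overline B_r$, and the last assertion of the theorem follows from Theorem~\ref{thm:general-finite-lift}.
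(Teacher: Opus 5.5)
Your Steps 1, 2 and 4 match the paper's proof essentially line by line. The paper takes $\gamma=(1+r^2)/2$ rather than $\max\{r^2,1/2\}$, which is immaterial. The divergence is in Step 3, and there your diagnosis is off: $\widetilde H\leq H$ is the trivial direction, not the obstacle. Since $f(u)\leq f'(0)u$ for $u\geq0$ (using the concave extension), the linear flow is a supersolution. Hence $S_{t+T}[aU(-T)]\leq aL_{t+T}[U(-T)]=aH_T(t)\leq aH(t)$. Letting $T\to\infty$ gives $D_aU\leq aH$, and then $a\searrow0$ gives $\widetilde H\leq H$. This is exactly the paper's argument.

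Your first route through Theorem~\ref{thm:asymptotic-uniqueness} is incomplete as written: you never verify that $H-U=o(U)$ uniformly as $U\searrow0$, nor that $V_{aH}$ is comparable with $U$. It is also unnecessary on its own terms. Since $aU(-T)\leq\min\{aH(-T),1\}$, comparing the increasing pullback with the decreasing capped pullback already gives $D_aU\leq V_{aH}\leq aH$, with no uniqueness statement needed.

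Your fallback via $\partial_aD_aU$ does work. The function $\partial_au_{a,T}$ solves a linear equation with potential $f'(u_{a,T})\leq f'(0)$, so it is at most $H_T\leq H$. Integrating in $a$, or using $\partial_aD_aU\to\widetilde H$ (which follows from concavity of $a\mapsto D_aU$), gives the bound. But this is only the differentiated form of the same comparison $S_\tau\leq L_\tau$.

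For the reverse inequality you also argue differently. You linearize the finite-time flow: $S_{t+T}[aU(-T)]/a\to L_{t+T}[U(-T)]$ as $a\to0$, with an $O(a)$ Duhamel error from $F_1(u)\leq Mu^2$. Together with $D_aU\geq S_{t+T}[aU(-T)]$ for $a<1$, this yields $\widetilde H\geq H_T$ for every $T$.

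The paper instead uses $D_aU\geq aU$ to get $U\leq\widetilde H$. It then uses the invariance $L_{t+T}[\widetilde H(-T)]=\widetilde H(t)$ of the linear entire solution, which follows from its exponential representation or from Gaussian integration.

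Both are valid. The paper's version is shorter but relies on $\widetilde H$ being reproduced by the linear flow. Yours needs only the quadratic error bound for bounded data, so it works before the representation of $\widetilde H$ is available.
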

\begin{proof}
Set $\gamma=(1+r^2)/2$, so that $r^2\leq\gamma<1$ and
$\gamma>0$, including when $r=0$.
For $G(a)=D_aU(t,x)$, inequality
\eqref{eq:trace-F-inequality} and $f(G)/f'(0)\geq G-(M/f'(0))G^2$ give
\eqref{eq:scalar-trace-distribution} with this $\gamma$ and quadratic
constant $M/f'(0)$.
Lemma~\ref{lem:scalar-trace-degenerate} gives the finite positive
pointwise limit
\[
 H_0(t,x)=\lim_{a\searrow0}\frac{D_aU(t,x)}a.
\]
Set $Z_a=D_aU/a$. Applying Theorem~\ref{thm:gkpp-ancient} to
$D_aU$ and integrating its logarithmic bounds gives
\[
 e^{-2f'(0)|t|-\lambda_*|x|}Z_a(0,0)
 \leq Z_a(t,x)
 \leq e^{2f'(0)|t|+\lambda_*|x|}Z_a(0,0).
\]
Since $Z_a(0,0)\to H_0(0,0)<\infty$ as $a\to0$, the family
$Z_a$ is locally uniformly bounded for small $a$. Its equation is
\[
 (Z_a)_t-\Delta Z_a=f'(0)Z_a-\frac{F_1(aZ_a)}a,
 \qquad 0\leq\frac{F_1(aZ_a)}a\leq MaZ_a^2,
\]
interior parabolic estimates give convergence in
$C^{1,2}_{\mathrm{loc}}$ and
\[
 (H_0)_t=\Delta H_0+f'(0)H_0,\qquad
 |\nabla H_0|\leq\lambda_*rH_0.
\]
The gradient bound follows from Lemma~\ref{lem:gap-preservation}.
The representation \eqref{eq:widder-input}, together with
$|\nabla H_0|\leq\lambda_*rH_0$, gives a finite positive
representing measure supported in $\overline B_r$, by the
support argument in the proof of Theorem~\ref{thm:general-finite-lift}
with radius $1$ replaced by $r$.

For $t>-T$, let $H_T(t)=L_{t+T}[U(-T)]$. Since $F_1\geq0$,
\[
 \partial_TH_T(t)=L_{t+T}[F_1(U(-T))]\geq0.
\]
Also $D_aU\geq aU$ for $0<a\leq1$, hence $U\leq H_0$ and
\[
 H_T(t)\leq L_{t+T}[H_0(-T)]=H_0(t).
\]
The last equality follows either from the spectral representation
or from Gaussian integration. Consequently $H_T\nearrow H\leq H_0$.
Conversely, finite-time linear comparison gives
\[
 S_{t+T}[aU(-T)]\leq aL_{t+T}[U(-T)].
\]
First letting $T\to\infty$ and then $a\searrow0$ yields
$D_aU\leq aH$ and $H_0\leq H$, respectively. Thus $H=H_0$.
Local bounds and the linear equation also upgrade $H_T\to H$ to
$C^{1,2}_{\mathrm{loc}}$ convergence. This proves the finite linear
limit before applying Theorem~\ref{thm:general-finite-lift}.
\end{proof}
The strict logarithmic gradient bound above is sufficient but is not
assumed for a general entire solution. In
Section~\ref{sec:degenerate-spectral-decomposition}, we decompose
such a solution into a critical part and an increasing limit of
solutions satisfying strict logarithmic gradient bounds.
The latter are covered by the reconstruction established here.
For the critical part, Section~\ref{sec:critical-faithfulness}
uses the different normalization $D_aU/[a\ln(1/a)]$ to extract
a measure on the critical sphere. Failure of a strict gradient
bound alone does not imply that the original solution is purely critical.

\section{Translation parameters and critical--subcritical decomposition}
\label{sec:degenerate-spectral-decomposition}

Section~\ref{sec:pde-dilations} constructs the ordered vector space
$E$ and its complete principal ideals $E_U$.
Proposition~\ref{prop:cone-translation-multipliers} represents
space--time translations on $E_U$ as multiplication by
$e^{s\lambda(k)+\lambda_*h\cdot\xi(k)}$.
We now prove that these parameters satisfy
$\lambda(k)=f'(0)(1+|\xi(k)|^2)$ and use this relation to decompose
an entire solution into a critical part and an increasing limit
of parts with a strict logarithmic gradient bound.

\subsection{Identification of the translation parameters}

We show that the translation parameters satisfy
$\lambda=f'(0)(1+|\xi|^2)$ by localizing their continuous-function
representation and identifying the resulting limits as traveling
fronts or homogeneous solutions.

Fix a nonzero
$U\in\mathcal C$.  Propositions~\ref{prop:cone-principal-ideal}
and~\ref{prop:cone-translation-multipliers} give a compact Hausdorff
space $K_U$, an isometric lattice isomorphism
\[
 \mathcal J:E_U\rightarrow C(K_U),\qquad \mathcal J(U)=1,
\]
and continuous functions
\[
 \lambda:K_U\rightarrow[0,2f'(0)],\qquad
 \xi:K_U\rightarrow\overline B_1
\]
such that
\begin{equation}\label{eq:cutoff-spectral-input}
 \mathcal J(T_{s,h}Z)(k)
 =e^{s\lambda(k)+\lambda_*h\cdot\xi(k)}\mathcal J(Z)(k)
 \qquad\text{for }Z\in E_U.
\end{equation}
Only the bounds $0\leq\lambda\leq2f'(0)$ and $|\xi|\leq1$ are used
as input below.  In particular, the lower bound $\lambda\geq f'(0)$
is not assumed.

{
We now determine the relation imposed by the equation on the temporal
and spatial translation parameters.
}

\begin{theorem}\label{thm:translation-spectral-surface}
For every $k\in K_U$,
\begin{equation}\label{eq:translation-spectral-surface}
 \lambda(k)=f'(0)(1+|\xi(k)|^2).
\end{equation}
\end{theorem}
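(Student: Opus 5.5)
The plan is to localize the multiplication-operator representation \eqref{eq:cutoff-spectral-input} near a fixed point $k_0\in K_U$. This produces a sequence of positive solutions on which every translation $T_{s,h}$ acts almost exactly as the single dilation $D_{m(s,h)}$, where
\[
 m(s,h)=e^{s\lambda(k_0)+\lambda_*h\cdot\xi(k_0)}.
\]
Passing to a limit gives an entire solution $W$ with $T_{s,h}W=D_{m(s,h)}W$ exactly. Such a $W$ must be a planar front or a homogeneous orbit, and Lemma~\ref{lem:wave-tail-action} then computes its translation multipliers and yields \eqref{eq:translation-spectral-surface}.

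\emph{Step 1 (localization).} Fix $k_0$ and let $N_n$ be neighbourhoods of $k_0$ on which
\[
 |\lambda-\lambda(k_0)|+|\xi-\xi(k_0)|<1/n .
\]
By Urysohn, choose $g_n\in C(K_U)$ with $0\le g_n\le1$, $g_n(k_0)=1$ and $\operatorname{supp}g_n\subset N_n$, and set $Z_n=\mathcal J^{-1}g_n\in\mathcal C$. Then $Z_n$ is nonzero and $Z_n\le U$. For fixed $(s,h)$ in a compact set, \eqref{eq:cutoff-spectral-input} gives, as functions on $K_U$,
\[
 e^{-\varepsilon_n}m\,g_n\le\mathcal J(T_{s,h}Z_n)\le e^{\varepsilon_n}m\,g_n,
 \qquad \varepsilon_n\to0 .
\]
Since $\mathcal J$ is an order isomorphism, this reads in $E$ as
\[
 D_{e^{-\varepsilon_n}m}Z_n\le T_{s,h}Z_n\le D_{e^{\varepsilon_n}m}Z_n .
\]
These are genuine pointwise inequalities between entire solutions. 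Next replace $Z_n$ by the dilation $D_{b_n}Z_n$ with $D_{b_n}Z_n(0,0)=\tfrac12$, which is possible by Corollary~\ref{cor:cone-normalization}. Dilations commute with translations and with each other, so the same sandwich persists. By \eqref{eq:cone-ancient-input}, $D_{b_n}Z_n$ is locally bounded above and below, uniformly in $n$, so parabolic compactness gives a subsequence converging in $C^{1,2}_{\loc}$ to some $W\in\mathcal C$ with $W(0,0)=\tfrac12$. Hence $0<W<1$.

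\emph{Step 2 (passing to the limit; the main obstacle).} Here I must show that $D_aZ_n\to D_aW$ whenever $Z_n\to W$ locally, for the relevant $a$; local convergence alone does not obviously commute with the pullback defining $D_a$. Lemma~\ref{lem:cone-semicontinuity} gives $\liminf D_aZ_n\ge D_aW$ for $a<1$ and $\limsup D_aZ_n\le D_aW$ for $a>1$. My first attempt at the missing inequalities would be to exploit the symmetry of the sandwich: apply it with $(-s,-h)$ to $T_{s,h}Z_n$, and combine this with the group law in Proposition~\ref{prop:pde-full-dilation}, so that each missing one-sided bound is converted into an available one for the reciprocal amplitude. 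The Lipschitz estimate \eqref{eq:amplitude-lipschitz} absorbs the factors $e^{\pm\varepsilon_n}$. If this fails, the fallback is a uniform version using the amplitude asymptotics \eqref{eq:amplitude-low-value} and \eqref{eq:inverse-wave-upper}, which are uniform over all solutions, together with Theorem~\ref{thm:asymptotic-uniqueness} to identify the limit of $D_aZ_n$ along a subsequence with $D_aW$. Either route should give $T_{s,h}W=D_{m(s,h)}W$ for all $(s,h)$.

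\emph{Step 3 (identification).} Write $\lambda=\lambda(k_0)$ and $\xi=\xi(k_0)$.

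If $\xi=0$, then $T_{0,h}W=D_1W=W$ for every $h$, so $W$ is spatially homogeneous and hence a time translate of $\xi_f$. Lemma~\ref{lem:wave-tail-action} gives $T_{s,0}W=D_{e^{f'(0)s}}W$, and strict monotonicity in the amplitude (Corollary~\ref{cor:cone-normalization}) forces $\lambda=f'(0)$.

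If $\xi\ne0$, put $\nu=\xi/|\xi|$. Then $W$ is invariant under translations orthogonal to $\nu$, so $W=\psi(x\cdot\nu,t)$. Since $W$ is nonconstant, $W_t>0$ by Theorem~\ref{thm:gkpp-ancient}, and $T_{s,0}W=D_{e^{s\lambda}}W$ with $s>0$ then forces $\lambda>0$. With $c=\lambda/(\lambda_*|\xi|)$, the relation gives invariance along $(s,h)=(s,-cs\nu)$, so $W=\psi(x\cdot\nu+ct)$ is a traveling wave with $0<\psi<1$. Theorem~\ref{thm:gkpp-ancient} gives $\psi'>0$ with limits $0$ and $1$, so $c\ge c_*$ and $\psi=\phi_c(\cdot+b)$. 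Lemma~\ref{lem:wave-tail-action} shows that the spatial shift $h\nu$ equals $D_{e^{\lambda(c)h}}$, hence $\lambda_*|\xi|=\lambda(c)$. It also gives
\[
 \lambda=c\lambda(c)=f'(0)+\lambda(c)^2=f'(0)(1+|\xi|^2),
\]
which is \eqref{eq:translation-spectral-surface}.
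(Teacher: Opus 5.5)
Your Steps 1 and 3 are sound. The argument closes, but only through your fallback in Step 2; the first idea there cannot work.

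\emph{Why the reflection idea fails.} After the squeeze and \eqref{eq:amplitude-lipschitz} you have $T_{s,h}W=\lim_n D_mY_n$, where $Y_n=D_{b_n}Z_n$ and $m=m(s,h)$. Lemma~\ref{lem:cone-semicontinuity} then gives $T_{s,h}W\ge D_mW$ for $m<1$ and $T_{s,h}W\le D_mW$ for $m>1$. Now run the same argument on $T_{s,h}Y_n\to T_{s,h}W$ with $(-s,-h)$, i.e.\ with amplitude $1/m$. Applying $D_m$ and the group law returns literally the same inequalities. Reflection and composition never produce the reverse bound.

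\emph{Why the fallback works.} By \eqref{eq:inverse-wave-upper}, \eqref{eq:inverse-wave-lower} and \eqref{eq:cone-basic-bounds}, for every entire solution $V$ the ratio $D_aV/V$ at a point lies in an interval determined only by $a$ and the value $V(t,x)$. That interval shrinks to $\{a\}$ as the value tends to $0$. This passes to pointwise limits. Hence any subsequential limit $\mathscr A_a$ of $D_aY_n$, and also $D_aW$, are both comparable to $W$ and differ by $o(W)$ uniformly as $W\searrow0$. The second part of Theorem~\ref{thm:asymptotic-uniqueness} then gives $\mathscr A_a=D_aW$. So $T_{s,h}W=D_{m(s,h)}W$ holds exactly. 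Your Step 3, using Lemma~\ref{lem:wave-tail-action} and the strict monotonicity in Corollary~\ref{cor:cone-normalization}, then finishes correctly. You should commit to this route rather than present it as a backup.

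\emph{Comparison with the paper.} The paper never identifies $\lim_n D_aY_n$ with $D_aY$.
\begin{itemize}
\item It keeps the limit profile $F(a)=\lim_n D_aY_n(0,0)$ and shows $Y(t,x)=F(e^{\lambda_0t+\lambda_*\xi_0\cdot x})$, so $Y$ is a homogeneous orbit or a wave.
\item It pins the exponent with two one-sided inputs:
  \begin{itemize}
  \item $F(a)\le C_\theta a\ln(1/a)$, the limit of \eqref{eq:inverse-wave-upper} at a single point;
  \item the lower semicontinuity $D_aY(0,0)\le F(a)$ for $a<1$.
  \end{itemize}
\item It adds tail asymptotics of $F$: integrating $\lambda_0aF'=f(F)$ in the homogeneous case, and $\Psi'/\Psi\to\lambda(c)$ in the wave case.
\end{itemize}

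Your route replaces that tail analysis with a genuine continuity statement: $D_a$ is continuous along locally convergent sequences with nonzero limit. This is stronger than Lemma~\ref{lem:cone-semicontinuity} and Proposition~\ref{prop:cone-monotone-dilation}, and it yields the exact covariance of the limit directly. The cost is an extra appeal to the Ces\`aro machinery behind Theorem~\ref{thm:asymptotic-uniqueness}. The paper's route needs only one-sided information at this step.
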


\begin{proof}
Fix $k_0\in K_U$ and write
$\lambda_0=\lambda(k_0)$ and $\xi_0=\xi(k_0)$.
For each $n\geq1$, choose $\psi_n\in C(K_U)$ such that
\[
 0\leq\psi_n\leq1,\qquad \psi_n(k_0)=1,\qquad
 \operatorname{supp}\psi_n
 \subset\{k:|\lambda(k)-\lambda_0|+|\xi(k)-\xi_0|<1/n\}.
\]
Set $Z_n=\mathcal J^{-1}\psi_n$. Since $0\leq Z_n\leq U<1$ and
$\psi_n(k_0)=1$, the strong maximum principle gives $0<Z_n<1$.
By Corollary~\ref{cor:cone-normalization}, for fixed $\theta\in(0,1)$
there is a unique $b_n>0$ such that
\[
 Y_n:=D_{b_n}Z_n,\qquad Y_n(0,0)=\theta.
\]
Moreover, $\mathcal J(Y_n)=b_n\psi_n$.
By \eqref{eq:amplitude-lipschitz}, for $0<a<b$,
\begin{equation}\label{eq:spectral-amplitude-lipschitz}
 0\leq D_bY-D_aY
 =D_{b/a}(D_aY)-D_aY
 \leq\left(\frac ba-1\right)D_aY
 \leq\frac{b-a}{a}.
\end{equation}
Interior parabolic estimates, \eqref{eq:spectral-amplitude-lipschitz},
and diagonal extraction give, along a subsequence,
\begin{equation}\label{eq:spectral-joint-compactness}
 D_aY_n(t,x)\rightarrow\mathscr A(a;t,x)
 \quad\text{locally uniformly on }
 (0,\infty)\times\mathbb R\times\mathbb R^N.
\end{equation}
For fixed $a>0$, the convergence is also in $C^{1,2}_{\loc}$
in $(t,x)$, and $\mathscr A(a;\cdot,\cdot)$ solves \eqref{eq:kpp}.
In particular,
\[
 Y:=\mathscr A(1;\cdot,\cdot),\qquad Y(0,0)=\theta,
\]
is an entire solution of \eqref{eq:kpp} satisfying $0<Y<1$.

Put $d_n=(|t|+\lambda_*|x|)/n$.  Applying
\eqref{eq:cutoff-spectral-input} on the support of $\psi_n$ and
then evaluating the corresponding order inequalities at $(0,0)$ gives
\[
 D_{a e^{\lambda_0t+\lambda_*\xi_0\cdot x-d_n}}Y_n(0,0)
 \leq D_aY_n(t,x)
 \leq D_{a e^{\lambda_0t+\lambda_*\xi_0\cdot x+d_n}}Y_n(0,0).
\]
By \eqref{eq:spectral-amplitude-lipschitz} and
\eqref{eq:spectral-joint-compactness},
\begin{equation}\label{eq:normalized-spectral-limit}
 \mathscr A(a;t,x)=F(ae^{\lambda_0t+\lambda_*\xi_0\cdot x}),
 \qquad F(a):=\mathscr A(a;0,0),\qquad F(1)=\theta.
\end{equation}
By Theorem~\ref{thm:gkpp-ancient}, $Y_t>0$, whereas
$\lambda_0=0$ would give $Y_t=0$. Thus $\lambda_0>0$, and
\[
 F(a)=Y\left(\frac{\ln a}{\lambda_0},0\right),
 \qquad F'(a)=
 \frac{Y_t((\ln a)/\lambda_0,0)}{\lambda_0a}>0,
 \qquad 0<F(a)<1.
\]
Passing to the limit in \eqref{eq:inverse-wave-upper}
and \eqref{eq:inverse-wave-lower} gives
\[
 F(a)\leq\phi_*(\lambda_*^{-1}\ln a+\phi_*^{-1}(\theta))\quad\text{for }0<a\leq1,
 \qquad
 F(a)\geq\phi_*(\lambda_*^{-1}\ln a+\phi_*^{-1}(\theta))\quad\text{for }a\geq1.
\]
In particular,
\begin{equation}\label{eq:spectral-F-endpoints}
 \lim_{a\searrow0}F(a)=0,\qquad
 \lim_{a\rightarrow\infty}F(a)=1,\qquad
 F(a)\leq C_\theta a\ln(1/a)\quad\text{for }0<a\leq a_\theta,
\end{equation}
for some $C_\theta>0$ and $a_\theta\in(0,1)$, by
\eqref{eq:intro-critical-tail}. We also have
\begin{equation}\label{eq:spectral-lower-semicontinuity}
 D_aY(0,0)\leq F(a)\qquad\text{for }0<a<1.
\end{equation}
Indeed, \eqref{eq:cone-stability} gives, for fixed $T>0$,
\[
 \left|S_T[aY_n(-T)]-S_T[aY(-T)]\right|(0)
 \leq ae^{f'(0)T}e^{T\Delta}|Y_n(-T)-Y(-T)|(0)\rightarrow0.
\]
For $0<a<1$, monotonicity in $T$ therefore gives
\[
 \begin{aligned}
 D_aY(0,0)
 &=\sup_{T>0}\lim_{n\rightarrow\infty}\bigl(S_T[aY_n(-T)]\bigr)(0)\\
 &\leq\liminf_{n\rightarrow\infty}D_aY_n(0,0)=F(a).
 \end{aligned}
\]

Suppose first that $\xi_0=0$.  Substitution of
\eqref{eq:normalized-spectral-limit} into the equation gives
\[
 \lambda_0aF'(a)=f(F(a)).
\]
Since $f(u)=f'(0)u+O(u^2)$ at zero, integration yields
\[
 F(a)=c_0a^{f'(0)/\lambda_0}(1+o(1))
 \quad\text{as }a\searrow0,\qquad c_0>0.
\]
Here $1/f(u)-1/(f'(0)u)=O(1)$ as $u\to0$.
By \eqref{eq:spectral-F-endpoints},
\[
 c_0a^{f'(0)/\lambda_0}(1+o(1))
 \leq C_\theta a\ln(1/a)
 \quad\Longrightarrow\quad \lambda_0\leq f'(0).
\]
On the other hand, the homogeneous-orbit amplitude action gives
\[
 D_aY(0,0)=Y\left(\frac{\ln a}{f'(0)},0\right)
 =F(a^{\lambda_0/f'(0)}).
\]
Using \eqref{eq:spectral-lower-semicontinuity} and $F'>0$,
\[
 F(a^{\lambda_0/f'(0)})\leq F(a)
 \ \Longrightarrow\ a^{\lambda_0/f'(0)}\leq a
 \ \Longrightarrow\ \lambda_0\geq f'(0)
 \qquad\text{for }0<a<1.
\]
Hence $\lambda_0=f'(0)$.

Suppose now that $\xi_0\ne0$.  Define
\[
 \nu_0=\frac{\xi_0}{|\xi_0|},\qquad
 c=\frac{\lambda_0}{\lambda_*|\xi_0|},\qquad
 \Psi(z)=F(e^{\lambda_*|\xi_0|z}).
\]
Then
\[
 Y(t,x)=\Psi(x\cdot\nu_0+ct),\qquad
 \Psi'>0,\qquad \Psi(-\infty)=0,\quad\Psi(+\infty)=1.
\]
Since $\Psi(z)=Y(0,z\nu_0)$, Lemma~\ref{lem:wave-tail-action}
gives $c\geq c_*$ and, with
\[
 r=\frac{c-\sqrt{c^2-4f'(0)}}2\in(0,\lambda_*],
\]
$\Psi'(z)/\Psi(z)\to r$ as $z\to-\infty$. Consequently,
\[
 \lim_{a\searrow0}\frac{\ln F(a)}{\ln a}
 =\frac r{\lambda_*|\xi_0|}.
\]
The upper bound in \eqref{eq:spectral-F-endpoints} implies
$r\geq\lambda_*|\xi_0|$.  Conversely, the exact wave amplitude action and
\eqref{eq:spectral-lower-semicontinuity} give
\[
 \Psi(r^{-1}\ln a)
 =D_aY(0,0)
 \leq F(a)=\Psi((\lambda_*|\xi_0|)^{-1}\ln a)
 \qquad\text{for }0<a<1.
\]
Since $\Psi$ is strictly increasing and $\ln a<0$,
$r\leq\lambda_*|\xi_0|$.  Thus $r=\lambda_*|\xi_0|$, and
$r^2-cr+f'(0)=0$ gives
\[
 \lambda_0=c\lambda_*|\xi_0|=f'(0)(1+|\xi_0|^2).
\]
This proves \eqref{eq:translation-spectral-surface} for every
$k_0\in K_U$.
\end{proof}

\begin{remark}\label{rem:spectral-normalized-uniqueness}
For fixed $(\lambda_0,\xi_0)$ and $\theta$, every subsequential
normalized limit in the preceding proof is the same.  If
$\xi_0\ne0$, the direction, speed, and value $\Psi(0)=\theta$
determine the wave uniquely by Lemma~\ref{lem:wave-tail-action}.
If $\xi_0=0$, the scalar initial-value problem
$Y_t=f(Y)$, $Y(0)=\theta$, gives uniqueness.  Consequently the
whole normalized sequence $Y_n$ converges in
$C^{1,2}_{\mathrm{loc}}$ to this solution.  No convergence
uniform down to amplitude zero is asserted or needed.
\end{remark}

\subsection{The critical--subcritical decomposition}

The parameter relation allows us to separate the region $|\xi|<1$
from the critical sphere. We approximate the former by solutions with
strict logarithmic gradient bounds and show that the remaining
solution satisfies the critical time--dilation identity.

\begin{proposition}
\label{prop:cutoff-critical-subcritical-decomposition}
For every $U\in\mathcal C$ there exist $A,C\in\mathcal C$ and
an increasing sequence $A_n\in\mathcal C$ such that
\begin{equation}\label{eq:cutoff-direct-decomposition}
 U=A\oplus C,\qquad
 A_n\nearrow A,\qquad A_n\oplus C\nearrow U
 \quad\text{in }C^{1,2}_{\mathrm{loc}}(\mathbb R\times\mathbb R^N).
\end{equation}
Every nonzero $A_n$ satisfies
\begin{equation}\label{eq:cutoff-direct-gradient-gap}
 |\nabla\ln A_n|\leq\lambda_*r_n,\qquad
 r_n:=1-\frac1n<1\qquad\text{for }n\geq3
\end{equation}
and has a finite associated linear field with measure supported in
$\overline B_{r_n}$.  The remaining solution $C$ satisfies
\begin{equation}\label{eq:cutoff-critical-covariance}
 C(t+s,x)=D_{e^{2f'(0)s}}C(t,x)
 \qquad\text{for every }s\in\mathbb R, (t,x)\in\mathbb R\times\mathbb R^N.
\end{equation}
The zero solution is allowed for $A$, $C$, and $A_n$.
\end{proposition}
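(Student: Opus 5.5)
Work in the principal ideal $E_U$ with its Kakutani representation $\mathcal J:E_U\to C(K_U)$, $\mathcal J(U)=1$. By Theorem~\ref{thm:translation-spectral-surface}, translations act on $E_U$ as multiplication by $e^{sf'(0)(1+|\xi(k)|^2)+\lambda_*h\cdot\xi(k)}$. The plan is to split the constant function $1$ according to $|\xi(k)|$.

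For $n\geq3$ pick continuous cutoffs $\chi_n:[0,1]\to[0,1]$ with $\chi_n=1$ on $[0,1-\frac2n]$, $\chi_n=0$ on $[1-\frac1n,1]$, and $\chi_n$ nondecreasing in $n$ pointwise. For instance, take $\chi_n(\rho)=\min\{1,\max\{0,n(1-\frac1n-\rho)\}\}$, which increases in $n$. Define
\[
 A_n=\mathcal J^{-1}\bigl(\chi_n(|\xi|)\bigr)\in\mathcal C,\qquad 0\le A_n\le U .
\]
Then $A_n$ is increasing in $n$ and bounded by $U$. Let $A$ be its pointwise, hence $C^{1,2}_{\loc}$, limit, which is an entire solution with $A\le U$. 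Set $C:=U\ominus A$ using Theorem~\ref{thm:cone-subtraction}, so $U=A\oplus C$. Then $A_n\oplus C\nearrow A\oplus C=U$ by Corollary~\ref{cor:cone-increasing-superposition}.

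\emph{Gradient bound for $A_n$.} On the support of $\psi_n:=\chi_n(|\xi|)$ we have $|\xi|\le r_n$. Hence, for every $h$,
\[
 e^{-\lambda_*r_n|h|}\psi_n\le e^{\lambda_*h\cdot\xi}\psi_n\le e^{\lambda_*r_n|h|}\psi_n
\]
in $C(K_U)$. This translates into $D_{e^{-\lambda_*r_n|h|}}A_n\le T_{0,h}A_n\le D_{e^{\lambda_*r_n|h|}}A_n$. Combining this with the ordinary inequalities $D_aV\le aV$ for $a\ge1$ and $D_aV\le V$ for $a\le1$, from \eqref{eq:cone-basic-bounds}, yields
\[
 A_n(t,x+h)\le e^{\lambda_*r_n|h|}A_n(t,x).
\]
Letting $h\to0$ gives \eqref{eq:cutoff-direct-gradient-gap}. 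Theorem~\ref{thm:gradient-gap-finite-lift} then provides the finite linear field with measure in $\overline B_{r_n}$.

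\emph{Covariance of $C$.} This is the main obstacle. The difficulty is that $C$ is defined by a monotone limit rather than by a continuous function on $K_U$: the limit of $\psi_n$ is the indicator of $\{|\xi|<1\}$, which is not continuous. I would identify $C$ as the infimum in $E$ of the elements $U\ominus A_n=\mathcal J^{-1}(1-\psi_n)$. Indeed, $C\le U\ominus A_n$ for all $n$, and any positive $Z$ below all of them satisfies $Z\oplus A_n\le U$, hence $Z\oplus A\le U$ by monotone limits, hence $Z\le C$ by cancellation.

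On the support of $1-\psi_n$ we have $|\xi|\ge1-\frac2n$, so $\lambda\ge f'(0)(1+(1-\frac2n)^2)$ there, while always $\lambda\le2f'(0)$. Therefore, for $s\ge0$,
\[
 D_{e^{2f'(0)s\epsilon_n}}(U\ominus A_n)\le T_{s,0}(U\ominus A_n)\le D_{e^{2f'(0)s}}(U\ominus A_n),
 \qquad \epsilon_n\to1 .
\]
The translations $T_{s,0}$ and the dilations $D_a$ are order isomorphisms commuting with infima. Passing to the infimum along the decreasing sequence $U\ominus A_n$, using the monotone continuity of Proposition~\ref{prop:cone-monotone-dilation} together with the Lipschitz bound \eqref{eq:amplitude-lipschitz} in the amplitude to handle $\epsilon_n\to1$, then gives $T_{s,0}C=D_{e^{2f'(0)s}}C$. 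This is \eqref{eq:cutoff-critical-covariance} for $s\ge0$.

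The case $s<0$ follows by inverting, using the group law of Proposition~\ref{prop:pde-full-dilation}. The delicate point to check is that the infimum in $E$ coincides with the pointwise decreasing limit of the solutions $U\ominus A_n$; I would verify this through Lemma~\ref{lem:cone-decreasing-superposition} and cancellation.
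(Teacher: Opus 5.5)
Your proposal is correct. Up to the covariance step it coincides with the paper's proof. Your cutoff $\min\{1,\max\{0,n(1-\frac1n-\rho)\}\}$ is exactly the paper's $\chi_n(q)=\min\{1,\max\{0,n(1-q)-1\}\}$. The gradient gap comes from the same order bounds in $C(K_U)$, $C$ is defined by nonlinear subtraction in the same way, and $A_n\oplus C\nearrow U$ follows from Corollary~\ref{cor:cone-increasing-superposition} as in the paper.

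For the covariance you take a longer route, and the obstacle you name is not really there. Since $0\le C\le U$, the element $C$ lies in $E_U$. Hence $g=\mathcal J(C)=1-\mathcal J(A)$ is already a continuous function on $K_U$; it is not the discontinuous pointwise limit of $1-\chi_n(|\xi|)$. The paper uses only $A\ge A_n$ to get $0\le g\le 1-\chi_n(|\xi|)$ for every $n$, so $g$ vanishes on $\{|\xi|<1\}$. On the remaining set, Theorem~\ref{thm:translation-spectral-surface} gives $\lambda=2f'(0)$. Therefore $e^{s\lambda}g=e^{2f'(0)s}g$, and injectivity of $\mathcal J$ yields \eqref{eq:cutoff-critical-covariance} for all $s\in\R$ at once, with no inversion step for $s<0$.

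Your argument instead characterizes $C$ as the infimum in $E$ of $U\ominus A_n$ and proves approximate covariance $D_{e^{2f'(0)s\epsilon_n}}\le T_{s,0}\le D_{e^{2f'(0)s}}$ on these approximants. You then pass to the infimum, and this is valid. For the lower bound, note that for $m\ge n$ one has $T_{s,0}(U\ominus A_n)\ge T_{s,0}(U\ominus A_m)\ge D_{e^{2f'(0)s\epsilon_m}}C$. Letting $m\to\infty$ by amplitude continuity, or by the Archimedean property, gives the bound.

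The point you flag as delicate, that the order infimum equals the pointwise decreasing limit, is true by the decreasing-continuity and cancellation argument you sketch. It is not actually required, though, because $T_{s,0}$ and $D_a$ are order isomorphisms of $E$ and preserve infima without any appeal to Proposition~\ref{prop:cone-monotone-dilation}.

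What your route buys is that it needs only approximate spectral localization of the approximants, not exact vanishing of a representing function. What the paper's route buys is brevity: membership of $C$ in $E_U$ turns the limiting problem into a pointwise statement about one continuous function.
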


\begin{proof}
The case $U=0$ is immediate.  For $U\ne0$, use the representation
above and define, for $n\geq3$,
\[
 \chi_n(q)=\min\{1,\max\{0,n(1-q)-1\}\},\qquad 0\leq q\leq1,
 \qquad
 A_n=\mathcal J^{-1}(\chi_n(|\xi|)).
\]
The functions $\chi_n$ increase with $n$, vanish for
$q\geq r_n$, and equal one for $q\leq1-2/n$.  Thus
\[
 0\leq A_n\leq A_{n+1}\leq U.
\]
For any $h\in\mathbb R^N$, multiplication in $C(K_U)$ gives
\[
 e^{-\lambda_*r_n|h|}\chi_n(|\xi|)
 \leq e^{\lambda_*h\cdot\xi}\chi_n(|\xi|)
 \leq e^{\lambda_*r_n|h|}\chi_n(|\xi|).
\]
Returning to entire solutions and then using the elementary dilation
bounds yields
\[
 \begin{aligned}
 e^{-\lambda_*r_n|h|}A_n(t,x)
 &\leq D_{e^{-\lambda_*r_n|h|}}A_n(t,x)
 \leq A_n(t,x+h)\\
 &\leq D_{e^{\lambda_*r_n|h|}}A_n(t,x)
 \leq e^{\lambda_*r_n|h|}A_n(t,x).
 \end{aligned}
\]
If $A_n\ne0$, it is strictly positive.  Taking logarithms and
difference quotients proves \eqref{eq:cutoff-direct-gradient-gap}.
This argument uses no amplitude differentiability.
The finite-linear-lift theorem for a strict logarithmic gradient
gap, Theorem~\ref{thm:gradient-gap-finite-lift}, gives the asserted
measure supported in $\overline B_{r_n}$.

Bounded monotone parabolic compactness gives
\[
 A_n\nearrow A\quad\text{in }C^{1,2}_{\mathrm{loc}},
 \qquad 0\leq A\leq U,\qquad A\in\mathcal C\cap E_U.
\]
This $A$ is also the supremum of the $A_n$ in $E$: any vector
upper bound is positive because it dominates the nonnegative
$A_n$, and its representing entire solution bounds the pointwise
limit $A$.  Nonlinear subtraction therefore gives a unique
$C\in\mathcal C$ such that
\[
 C=U\ominus A,\qquad U=A\oplus C.
\]
Here $\ominus$ is subtraction in the ordered vector space, not
ordinary subtraction of physical functions.  Moreover $0\leq C\leq U$,
so $C\in E_U$.  Put $g=\mathcal J(C)$.  Linearity and order give
\[
 0\leq g=1-\mathcal J(A)
 \leq1-\chi_n(|\xi|)\qquad\text{for every }n\geq3.
\]
At each $k$ with $|\xi(k)|<1$, the right side vanishes for all
sufficiently large $n$.  Consequently
\[
 \operatorname{supp}g\subset\{k:|\xi(k)|=1\}.
\]
By Theorem~\ref{thm:translation-spectral-surface}, $\lambda=2f'(0)$
on this set.  Hence, for every $s\in\mathbb R$,
\[
 \mathcal J(T_{s,0}C)=e^{s\lambda}g=e^{2f'(0)s}g
 =\mathcal J(D_{e^{2f'(0)s}}C).
\]
Injectivity of $\mathcal J$ proves
\eqref{eq:cutoff-critical-covariance}.  Finally, increasing
continuity of superposition gives
\[
 A_n\oplus C\nearrow A\oplus C=U;
\]
bounded parabolic compactness upgrades this convergence to
$C^{1,2}_{\mathrm{loc}}$, completing
\eqref{eq:cutoff-direct-decomposition}.
\end{proof}

In particular, this construction does not require a prior critical
projection, continuity of that projection, or preservation of the
critical trace under a decreasing limit.  It produces the critical
covariant remainder directly from the spectral support of
$\mathcal J(U\ominus A)$.
\section{Construction and identification of critical entire solutions}
\label{sec:critical-measure-solutions}

We construct entire solutions from arbitrary finite positive measures
on the critical sphere and prove that every critical component obtained
in Section~\ref{sec:degenerate-spectral-decomposition} belongs to this
family. This establishes Theorem~\ref{thm:borel-main} and identifies
the part not covered by the strict gradient bound.

Throughout this section the fixed reaction satisfies
\eqref{eq:general-kpp-assumptions}, with $f'(0)>0$, and uses the
concave extension fixed after \eqref{eq:general-kpp-assumptions}.
We retain $F_1$ from \eqref{eq:reaction-defects}, write $S_t$ for
the Cauchy solution operator, and set
\[
 \mathcal F[v]=v_t-\Delta v-f(v),\qquad
 M_f=\tfrac12\max_{0\leq u\leq1}|f''(u)|.
\]
Concavity gives $f'(u)\leq f'(0)$ for $u\geq0$, and Taylor's formula gives
$0\leq F_1(u)\leq M_fu^2$ for $0\leq u\leq1$.

For a finite positive Borel measure $\mu$ on $\mathbb S^{N-1}$,
recall $L_\mu$ and $z_\mu$ from \eqref{eq:critical-measure-phase}.
We first construct a radially symmetric solution. It provides a uniform
upper bound for the subsequent construction with arbitrary finite
Borel measures on $\mathbb S^{N-1}$. Dimension one is treated below.

Fix one translate of the increasing critical wave, satisfying
\[
 \phi_*''-c_*\phi_*'+f(\phi_*)=0,\qquad
 \phi_*(-\infty)=0,\qquad \phi_*(+\infty)=1.
\]
The wave estimates in
Subsection~\ref{subsec:wave-amplitude-limits}, together with
\eqref{eq:intro-critical-expansion}, give
\begin{equation}\label{eq:radial-wave-estimates}
 \phi_*(s)\sim(-s)e^{\lambda_*s}\quad(s\to-\infty),\qquad
 |\phi_*^{(j)}(s)|\leq C(1+|s|)e^{\lambda_*s}
 \quad\text{for }s\leq-1,\ j=0,1,2,
\end{equation}
We also need $\phi_*''-\lambda_*\phi_*'\leq0$.
For completeness, if $h=\phi_*''-\lambda_*\phi_*'=\lambda_*\phi_*'-f(\phi_*)$, then
\[
 h'-\lambda_*h=(f'(0)-f'(\phi_*))\phi_*'\geq0.
\]
The boundedness of the profile and its first two derivatives gives
$e^{-\lambda_*s}h(s)\to0$ at $+\infty$. Integration of
$(e^{-\lambda_*s}h)'\geq0$ therefore gives $h\leq0$ on $\mathbb R$.
In particular, for
\[
 g(q)=\phi_*(\lambda_*^{-1}\ln q)\quad\text{for }q>0,\qquad g(0)=0,
\]
we have $g''(q)=\lambda_*^{-2}q^{-2}h(\lambda_*^{-1}\ln q)\leq0$. The critical tail ensures
continuity at zero, so $g$ is concave on $[0,\infty)$.

\subsection{The radially symmetric solution}
\label{sec:radial-reference-complete}

We construct the radially symmetric solution that will bound the
solutions generated by general critical-sphere measures. The main
step is to build a supersolution that decays on compact sets in
the remote past and dominates the critical-profile subsolution.

Throughout this subsection $N\geq2$.
Let $\sigma$ be the normalized surface area measure on $\mathbb S^{N-1}$, so that
\[
 \int_{\mathbb S^{N-1}}d\sigma(\nu)=\sigma(\mathbb S^{N-1})=1.
\]
{
The following theorem constructs the radially symmetric entire
solution that will provide an upper bound for solutions associated
with general measures.
}

\begin{theorem}
\label{thm:radial-critical-solution}
There exists a radially symmetric entire solution
$u_\sigma=U^{\rm rad}$ satisfying
\[
 0<u_\sigma<1,\qquad \partial_tu_\sigma>0,
 \qquad \phi_*(z_\sigma)\leq u_\sigma,
\]
and, for every $M>0$,
\[
 \lim_{t\to-\infty}\sup_{|x|\leq M}u_\sigma(t,x)=0.
\]
It is minimal above $\phi_*(z_\sigma)$: if $W\in\mathcal E$ and
$W\geq\phi_*(z_\sigma)$, then $W\geq u_\sigma$.
For every $b>0$, the minimal entire solution above
$\phi_*(z_{b\sigma})$ is
\[
 u_{b\sigma}(t,x)
 =u_\sigma\left(t+\frac{\ln b}{2f'(0)},x\right).
\]
\end{theorem}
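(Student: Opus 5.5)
The plan is to build $u_\sigma$ as the increasing pullback limit from the subsolution $\underline w:=\phi_*(z_\sigma)$, with a radial supersolution $\overline w\geq\underline w$ to keep the limit away from one on compact sets in the remote past. First, $\underline w$ is a global subsolution. Write $z=z_\sigma$, so $L_\sigma=e^{\lambda_*(z-c_*t)}$ satisfies $\Delta L_\sigma=\lambda_*^2L_\sigma$. A short computation gives $\Delta z=\lambda_*(1-|\nabla z|^2)$ and $|\nabla z|\le1$, the latter because $\nabla z$ is a $\pi$-average of unit vectors. Then
\[
 \mathcal F[\phi_*(z)]=c_*\phi_*'-\phi_*''|\nabla z|^2-\phi_*'\lambda_*(1-|\nabla z|^2)-f(\phi_*)
 =(1-|\nabla z|^2)\bigl(\phi_*''-\lambda_*\phi_*'\bigr)\le0,
\]
using the ODE and $\phi_*''-\lambda_*\phi_*'\le0$ from Lemma~\ref{lem:wave-tail-action}. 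This computation works for every measure $\mu$, not only $\sigma$. Consequently the pullbacks $S_{t+T}[\underline w(-T)]$ increase in $T$ and are bounded by one. Their limit is an entire solution $u_\sigma\ge\underline w$. It is radial, because $\sigma$ and hence $z_\sigma$ are rotation invariant and $S$ commutes with rotations.

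Minimality is immediate from the construction. If $W\in\mathcal E$ and $W\ge\underline w$, comparison from time $-T$ gives $W(t)\ge S_{t+T}[\underline w(-T)]$, and letting $T\to\infty$ gives $W\ge u_\sigma$. The scaling identity follows because $z_{b\sigma}(t,x)=z_\sigma(t,x)+\lambda_*^{-1}\ln b=z_\sigma(t+\ln b/(2f'(0)),x)$, using $\lambda_*c_*=2f'(0)$. So $\phi_*(z_{b\sigma})$ is a time translate of $\phi_*(z_\sigma)$, and minimality transfers to the translate. Once decay on compacts is known, we have $u_\sigma\not\equiv1$ and $u_\sigma\ge\underline w>0$, so $0<u_\sigma<1$ by the strong maximum principle. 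Then $\partial_tu_\sigma>0$ follows from Theorem~\ref{thm:gkpp-ancient}, since $0<u_\sigma<1$ on $\R\times\R^N$.

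The main obstacle is the decay $\sup_{|x|\le M}u_\sigma(t,x)\to0$ as $t\to-\infty$, which needs an upper barrier. As the introduction explains, the naive barrier $\min\{1,Q_\sigma\}$ is not ordered with $\underline w$ because of the factor $|z|$. I would try a supersolution of the form $\overline w=g(\Lambda)$ or $\overline w=\min\{1,K\,(1+\ell(t,x))e^{\lambda_*z}\}$, where $\ell$ is a positive correction that absorbs the linear factor. Asymptotically, $L_\sigma(x)\approx C|x|^{-(N-1)/2}e^{\lambda_*|x|}$, so $z_\sigma\approx c_*t+|x|-\frac{N-1}{2\lambda_*}\ln|x|$. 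Concretely, I would look for $\overline w$ as $\phi_*$ evaluated at $z_\sigma+\beta(t,x)$, or $\phi_*$ of a slightly shifted radial phase. The shift $\beta$ would be chosen so that $\beta\to+\infty$ only on the scale where $\phi_*(z)$ already saturates, while on compact sets $\beta$ stays bounded as $t\to-\infty$. One needs $\mathcal F[\overline w]\ge0$; the error terms of order $(1-|\nabla z|^2)\phi_*'$ and $\phi_*'\Delta\beta$ must be dominated by the positive contribution $\beta_t\phi_*'$.

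As a fallback, I would instead control $u_\sigma$ at the origin by the linear field. The tail $\phi_*(s)\le C(1+|s|)e^{\lambda_*s}$ gives $\underline w(-T,y)\le C(1+|z|)H_\sigma(-T,y)$. Lemma~\ref{lem:critical-gaussian-half} type Gaussian computations, with $|z|\lesssim\sqrt T$ on the relevant diffusive region, should bound $L_{t+T}[\min\{\underline w(-T),1\}](0)$ by $C(1+|t|)e^{2f'(0)t}$ uniformly in $T$. By Cauchy stability this bounds the pullbacks, and hence $u_\sigma$, and gives the decay as $t\to-\infty$. I expect verifying that the $\sqrt T$ fluctuations of $|z|$ do not produce an unbounded factor to be the delicate point. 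If they do, the logarithmic correction in $z_\sigma$ (radial heat-kernel decay) should provide the needed compensation.
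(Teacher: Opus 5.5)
There is a genuine gap at the decay on compact sets. The rest is correct and matches the paper: the identity $\mathcal F[\phi_*(z_\sigma)]=(1-|\nabla z_\sigma|^2)(\phi_*''-\lambda_*\phi_*')\le0$, the increasing pullback, radial symmetry, minimality, and the time-translation identity for $b\sigma$. Using Theorem~\ref{thm:gkpp-ancient} for $\partial_tu_\sigma>0$ once $0<u_\sigma<1$ is known is also fine.

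The decay $\sup_{|x|\le M}u_\sigma(t,x)\to0$ is the real content of the theorem. Without it you do not even know $u_\sigma\not\equiv1$, and neither of your routes delivers it.

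\emph{Your fallback is false, not merely delicate.} The cap is vacuous since $\phi_*<1$. Put $s=t+T$ and $\psi(\tau)=e^{-\lambda_*\tau}\phi_*(\tau)$, so $\psi(\tau)\sim|\tau|$ as $\tau\to-\infty$. Fubini and completing the square give
\[
 \bigl(L_{s}[\phi_*(z_\sigma(-T,\cdot))]\bigr)(0)
 =e^{2f'(0)t}\int_{\mathbb S^{N-1}}\int_{\mathbb R^N}
 \frac{e^{-|y-2\lambda_*s\nu|^2/(4s)}}{(4\pi s)^{N/2}}\,
 \psi\bigl(z_\sigma(-T,y)\bigr)\,dy\,d\sigma(\nu).
\]
On the bulk of this Gaussian, $z_\sigma(-T,y)=c_*t+(y-2\lambda_*s\nu)\cdot\nu-\frac{N-1}{2\lambda_*}\ln s+O(1)$. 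A fixed fraction of the Gaussian mass therefore lies where $z_\sigma(-T,y)\le-\sqrt s$. Hence the linear evolution is at least $c\sqrt T\,e^{2f'(0)t}$ and diverges as $T\to\infty$. The logarithmic term is of lower order and pushes $z_\sigma$ further negative, so it compensates nothing. This is the polynomial-factor obstruction described in the introduction. No bound of the form $S_s\le L_s$ can be uniform in $T$; the extra $\sqrt T$ is removed only by the nonlinearity near the front.

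\emph{Your first route is only an ansatz, and its simplest instance fails.} For a shift $\beta=\beta(t)$ one gets
\[
 \mathcal F[\phi_*(z_\sigma+\beta)]=\beta'\phi_*'+(1-|\nabla z_\sigma|^2)(\phi_*''-\lambda_*\phi_*').
\]
At $x=0$ we have $\nabla z_\sigma=0$ and $(\lambda_*\phi_*'-\phi_*'')/\phi_*'(\tau)\sim1/|\tau|$. This forces $\beta'\gtrsim1/|t|$, so $\beta\to-\infty$ as $t\to-\infty$ and the barrier drops below $\phi_*(z_\sigma)$.

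\emph{The missing idea is the paper's two-piece barrier.}
\begin{itemize}
\item Near the front, replace $z_\sigma$ by the exact radial distance $d=|x|-R(t)$, where $z_\sigma(t,R(t))=0$. Then $|\nabla d|=1$, and $-R'=c_*/\partial_rz_\sigma\ge c_*+(N-1)/R$ compensates the curvature.
\item With the small increasing shift $\eta=D/\ln R$, the function $\phi_*(d+\eta(t))$ is a supersolution for $d\ge-3L$, where $L=R/(\ln R)^2$. It lies above $\phi_*(z_\sigma)$ there because $z_\sigma\le d+\eta$.
\item Deeper inside, cut this front off and add the subcritical linear field $H=R(t)^{-p}\int_0^1e^{f'(0)(1+\ell^2)t}L_\sigma(\ell x)\,d\ell$.
\end{itemize}
Its modes with $\ell<1$ beat the critical profile by roughly $\exp((c_*T-|x|)^2/(4T))$, so $H\ge\phi_*(z_\sigma)$ wherever $d\le-L$. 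The factor $R^{-p}$ produces the positive term $-pR'H/R$, which absorbs the cutoff error. Yet $H=O(T^{-p-1/2}e^{-f'(0)T})$ on compact sets. Trapping the pullbacks below $\min\{1,\Psi+H\}$ for $t\le-T_0$ gives the decay, and the rest of your argument then goes through.
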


\begin{proof}
Set $\mathcal F[v]:=v_t-\Delta v-f(v)$.
We use $L_\sigma$ and $z_\sigma$ defined in \eqref{eq:critical-measure-phase}.
Since the surface area measure $\sigma$ is invariant under orthogonal
transformations, $L_\sigma$ is radially symmetric, with
$L_\sigma(x)=L_\sigma(|x|e_1)$, where $e_1=(1,0,\cdots,0)$.
We also write $z_\sigma(t,r)=z_\sigma(t,re_1)$ for $r\geq0$.
The function $L_\sigma$ is smooth, positive, and solves
$\Delta L_\sigma=f'(0)L_\sigma$. Thus
\begin{equation}\label{eq:radial-log-identity}
 \partial_{rr} z_\sigma+\lambda_*(\partial_r z_\sigma)^2+\frac{N-1}{r}\partial_r z_\sigma=\lambda_*
 \qquad\text{for }t\in\R,\ r>0.
\end{equation}
Differentiation under the integral gives
\[
\begin{aligned}
 \partial_r z_\sigma(t,r)&=\frac{\displaystyle\int_{\mathbb S^{N-1}}\nu_1e^{\lambda_*r\nu_1}\,\sigma(d\nu)}{L_\sigma(re_1)},\\
 \partial_{rr} z_\sigma(t,r)&=\frac{\displaystyle\lambda_*\iint_{\mathbb S^{N-1}\times\mathbb S^{N-1}}(\nu_1-\eta_1)^2
 e^{\lambda_*r(\nu_1+\eta_1)}\,\sigma(d\nu)\sigma(d\eta)}
 {2L_\sigma(re_1)^2}.
\end{aligned}
\]
Clearly,
\begin{equation}\label{eq:radial-log-signs}
 0<\partial_r z_\sigma(t,r)<1,\qquad \partial_{rr} z_\sigma(t,r)\geq0
 \qquad\text{for }t\in\R,\ r>0.
\end{equation}

Here are the elementary bounds that will be used below:
\begin{equation}\label{eq:radial-laplace-bounds}
 c(1+r)^{-\frac{N-1}{2}}e^{\lambda_*r}\leq L_\sigma(re_1)
 \leq C(1+r)^{-\frac{N-1}{2}}e^{\lambda_*r}
 \qquad\text{for }r\geq0,
\end{equation}
and
\begin{equation}\label{eq:radial-log-derivative-bound}
 0\leq1-\partial_r z_\sigma(t,r)\leq\frac Cr
 \qquad\text{for }t\in\R,\ r\geq1.
\end{equation}
Indeed, the integral representation of the modified Bessel function
of the first kind gives
\[
 L_\sigma(re_1)=\Gamma\!\left(\frac N2\right)
 \left(\frac{2}{\lambda_*r}\right)^{\frac N2-1}
 I_{\frac N2-1}(\lambda_*r)
 \qquad\text{for }r>0.
\]
Its differentiation formula and expansion yield
\[
\begin{aligned}
 L_\sigma(re_1)
 &=\frac{2^{\frac N2-1}\Gamma(N/2)}{\sqrt{2\pi}}
   (\lambda_*r)^{-\frac{N-1}{2}}e^{\lambda_*r}
   \bigl(1+O(r^{-1})\bigr),\\
 \partial_r z_\sigma(t,r)&=\frac{I_{\frac N2}(\lambda_*r)}{I_{\frac N2-1}(\lambda_*r)}
 =1-\frac{N-1}{2\lambda_*r}+O(r^{-2}),
\end{aligned}
\qquad\text{as }r\to\infty.
\]
Together with \eqref{eq:radial-log-signs} and positivity and continuity
on bounded intervals, these imply \eqref{eq:radial-laplace-bounds}
and \eqref{eq:radial-log-derivative-bound}.

For $t=-T<0$, define $R(t)>0$ by
\begin{equation}\label{eq:radial-radius-definition}
 z_\sigma(t,R(t))=0.
\end{equation}
Existence and uniqueness follow from $z_\sigma(t,0)=c_*t<0$, $\partial_r z_\sigma(t,r)>0$
for $r>0$, and $z_\sigma(t,r)\to\infty$ as $r\to\infty$.
Since $\partial_t z_\sigma=c_*$, \eqref{eq:radial-laplace-bounds},
\eqref{eq:radial-log-derivative-bound}, and \eqref{eq:radial-radius-definition} imply
\begin{equation}\label{eq:radial-radius-estimates}
 R(-T)=c_*T+\frac{N-1}{c_*}\ln T+O(1),\qquad
 R'(t)=-\frac{c_*}{\partial_r z_\sigma(t,R(t))},\qquad
 c_*\leq-R'(t)\leq C
\end{equation}
for all sufficiently large $T$.

Define
\[
 \underline u(t,x):=\phi_*(z_\sigma(t,x)).
\]
The function $z_\sigma$ is smooth even at $x=0$.
If $H_0=e^{2f'(0)t}L_\sigma(x)$, then
$(H_0)_t-\Delta H_0=f'(0)H_0$ and
$|\nabla H_0|\leq\lambda_*H_0$.
Equivalently,
\[
 \partial_t z_\sigma-\Delta z_\sigma=\lambda_*(1+|\nabla z_\sigma|^2),\qquad |\nabla z_\sigma|\leq1.
\]
The wave equation therefore yields
\begin{equation}\label{eq:radial-global-subsolution}
 \mathcal F[\underline u]=(1-|\nabla z_\sigma|^2)
 \bigl(\phi_*''(z_\sigma)-\lambda_*\phi_*'(z_\sigma)\bigr)\leq0.
\end{equation}
Thus $\underline u$ is a positive bounded global subsolution.

We now construct a bounded supersolution for all sufficiently negative
times. Put
\[
 d(t,x):=|x|-R(t),\qquad
 L(t):=\frac{R(t)}{(\ln R(t))^2},\qquad
 \eta(t):=\frac{D}{\ln R(t)},
\]
where $D>0$ will be chosen. The function $d$ is smooth when $x\ne0$.
For $d\geq-3L(t)$ and large $R(t)$, one has $|x|\geq R(t)/2$, and
\[
 |\nabla d|=1,\qquad d_t=-R'(t),\qquad
 \Delta d=\frac{N-1}{R(t)+d}.
\]
At $d=0$, \eqref{eq:radial-log-identity} gives the exact inequality
\begin{align*}
 -R'(t)-\frac{N-1}{R(t)}-c_*
 &=\frac{c_*}{\partial_r z_\sigma(t,R(t))}-\frac{N-1}{R(t)}-c_*\\
 &=\frac{\lambda_*(1-\partial_r z_\sigma(t,R(t)))^2+\partial_{rr} z_\sigma(t,R(t))}{\partial_r z_\sigma(t,R(t))}\geq0.
\end{align*}
For $-3L(t)\leq d\leq0$, using $d_t=-R'(t)$ and
$\Delta d=(N-1)/(R(t)+d)$, we obtain
\begin{align*}
 d_t-\Delta d-c_*
 &=\left(-R'(t)-\frac{N-1}{R(t)}-c_*\right)
   +(N-1)\left(\frac1{R(t)}-\frac1{R(t)+d}\right)\\
 &\geq\frac{(N-1)d}{R(t)(R(t)+d)}
 \geq-\frac{CL(t)}{R(t)^2},
\end{align*}
where the last inequality uses $|d|\leq3L(t)$ and $R(t)+d\geq R(t)/2$
for all sufficiently negative $t$.
For $d\geq0$, the same identity gives $d_t-\Delta d-c_*\geq0$.
On the other hand,
\[
 \eta'(t)=\frac{-DR'(t)}{R(t)(\ln R(t))^2}
 \geq\frac{c_*D}{R(t)(\ln R(t))^2}.
\]
Since $L(t)/R(t)^2=1/(R(t)(\ln R(t))^2)$,
the shifted wave
\[
 v(t,x):=\phi_*(d(t,x)+\eta(t))
\]
satisfies
\begin{equation}\label{eq:radial-outer-profile}
 \mathcal F[v]
 =\phi_*'(d+\eta(t))(d_t+\eta'(t)-\Delta d-c_*)\geq0
 \qquad\text{for }d\geq-3L(t),
\end{equation}
by taking $D$ sufficiently large.

The supersolution inequality above is only established for
$d\geq-3L(t)$, and $d=|x|-R(t)$ is not smooth at the origin.
We therefore cut off $v$ in the deep interior.
Fix a smooth function $0\leq\chi\leq1$ with
$\chi(s)=0$ for $s\leq-2$ and $\chi(s)=1$ for $s\geq-1$.
Define
\[
 \Psi(t,x):=\chi(d(t,x)/L(t))\phi_*(d(t,x)+\eta(t)).
\]
It agrees with $v$ when $d\geq-L(t)$, and it
vanishes when $d\leq-2L(t)$.

On the strip $-2L(t)\leq d\leq-L(t)$, one has
$d_t=O(1)$, $\Delta d=O(R(t)^{-1})$, and
$L'(t)=O((\ln R(t))^{-2})$. The chain rule gives
\[
 \left|\nabla\!\left[\chi\!\left(\frac d{L(t)}\right)\right]\right|
 \leq\frac C{L(t)},\qquad
 \left|(\partial_t-\Delta)\!\left[\chi\!\left(\frac d{L(t)}\right)\right]\right|
 \leq\frac C{L(t)}.
\]
The product identity is
\begin{align*}
 \mathcal F[\Psi]
 &=\chi\!\left(\frac d{L(t)}\right)\mathcal F[v]
 +v(\partial_t-\Delta)\!\left[\chi\!\left(\frac d{L(t)}\right)\right]\\
 &\quad-2\nabla\!\left[\chi\!\left(\frac d{L(t)}\right)\right]\cdot\nabla v
 +\chi\!\left(\frac d{L(t)}\right)f(v)
 -f\!\left(\chi\!\left(\frac d{L(t)}\right)v\right).
\end{align*}
The first term is nonnegative by
\eqref{eq:radial-outer-profile}. The last two terms satisfy
\[
 \chi\!\left(\frac d{L(t)}\right)f(v)
 -f\!\left(\chi\!\left(\frac d{L(t)}\right)v\right)
 =-\chi\!\left(\frac d{L(t)}\right)F_1(v)
 +F_1\!\left(\chi\!\left(\frac d{L(t)}\right)v\right)
 \geq-M_fv^2.
\]
Lemma~\ref{lem:wave-tail-action} and  \eqref{eq:intro-critical-expansion}, together with $\eta(t)\to0$ and
$L(t)\to\infty$ as $t\to-\infty$, bound $v$ and $|\nabla v|$ on this strip by
$C(1+L(t))e^{\lambda_*d}$. Consequently
\begin{equation}\label{eq:radial-cutoff-error}
 \mathcal F[\Psi]\geq-C(1+L(t))e^{\lambda_*d}
 \qquad\text{for }-2L(t)\leq d\leq-L(t).
\end{equation}
Outside this strip, $\mathcal F[\Psi]\geq0$.

The cutoff introduces an error on
$-2L(t)\leq d(t,x)\leq-L(t)$, as estimated in
\eqref{eq:radial-cutoff-error}. To compensate for this error, we add a correction term $H$ to $\Psi$.
Since the wave profile is close to zero in this region, we construct
$H$ from solutions of the linearized equation. Fix $p>0$ and define
\[
 K(t,x):=\int_0^1 e^{f'(0)(1+\ell^2)t}L_\sigma(\ell x)\,d\ell,
 \qquad H(t,x):=R(t)^{-p}K(t,x).
\]
For each $\ell\in[0,1]$, the function
$e^{f'(0)(1+\ell^2)t}L_\sigma(\ell x)$ satisfies
\[
 (\partial_t-\Delta-f'(0))
 \left[e^{f'(0)(1+\ell^2)t}L_\sigma(\ell x)\right]=0,
\]
because $\Delta L_\sigma(\ell x)=f'(0)\ell^2L_\sigma(\ell x)$.
Differentiating under the integral gives $K_t-\Delta K=f'(0)K$.
Multiplying $K$ by $R(t)^{-p}$ gives
\begin{equation}\label{eq:radial-correction-equation}
 H_t-\Delta H-f'(0)H=\beta(t)H,\qquad
 \beta(t):=-p\frac{R'(t)}{R(t)},\qquad
 \frac cT\leq\beta(-T)\leq\frac CT.
\end{equation}

The correction term $H$ is small on every fixed compact set. For every fixed \(M>0\) and $|x|\leq M$,
one has $L_\sigma(\ell x)\leq e^{\lambda_*M}$, and therefore
\begin{equation}\label{eq:radial-correction-compact}
 \sup_{|x|\leq M}K(-T,x)
 \leq C_Me^{-f'(0)T}\int_0^1e^{-f'(0)T\ell^2}\,d\ell
 \leq C_M T^{-1/2}e^{-f'(0)T}.
\end{equation}

We next show that the correction term $H$ dominates both the subsolution and
the cutoff error in the required interior region. If $d(-T,x)\leq-L(-T)$,
then, by \eqref{eq:radial-radius-estimates}, for a fixed $c_0>0$ and
all sufficiently large $T$,
\begin{equation}\label{eq:radial-interior-gap}
 0\leq r:=|x|\leq R(-T)-L(-T)
 \leq c_*T-\frac{c_0T}{(\ln T)^2}.
\end{equation}
In particular, $r\leq c_*T-\sqrt T$. Put
$\ell_0=r/(c_*T)$. The interval
$[\ell_0,\ell_0+1/(2\lambda_*\sqrt T)]$ lies in $[0,1]$. The lower
bound in \eqref{eq:radial-laplace-bounds} gives
\begin{align*}
 K(-T,x)
 &\geq c e^{-f'(0)T+r^2/(4T)}
 \int_0^1(1+\lambda_*\ell r)^{-\frac{N-1}{2}}e^{-f'(0)T(\ell-\ell_0)^2}\,d\ell\\
 &\geq cT^{-1/2}(1+r^2/T)^{-\frac{N-1}{2}}e^{-f'(0)T+r^2/(4T)}.
\end{align*}
For $\ell\in[\ell_0,\ell_0+1/(2\lambda_*\sqrt T)]$,
$f'(0)T(\ell-\ell_0)^2\leq1/4$ and
\[
 1+\lambda_*\ell r\leq1+\frac{r^2}{2T}
                  +\frac{r}{2\sqrt T}
 \leq C(1+r^2/T).
\]
Meanwhile,
\[
 e^{\lambda_*z_\sigma(-T,x)}=e^{-2f'(0)T}L_\sigma(re_1)
 \leq C(1+r)^{-\frac{N-1}{2}}e^{-2f'(0)T+\lambda_*r}.
\]
Since $R(-T)\asymp T$ as $T\to\infty$\footnote{The notation
$A\asymp B$ means that $C^{-1}B\leq A\leq CB$ for some constant
$C\geq1$ independent of the asymptotic variables under consideration.},
it follows that throughout
\eqref{eq:radial-interior-gap},
\begin{align}
 \frac{H(-T,x)}{T e^{\lambda_*z_\sigma(-T,x)}}
 &\geq cT^{-p-3/2}
 \left(\frac{1+r}{1+r^2/T}\right)^{\frac{N-1}{2}}
 \exp\left(\frac{(c_*T-r)^2}{4T}\right),
 \label{eq:radial-correction-ratio}\\
 \frac{\beta(-T)H(-T,x)}{T e^{\lambda_*z_\sigma(-T,x)}}
 &\geq cT^{-p-5/2}
 \left(\frac{1+r}{1+r^2/T}\right)^{\frac{N-1}{2}}
 \exp\left(\frac{(c_*T-r)^2}{4T}\right).
 \label{eq:radial-residual-ratio}
\end{align}
Here $r\leq c_*T$ implies
$(1+r)/(1+r^2/T)\geq\min\{1,c_*^{-1}\}>0$.
Since $c_*T-r\geq c_0T/(\ln T)^2$ by
\eqref{eq:radial-interior-gap}, we also have
\[
 \frac{(c_*T-r)^2}{4T}\geq\frac{cT}{(\ln T)^4}.
\]
Both right sides in \eqref{eq:radial-correction-ratio}--
\eqref{eq:radial-residual-ratio} consequently tend to infinity as
$T\to\infty$, uniformly for $d(-T,x)\leq-L(-T)$.

On $d(-T,x)\leq-L(-T)$, one has $-c_*T\leq z_\sigma(-T,x)\leq-1$ for large $T$.
 The critical wave-tail estimate
\eqref{eq:intro-critical-expansion} gives $\underline u\leq CT e^{\lambda_*z_\sigma}$, so
\eqref{eq:radial-correction-ratio} proves
\begin{equation}\label{eq:radial-interior-order}
 H\geq\underline u\qquad\text{for }d\leq-L(t).
\end{equation}
For $-2L(t)\leq d\leq0$, the identity
\[
 z_\sigma(t,x)-d(t,x)=\int_{R(t)+d(t,x)}^{R(t)}(1-\partial_r z_\sigma(t,s))\,ds
\]
and \eqref{eq:radial-log-derivative-bound} give
\begin{equation}\label{eq:radial-phase-distance}
 0\leq z_\sigma-d\leq C|d|/R(t)\leq C/(\ln R(t))^2.
\end{equation}
Since $e^{\lambda_*d}\leq e^{\lambda_*z_\sigma}$ and
$1+L(t)\leq C|t|$, estimates \eqref{eq:radial-residual-ratio} and
\eqref{eq:radial-cutoff-error} give, for $T_0$ sufficiently large,
\begin{equation}\label{eq:radial-cutoff-absorbed}
 \mathcal F[\Psi]+\beta H\geq0
 \qquad\text{for }-2L(t)\leq d\leq-L(t),\ t\leq-T_0.
\end{equation}

Define, for $t\leq-T_0$,
\[
 \overline u(t,x):=\min\{1,\Psi(t,x)+H(t,x)\}.
\]
This is a bounded supersolution. Indeed, writing $W=\Psi+H$,
on the set $\{W<1\}$ we have
\begin{align*}
 \mathcal F[W]
 &=\mathcal F[\Psi]+\beta H+f'(0)H+f(\Psi)-f(\Psi+H)\\
 &\geq\mathcal F[\Psi]+\beta H\geq0.
\end{align*}
The first inequality follows by integrating $f'\leq f'(0)$ between
$\Psi$ and $\Psi+H$. The second follows from
\eqref{eq:radial-cutoff-absorbed} on the cutoff strip, from
\eqref{eq:radial-outer-profile} where $d\geq-L$, and from $\Psi=0$
where $d\leq-2L$.

Here is a distributional verification of the cap. Put
$\mathscr L=\partial_t-\Delta$. For a smooth function $v$, convex
smooth approximations of $v_+=\max\{v,0\}$ give
\[
 \mathscr L(v_+)\leq\mathbf1_{\{v>0\}}\mathscr Lv
\]
in distributions. More explicitly, choose increasing convex
$\theta_\varepsilon$ which vanish for $s\leq0$, have derivatives
bounded by one, and converge to $s_+$. Then
\[
 \mathscr L\theta_\varepsilon(v)
 =\theta_\varepsilon'(v)\mathscr Lv
  -\theta_\varepsilon''(v)|\nabla v|^2
 \leq\theta_\varepsilon'(v)\mathscr Lv,
\]
and pass to the distributional limit. Applied to $v=1-W$, this gives
\[
 \mathscr L\overline u
 =-\mathscr L(1-W)_+
 \geq\mathbf1_{\{W<1\}}\mathscr LW
 \geq\mathbf1_{\{W<1\}}f(W)=f(\overline u).
\]
The last equality uses $f(1)=0$. This argument is local before the
cap, so the spatial growth of $H$ causes no difficulty. Afterwards,
$0\leq\overline u\leq1$, and all comparisons are between bounded
functions.

We verify $\underline u\leq\overline u$ everywhere. On $d\leq-L(t)$ this follows
from \eqref{eq:radial-interior-order}. On $-L(t)\leq d\leq0$,
\eqref{eq:radial-phase-distance} and
$\eta(t)=D/\ln R(t)$ give $z_\sigma\leq d+\eta(t)$ for large $R(t)$, where $\Psi=v$.
On $d\geq0$, integration of $\partial_r z_\sigma(t,r)\leq1$ gives $z_\sigma\leq d$.
Thus $\underline u\leq\Psi$ on $d\geq-L(t)$, and we obtain
\begin{equation}\label{eq:radial-barrier-order}
 0<\underline u\leq\overline u\leq1\qquad\text{for }t\leq-T_0.
\end{equation}

Both $\underline u$ and $\overline u$ are radially symmetric in $x$.
For each integer $n>T_0$, let $u_n$ be the Cauchy solution
with initial data $u_n(-n,x)=\underline u(-n,x)$.
Rotational invariance of the equation and uniqueness of the Cauchy
problem imply that each $u_n$ is radially symmetric in $x$.
The parabolic comparison principle gives
\[
 \underline u\leq u_n\leq\overline u\leq1
 \quad\text{for }-n\leq t\leq-T_0,\qquad
 0\leq u_n\leq1\quad\text{for }t\geq-n.
\]

At time $-n$, the subsolution gives
$u_{n+1}(-n,\cdot)\geq\underline u(-n,\cdot)=u_n(-n,\cdot)$.
Hence $u_{n+1}(t,x)\geq u_n(t,x)$ for $t\geq-n$ and $x\in\R^N$. Boundedness and
interior parabolic estimates give
\[
 u_n\nearrow U^{\rm rad}
 \quad\text{in }C^{1,2}_{\mathrm{loc}}(\R\times\R^N)
 \quad\text{as }n\to\infty.
\]
The limit $U^{\rm rad}$ is therefore a radially symmetric entire solution and satisfies
$\underline u\leq U^{\rm rad}\leq\overline u$ for $t\leq-T_0$.

For each fixed $M$, the function $\Psi(-T,x)$ vanishes on
$|x|\leq M$ once $T$ is sufficiently large, because $R(-T)-2L(-T)\to\infty$.
Consequently \eqref{eq:radial-correction-compact} gives
\begin{equation}\label{eq:radial-strict-past-bound}
 \sup_{|x|\leq M}U^{\rm rad}(-T,x)
 \leq C_M T^{-p-1/2}e^{-f'(0)T}\rightarrow0.
\end{equation}
The lower bound by $\underline u$ proves positivity. The past bound excludes
the constant solution $1$. More explicitly, if $U^{\rm rad}$ took
the value $1$ at a finite space--time point, the strong parabolic
maximum principle for $1-U^{\rm rad}$ would force it to equal $1$
at all earlier times, contradicting
\eqref{eq:radial-strict-past-bound}. Indeed, $w=1-U^{\rm rad}$ satisfies
$w_t-\Delta w=c(t,x)w$ with
$c=-f(U^{\rm rad})/(1-U^{\rm rad})$, whose continuous extension
at $U^{\rm rad}=1$ is $f'(1)$ and is bounded. Thus
\begin{equation}\label{eq:radial-strict-reference}
 0<U^{\rm rad}<1,
 \qquad U^{\rm rad}(t,0)\rightarrow0\quad(t\to-\infty).
\end{equation}

By the comparison principle, $U^{\rm rad}$ is the minimal
entire solution above $\underline u=\phi_*(z_\sigma)$.
We denote this solution associated with $\sigma$ by $u_\sigma:=U^{\rm rad}$.

Finally, $U^{\rm rad}$ is strictly increasing in time. For $h>0$,
\[
 u_n(-n+h,\cdot)\geq \underline u(-n+h,\cdot)\geq \underline u(-n,\cdot).
\]
Comparing $u_n(t+h,\cdot)$ with $u_n(t,\cdot)$ and taking the limit
gives $U^{\rm rad}(t+h,x)\geq U^{\rm rad}(t,x)$.
Thus $v=\partial_tU^{\rm rad}\geq0$ and
\[
 v_t-\Delta v=f'(U^{\rm rad})v.
\]
If $v$ vanished at a finite point, the strong maximum principle
would make it vanish at all earlier times. The solution would then
be time-independent on that earlier interval, and its past limit
would force it to be zero there, contradicting positivity.
Therefore $\partial_tU^{\rm rad}>0$ everywhere.

For the measure $b\sigma$, where $b>0$, the corresponding minimal
entire solution is a time translate of $u_\sigma$. Indeed,
\[
 \phi_*(z_{b\sigma}(t,x))
 =\phi_*(z_\sigma(t,x)+\lambda_*^{-1}\ln b)
 =\underline u\left(t+\frac{\ln b}{2f'(0)},x\right).
\]
Thus $U^{\rm rad}(t+\ln b/(2f'(0)),x)$ is an entire solution above
$\phi_*(z_{b\sigma}(t,x))$. If $W\in\mathcal E$
satisfies $W\geq\phi_*(z_{b\sigma})$, then
\[
 W\left(t-\frac{\ln b}{2f'(0)},x\right)
 \geq\underline u(t,x).
\]
Minimality of $U^{\rm rad}$ and time translation therefore give
\[
 U^{\rm rad}\left(t+\frac{\ln b}{2f'(0)},x\right)\leq W(t,x).
\]
Hence this time translate is the minimal entire solution
above $\phi_*(z_{b\sigma})$, which we denote by
\begin{equation}\label{eq:radial-mass-time-scaling}
 u_{b\sigma}(t,x)
 :=u_\sigma\left(t+\frac{\ln b}{2f'(0)},x\right)
 =U^{\rm rad}\left(t+\frac{\ln b}{2f'(0)},x\right).
\end{equation}
This completes the proof.
\end{proof}

\subsection{Entire solutions from critical-sphere measures}
\label{sec:canonical-critical-construction}

We now construct a canonical entire solution for each finite positive
measure on the critical sphere. Spherical averaging provides a radial
upper bound, and comparison relates scaling of the measure to dilation
and time translation of the solution.

Let $g(q)=\phi_*(\lambda_*^{-1}\ln q)$ for $q>0$, with $g(0)=0$.
Its monotonicity and concavity follow from Lemma~\ref{lem:wave-tail-action}.
Let $\sigma$ be the normalized surface measure on $\mathbb S^{N-1}$. For $N=1$, it means $(\delta_{+1}+\delta_{-1})/2$.
For a finite positive Borel measure $\mu$, recall
$L_\mu$ and $z_\mu$ from \eqref{eq:critical-measure-phase} and set
\begin{equation}\label{eq:canonical-critical-linear-field}
 H_\mu(t,x):=e^{2f'(0)t}L_\mu(x)=e^{\lambda_*z_\mu(t,x)}.
\end{equation}

{
We use the radial solution to ensure that the construction for an
arbitrary finite positive measure stays strictly below one. We also
identify the action of dilation on the measure.
}

\begin{theorem}\label{thm:canonical-critical-solutions}
For every finite positive Borel measure $\mu$ on $\mathbb S^{N-1}$,
there exists an entire solution $u_\mu$ of \eqref{eq:kpp} satisfying
\begin{equation}\label{eq:canonical-critical-radial-upper}
 0<\phi_*(z_\mu(t,x))\leq u_\mu(t,x)
 \leq U^{\rm rad}\!\left(\frac{z_\mu(t,x)}{c_*},0\right)<1
\end{equation}
for every $(t,x)\in\R\times\R^N$, and
\[
 u_\mu(t,\cdot)\rightarrow0
 \qquad\text{locally uniformly in }\R^N
 \quad\text{as }t\to-\infty.
\]
Moreover,
\begin{equation}\label{eq:canonical-critical-dilation}
 (D_bu_\mu)(t,x)=u_{b\mu}(t,x)
 =u_\mu\!\left(t+\frac{\ln b}{2f'(0)},x\right)
 \qquad\text{for }b>0.
\end{equation}
\end{theorem}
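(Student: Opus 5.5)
The plan is to obtain $u_\mu$ as an increasing pullback limit from the critical-profile subsolution, to get the radial upper bound by a rotation-averaging argument at the spatial origin, and to identify dilations by minimality. Write $\underline u_\mu:=\phi_*(z_\mu)=g(H_\mu)$.

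\emph{Subsolution and construction.} Since $H_\mu=e^{\lambda_*z_\mu}$ solves the linearized equation and, by Cauchy--Schwarz on the Laplace integral, $|\nabla H_\mu|\le\lambda_*H_\mu$, we get
\[
\partial_tz_\mu-\Delta z_\mu=\lambda_*(1+|\nabla z_\mu|^2),\qquad |\nabla z_\mu|\le1 .
\]
The computation \eqref{eq:radial-global-subsolution} then gives $\mathcal F[\underline u_\mu]=(1-|\nabla z_\mu|^2)(\phi_*''-\lambda_*\phi_*')(z_\mu)\le0$. Hence $\underline u_\mu$ is a global subsolution with $0<\underline u_\mu<1$. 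The pullbacks $S_{t+T}[\underline u_\mu(-T)]$ increase in $T$ and lie above $\underline u_\mu$. Interior estimates give an entire limit $u_\mu$ in $C^{1,2}_{\loc}$. Comparison shows that $u_\mu$ is the least entire solution above $\underline u_\mu$.

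\emph{Radial upper bound.} First reduce to $x=0$. For $x_0\in\R^N$ one has $z_\mu(t,x+x_0)=z_{\mu'}(t,x)$ with $d\mu'=e^{\lambda_*x_0\cdot\nu}d\mu$. Minimality then gives $u_\mu(t,x+x_0)=u_{\mu'}(t,x)$, and $z_{\mu'}(t,0)=z_\mu(t,x_0)$. So it suffices to prove $u_\mu(t,0)\le U^{\rm rad}(z_\mu(t,0)/c_*,0)$. Let $M=\mu(\Sph)$. Then $z_\mu(t,0)=c_*t+\lambda_*^{-1}\ln M$, and Theorem~\ref{thm:radial-critical-solution} identifies the right side with $u_{M\sigma}(t,0)$.

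Now fix $T$, put $h=\underline u_\mu(-T)$, and let $h_R=h\circ R^{-1}$ for $R\in O(N)$; note that $h_R=g(H_{R_\#\mu}(-T))$. By rotation invariance, $S_{t+T}[h_R](0)=S_{t+T}[h](0)$. Averaging over Haar measure $dR$ and using the concavity inequality \eqref{sub-concavity}, extended to continuous averages by approximation with finite convex combinations and Cauchy stability, gives
\[
S_{t+T}[h](0)=\int S_{t+T}[h_R](0)\,dR\le S_{t+T}\Bigl[\int h_R\,dR\Bigr](0).
\]
Jensen's inequality for the concave function $g$, together with $\int H_{R_\#\mu}\,dR=H_{M\sigma}$, gives
\[
\int h_R\,dR\le g(H_{M\sigma}(-T))=\underline u_{M\sigma}(-T).
\]
Letting $T\to\infty$ yields $u_\mu(t,0)\le u_{M\sigma}(t,0)$. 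This proves \eqref{eq:canonical-critical-radial-upper}; the strict inequality $<1$ comes from $U^{\rm rad}<1$. The past decay follows because $z_\mu(t,x)\to-\infty$ locally uniformly as $t\to-\infty$, while $U^{\rm rad}(s,0)\to0$ as $s\to-\infty$.

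\emph{Dilation identity.} Since $z_{b\mu}(t,x)=z_\mu(t+\ln b/(2f'(0)),x)$, minimality gives $u_{b\mu}=T_{s,0}u_\mu$ with $s=\ln b/(2f'(0))$; this is the argument at the end of Theorem~\ref{thm:radial-critical-solution}. It remains to show $D_bu_\mu=u_{b\mu}$.
\begin{itemize}
\item For $b\ge1$, \eqref{eq:inverse-wave-lower} and $u_\mu\ge\phi_*(z_\mu)$ give
\[
D_bu_\mu\ge\phi_*\bigl(\lambda_*^{-1}\ln b+\phi_*^{-1}(u_\mu)\bigr)\ge\phi_*(z_{b\mu}),
\]
so $D_bu_\mu\ge u_{b\mu}$ by minimality.
\item Applying this to the measure $\mu/b$ gives $D_bu_{\mu/b}\ge u_\mu$, that is, $u_{\mu/b}\ge D_{1/b}u_\mu$, by the group law (Proposition~\ref{prop:pde-full-dilation}).
\end{itemize}

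The reverse inequality $D_bu_\mu\le u_{b\mu}$ for $b\ge1$ is the step I expect to be the main obstacle. Minimality points the wrong way here. The pointwise inequality needed for the pullback, $b^{-1}g(bq)\ge g(q)$, is also false, because concavity of $g$ gives the opposite. I would try Theorem~\ref{thm:asymptotic-uniqueness} with $X=D_bu_\mu$, $Y=T_{s,0}u_\mu$ and $Z=u_\mu$. The comparability $cZ\le X,Y\le CZ$ comes from \eqref{eq:cone-basic-bounds} and \eqref{eq:gkpp-ancient-bounds}. For the small-amplitude condition, \eqref{eq:amplitude-low-value} gives $X=bu_\mu+o(u_\mu)$. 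It therefore suffices to show $u_\mu(t+s,x)=b\,u_\mu(t,x)+o(u_\mu)$ as $u_\mu\searrow0$.

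For this I would squeeze $u_\mu$ between $\phi_*(z_\mu)$ and the radial bound. The intent is to show that, in the small-value region, $u_\mu$ behaves like a function of the phase $z_\mu$ with an asymptotically exponential $e^{\lambda_*z_\mu}$-type tail. The shift $s$ changes $z_\mu$ by $\lambda_*^{-1}\ln b$, so such a tail would multiply the value by $b(1+o(1))$. Controlling the logarithmic prefactor uniformly is the delicate point. If it fails, a fallback is to use \eqref{eq:canonical-critical-dilation} only in the form of the inequality $D_bu_\mu\ge u_{b\mu}$, and to recover equality later through the critical covariance \eqref{eq:cutoff-critical-covariance}.
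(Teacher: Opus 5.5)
Most of your plan is sound and close to the paper. The following parts are correct:
\begin{itemize}
\item the pullback construction from $g(H_\mu)$ and its minimality;
\item the tilt $d\mu'=e^{\lambda_*x_0\cdot\nu}d\mu$, which reduces the radial bound to $x=0$;
\item the $O(N)$-averaging of the data using concavity of $S_\tau$, an equivalent variant of the paper's spherical averaging of the evolved solution;
\item the time scaling $u_{b\mu}=T_{s,0}u_\mu$ obtained from minimality;
\item the one-sided bound $D_bu_\mu\ge u_{b\mu}$ for $b\ge1$ via \eqref{eq:inverse-wave-lower}, equivalently $D_au_\mu\le u_{a\mu}$ for $a\le1$.
\end{itemize}
A minor omission: Theorem~\ref{thm:radial-critical-solution} is stated only for $N\ge2$. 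For $N=1$ you must set $U^{\rm rad}:=u_\sigma$ with $\sigma=(\delta_{1}+\delta_{-1})/2$ and prove $U^{\rm rad}<1$ and its past decay yourself. One way is the sandwich $\max\{v_+,v_-\}\le u_\sigma\le\min\{v_++v_-,1\}$ together with the strong maximum principle.

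The genuine gap is the reverse inequality $D_au_\mu\ge u_{a\mu}$ for $0<a<1$, which you leave open; neither of your routes closes it.

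Your asymptotic-uniqueness route needs $u_\mu(t+s,x)=b\,u_\mu(t,x)+o(u_\mu)$ uniformly, that is, a first-order tail of $u_\mu$ as a function of $z_\mu$. The squeeze cannot supply this at this stage. The radial construction only gives \eqref{eq:radial-strict-past-bound}, $U^{\rm rad}(-T,0)\le CT^{-p-1/2}e^{-f'(0)T}$. This is exponentially larger than the lower bound $\phi_*(-c_*T)\sim c_*Te^{-2f'(0)T}$. The exact tail \eqref{eq:faithfulness-radial-exact-tail} would make the squeeze work, but the paper derives that tail later from \eqref{eq:canonical-critical-dilation} itself, so using it is circular.

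The spectral fallback is vacuous. In $E_{u_\mu}$ you have $J(D_bu_\mu)=e^{2f'(0)s}$ and $J(T_{s,0}u_\mu)=e^{s\lambda}$. Your inequality therefore only restates $\lambda\le2f'(0)$, which Proposition~\ref{prop:cone-translation-multipliers} already gives. Equality amounts to $|\xi|\equiv1$ on $K_{u_\mu}$, i.e.\ to $u_\mu$ having no subcritical part, which is what you are trying to prove.

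The missing idea is that the concavity defect of $g$ is only linear in the critical field, and the pullback erases it.
\begin{itemize}
\item From $-1\le(e^{-\lambda_*s}\phi_*(s))'\le0$ you get $0\le g(aq)-a\,g(q)\le\lambda_*^{-1}a|\ln a|\,q$.
\item By \eqref{eq:cone-stability} and $S_\tau[Bh]\le BS_\tau[h]$ for $B\ge1$, for a suitable constant $C$,
\[
 \bigl|S_{t+T}[a\,g(H_\mu(-T))]-S_{t+T}[g(aH_\mu(-T))]\bigr|
 \le(1+a)\,S_{t+T}[\min\{CH_\mu(-T),1\}].
\]
\item The right-hand side tends to $0$ by Corollary~\ref{cor:critical-field-pullback}: capped critical exponential data are extinguished by the pullback, through the Gaussian half-loss of Lemma~\ref{lem:critical-gaussian-half}.
\end{itemize}
Hence $S_{t+T}[a\,g(H_\mu(-T))]\to u_{a\mu}$. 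Since $u_\mu\ge g(H_\mu)$, this gives $D_au_\mu\ge u_{a\mu}$. Combined with your inequality and the group law, this yields \eqref{eq:canonical-critical-dilation}, exactly as in the paper.
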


\begin{proof}
These functions and all their derivatives are finite on compact
space--time sets. They satisfy $H_t-\Delta H=f'(0)H$ and
$|\nabla H|\leq\lambda_*H$. Direct substitution therefore gives
\[
 (\partial_t-\Delta)g(H)-f(g(H))
 =g''(H)(f'(0)H^2-|\nabla H|^2)\leq0.
\]
Thus $g(H_\mu)$ is a bounded subsolution. For each fixed $(t,x)$,
$\bigl(S_{t+T}[g(H_\mu(-T,\cdot))]\bigr)(x)$ is nondecreasing in
$T>\max\{0,-t\}$ and bounded above by $1$. Hence
\begin{equation}\label{eq:canonical-critical-pullback}
 u_\mu(t,x):=\lim_{T\to\infty}\bigl(S_{t+T}[g(H_\mu(-T,\cdot))]\bigr)(x)
\end{equation}
defines an entire solution of \eqref{eq:kpp}. The convergence holds in
$C^{1,2}_{\mathrm{loc}}(\R\times\R^N)$ by parabolic
estimates. We set $u_0:=0$. If an entire solution $Z$ lies above $g(H_\mu)$, comparison
from time $-T$ and passage to the limit show that $Z\geq u_\mu$.
This is the minimality property used below.

For $N=1$, write $\mu=\alpha\delta_{+1}+\beta\delta_{-1}$, where
$\alpha,\beta\geq0$ and $\alpha+\beta>0$, and set
\[
 v_+(t,x)=\phi_*(x+c_*t+\lambda_*^{-1}\ln\alpha),\qquad
 v_-(t,x)=\phi_*(-x+c_*t+\lambda_*^{-1}\ln\beta),
\]
for $\alpha>0$ and $\beta>0$, respectively, with the conventions
$v_+\equiv0$ if $\alpha=0$ and $v_-\equiv0$ if $\beta=0$.
Monotonicity and
concavity of $g$, together with $g(0)=0$, give
$\max\{v_+,v_-\}\leq g(H_\mu)\leq\min\{v_++v_-,1\}$.
Comparison in \eqref{eq:canonical-critical-pullback} therefore yields
\[
 \max\{v_+,v_-\}\leq u_\mu\leq\min\{v_++v_-,1\}.
\]
By Theorem~\ref{thm:packet}, $u_\mu$ coincides with the corresponding
entire solution of Hamel and Nadirashvili
\cite[Theorem~1.1]{HN2001}. In particular, $0<u_\mu<1$.
For $\mu=\sigma=(\delta_{+1}+\delta_{-1})/2$, symmetry of the initial data gives the even solution
$U^{\rm rad}=u_\sigma$.
The comparison and strong maximum principle argument above gives
$\partial_tU^{\rm rad}>0$; minimality also gives
\eqref{eq:radial-mass-time-scaling} in this dimension.

We next use the radially symmetric solution $U^{\rm rad}$ to prove
that $u_\mu(t,x)<1$ for every finite positive Borel measure $\mu$
and every $(t,x)\in\R\times\R^N$.
For $b>0$, definition \eqref{eq:canonical-critical-linear-field} gives
\[
 H_{b\mu}(t,x)=bH_\mu(t,x)
 =H_\mu\!\left(t+\frac{\ln b}{2f'(0)},x\right).
\]
Substituting this identity into \eqref{eq:canonical-critical-pullback}, we obtain
\begin{equation}\label{eq:canonical-critical-time-scaling}
 \begin{aligned}
 u_{b\mu}(t,x)
 &=\lim_{T\to\infty}\bigl(S_{t+T}[g\!\left(H_\mu\!\left(-T+\frac{\ln b}{2f'(0)},\cdot\right)\right)]\bigr)(x)\\
 &=u_\mu\!\left(t+\frac{\ln b}{2f'(0)},x\right).
 \end{aligned}
\end{equation}

Let $M=\mu(\mathbb S^{N-1})$, and fix $T$. Fubini's theorem and
the radial symmetry of $L_\sigma$ give
\begin{equation}\label{eq:canonical-spherical-laplace-average}
 \begin{aligned}
 \int_{\mathbb S^{N-1}}L_\mu(|x|\xi)\,d\sigma(\xi)
 &=\int_{\mathbb S^{N-1}}
   \left(\int_{\mathbb S^{N-1}}
   e^{\lambda_*|x|\xi\cdot\nu}\,d\sigma(\xi)\right)d\mu(\nu)\\
 &=\int_{\mathbb S^{N-1}}L_\sigma(|x|\nu)\,d\mu(\nu)
 =M L_\sigma(x).
 \end{aligned}
\end{equation}
Concavity of $g$ then implies
\[
 \int_{\mathbb S^{N-1}}g(H_\mu(-T,|x|\xi))\,d\sigma(\xi)
 \leq g(H_{M\sigma}(-T,x)).
\]
Set
\[
 v_{\mu,T}(t,x):=\bigl(S_{t+T}[g(H_\mu(-T,\cdot))]\bigr)(x)
 \qquad\text{for }t\geq-T,
\]
and define its spherical average by
\[
 W_T(t,x):=\int_{\mathbb S^{N-1}}
 v_{\mu,T}(t,|x|\xi)\,d\sigma(\xi).
\]
Since spherical averaging commutes with the Laplacian, concavity of $f$ gives
\[
 (W_T)_t-\Delta W_T
 =\int_{\mathbb S^{N-1}}f(v_{\mu,T}(t,|x|\xi))\,d\sigma(\xi)
 \leq f(W_T).
\]
Thus it is a subsolution.
Comparison gives $W_T\leq v_{M\sigma,T}$. At the origin
$W_T(t,0)=v_{\mu,T}(t,0)$. Sending $T\to\infty$ gives
\begin{equation}\label{eq:canonical-origin-comparison}
 u_\mu(t,0)\leq u_{M\sigma}(t,0)
 \qquad\text{for }t\in\R.
\end{equation}

For a general point $x$, define the finite measure
$d\mu_x(\nu)=e^{\lambda_*x\cdot\nu}d\mu(\nu)$. Then
$L_{\mu_x}(y)=L_\mu(x+y)$ and its total mass is $L_\mu(x)$.
Uniqueness gives
$u_{\mu_x}(t,y)=u_\mu(t,x+y)$. Applying
\eqref{eq:canonical-origin-comparison} to $\mu_x$, whose total mass
is $L_\mu(x)$, gives
\[
 u_\mu(t,x)=u_{\mu_x}(t,0)\leq u_{L_\mu(x)\sigma}(t,0).
\]
By \eqref{eq:canonical-critical-time-scaling} with $b=L_\mu(x)$
and $u_\sigma=U^{\rm rad}$,
\[
 u_{L_\mu(x)\sigma}(t,0)
 =U^{\rm rad}\!\left(t+\frac{\ln L_\mu(x)}{2f'(0)},0\right)
 =U^{\rm rad}\!\left(\frac{z_\mu(t,x)}{c_*},0\right).
\]
Together with the lower bound from \eqref{eq:canonical-critical-pullback}
and $U^{\rm rad}<1$, this proves \eqref{eq:canonical-critical-radial-upper}.
The same bound and the past decay of $U^{\rm rad}$ give
$u_\mu(t,\cdot)\rightarrow0$ locally uniformly in $\R^N$ as $t\to-\infty$.

We next prove that $D_bu_\mu=u_{b\mu}$ for every $b>0$, using
the dilation group established in Proposition~\ref{prop:pde-full-dilation}
and the comparison estimates below. Thus dilation of $u_\mu$
corresponds to multiplication of its defining measure by $b$.
The wave equation, the tail estimate \eqref{eq:intro-critical-expansion},
and Lemma~\ref{lem:wave-tail-action} give
\[
 \frac{d^2}{ds^2}\bigl(e^{-\lambda_*s}\phi_*(s)\bigr)
 =e^{-\lambda_*s}\bigl(f'(0)\phi_*(s)-f(\phi_*(s))\bigr)\geq0,
 \qquad
 -1\leq\frac{d}{ds}\bigl(e^{-\lambda_*s}\phi_*(s)\bigr)\leq0.
\]
For $b>0$ and $q>0$, the preceding derivative bound gives
\begin{equation}\label{eq:canonical-critical-profile-error}
 \begin{aligned}
 |b g(q)-g(bq)|
=bq\left|\int_{\lambda_*^{-1}\ln q}^{\lambda_*^{-1}\ln(bq)}
 \frac{d}{dr}\bigl(e^{-\lambda_*r}\phi_*(r)\bigr)\,dr\right|\leq \frac b{\lambda_*}q|\ln b|.
 \end{aligned}
\end{equation}
The left side is also bounded by $b+1$, hence by
$\min\{\frac b{\lambda_*}|\ln b|H_\mu,b+1\}$ by taking $q=H_\mu$.

Fix $b>0$, $b\ne1$. Applying \eqref{eq:cone-stability} and
\eqref{eq:canonical-critical-profile-error}, and using
$S_t[Bh]\leq B S_t[h]$ for $B\geq1$, we obtain
\[
\begin{aligned}
 &\left|S_{t+T}[b g(H_\mu(-T,\cdot))]
       -S_{t+T}[g(bH_\mu(-T,\cdot))]\right|\\
 &\quad\leq S_{t+T}[\min\left\{
       \frac b{\lambda_*}|\ln b|H_\mu(-T,\cdot),b+1\right\}]\\
 &\quad\leq(b+1)S_{t+T}[\min\left\{
       \frac{b|\ln b|}{\lambda_*(b+1)}H_\mu(-T,\cdot),1\right\}]
 \rightarrow0
 \qquad\text{as }T\to\infty,
\end{aligned}
\]
locally uniformly, by \eqref{eq:general-critical-extinction} with
$\eta=\mu$. Since $bH_\mu=H_{b\mu}$,
\eqref{eq:canonical-critical-pullback} and interior parabolic estimates give
\begin{equation}\label{eq:canonical-scaled-seed-convergence}
 S_{t+T}[b g(H_\mu(-T,\cdot))]
 \rightarrow u_{b\mu}(t,\cdot)
 \qquad\text{in }C^{1,2}_{\mathrm{loc}}(\R\times\R^N)
 \quad\text{as }T\to\infty.
\end{equation}
For $b=1$, this is precisely \eqref{eq:canonical-critical-pullback}.

For $0<b<1$, concavity of $g$ gives
\[
 bS_{t+T}[g(H_\mu(-T,\cdot))]
 \leq S_{t+T}[b g(H_\mu(-T,\cdot))]
 \leq S_{t+T}[g(bH_\mu(-T,\cdot))].
\]
Letting $T\to\infty$ yields $bu_\mu\leq u_{b\mu}$.
Therefore,
\[
 S_{t+T}[bu_\mu(-T,\cdot)]\leq u_{b\mu}(t,\cdot),
\]
and hence $D_bu_\mu\leq u_{b\mu}$. Conversely,
$u_\mu\geq g(H_\mu)$ implies
\[
 S_{t+T}[bu_\mu(-T,\cdot)]
 \geq S_{t+T}[b g(H_\mu(-T,\cdot))].
\]
Letting $T\to\infty$, the definition of $D_b$ and
\eqref{eq:canonical-scaled-seed-convergence} give $D_bu_\mu\geq u_{b\mu}$.
Consequently, $D_bu_\mu=u_{b\mu}$.
For $b>1$, applying this identity with parameter $1/b$ and measure $b\mu$ gives $D_{1/b}u_{b\mu}=u_{(1/b)(b\mu)}=u_\mu$.
Applying $D_b$ and using Proposition~\ref{prop:pde-full-dilation} yields
$D_bu_\mu=u_{b\mu}$. Together with
\eqref{eq:canonical-critical-time-scaling}, this proves
\eqref{eq:canonical-critical-dilation}.
\end{proof}

\subsection{The critical trace and its nonvanishing}
\label{sec:critical-faithfulness}

We extract a measure on the critical sphere from the small-amplitude
limit and prove that this measure is nonzero for each nonzero solution
satisfying the critical time--dilation identity. A spherical averaging
inequality connects the trace to the radial solution constructed above.

Throughout this section the reaction satisfies
\eqref{eq:general-kpp-assumptions}, with $f'(0)>0$.
All entire solutions considered here solve $U_t=\Delta U+f(U)$.
Let $\phi_*$ be the increasing speed-$c_*$ profile fixed earlier, with
\[
 \phi_*(s)\sim(-s)e^{\lambda_*s}\quad(s\to-\infty).
\]
For a finite nonnegative measure $\mu$ on $\mathbb S^{N-1}$, set
\[
 L_\mu(x)=\int_{\mathbb S^{N-1}}e^{\lambda_*x\cdot\nu}\,\mu(d\nu),
 \qquad H_\mu(t,x)=e^{2f'(0)t}L_\mu(x).
\]
For $\mu\ne0$, write $z_\mu=\lambda_*^{-1}\ln H_\mu$. Let $\sigma$ be normalized
surface measure and let $\ell(R)=L_\sigma(Re_1)$; when $N=1$,
$\sigma=(\delta_{-1}+\delta_1)/2$ and $\ell(R)=\cosh(\lambda_*R)$.

The preceding construction gives the minimal entire solution $u_\mu$
above $\phi_*(z_\mu)$ in every dimension, and its radial reference
$U^{\rm rad}=u_\sigma$. In particular,
\begin{gather*}
 \phi_*(z_\mu(t,x))\leq u_\mu(t,x)
 \leq U^{\rm rad}(z_\mu(t,x)/c_*,0),\\
 D_bu_\mu=u_{b\mu},\qquad
 u_{b\mu}(t,x)=u_\mu(t+\tfrac{\ln b}{2f'(0)},x),\qquad
 U^{\rm rad}(t,0)\rightarrow0\quad(t\to-\infty).
\end{gather*}
We also use the finite critical-field estimate already proved:
for every finite angular measure $\eta$, every $C>0$, and every fixed
$(t,x)$,
\begin{equation}\label{eq:faithfulness-critical-error}
 \lim_{T\to\infty}\bigl(S_{t+T}[\min\{Ce^{-2f'(0)T}L_\eta,1\}]\bigr)(x)=0,
\end{equation}
with local $C^{1,2}$ convergence. All data in this formula belong
to $[0,1]$.

The ordered vector space constructed above has $D_a$ as positive
scalar multiplication and $\oplus$ as addition on its positive cone;
its order is pointwise order between physical solutions. For each
entire solution $U$ of \eqref{eq:kpp} with $0<U<1$,
its principal ideal has a representation
$J:E_U\to C(K_U)$ with $J(U)=1$. In this representation translations
are multiplication by $e^{\lambda_*h\cdot\xi+s\lambda}$, where
$\lambda=f'(0)(1+|\xi|^2)$ and $|\xi|\leq1$. These facts were established
before the critical classification and therefore are available here.

A critical solution $U_*$ is an entire solution of \eqref{eq:kpp}
with $0\leq U_*<1$
satisfying
\begin{equation}\label{eq:critical-solution}
 D_aU_*(t,x)=U_*\left(t+\frac{\ln a}{2f'(0)},x\right)
 \qquad\text{for }a>0.
\end{equation}
We first obtain a critical-sphere measure from the small-amplitude
limit of an entire solution. This limit exists for every $0<U<1$;
we will subsequently prove that it is nonzero for each nonzero
solution satisfying \eqref{eq:critical-solution}.
In the following proof, $F_1(u)=f'(0)u-f(u)$ and $M$ are as in
\eqref{eq:general-quadratic-error}.

\begin{theorem}\label{thm:pde-critical-trace}
For every entire solution $U$ of \eqref{eq:kpp} with $0<U<1$,
there is a unique finite
nonnegative measure $\mu_*$ on $\mathbb S^{N-1}$ such that
\begin{equation}\label{eq:pde-critical-trace}
 \frac{D_aU(t,x)}{a\ln(1/a)}
 \rightarrow H_*[U](t,x)
 =e^{2f'(0)t}\int_{\mathbb S^{N-1}}e^{\lambda_*x\cdot\nu}\,\mu_*(d\nu)
 \quad\text{in }C^{1,2}_{\mathrm{loc}}(\mathbb R\times\mathbb R^N)
 \quad\text{as }a\searrow0.
\end{equation}
The measure may be zero. If it is nonzero, then
\begin{equation}\label{eq:pde-trace-lower-barrier}
 U(t,x)\geq\phi_*\left(\frac1{\lambda_*}\ln\bigl(\lambda_*H_*[U](t,x)\bigr)\right)
 \quad\text{for all }(t,x)\in\mathbb R\times\mathbb R^N.
\end{equation}
\end{theorem}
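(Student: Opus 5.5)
The plan is to apply the scalar criterion of Lemma~\ref{lem:scalar-trace-degenerate} pointwise with $\gamma=1$, and then use parabolic compactness and the representation \eqref{eq:widder-input} to identify the limit field.

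\emph{Pointwise limit.} Fix $(t,x)$ and set $G(a)=D_aU(t,x)$. By Theorem~\ref{thm:gkpp-ancient}, $|\nabla\ln U|\leq\lambda_*$, so Lemma~\ref{lem:finite-amplitude} applies with $r=1$ and $\gamma=1$. Combining it with $f(G)/f'(0)\geq G-(M/f'(0))G^2$ gives \eqref{eq:scalar-trace-distribution}. The remaining hypotheses of Lemma~\ref{lem:scalar-trace-degenerate} hold:
\begin{itemize}
\item $G$ is $C^1$, increasing and concave by Lemma~\ref{lem:amplitude-C1-degenerate};
\item $G\to0$ as $a\searrow0$ by Lemma~\ref{lem:cone-small-amplitudes};
\item $G-aG'\geq0$ follows from the bound $\partial_aD_aU\leq D_aU/a$.
\end{itemize}
Hence $H_*(t,x):=\lim_{a\searrow0}G(a)/(a\ln(1/a))$ exists and is finite and nonnegative.

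\emph{Local convergence and representation.} Put $Z_a=D_aU/(a\ln(1/a))$. Applying Theorem~\ref{thm:gkpp-ancient} to $D_aU$ gives
\[
 Z_a(t,x)\leq e^{2f'(0)|t|+\lambda_*|x|}Z_a(0,0),
\]
so the family $Z_a$ is locally bounded for small $a$. It satisfies
\[
 (Z_a)_t-\Delta Z_a=f'(0)Z_a-\frac{F_1(aZ_a\ln(1/a))}{a\ln(1/a)},
\]
and the error term is at most $Ma\ln(1/a)Z_a^2\to0$. Interior parabolic estimates therefore upgrade the pointwise convergence to $C^{1,2}_{\loc}$. The limit $H_*$ is a nonnegative solution of $H_t=\Delta H+f'(0)H$.

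Next I identify the time dependence. By Proposition~\ref{prop:inverse-wave}, $D_aU\leq\phi_*(\lambda_*^{-1}\ln a+\phi_*^{-1}(U))$, and $\phi_*(\lambda_*^{-1}\ln a+c)\sim \lambda_*^{-1}a\ln(1/a)e^{\lambda_* c}$ as $a\searrow 0$. Thus the limit is controlled by $e^{\lambda_*\phi_*^{-1}(U)}$, and $H_*\leq\lambda_*^{-1}e^{\lambda_*\phi_*^{-1}(U)}$ is locally finite.

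To get the exponent $2f'(0)$, I use the bound $U(t+s)\leq e^{2f'(0)s}U(t)$ for $s\geq0$ from Lemma~\ref{lem:cone-translation-bounds}. It gives $T_{s,0}\leq e^{2f'(0)s}I$, hence $D_aU(t+s,\cdot)\leq D_{ae^{2f'(0)s}}U(t,\cdot)$. Dividing by $a\ln(1/a)$ and noting that $\ln(1/(ae^{2f'(0)s}))/\ln(1/a)\to1$, I obtain $H_*(t+s,x)\leq e^{2f'(0)s}H_*(t,x)$. So $e^{-2f'(0)t}H_*$ is nonincreasing in $t$.

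In the representation \eqref{eq:widder-input} for $H_*$, this monotonicity forces the measure onto $|\xi|\geq1$. The gradient bound $|\nabla H_*|\leq\lambda_*H_*$, inherited from Lemma~\ref{lem:gap-preservation} with $r=1$, forces it onto $\overline B_1$ by the support argument of Theorem~\ref{thm:general-finite-lift}. Together these give support on the unit sphere and the stated form with some $\mu_*$. Uniqueness of $\mu_*$ follows from uniqueness of the Laplace transform.

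\emph{Lower barrier.} This is the step I expect to be hardest. Write $\varepsilon=\lambda_*^{-1}\ln(1/a)^{-1}$-type corrections loosely; the idea is as follows. From the convergence, $D_aU\geq(1-\varepsilon)a\ln(1/a)H_*$ on compacts for small $a$. I plan to transport this to the remote past via the time covariance $H_*(t-\tau)=e^{-2f'(0)\tau}H_*(t)$ and then compare with the subsolution $g(\lambda_*H_\mu)$, where $g(q)=\phi_*(\lambda_*^{-1}\ln q)$ as in Theorem~\ref{thm:canonical-critical-solutions}. The intended route is
\[
 U=D_{1/a}D_aU\geq D_{1/a}\bigl[\text{minimal solution above } g(\lambda_* a H_*)\bigr]\ \text{(approximately)},
\]
and then to use the dilation identity $D_{1/a}u_{a\mu}=u_\mu$ from \eqref{eq:canonical-critical-dilation}.

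The concrete version I will try: show $U\geq g(\lambda_*H_*)$ at time $-T$ on the region where $H_*$ is small. There the asymptotics $g(q)\sim\lambda_*^{-1}q\ln(1/q)$ match the scaling $a\ln(1/a)$. I will combine this with Proposition~\ref{prop:inverse-wave} applied at small amplitudes, evolve forward, and absorb errors with the capped critical-field extinction \eqref{eq:faithfulness-critical-error}. The principal difficulty is that the convergence is only local, while the barrier is global. I expect to handle this by the measure-translation trick $\mu\mapsto\mu_x$ used in Theorem~\ref{thm:canonical-critical-solutions}, which reduces the claim to the origin.
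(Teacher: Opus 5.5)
Your pointwise limit, the $C^{1,2}_{\loc}$ upgrade, and the representation with measure on $\overline B_1$ agree with the paper. The step that is supposed to put the measure on the sphere fails, however.

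The inequality you derive, $H_*(t+s,x)\le e^{2f'(0)s}H_*(t,x)$ for $s\ge0$, bounds the growth rate from \emph{above}. Write
\[
 e^{-2f'(0)t}H_*(t,x)=\int_{\overline B_1}e^{\lambda_*x\cdot\xi+f'(0)(|\xi|^2-1)t}\,\mu(d\xi).
\]
Here the components with $|\xi|<1$ decrease in $t$. So this monotonicity only excludes mass in $\{|\xi|>1\}$, which is the information you already have from $|\nabla H_*|\le\lambda_*H_*$. Nothing in your proposal excludes mass in the open ball $B_1$, and that is the real content of the statement.

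No argument using only these linear constraints can close this gap. Any field with Widder measure in $\overline B_r$, $r<1$, satisfies all of them. The reverse growth bound also cannot be obtained by the same comparison, because $U(t+s,x)\ge e^{2f'(0)s}U(t,x)$ is false in general. A nonlinear argument is required.

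The missing idea uses the lower barrier, which you have misjudged as the hard step. You already wrote $H_*\le\lambda_*^{-1}e^{\lambda_*\phi_*^{-1}(U)}$ by dividing \eqref{eq:inverse-wave-upper} by $a\ln(1/a)$. Applying $\lambda_*^{-1}\ln(\lambda_*\,\cdot)$ and then the increasing function $\phi_*$ gives exactly \eqref{eq:pde-trace-lower-barrier}. No pullback or measure-translation argument is needed, and the barrier holds before the support of the measure is known.

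The paper then uses the barrier to argue by contradiction:
\begin{enumerate}
\item Suppose $\mu(B_1)>0$. Let $h$ be the part of $H_*$ coming from $\overline B_r$, with $r<1$ and $\mu(\overline B_r)>0$.
\item The barrier gives $U\ge\phi_*(\lambda_*^{-1}\ln(\lambda_*h))$. Since $\phi_*(\lambda_*^{-1}\ln(\lambda_*s))\sim s\ln(1/(\lambda_*s))$, for every $m>0$ this is at least $mh$ wherever $h$ is small enough.
\item Hence $\ell_m=(mh-B_m(mh)^p)_+$, with $p$ as in \eqref{eq:power-barrier-constants} and $B_m$ large, is a subsolution lying below $U$.
\item Since $0\le\min\{mh,1\}-\ell_m\le B_m(mh)^p$, the estimates \eqref{eq:general-cauchy-stability} and \eqref{eq:general-power-decay} give $V_{mh}\le U$ for every $m$. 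Here $V_{mh}$ is the capped pullback of Theorem~\ref{thm:general-finite-lift}.
\item Because $h(t+s,\cdot)\le e^{f'(0)(1+r^2)s}h(t,\cdot)$, this yields $V_h(t+s,x)\le U(t,x)$ for all $s\ge0$.
\item Since $V_h(t+s,x)\to1$ as $s\to\infty$, this contradicts $U<1$. Thus $\mu(B_1)=0$.
\end{enumerate}
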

\begin{proof}
Fix $(t,x)$ and set $G(a)=D_aU(t,x)$. The amplitude properties,
Lemma~\ref{lem:amplitude-C1-degenerate}, and
\eqref{eq:trace-F-inequality} with $\gamma=1$ imply all the
hypotheses of Lemma~\ref{lem:scalar-trace-degenerate}, with its quadratic
constant $M/f'(0)$, because $f(G)/f'(0)\geq G-(M/f'(0))G^2$.
Thus the normalized amplitude has a finite
pointwise limit $H_*[U]$.

For $0<a<1$, put $b_a=a\ln(1/a)$ and $Z_a=D_aU/b_a$.
The ancient estimates applied to $D_aU$ give
\[
 e^{-2f'(0)|t|-\lambda_*|x|}Z_a(0,0)\leq Z_a(t,x)
 \leq e^{2f'(0)|t|+\lambda_*|x|}Z_a(0,0).
\]
The pointwise limit at $(0,0)$ consequently bounds $Z_a$ on each
compact set for small $a$. Its equation is
\[
 (Z_a)_t-\Delta Z_a
 =f'(0)Z_a-\frac{F_1(b_aZ_a)}{b_a},\qquad
 0\leq\frac{F_1(b_aZ_a)}{b_a}\leq Mb_aZ_a^2.
\]
Interior parabolic estimates and the pointwise limit yield
convergence in $C^{1,2}_{\mathrm{loc}}$ to a solution of
\[
 (H_*)_t=\Delta H_*+f'(0)H_*,\qquad |\nabla H_*|\leq\lambda_*H_*.
\]
The two-sided comparison shows that $H_*$ is either identically
zero or positive everywhere. In the positive case, the positive
ancient heat representation used in Section~\ref{sec:general-finite-lift}
gives
\begin{equation}\label{eq:trace-preliminary-spectrum}
 H_*(t,x)=\int_{\overline B_1}
 e^{\lambda_*x\cdot\xi+f'(0)(1+|\xi|^2)t}\,\mu(d\xi).
\end{equation}
Indeed the representing measure has mass $H_*(0,0)<\infty$.
The bound $H_*(0,se)\leq e^{\lambda_*s}H_*(0,0)$ for every unit $e$ and
$s\geq0$ excludes mass where $e\cdot\xi>1$. A countable dense
set of unit vectors then excludes all mass outside $\overline B_1$.
Uniqueness follows from uniqueness of the positive heat
representation, or equivalently uniqueness of the Laplace transform.

Divide \eqref{eq:inverse-wave-upper} by $a\ln(1/a)$ and use the
critical tail of $\phi_*$. This gives
\[
 H_*(t,x)\leq\frac1{\lambda_*}\exp\bigl(\lambda_*\phi_*^{-1}(U(t,x))\bigr),
\]
and hence \eqref{eq:pde-trace-lower-barrier}. In particular, the fixed
critical wave $\phi_*(x\cdot\nu+c_*t)$ has trace measure
$\lambda_*^{-1}\delta_\nu$.

It remains to exclude interior mass in
\eqref{eq:trace-preliminary-spectrum}. If $\mu(B_1)>0$, choose
$r\in[0,1)$ with $\mu(\overline B_r)>0$ and set
\[
 h(t,x)=\int_{\overline B_r}
 e^{\lambda_*x\cdot\xi+f'(0)(1+|\xi|^2)t}\,\mu(d\xi).
\]
Then $0<h\leq H_*$ and
$U\geq\phi_*(\lambda_*^{-1}\ln(\lambda_*h))$.
For every $m>0$, the critical tail yields a number $d_m>0$ such that
\[
 \phi_*(\lambda_*^{-1}\ln(\lambda_*s))\geq ms
 \quad\text{for }0<s\leq d_m.
\]
Choose $1<p\leq2$ with $pr^2<1$ and set
$c=f'(0)(p-1)(1-pr^2)>0$. Choose $B_m$ so large that
\[
 B_m\geq\max\{1,M/c\},\qquad
 B_m^{-1/(p-1)}/m\leq d_m.
\]
The lower barrier from the finite linear reconstruction is
\[
 \ell_m=(mh-B_m(mh)^p)_+.
\]
On its positive set $mh<1$ and $h\leq d_m$, so that
$\ell_m\leq mh\leq U$. Its subsolution inequality follows from
\[
 (\partial_t-\Delta-f'(0))(mh)^p\geq c(mh)^p,
 \qquad
 (\partial_t-\Delta)\ell_m-f(\ell_m)
 \leq-B_mc(mh)^p+M(mh)^2\leq0
\]
on that set; taking the maximum with zero preserves the
subsolution inequality.

Let $V_{mh}^f$ be the capped pullback reconstructed in
Theorem~\ref{thm:general-finite-lift}. Since
\[
 0\leq\min\{mh,1\}-\ell_m\leq B_m(mh)^p,
\]
comparison, \eqref{eq:general-cauchy-stability}, and
\eqref{eq:general-power-decay} give
\[
 \begin{split}
 0&\leq V_{mh}^f(t,x)-\bigl(S_{t+T}[\ell_m(-T)]\bigr)(x)\\
 &\leq B_m e^{-c(t+T)}(mh(t,x))^p\rightarrow0
 \quad\text{as }T\to\infty.
 \end{split}
\]
Because $\ell_m\leq U$, it follows that $V_{mh}^f\leq U$ for
every $m>0$. On the other hand, for $s\geq0$,
\[
 h(t+s,x)\leq e^{f'(0)(1+r^2)s}h(t,x),\qquad
 V_h^f(t+s,x)\leq V_{mh}^f(t,x)\leq U(t,x)
 \quad\text{for }m\geq e^{f'(0)(1+r^2)s}.
\]
The first inequality is immediate from the support of $\mu$; the
second uses order and time-translation covariance of the capped
pullback. Since $V_h^f$ is an entire solution of \eqref{eq:kpp}
with $0<V_h^f<1$,
$V_h^f(t+s,x)\to1$ as $s\to\infty$. This contradicts $U<1$.
Thus $\mu(B_1)=0$, proving the sphere representation in
\eqref{eq:pde-critical-trace}.
\end{proof}

{
For solutions satisfying the time--dilation identity, the amplitude
trace becomes a precise past-time asymptotic profile.
}

\begin{corollary}\label{cor:trace-pure-critical}
Suppose that
\[
 D_aU(t,x)=U\left(t+\frac{\ln a}{2f'(0)},x\right)
 \quad\text{for all }a>0.
\]
Then
\[
 \Delta U\geq\frac1{4f'(0)}U_{tt},\qquad
 \frac{U(t,\cdot)}{(-2f'(0)t)e^{2f'(0)t}}
 \rightarrow\kappa
 \quad\text{in }C^2_{\mathrm{loc}}(\mathbb R^N)
 \quad\text{as }t\to-\infty,
\]
where
$\kappa(x)=\int_{\mathbb S^{N-1}}e^{\lambda_*x\cdot\nu}\,\mu_*(d\nu)$
and $\Delta\kappa=f'(0)\kappa$.
\end{corollary}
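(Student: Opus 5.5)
The plan is to translate the time--dilation identity into a statement about the amplitude map $a\mapsto D_aU(t,x)$, and then read both conclusions off results already proved for that map. The case $U\equiv0$ is trivial, since then both sides vanish and $\mu_*=0$; so assume $0<U<1$. Put $\beta=2f'(0)$ and $a=e^{\beta s}$. The hypothesis says
\[
 G(a):=D_aU(t,x)=U\bigl(t+\beta^{-1}\ln a,x\bigr).
\]
Thus the amplitude variable is just a logarithmic reparametrization of time. Since $f\in C^2$, $U_t$ solves the linear equation $(\partial_t-\Delta-f'(U))U_t=0$ classically, so $U_{tt}$ exists and is continuous. Hence $G$ is $C^2$ on $(0,\infty)$, and any distributional inequality for $G$ holds pointwise.

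For the differential inequality, I will invoke Lemma~\ref{lem:finite-amplitude} with $r=1$ and $\gamma=1$. This is admissible because Theorem~\ref{thm:gkpp-ancient} gives $|\nabla\ln U|\leq\lambda_*$. The lemma yields
\[
 a^2G''\leq aG'-\frac{f(G)}{f'(0)}.
\]
By the chain rule, $aG'=\beta^{-1}U_t$ and $a^2G''=\beta^{-2}U_{tt}-\beta^{-1}U_t$, both evaluated at $(t+\beta^{-1}\ln a,x)$. Substituting these and multiplying by $\beta^2=4f'(0)^2$ gives
\[
 U_{tt}\leq 2\beta U_t-4f'(0)f(U)=4f'(0)\bigl(U_t-f(U)\bigr)=4f'(0)\Delta U.
\]
Since $a>0$ and $(t,x)$ are arbitrary, this is $\Delta U\geq U_{tt}/(4f'(0))$ everywhere.

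For the past asymptotics, I will apply Theorem~\ref{thm:pde-critical-trace}. It gives
\[
 \frac{D_aU}{a\ln(1/a)}\rightarrow H_*[U]=e^{2f'(0)t}\kappa(x)
 \qquad\text{in }C^{1,2}_{\loc}\text{ as }a\searrow0,
\]
with $\kappa$ given by the sphere measure $\mu_*$. Writing $a=e^{\beta s}$, so that $a\ln(1/a)=(-\beta s)e^{\beta s}$ and $s\to-\infty$, the hypothesis turns this into
\[
 \frac{U(t+s,x)}{(-2f'(0)s)e^{2f'(0)s}}\rightarrow e^{2f'(0)t}\kappa(x)
\]
in $C^{1,2}_{\loc}$ in $(t,x)$. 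Restricting to $t=0$ gives convergence in $C^2_{\loc}(\R^N)$ in $x$ of $U(s,\cdot)/((-2f'(0)s)e^{2f'(0)s})$ to $\kappa$. Finally, $\Delta\kappa=f'(0)\kappa$ follows by differentiating under the integral, because $|\nu|=1$ on the support of $\mu_*$.

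The only delicate point is the regularity bookkeeping. First, the distributional inequality of Lemma~\ref{lem:finite-amplitude} must be upgraded to a pointwise one; this works because $G$ is $C^2$, as noted above. Second, restricting the $C^{1,2}_{\loc}$ convergence to the slice $t=0$ must yield $C^2_{\loc}$ convergence in $x$. This holds because local uniform convergence of the spatial derivatives up to order two on cylinders containing $\{0\}\times K$ controls them on the slice. I expect no genuine obstacle beyond these two checks.
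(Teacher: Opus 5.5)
Your proposal is correct and is essentially the paper's proof. You use the covariance to make $G(a)=D_aU(t,x)$ twice differentiable and convert $aG'$ and $a^2G''$ into $U_t$ and $U_{tt}$, then apply \eqref{eq:trace-F-inequality} with $\gamma=1$, and obtain the past asymptotics by substituting $a=e^{2f'(0)t}$ into \eqref{eq:pde-critical-trace} at reference time zero. The only additions are details the paper leaves implicit: the trivial case $U\equiv0$, the upgrade from the distributional to the pointwise inequality, and the verification of $\Delta\kappa=f'(0)\kappa$.
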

\begin{proof}
The covariance makes $G(a)=D_aU(t,x)$ twice differentiable in $a$.
At $a=1$,
\[
 G'(1)=\frac1{2f'(0)}U_t,\qquad
 G''(1)=\frac1{4f'(0)^2}U_{tt}-\frac1{2f'(0)}U_t.
\]
Using \eqref{eq:trace-F-inequality} with $\gamma=1$ gives
$U_{tt}/(4f'(0))\leq U_t-f(U)=\Delta U$.
Apply \eqref{eq:pde-critical-trace} at reference time zero with
$a=e^{2f'(0)t}$ to obtain the asserted limit.
\end{proof}

To prove that the critical trace does not vanish, we first compare
the spherical average of a critical solution with its value at a
later time. The comparison follows from the translation representation
and the nonlinear averaging inequality.

\begin{lemma}
\label{lem:faithfulness-spherical-mean}
If $U_*$ is a nonzero critical solution in the sense of
\eqref{eq:critical-solution}, then
\begin{equation}\label{eq:faithfulness-mean-bound}
 \int_{\mathbb S^{N-1}}U_*(t,R\nu)\,\sigma(d\nu)
 \leq U_*(t+\tfrac{\ln L_\sigma(Re_1)}{2f'(0)},0)
 \qquad\text{for }R\geq0,\ t\in\mathbb R.
\end{equation}
\end{lemma}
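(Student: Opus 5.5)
We derive \eqref{eq:faithfulness-mean-bound} from two facts. First, the nonlinear averaging inequality of Lemma~\ref{lem:faithfulness-evaluation}, passed to the limit to cover a continuum of rotations. Second, the translation representation, which identifies that nonlinear average as a time translate of $U_*$. Write $U=U_*$ and work in $E_U$ with the representation $J:E_U\to C(K_U)$, $J(U)=1$. By Theorem~\ref{thm:translation-spectral-surface} and the critical covariance \eqref{eq:critical-solution}, the support of $J(U)=1$ lies where the multiplier of $T_{s,0}$ equals that of $D_{e^{2f'(0)s}}$. Since $J(U)\equiv1$ and both operators are multiplications, this forces $\lambda\equiv2f'(0)$ on $K_U$, and Theorem~\ref{thm:translation-spectral-surface} then gives $|\xi|\equiv1$ on $K_U$.

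\textbf{Step 1: identify the nonlinear spherical average.} For a finite family of unit vectors $\nu_j$ and weights $\theta_j>0$ with $\sum\theta_j=1$, consider
\[
 Z=\bigoplus_j D_{\theta_j}T_{0,R\nu_j}U .
\]
In $C(K_U)$ this element has image $\sum_j\theta_je^{\lambda_*R\nu_j\cdot\xi(k)}$. Choose the families so that the discrete measures $\sum_j\theta_j\delta_{\nu_j}$ converge weakly to $\sigma$. Uniformly in $k$, the images then converge to
\[
 \int e^{\lambda_*R\nu\cdot\xi(k)}\,\sigma(d\nu)=L_\sigma(R\xi(k))=\ell(R),
\]
where the last equality uses $|\xi(k)|=1$ and radial symmetry. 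Convergence is uniform because $\xi(K_U)\subset\overline B_1$ is compact and the integrand is equicontinuous. So the discrete averages converge in $\|\cdot\|_U$ to $\ell(R)\odot U=D_{\ell(R)}U$. By \eqref{eq:critical-solution},
\[
 D_{\ell(R)}U(t,x)=U\Bigl(t+\tfrac{\ln\ell(R)}{2f'(0)},x\Bigr).
\]
Lemma~\ref{lem:cone-unit-norm-local} converts norm convergence into pointwise convergence of the physical solutions.

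\textbf{Step 2: compare with the ordinary average.} Lemma~\ref{lem:faithfulness-evaluation} gives, at $(t,0)$,
\[
 Z(t,0)\ge\sum_j\theta_jU(t,R\nu_j).
\]
Since $U(t,\cdot)$ is continuous on the sphere of radius $R$, the right side converges to $\int U(t,R\nu)\,\sigma(d\nu)$. Passing to the limit in both sides yields \eqref{eq:faithfulness-mean-bound}.

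For $N=1$, $\sigma$ is the two-point measure $(\delta_{-1}+\delta_1)/2$, so no limit is needed. The two-term case of Lemma~\ref{lem:faithfulness-evaluation} applies directly, with $\ell(R)=\cosh(\lambda_*R)$.

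\textbf{Main obstacle.} The delicate step is justifying that $|\xi|\equiv1$ on all of $K_U$, rather than only on the support of $J(U)$. This is where $J(U)=1$ is essential: the support of the constant function $1$ is all of $K_U$, so the covariance identity pins the multiplier everywhere. The uniformity of the convergence in Step 1 must also be checked, but it follows from the equicontinuity noted there.
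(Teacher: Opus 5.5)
Your proposal is correct and follows the paper's proof essentially step for step. You use $J(U_*)=1$ to force $\lambda\equiv2f'(0)$, and hence $|\xi|\equiv1$, on all of $K_{U_*}$. You then approximate $\sigma$ by finite convex combinations of spatial translates whose images converge uniformly to the constant $L_\sigma(Re_1)$, and conclude with Lemma~\ref{lem:faithfulness-evaluation}, Lemma~\ref{lem:cone-unit-norm-local}, and the critical covariance. The differences are cosmetic. The paper builds the discrete measures from an explicit small-diameter Borel partition, which yields the uniform convergence directly and covers $N=1$ without a separate case. You instead appeal to weak convergence plus equicontinuity and treat $N=1$ separately.
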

\begin{proof}
Work in the principal ideal $E_{U_*}$ with its representation
$J:E_{U_*}\to C(K_{U_*})$ and $J(U_*)=1$.
Critical covariance gives $T_{s,0}U_*=D_{e^{2f'(0)s}}U_*$.
Applying $J$ shows $e^{s\lambda(k)}=e^{2f'(0)s}$ for every
$k\in K_{U_*}$, so $\lambda=2f'(0)$.
The spectral surface identity therefore implies $|\xi(k)|=1$
on all of $K_{U_*}$. Define
\[
 f_R(\zeta)=\int_{\mathbb S^{N-1}}
                e^{\lambda_*R\nu\cdot\zeta}\,\sigma(d\nu)
 \qquad\text{for }\zeta\in[-1,1]^N.
\]
Rotational invariance of $\sigma$ gives
$f_R(\xi(k))=L_\sigma(Re_1)$ on $K_{U_*}$.

For each $m\geq1$, choose $\nu_{m,1},\cdots,\nu_{m,k_m}\in\mathbb S^{N-1}$
such that $\mathbb S^{N-1}\subset\bigcup_{j=1}^{k_m}B_{1/m}(\nu_{m,j})$.
Set
\[
 A_{m,j}:=\bigl(B_{1/m}(\nu_{m,j})\cap\mathbb S^{N-1}\bigr)
 \setminus\bigcup_{i<j}B_{1/m}(\nu_{m,i})
 \qquad\text{for }j=1,\cdots,k_m.
\]
These sets form a Borel partition of the sphere, and
$|\nu-\nu_{m,j}|<1/m$ for $\nu\in A_{m,j}$.
Let $\theta_{m,j}:=\sigma(A_{m,j})\geq0$, so that
$\sum_{j=1}^{k_m}\theta_{m,j}=1$. Terms with $\theta_{m,j}=0$
are omitted from the sums below. Define
\[
 f_{R,m}(\zeta)=\sum_{j=1}^{k_m}\theta_{m,j}
                      e^{\lambda_*R\nu_{m,j}\cdot\zeta},\qquad
 Q_m=\bigoplus_{j=1}^{k_m}D_{\theta_{m,j}}(T_{0,R\nu_{m,j}}U_*).
\]
Thus $Q_m\geq0$.
Moreover, $f_{R,m}\rightarrow f_R$ uniformly on $[-1,1]^N$
as $m\rightarrow\infty$.
The translation multiplier formula gives
$J(Q_m)(k)=f_{R,m}(\xi(k))$. Since
$J(D_{L_\sigma(Re_1)}U_*)=L_\sigma(Re_1)$, we have
\[
 \|Q_m\ominus D_{L_\sigma(Re_1)}U_*\|_{U_*}
 \leq\|f_{R,m}-f_R\|_{C([-1,1]^N)}\rightarrow0.
\]
The subtraction inside this norm is subtraction in the ordered space.
Apply \eqref{eq:faithfulness-physical-jensen} to the finite average
$Q_m$ at $(t,0)$. It gives
\[
 Q_m(t,0)\geq\sum_{j=1}^{k_m}\theta_{m,j}U_*(t,R\nu_{m,j}).
\]
The left side tends to $D_{L_\sigma(Re_1)}U_*(t,0)$ as $m\rightarrow\infty$
by Lemma~\ref{lem:cone-unit-norm-local}, while
\[
 \sum_{j=1}^{k_m}\theta_{m,j}U_*(t,R\nu_{m,j})
 \rightarrow\int_{\mathbb S^{N-1}}U_*(t,R\nu)\,\sigma(d\nu)
 \qquad\text{as }m\rightarrow\infty.
\]
By \eqref{eq:critical-solution} with $a=L_\sigma(Re_1)$, one has
\[
 D_{L_\sigma(Re_1)}U_*(t,0)
 =U_*\left(t+\frac{\ln L_\sigma(Re_1)}{2f'(0)},0\right).
\]
This proves \eqref{eq:faithfulness-mean-bound}.
\end{proof}

We next use \eqref{eq:faithfulness-mean-bound} to prove that
$H_*[U_*]\not\equiv0$ for every entire solution $0<U_*<1$ satisfying
\eqref{eq:critical-solution}, where $H_*[U_*]$ is the small-amplitude
limit defined in \eqref{eq:pde-critical-trace}.
We use the solutions $U^{\rm rad}=u_\sigma$ and $u_\mu$ from
Theorems~\ref{thm:radial-critical-solution}
and~\ref{thm:canonical-critical-solutions}.

{
The lower bound from the trace and the radial upper bound now
identify every nonzero critical solution with its canonical
measure-generated solution.
}

\begin{theorem}
\label{thm:faithful-critical-trace}
Let $U_*$ be a nonzero critical solution in the sense of
\eqref{eq:critical-solution}.
Let $\mu_*:=\mu_*[U_*]$ be the measure from
Theorem~\ref{thm:pde-critical-trace}. With $L_{\mu_*}$ defined by
\eqref{eq:critical-measure-phase}, we have
\[
 H_*[U_*](t,x)=e^{2f'(0)t}L_{\mu_*}(x).
\]
Then $\mu_*\ne0$, and
\begin{equation}\label{eq:faithfulness-universal-upper}
 U_*(t,x)\leq U^{\rm rad}\left(t+\frac{\ln\bigl(\lambda_*L_{\mu_*}(x)\bigr)}{2f'(0)},0\right).
\end{equation}
In particular, for any entire solution $0\leq U_*<1$ satisfying
\eqref{eq:critical-solution},
\[
 H_*[U_*]\equiv0\quad\Rightarrow\quad U_*\equiv0.
\]
\end{theorem}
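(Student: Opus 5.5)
The plan is to prove the universal upper bound \eqref{eq:faithfulness-universal-upper} first and to deduce $\mu_*\neq0$ from it. The key inputs are the spherical-mean inequality of Lemma~\ref{lem:faithfulness-spherical-mean} and the small-amplitude trace of Theorem~\ref{thm:pde-critical-trace}. For the identity $H_*[U_*]=e^{2f'(0)t}L_{\mu_*}$ we only need the definition of $L_{\mu_*}$ in \eqref{eq:critical-measure-phase} and the sphere representation \eqref{eq:pde-critical-trace}.

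\emph{Step 1: the bound at the origin.} By critical covariance \eqref{eq:critical-solution},
\[
 \frac{D_aU_*(t,x)}{a\ln(1/a)}=\frac{U_*(t+\frac{\ln a}{2f'(0)},x)}{a\ln(1/a)}.
\]
Hence the trace gives the precise past asymptotics of $U_*$; this is the content of Corollary~\ref{cor:trace-pure-critical}. I expect the hardest point to be the following. We must show that if $\mu_*=0$, then $U_*(s,0)$ is $o(|s|e^{2f'(0)s})$ as $s\to-\infty$, and that this forces $U_*\equiv0$.

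To do this, I would combine \eqref{eq:faithfulness-mean-bound} with the lower bound of spherical means coming from positivity and the Harnack-type estimate $|\nabla\ln U_*|\le\lambda_*$. More concretely, I would build a subsolution. At a fixed negative time $t_0$, choose $R$ large and take $w=U_*(t_0,\cdot)$ as initial data. The spherical averages of $U_*(t_0,R\nu)$ are controlled by $U_*(t_0+\ln\ell(R)/(2f'(0)),0)$. With $\ell(R)\sim cR^{-(N-1)/2}e^{\lambda_*R}$ and $t_0+\ln\ell(R)/(2f'(0))\approx t_0+R/c_*$, this says: if $U_*(s,0)$ decays faster than the critical rate $|s|e^{2f'(0)s}$, then the data of $U_*$ on large spheres decay faster than $\phi_*$ along the critical phase.

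\emph{Step 2: comparison with the radial solution.} Using the rotational averaging argument of Theorem~\ref{thm:canonical-critical-solutions}, spherical averages of a solution are subsolutions. The spherical mean $W(t,x)=\int U_*(t,|x|\nu)\,d\sigma(\nu)$ is therefore a radial subsolution with $W(t,0)=U_*(t,0)$. Inequality \eqref{eq:faithfulness-mean-bound} bounds $W$ by the radial profile $R\mapsto U_*(t+\ln\ell(R)/(2f'(0)),0)$. I would then compare $W$ with $u_{b\sigma}=U^{\rm rad}(\cdot+\ln b/(2f'(0)),\cdot)$, taking $b=\lambda_*\mu_*(\mathbb S^{N-1})$ (or $b=\varepsilon$ if the trace vanishes), via the asymptotic uniqueness philosophy of Theorem~\ref{thm:asymptotic-uniqueness}.

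The concrete goal is $U_*(t,0)\le U^{\rm rad}(t+\ln(\lambda_*M_*)/(2f'(0)),0)$, where $M_*=\mu_*(\mathbb S^{N-1})=L_{\mu_*}(0)$. The trace asymptotics make the ratio $U_*/u_{\lambda_*\mu_*\text{-type}}$ tend to at most one in the remote past on the leading edge, where $U^{\rm rad}\ge\phi_*(z_\sigma)$ has the matching tail. Comparison from time $-T$, with a small dilation $D_{1+\varepsilon}$ to absorb errors and the pullback extinction of capped critical fields (Corollary~\ref{cor:critical-field-pullback}) to kill the interior error, then gives the inequality. Letting $\varepsilon\to0$ completes this step.

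\emph{Step 3: translation to general $x$ and conclusion.} For general $x$, I would apply the bound to the translated critical solution $T_{0,x}U_*$. It is again critical, and its trace measure is $e^{\lambda_*x\cdot\nu}\mu_*$, of mass $L_{\mu_*}(x)$. This yields \eqref{eq:faithfulness-universal-upper}.

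Finally, suppose $\mu_*=0$. Then Step 2 applies with $b=\varepsilon$ for every $\varepsilon>0$, giving $U_*(t,0)\le U^{\rm rad}(t+\ln\varepsilon/(2f'(0)),0)$. This tends to $0$ as $\varepsilon\to0$ by \eqref{eq:radial-strict-reference}. Hence $U_*(t,0)=0$, and therefore $U_*\equiv0$ by the strong maximum principle. This proves both $\mu_*\neq0$ for nonzero $U_*$ and the final implication.
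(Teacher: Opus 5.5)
Your proposal is correct and is essentially the paper's proof. The spherical mean $W$ is a subsolution with $W(t,0)=U_*(t,0)$. The mean bound \eqref{eq:faithfulness-mean-bound}, combined with the origin asymptotics of Corollary~\ref{cor:trace-pure-critical}, gives $W(-T,\cdot)\le\phi_*(z_{b\sigma}(-T,\cdot))+\min\{C_be^{-2f'(0)T}L_\sigma,1\}$ for every $b>\lambda_*L_{\mu_*}(0)$. Comparison from $-T$, subadditivity of $S_t$, and extinction of the capped critical field then give $U_*(t,0)\le U^{\rm rad}(t+\frac{\ln b}{2f'(0)},0)$. Letting $b\searrow\lambda_*L_{\mu_*}(0)$ (or $b\searrow0$ when $\mu_*=0$) and translating finishes the argument.

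Drop the appeal to Theorem~\ref{thm:asymptotic-uniqueness}: it is not needed, and it would not apply directly, since $W$ is only a subsolution. Choosing $b$ strictly above $\lambda_*L_{\mu_*}(0)$ already does the job of your $D_{1+\varepsilon}$.
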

\begin{proof}
Applying Corollary~\ref{cor:trace-pure-critical} to $U_*$ at $x=0$ gives
\begin{equation}\label{eq:faithfulness-origin-asymptotic}
 \frac{U_*(s,0)}{(-2f'(0)s)e^{2f'(0)s}}\rightarrow L_{\mu_*}(0)
 \qquad\text{as }s\rightarrow-\infty.
\end{equation}
Fix any number $b>\lambda_*L_{\mu_*}(0)$. The critical tail asymptotics
\eqref{eq:intro-critical-expansion}
and \eqref{eq:faithfulness-origin-asymptotic} imply
$U_*(z/c_*,0)\leq\phi_*(z+\lambda_*^{-1}\ln b)$ for all sufficiently negative $z$.
Since $U_*$ is bounded, there exists $C_b>0$ such that
\begin{equation}\label{eq:faithfulness-scalar-envelope}
 U_*(z/c_*,0)\leq\phi_*(z+\lambda_*^{-1}\ln b)+C_be^{\lambda_*z}
 \qquad\hbox{for every }z\in\mathbb R.
\end{equation}

Define the spherical average
\[
 W(t,x)=\int_{\mathbb S^{N-1}}U_*(t,|x|\xi)\,d\sigma(\xi).
\]
It is a $C^{1,2}$ radially symmetric function and $W(t,0)=U_*(t,0)$.
By concavity of $f$, we have
\[
 W_t-\Delta W=\int_{\mathbb S^{N-1}}f(U_*(t,|x|\xi))\,d\sigma(\xi)\leq f(W),
\]
so $W$ is a subsolution. By
\eqref{eq:faithfulness-mean-bound}, one has
\[
 W(-T,x)\leq U_*\left(-T+\frac{\ln L_\sigma(x)}{2f'(0)},0\right).
\]
Apply \eqref{eq:faithfulness-scalar-envelope} with
$z=-c_*T+\lambda_*^{-1}\ln L_\sigma(x)$. Since $W\leq1$, the result can be written as
\[
 W(-T,x)\leq
 \phi_*\bigl(-c_*T+\lambda_*^{-1}\ln(bL_\sigma(x))\bigr)
 +\min\{C_be^{-2f'(0)T}L_\sigma(x),1\}.
\]
The comparison principle and semigroup subadditivity give at any fixed $(t,0)$ with $t>-T$,
\begin{align*}
 U_*(t,0)=W(t,0)
 &\leq \bigl(S_{t+T}[\phi_*(z_{b\sigma}(-T,\cdot))]\bigr)(0)\\
 &\quad+\bigl(S_{t+T}[\min\{C_be^{-2f'(0)T}L_\sigma,1\}]\bigr)(0).
\end{align*}
Let $T\to\infty$. The first term converges to
$u_{b\sigma}(t,0)$ by its definition. The second tends to zero by
\eqref{eq:general-critical-extinction} with $\mu=\sigma$. Thus
\[
 U_*(t,0)\leq u_{b\sigma}(t,0)
 =U^{\rm rad}(t+\tfrac{\ln b}{2f'(0)},0)
 \qquad\hbox{for every }b>\lambda_*L_{\mu_*}(0).
\]
If $L_{\mu_*}(0)=0$, letting $b\searrow0$ contradicts $U_*(t,0)>0$,
since $U^{\rm rad}(s,0)\to0$ as $s\to-\infty$.
Hence $L_{\mu_*}(0)>0$. Letting $b\searrow\lambda_*L_{\mu_*}(0)$ gives the
bound \eqref{eq:faithfulness-universal-upper} at $x=0$.

Applying the same argument to $U_*^x(t,y)=U_*(t,x+y)$ yields
\eqref{eq:faithfulness-universal-upper} at every $(t,x)$.
\end{proof}

\subsection{Identification and weak continuity}
\label{sec:critical-canonical-identification}

We use the critical trace to identify each critical solution with
the canonical solution associated with its measure, and then prove
weak continuity of this family. The argument starts by determining
the exact coefficient in the past asymptotics of the radial solution.

We next determine the precise asymptotic behaviour of
$U^{\rm rad}(t,0)$ as $t\rightarrow-\infty$.
We claim that
\begin{equation}\label{eq:faithfulness-radial-exact-tail}
 U^{\rm rad}(s,0)\sim(-c_*s)e^{2f'(0)s}\qquad \hbox{for }s\to-\infty.
\end{equation}
Let $\mu_*:=\mu_*[u_\sigma]$ be the measure from
Theorem~\ref{thm:pde-critical-trace}, so that
\[
 \lim_{a\searrow0}\frac{D_a u_\sigma(t,x)}{a\ln(1/a)}
 =H_*[u_\sigma](t,x)=e^{2f'(0)t}L_{\mu_*}(x).
\]
The radial symmetry of the solution passes to its limit.
Taking the spherical average and applying
\eqref{eq:canonical-spherical-laplace-average} with $\mu=\mu_*$ gives
\[
 L_{\mu_*}(x)=\int_{\mathbb S^{N-1}}L_{\mu_*}(|x|\xi)\,d\sigma(\xi)
 =\mu_*(\mathbb S^{N-1})L_\sigma(x)
 =L_{\mu_*}(0)L_\sigma(x).
\]
By \eqref{eq:canonical-critical-dilation},
$D_a u_\sigma(0,0)=u_\sigma(\tfrac{\ln a}{2f'(0)},0)$, so
\[
 \lim_{s\to-\infty}
 \frac{u_\sigma(s,0)}{(-2f'(0)s)e^{2f'(0)s}}=L_{\mu_*}(0).
\]
Since $L_\sigma(0)=1$, Theorem~\ref{thm:radial-critical-solution} gives
$u_\sigma(s,0)\geq\phi_*(c_*s)$, hence $L_{\mu_*}(0)\geq1/\lambda_*$.
By \eqref{eq:pde-trace-lower-barrier} and
$L_{\mu_*}(x)=L_{\mu_*}(0)L_\sigma(x)$,
\[
 u_\sigma(t,x)\geq
 \phi_*\left(\lambda_*^{-1}\ln\bigl(\lambda_*H_*[u_\sigma](t,x)\bigr)\right)
 =\phi_*\left(z_{\lambda_*L_{\mu_*}(0)\sigma}(t,x)\right).
\]
If $L_{\mu_*}(0)>1/\lambda_*$, minimality and
\eqref{eq:radial-mass-time-scaling} therefore give
\[
 u_\sigma\geq u_{\lambda_*L_{\mu_*}(0)\sigma}
 =u_\sigma(t+\tfrac{\ln(\lambda_*L_{\mu_*}(0))}{2f'(0)},x)>u_\sigma,
\]
contradicting strict time monotonicity. Thus $L_{\mu_*}(0)=1/\lambda_*$.

With the radial tail coefficient fixed, the lower critical profile
and the radial upper bound determine the same entire solution.
The following uniqueness statement will identify the critical component.

\begin{lemma}
\label{lem:faithfulness-envelope-absorption}
Let $\mu$ be a finite positive Borel measure on $\mathbb S^{N-1}$
and let $Z$ be an entire solution of \eqref{eq:kpp} satisfying
\[
 \phi_*(z_\mu(t,x))\leq Z(t,x)
 \leq U^{\rm rad}(z_\mu(t,x)/c_*,0).
\]
Then $Z=u_\mu$.
\end{lemma}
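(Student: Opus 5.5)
The plan is to squeeze $Z$ against $u_\mu$ from both sides: the lower inequality comes from minimality, and the upper inequality comes from the asymptotic uniqueness principle, Theorem~\ref{thm:asymptotic-uniqueness}. Both barriers are functions of the single phase $z_\mu$, and by \eqref{eq:faithfulness-radial-exact-tail} they have the same leading tail as $z_\mu\to-\infty$. That common tail is what makes the comparison near zero possible.

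\emph{Step 1: the easy inequality.} Since $Z\in\mathcal E$ and $Z\geq\phi_*(z_\mu)=g(H_\mu)$, the minimality property of $u_\mu$ in Theorem~\ref{thm:canonical-critical-solutions} gives $Z\geq u_\mu$. Both $Z$ and $u_\mu$ satisfy the same two-sided bound, by \eqref{eq:canonical-critical-radial-upper}. Since $\phi_*>0$ and $U^{\rm rad}<1$, both lie strictly between $0$ and $1$.

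\emph{Step 2: the common tail.} Write $\Phi(z):=U^{\rm rad}(z/c_*,0)$. Then \eqref{eq:faithfulness-radial-exact-tail} and $2f'(0)/c_*=\lambda_*$ give $\Phi(z)\sim(-z)e^{\lambda_*z}$. By \eqref{eq:intro-critical-tail}, also $\phi_*(z)\sim(-z)e^{\lambda_*z}$. Hence
\[
 \Phi(z)-\phi_*(z)=o(\phi_*(z))\qquad\text{as }z\to-\infty .
\]
Since $\phi_*$ and $\Phi$ are continuous and strictly increasing, this gives
\[
 0\leq Z-u_\mu\leq\Phi(z_\mu)-\phi_*(z_\mu)\leq\phi_*(z_\mu)\,\rho(z_\mu)\leq u_\mu\,\rho(z_\mu),
\]
where $\rho\geq0$ is bounded on every half-line $(-\infty,z_0]$, $\rho(z)\to0$ as $z\to-\infty$, and for $z>z_0$ the difference is at most $1$.

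\emph{Step 3: converting the phase to the solution value (main obstacle).} The point requiring care is uniformity: I must show that $u_\mu(t,x)\to0$ forces $z_\mu(t,x)\to-\infty$ uniformly in $(t,x)$. This follows from $u_\mu\geq\phi_*(z_\mu)$. If $u_\mu\leq\delta$, then $z_\mu\leq\phi_*^{-1}(\delta)\to-\infty$ as $\delta\to0$. So $\rho(z_\mu)\leq\tilde\omega(u_\mu)$ with
\[
 \tilde\omega(\delta):=\sup\bigl\{\rho(z):z\leq\phi_*^{-1}(\delta)\bigr\}\to0 .
\]
On the set where $u_\mu\geq\delta_0$, we have $(Z-u_\mu)/u_\mu\leq1/\delta_0$. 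I then take a continuous majorant $\omega\geq0$ on $[0,1]$ with $\omega(0)=0$ and $(Z-u_\mu)_+\leq u_\mu\,\omega(u_\mu)$.

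\emph{Step 4: conclusion.} Apply the first assertion of Theorem~\ref{thm:asymptotic-uniqueness} with $X=Z$ and $Y=u_\mu$. This gives $Z\leq u_\mu$, and together with Step 1, $Z=u_\mu$.
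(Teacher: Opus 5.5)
Your argument is correct, but it obtains the upper inequality $Z\leq u_\mu$ by a different mechanism than the paper.

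\textbf{The paper's route.} After $Z\geq u_\mu$ by minimality, the paper fixes $b>1$ and uses \eqref{eq:faithfulness-radial-exact-tail} to dominate $U^{\rm rad}(z/c_*,0)$ by $\phi_*(z+\lambda_*^{-1}\ln b)+C_be^{\lambda_*z}$. It then evolves the resulting bound $Z(-T)\leq\phi_*(z_{b\mu}(-T))+\min\{C_be^{-2f'(0)T}L_\mu,1\}$ forward, using subadditivity of $S_t$. The critical-exponential remainder is removed by the extinction result \eqref{eq:general-critical-extinction}. This gives $Z\leq u_{b\mu}$, and the paper concludes by letting $b\searrow1$ through $u_{b\mu}(t,x)=u_\mu(t+\ln b/(2f'(0)),x)$.

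\textbf{Your route.} You use the same exact radial tail, but only to show that the envelope gap $U^{\rm rad}(z_\mu/c_*,0)-\phi_*(z_\mu)$ is $o(\phi_*(z_\mu))$. You then hand the comparison to the general Theorem~\ref{thm:asymptotic-uniqueness}. Your Step~3 correctly supplies the needed uniformity, since $u_\mu\geq\phi_*(z_\mu)$ turns smallness of the solution into $z_\mu\to-\infty$.

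\textbf{What each buys.} Your route is shorter, and it is symmetric. The same gap is also bounded by $Z\,\rho(z_\mu)$, because $Z\geq\phi_*(z_\mu)$. So the theorem applied with $X=u_\mu$ and $Y=Z$ gives $u_\mu\leq Z$, and you do not need minimality at all: any two entire solutions in this envelope coincide. This is exactly the second assertion of Theorem~\ref{thm:asymptotic-uniqueness} with comparison function $\phi_*(z_\mu)$. It applies because your $\Phi$ satisfies $\Phi\leq C\phi_*$ on all of $\R$, so you could cite that assertion instead of building $\omega$ by hand. The paper's route avoids the Ces\`aro averaging lemma underlying Theorem~\ref{thm:asymptotic-uniqueness}. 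It stays within the pullback-comparison and Gaussian-extinction machinery of the critical construction, in parallel with the proof of Theorem~\ref{thm:faithful-critical-trace}. The cost is that it needs minimality of $u_\mu$ and the extra limit $b\searrow1$.
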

\begin{proof}
Minimality of the canonical entire solution gives $Z\geq u_\mu$.
Fix $b>1$. By \eqref{eq:faithfulness-radial-exact-tail}, there is
$C_b<\infty$ such that, for all $z\in \R$,
\[
 U^{\rm rad}(z/c_*,0)\leq\phi_*(z+\lambda_*^{-1}\ln b)+C_be^{\lambda_*z}.
\]
Indeed, \eqref{eq:faithfulness-radial-exact-tail} and
\eqref{eq:intro-critical-expansion} give
\[
 \lim_{z\to-\infty}
 \frac{\phi_*(z+\lambda_*^{-1}\ln b)}{U^{\rm rad}(z/c_*,0)}=b>1.
\]
Thus there exists $z_b\in\R$ such that
$U^{\rm rad}(z/c_*,0)\leq\phi_*(z+\lambda_*^{-1}\ln b)$ for $z\leq z_b$.
For $z\geq z_b$, choosing $C_b=e^{-\lambda_*z_b}$ gives
$U^{\rm rad}(z/c_*,0)\leq1\leq C_be^{\lambda_*z}$.
Since $Z\leq1$, it follows that
\[
 Z(-T,x)\leq\phi_*(z_{b\mu}(-T,x))
 +\min\{C_b e^{-2f'(0)T}L_\mu(x),1\}.
\]
Applying the comparison principle and semigroup subadditivity, letting
$T\to\infty$, and using
\eqref{eq:general-critical-extinction} give $Z\leq u_{b\mu}$.
Finally, letting $b\searrow1$ and using
\eqref{eq:canonical-critical-dilation} and continuity gives
$Z\leq u_\mu$. This proves equality. 
\end{proof}

\begin{theorem}
\label{thm:faithfulness-critical-classification}
Every nonzero critical solution $U_*$ in the sense of
\eqref{eq:critical-solution} has the form
\[
 U_*=u_\mu,\qquad \mu=\lambda_*\mu_*[U_*],
\]
where $\mu$ is a  finite positive Borel measure on $\mathbb S^{N-1}$.
\end{theorem}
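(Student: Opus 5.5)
The plan is to squeeze $U_*$ between the two barriers of Lemma~\ref{lem:faithfulness-envelope-absorption} for the measure $\mu=\lambda_*\mu_*[U_*]$, and then let that lemma give the identification.

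First I fix the measure. By Theorem~\ref{thm:faithful-critical-trace}, $\mu_*:=\mu_*[U_*]\neq0$ and $H_*[U_*]=e^{2f'(0)t}L_{\mu_*}$. Set $\mu=\lambda_*\mu_*$, which is a finite positive Borel measure. Then
\[
 L_\mu=\lambda_*L_{\mu_*},\qquad
 e^{\lambda_*z_\mu}=H_\mu=\lambda_*H_*[U_*].
\]

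Next comes the lower barrier. The trace inequality \eqref{eq:pde-trace-lower-barrier} of Theorem~\ref{thm:pde-critical-trace} reads
\[
 U_*\geq\phi_*\bigl(\lambda_*^{-1}\ln(\lambda_*H_*[U_*])\bigr)=\phi_*(z_\mu).
\]

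Then the upper barrier. The universal upper bound \eqref{eq:faithfulness-universal-upper} gives
\[
 U_*(t,x)\leq U^{\rm rad}\bigl(t+\tfrac{\ln(\lambda_*L_{\mu_*}(x))}{2f'(0)},0\bigr).
\]
Since $c_*=2\lambda_*$ and $\lambda_*^2=f'(0)$, we have $2f'(0)/c_*=\lambda_*$. Hence
\[
 z_\mu(t,x)/c_*=t+\frac{\ln L_\mu(x)}{\lambda_*c_*}=t+\frac{\ln(\lambda_*L_{\mu_*}(x))}{2f'(0)}.
\]
So the bound becomes $U_*\leq U^{\rm rad}(z_\mu/c_*,0)$.

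Finally, with both bounds in hand, Lemma~\ref{lem:faithfulness-envelope-absorption} applied with $Z=U_*$ gives $U_*=u_\mu$.

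I expect no real obstacle here, since the analytic content (nonvanishing of the trace and the exact radial tail \eqref{eq:faithfulness-radial-exact-tail}) was already proved. The only point needing care is that the normalizations match. The factor $\lambda_*$ multiplying $\mu_*$ must be the same in the trace lower barrier and in the radial upper bound. It is: the lower barrier uses $\ln(\lambda_*H_*)$ and the upper bound uses $\ln(\lambda_*L_{\mu_*})$. Checking this consistency is the step I would verify most carefully.
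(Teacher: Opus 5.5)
Your proposal is correct and follows essentially the same route as the paper. The paper likewise takes $\mu_*\neq0$ from Theorem~\ref{thm:faithful-critical-trace}, gets the lower barrier $\phi_*(z_\mu)$ from \eqref{eq:pde-trace-lower-barrier} and the upper barrier $U^{\rm rad}(z_\mu/c_*,0)$ from \eqref{eq:faithfulness-universal-upper}, and concludes with Lemma~\ref{lem:faithfulness-envelope-absorption}. Your explicit normalization check $\lambda_*c_*=2f'(0)$ is the only step the paper leaves implicit.
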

\begin{proof}
Theorem~\ref{thm:faithful-critical-trace} gives $\mu_*[U_*]\ne0$. The lower bound in Theorem~\ref{thm:pde-critical-trace} reads
\[
 U_*\geq\phi_*\left(\lambda_*^{-1}\ln\bigl(\lambda_*H_*[U_*]\bigr)\right)=\phi_*(z_\mu).
\]
The universal radial upper bound
\eqref{eq:faithfulness-universal-upper} reads
$U_*\leq U^{\rm rad}(z_\mu/c_*,0)$. Hence
Lemma~\ref{lem:faithfulness-envelope-absorption} gives $U_*=u_\mu$.
\end{proof}

{
This identification also gives continuity under weak convergence of
measures, allowing approximation by finite atomic measures.
}

\begin{proposition}
\label{prop:faithfulness-weak-continuity}
If finite nonnegative measures $\mu_n$ on $\mathbb S^{N-1}$ converge
weakly to a finite measure $\mu$, then
\[
 u_{\mu_n}\rightarrow u_\mu
 \quad\text{in }C^{1,2}_{\mathrm{loc}}(\R\times\R^N).
\]
Here $u_0=0$. In particular, every critical solution $U_*$ is a
local $C^{1,2}$ limit of canonical solutions associated with nonzero
finite atomic angular measures. The corresponding finite HN parameters
are identified in the proof of Theorem~\ref{thm:main-closure}.
\end{proposition}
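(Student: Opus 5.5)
The plan is compactness plus uniqueness through the squeeze of Lemma~\ref{lem:faithfulness-envelope-absorption}. First we dispose of the degenerate case $\mu=0$. Then $\mu_n(\Sph)\to0$, and by \eqref{eq:canonical-critical-radial-upper}
\[
 u_{\mu_n}(t,x)\leq U^{\rm rad}\bigl(z_{\mu_n}(t,x)/c_*,0\bigr),\qquad
 z_{\mu_n}\leq c_*t+\lambda_*^{-1}\bigl(\ln\mu_n(\Sph)+\lambda_*|x|\bigr).
\]
On compact sets $z_{\mu_n}\to-\infty$ uniformly. The past decay of $U^{\rm rad}(\cdot,0)$ then gives $u_{\mu_n}\to0$ locally uniformly, and parabolic estimates upgrade this to $C^{1,2}_{\loc}$. (If some $\mu_n=0$, the convention $u_0=0$ is harmless.)

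Now let $\mu\neq0$, so that $\mu_n(\Sph)\to\mu(\Sph)>0$ and $\mu_n\neq0$ for large $n$. Since $x\mapsto e^{\lambda_*x\cdot\nu}$ is continuous on $\Sph$, uniformly for $x$ in compact sets, weak convergence gives $L_{\mu_n}\to L_\mu$ locally uniformly. The limit $L_\mu$ is positive, so $z_{\mu_n}\to z_\mu$ locally uniformly. The family $u_{\mu_n}$ takes values in $[0,1]$, so by interior parabolic estimates every subsequence has a further subsequence converging in $C^{1,2}_{\loc}$ to some entire solution $Z$ with $0\leq Z\leq1$.

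Passing to the limit in the two-sided bound \eqref{eq:canonical-critical-radial-upper} uses only the continuity of $\phi_*$ and of $U^{\rm rad}(\cdot,0)$. It gives
\[
 \phi_*(z_\mu)\leq Z\leq U^{\rm rad}(z_\mu/c_*,0).
\]
Lemma~\ref{lem:faithfulness-envelope-absorption} then identifies $Z=u_\mu$. The limit is independent of the subsequence, so the whole sequence converges. This argument also gives the continuity assertion of Theorem~\ref{thm:borel-main}.

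For the final statement, let $U_*$ be a nonzero critical solution. By Theorem~\ref{thm:faithfulness-critical-classification}, $U_*=u_\mu$ with $\mu=\lambda_*\mu_*[U_*]$. Choose finite atomic measures $\mu_n\rightharpoonup\mu$ with distinct atoms and the same total mass, for example via Borel partitions of $\Sph$ into cells of diameter below $1/n$, each cell's mass placed at one of its points. The continuity just proved gives $u_{\mu_n}\to U_*$. If $U_*=0$, use any atomic $\mu_n\to0$, such as $n^{-1}\delta_{e_1}$, together with the case $\mu=0$.

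The only delicate point is the identification step. It rests entirely on the exact radial tail \eqref{eq:faithfulness-radial-exact-tail} built into Lemma~\ref{lem:faithfulness-envelope-absorption}, and on the fact that both barriers are continuous in $z_\mu$ and stable under the passage to the limit. I expect no further obstacle there beyond checking that the upper barrier passes to the limit pointwise, which holds because $U^{\rm rad}(\cdot,0)$ is continuous.
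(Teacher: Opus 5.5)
Your proposal is correct and follows essentially the same route as the paper: the case $\mu=0$ via the radial upper bound \eqref{eq:canonical-critical-radial-upper} and the past decay of $U^{\rm rad}(\cdot,0)$; the case $\mu\neq0$ via parabolic compactness and the squeeze identification of Lemma~\ref{lem:faithfulness-envelope-absorption}; and the atomic approximation via Theorem~\ref{thm:faithfulness-critical-classification}. In the case $\mu=0$, state explicitly that $\partial_tU^{\rm rad}>0$ (Theorem~\ref{thm:radial-critical-solution}); this monotonicity is what allows you to replace $z_{\mu_n}$ by its upper bound inside $U^{\rm rad}(\cdot/c_*,0)$.
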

\begin{proof}
Weak convergence on the compact sphere gives
\[
 \sup_n\mu_n(\mathbb S^{N-1})<\infty,\qquad
 L_{\mu_n}\rightarrow L_\mu
 \quad\text{in }C^\infty_{\mathrm{loc}}(\R^N).
\]
If $\mu=0$, then for every $R>0$ and every $n$ with $\mu_n\ne0$,
\[
 \sup_{\substack{|t|\leq R\\|x|\leq R}}u_{\mu_n}(t,x)
 \leq U^{\rm rad}\left(
 R+\frac{\ln\bigl(e^{\lambda_*R}\mu_n(\mathbb S^{N-1})\bigr)}
 {2f'(0)},0\right)\rightarrow0.
\]
Here we used \eqref{eq:canonical-critical-radial-upper} and
$L_{\mu_n}(x)\leq e^{\lambda_*R}\mu_n(\mathbb S^{N-1})$ for $|x|\leq R$.
Since $u_0=0$, parabolic estimates yield the asserted convergence.

If $\mu\ne0$, then $L_\mu>0$ and
\[
 z_{\mu_n}\rightarrow z_\mu
 \quad\text{in }C^\infty_{\mathrm{loc}}(\R\times\R^N).
\]
By parabolic compactness, one has, up to extraction of a subsequence,
\[
 u_{\mu_{n_j}}\rightarrow Z
 \quad\text{in }C^{1,2}_{\mathrm{loc}}(\R\times\R^N),\qquad
 \phi_*(z_\mu)\leq Z\leq U^{\rm rad}(z_\mu/c_*,0).
\]
Lemma~\ref{lem:faithfulness-envelope-absorption} gives $Z=u_\mu$,
so the whole sequence converges.

For $U_*\ne0$, Theorem~\ref{thm:faithfulness-critical-classification}
gives $U_*=u_\mu$. By finite Borel partitions of the sphere, choose finite atomic measures
\[
 \mu_k=\sum_{i=1}^{m_k}b_{k,i}\delta_{\nu_{k,i}}
 \rightharpoonup\mu,\qquad
 b_{k,i}>0,\quad \nu_{k,i}\in\mathbb S^{N-1}.
\]
Then $u_{\mu_k}\to U_*$ in $C^{1,2}_{\mathrm{loc}}$.
For $U_*=0$, take $\mu_k=k^{-1}\delta_\nu$ with fixed
$\nu\in\mathbb S^{N-1}$.
\end{proof}

\begin{proof}[Proof of Theorem~\ref{thm:borel-main}]
Existence and \eqref{eq:borel-main-squeeze} follow from
Theorem~\ref{thm:canonical-critical-solutions}.
Theorem~\ref{thm:gkpp-ancient} gives strict time monotonicity and
the limits at $t=\pm\infty$; interior parabolic estimates upgrade
these limits to \eqref{eq:borel-main-extinction}.
For $\alpha\in(0,1)$, let $s_\alpha$ be the unique real number with
$U^{\rm rad}(s_\alpha,0)=\alpha$. The two bounds in
\eqref{eq:canonical-critical-radial-upper} imply
\[
 u_\mu(t,x)=\alpha
 \quad\Rightarrow\quad
 c_*s_\alpha\leq z_\mu(t,x)\leq\phi_*^{-1}(\alpha).
\]
Taking $C_\alpha=1+\max\{|c_*s_\alpha|,|\phi_*^{-1}(\alpha)|\}$
proves \eqref{eq:borel-main-level-strip}.

By \eqref{eq:intro-critical-tail},
\eqref{eq:faithfulness-radial-exact-tail}, and the same two bounds,
\[
 \frac{u_\mu(t,x)}{(-c_*t)e^{2f'(0)t}}
 \rightarrow L_\mu(x)
 \qquad\text{as }t\to-\infty
 \quad\text{for every }x\in\R^N.
\]
Thus $u_\mu=u_\nu$ implies $L_\mu=L_\nu$, and uniqueness of the
Laplace transform gives $\mu=\nu$.
Weak continuity follows from
Proposition~\ref{prop:faithfulness-weak-continuity}, completing the proof.
\end{proof}

\section{The Hamel--Nadirashvili closure theorem}
\label{sec:final-native-closure}
\label{sec:final-original-closure}

We combine the decomposition and the critical measure identification
to prove Theorem~\ref{thm:main-closure}. We first control mixed
approximation at a fixed strict gradient bound, identify the finite
atomic constructions as HN solutions, and then choose a diagonal
sequence converging to the original entire solution.

The reaction remains the fixed function satisfying
\eqref{eq:general-kpp-assumptions}, with $f'(0)>0$.
We retain $S_\tau$ for the nonlinear Cauchy semigroup and
$L_\tau=e^{f'(0)\tau}P_\tau$ for the positive linear semigroup.
Write
\[
 \mathcal Q_R=[-R,R]\times\overline B_R(0),\qquad
 \|v\|_{2,1;R}=\sup_{\mathcal Q_R}
 \left(|v|+|v_t|+\sum_i|v_{x_i}|+\sum_{i,j}|v_{x_ix_j}|\right).
\]

\subsection{Mixed approximation at a fixed spectral radius}

For a fixed $r<1$, we prove that weak approximation of the critical
measure and of the measure supported in $\overline B_r$ passes to
their nonlinear superposition. This permits simultaneous finite
atomic approximation of the two components.

For a nonnegative finite measure $\eta$ on $\mathbb S^{N-1}$, let
$u_{\eta}$ be the entire solution given by
Theorem~\ref{thm:canonical-critical-solutions} when $\eta\ne0$, and set $u_0=0$.
We use $L_\eta$ and $z_\eta$ from \eqref{eq:critical-measure-phase}
and the construction \eqref{eq:canonical-critical-pullback}.
Theorem~\ref{thm:canonical-critical-solutions} and the ancient estimates give
\begin{align}
 u_{\eta}(t,x)
 &\leq U^{\rm rad}\left(t+\frac{\ln L_\eta(x)}{2f'(0)},0\right),
                                                        \label{eq:final-critical-radial}\\
 D_au_{\eta}(t,x)
 &=u_{\eta}(t+\tfrac{\ln a}{2f'(0)},x),\qquad
 |\nabla\ln u_{\eta}|\leq\lambda_*                 \label{eq:final-critical-covariance}
\end{align}
for $\eta\ne0$.
For a finite nonnegative measure
$\zeta$ supported in $\overline B_r$, where $0\leq r<1$, put
\[
 H_\zeta(t,x)=\int_{\overline B_r}
 e^{\lambda_*x\cdot\xi+f'(0)(1+|\xi|^2)t}\,\zeta(d\xi).
\]
Let $V_{H_\zeta}$ be the entire solution reconstructed in
Theorem~\ref{thm:general-finite-lift}, with $V_{H_0}=0$.
That theorem gives $0\leq V_{H_\zeta}\leq\min\{H_\zeta,1\}$ and continuity into
$C^{1,2}_{\mathrm{loc}}(\R\times\R^N)$ under weak convergence of
measures on this fixed ball.

We first show that the maximum and capped sum of the two components
have the same pullback limit. The quantitative bound is uniform for
bounded measure masses at a fixed $r<1$, which will allow passage
to weak limits of both measures.

\begin{lemma}
\label{lem:final-uniform-mixed-heat}
Let $0\leq r<1$, and let $\eta,\zeta$ be finite nonnegative Borel measures
on $\mathbb S^{N-1}$ and $\overline B_r$, respectively. Set
\[
 \underline u=\max\{u_{\eta},V_{H_\zeta}\},\qquad \overline u=\min\{u_{\eta}+V_{H_\zeta},1\}.
\]
For $R\geq1$ and $T\geq2R+2$,
\begin{equation}\label{eq:final-uniform-mixed-gap}
 \sup_{(t,x)\in[-R,R]\times\overline B_R(0)}
 \bigl(L_{T+t}[\overline u(-T,\cdot)-\underline u(-T,\cdot)]\bigr)(x)
 \leq K_R T^{(N+1)/2}
       \exp\left(-\frac{f'(0)(1-r)^2T}{4}\right).
\end{equation}
where $K_R$ depends only on $f,N,r,R$, the fixed wave normalization, and upper bounds for
$\eta(\mathbb S^{N-1})$ and $\zeta(\overline B_r)$.
Moreover, as $T\to\infty$,
\[
 \begin{aligned}
 S_{t+T}[\underline u(-T,\cdot)]&\nearrow u_\eta\oplus V_{H_\zeta},\\
 S_{t+T}[\overline u(-T,\cdot)]&\searrow u_\eta\oplus V_{H_\zeta}
 \end{aligned}
 \quad\text{in }C^{1,2}_{\mathrm{loc}}(\R\times\R^N).
\]
Furthermore,
\[
 W\in\mathcal E,\quad\underline u\leq W\leq\overline u
 \quad\Longrightarrow\quad W=u_\eta\oplus V_{H_\zeta}.
\]
\end{lemma}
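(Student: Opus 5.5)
The plan follows the proof of Theorem~\ref{thm:packet}: first bound the gap pointwise by a geometric mean of the two components, then control its linear evolution through Gaussian integrals of exponentials.

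\textbf{Pointwise gap.} Writing $a=u_\eta$ and $b=V_{H_\zeta}$, both in $[0,1]$, the elementary inequality
\[
 \min\{a+b,1\}-\max\{a,b\}\leq\min\{a,b\}\leq\sqrt{ab}
\]
gives $\overline u-\underline u\leq\sqrt{u_\eta V_{H_\zeta}}$.

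\textbf{Bounds for the two factors at time $-T$.} For the subcritical factor we use $V_{H_\zeta}\leq H_\zeta$. For the critical factor we use \eqref{eq:final-critical-radial} together with the exact tail \eqref{eq:faithfulness-radial-exact-tail}, so that $U^{\rm rad}(s,0)\leq C(1+|s|)e^{2f'(0)s}$ for all $s$ (bounded by $1$ for $s\geq0$). This yields
\[
 u_\eta(-T,y)\leq C\bigl(1+T+|y|\bigr)e^{-2f'(0)T}L_\eta(y),
\]
where the $|y|$ arises from $|\ln L_\eta(y)|\leq\lambda_*|y|+|\ln\eta(\mathbb S^{N-1})|$; the mass lower bound enters only through a logarithm. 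If the mass is small, one can instead use monotonicity in $\eta$, since $u_\eta\leq u_{\eta'}$ for $\eta\leq\eta'$. Applying Cauchy--Schwarz to the two integrals defining $L_\eta$ and $H_\zeta$,
\[
 \sqrt{u_\eta V_{H_\zeta}}(-T,y)\leq C(1+T+|y|)^{1/2}\int\!\!\int
 e^{\frac{\lambda_*}{2}y\cdot(\nu+\xi)-f'(0)T-\frac{f'(0)}2(1+|\xi|^2)T}\,d\eta^{1/2}d\zeta^{1/2},
\]
where the square root of a mixture is bounded via $\sqrt{\int\int}\leq$ suitable Jensen estimates, or more simply via $\sqrt{AB}\leq\varepsilon A+B/\varepsilon$ pointwise per pair $(\nu,\xi)$ after discretization. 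Cleaner still is $\sqrt{\int e^{\cdots}d\eta\int e^{\cdots}d\zeta}\leq(\eta\zeta)^{1/2}\sup$, but this fails because of the $y$-dependence. The route I would take is to write
\[
 \sqrt{L_\eta(y)H_\zeta}\leq\Bigl(\int\!\!\int e^{\lambda_*y\cdot(\nu+\xi)}\,d\eta\,d\zeta\Bigr)^{1/2}
\]
and to use concavity of the square root against the Gaussian kernel.

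\textbf{Gaussian evolution.} Applying $L_{T+t}$, a Gaussian integration of $e^{\frac{\lambda_*}{2}y\cdot(\nu+\xi)}$ produces the exponent
\[
 f'(0)T\Bigl[1-1-\tfrac12(1+|\xi|^2)+\tfrac14|\nu+\xi|^2\Bigr]=-\tfrac{f'(0)}4|\nu-\xi|^2T\leq-\tfrac{f'(0)}4(1-r)^2T,
\]
since $|\nu|=1$ and $|\xi|\leq r$. The polynomial weight, together with Jensen's inequality to pull the square root outside the heat kernel, contributes the factor $T^{(N+1)/2}$ for $|x|,|t|\leq R$. The main obstacle is this bookkeeping: the square root of a double integral must be handled uniformly in the masses, and I expect the Jensen-outside step, $P\sqrt{F}\leq\sqrt{PF}$ applied with the $e^{\lambda_*y\cdot(\nu+\xi)/2}$ reweighting, to be where the care is needed.

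\textbf{The two limits and uniqueness.} By Jensen's inequality, $\underline u$ is a subsolution, and $\overline u$ is a supersolution by Lemma~\ref{lem:pde-semigroup-inequalities}. Hence the pullbacks from $\underline u$ increase, those from $\overline u$ decrease, and they are ordered. By \eqref{eq:general-cauchy-stability} their difference is bounded by \eqref{eq:final-uniform-mixed-gap}, which tends to $0$, so both pullbacks share a common entire limit $W_0$. Any $W\in\mathcal E$ lying between $\underline u$ and $\overline u$ is trapped between the two pullbacks, so $W=W_0$. Finally, $u_\eta\oplus V_{H_\zeta}$ lies between $\underline u$ and $\overline u$ by \eqref{eq:cone-basic-bounds}, and therefore equals $W_0$. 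Convergence in $C^{1,2}_{\loc}$ then follows from parabolic estimates.
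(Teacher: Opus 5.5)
Your argument for the two pullback limits and for uniqueness is correct and is the paper's. The gap is in the quantitative estimate \eqref{eq:final-uniform-mixed-gap}, which carries the lemma: it is not established.

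\textbf{Why your route fails.} The exponent $-\tfrac{f'(0)}{4}|\nu-\xi|^2T$ you compute is what a single mode gives. You would get it only if the square root could be moved inside the double integral over $(\nu,\xi)$, and none of your devices achieves that.

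The step $P_s[\sqrt F]\le\sqrt{P_s[F]}$ loses exactly the decay you need. On one mode, $P_s[e^{\lambda_*y\cdot w/2}]$ and $\sqrt{P_s[e^{\lambda_*y\cdot w}]}$ differ by the factor $e^{f'(0)|w|^2s/4}$. Concretely, for $F(y)=e^{-2f'(0)T}L_\eta(y)H_\zeta(-T,y)$ and $(t,x)=(0,0)$,
\[
 e^{f'(0)T}\sqrt{P_T[F](0)}
 =e^{f'(0)T}\Bigl(\iint e^{2f'(0)T(\nu\cdot\xi-1)}\,d\eta(\nu)\,d\zeta(\xi)\Bigr)^{1/2}.
\]
This is of order one when $\zeta$ is a multiple of $\delta_0$, whereas the true size is $e^{-f'(0)T/4}$ up to polynomial factors. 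For $\eta=\delta_e$, $\zeta=\delta_{re}$ it equals $e^{f'(0)rT}$.

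Your other devices do not help either:
\begin{itemize}
\item Reweighting by $e^{\lambda_*y\cdot(\nu+\xi)/2}$ (Cauchy--Schwarz) fails. Once the root is taken of the whole mixture, the weight cannot depend on $(\nu,\xi)$.
\item The expression $\iint\cdots d\eta^{1/2}d\zeta^{1/2}$ has no meaning for diffuse measures. For atomic ones, subadditivity of the root produces $\sum_i m_i^{1/2}$, which the total mass does not control. So the constant would not depend only on upper mass bounds.
\end{itemize}

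\textbf{The fix.} The bound you discarded is the one that works, and it is the paper's route. The $y$-dependence is harmless: the Gaussian average of a radial exponential exceeds that of a directional one only by a polynomial factor.

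Bound both factors by radial functions before taking the root.
\begin{itemize}
\item Critical factor: $u_\eta(-T,y)\le K(1+T)e^{-2f'(0)T+\lambda_*|y|}$. The paper gets this from \eqref{eq:inverse-wave-upper} at $(0,0)$, critical covariance, and $|\nabla\ln u_\eta|\le\lambda_*$. Your $U^{\rm rad}$ route gives the same with $(1+T+|y|)$ in place of $(1+T)$, using $L_\eta(y)\le\max\{\eta(\mathbb S^{N-1}),1\}e^{\lambda_*|y|}$ and time monotonicity of $U^{\rm rad}$. This also removes your small-mass issue.
\item Subcritical factor: $H_\zeta(-T,y)\le\zeta(\overline B_r)\sup_{0\le a\le r}e^{\lambda_*a|y|-f'(0)(1+a^2)T}$.
\end{itemize}

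Then integrate the heat kernel in polar coordinates about $x$, with $\rho=|y-x|$, $|y|\le R+\rho$, $s=T+t$, and complete the square:
\[
 -\frac{\rho^2}{4s}+\frac{\lambda_*(1+a)\rho}{2}-\frac{f'(0)(3+a^2)T}{2}+f'(0)s
 =-\frac{(\rho-\lambda_*(1+a)s)^2}{4s}-\frac{f'(0)(1-a)^2T}{4}+f'(0)t\Bigl(1+\frac{(1+a)^2}{4}\Bigr).
\]
The remaining bookkeeping is as follows:
\begin{itemize}
\item Replacing $(\rho-\lambda_*(1+a)s)^2$ by the squared distance from $\rho$ to $[\lambda_*s,\lambda_*(1+r)s]$ handles the supremum in $a$.
\item The weight $\rho^{N-1}$ together with the normalization $s^{-N/2}$ contributes $T^{N/2}$.
\item The root of $1+T$ contributes $T^{1/2}$.
\item The $t$-terms and $R$-terms are absorbed into $K_R$.
\item $(1-a)^2\ge(1-r)^2$ gives the stated exponent $-f'(0)(1-r)^2T/4$.
\end{itemize}
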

\begin{proof}
If either $\eta$ or $\zeta$ is zero, then $\underline u=\overline u$ and all conclusions are
immediate. Otherwise, by \eqref{eq:final-critical-radial},
\[
 u_{\eta}(0,0)\leq
 U^{\rm rad}\left(\frac{\ln\max\{\eta(\mathbb S^{N-1}),1\}}{2f'(0)},0\right)
 =:\theta<1.
\]
By \eqref{eq:inverse-wave-upper} and
\eqref{eq:final-critical-covariance}, one has
\[
 u_{\eta}(-T,0)=D_{e^{-2f'(0)T}}u_{\eta}(0,0)
 \leq\phi_*\left(-c_*T+\phi_*^{-1}(\theta)\right)
 \leq K(1+T)e^{-2f'(0)T}\qquad \hbox{for }T\geq1.
\]
Here $K$ depends only on $\theta$ and $\phi_*$.
By $|\nabla\ln u_{\eta}|\leq\lambda_*$, we have
\begin{equation}\label{eq:final-critical-global-past}
 u_{\eta}(-T,y)\leq K(1+T)e^{-2f'(0)T+\lambda_*|y|}
 \qquad\text{for }y\in\mathbb R^N,\ T\geq1.
\end{equation}
By $V_{H_\zeta}\leq H_\zeta$ and the definition of $H_\zeta$,
\begin{equation}\label{eq:final-subcritical-global-past}
 \begin{aligned}
 V_{H_\zeta}(-T,y)&\leq H_\zeta(-T,y)
 =\int_{\overline B_r}e^{\lambda_*y\cdot\xi-f'(0)(1+|\xi|^2)T}\,d\zeta(\xi)\\
 &\leq\zeta(\overline B_r)
 \sup_{0\leq a\leq r}\exp\{\lambda_*a|y|-f'(0)(1+a^2)T\}.
 \end{aligned}
\end{equation}

Since $0\leq u_\eta,V_{H_\zeta}\leq1$,
\[
 0\leq\overline u-\underline u
 \leq\min\{u_\eta,V_{H_\zeta}\}\leq\sqrt{u_\eta V_{H_\zeta}}.
\]
For $|t|,|x|\leq R$ and $T\geq2R+2$, polar integration with
$\rho=|y-x|$, $|y|\leq R+\rho$, and
\eqref{eq:final-critical-global-past}--\eqref{eq:final-subcritical-global-past} give
\[
 \begin{aligned}
 &\bigl(L_{T+t}[(\overline u-\underline u)(-T,\cdot)]\bigr)(x)\\
 &\quad\leq \bigl(L_{T+t}[\sqrt{u_\eta(-T,\cdot)V_{H_\zeta}(-T,\cdot)}]\bigr)(x)\\
 &\quad\leq K_R(1+T)^{1/2}(T+t)^{-N/2}
 \int_0^\infty\rho^{N-1}\sup_{0\leq a\leq r}e^{E_a(\rho)}\,d\rho,
 \end{aligned}
\]
where, with $I_{T+t}=[\lambda_*(T+t),\lambda_*(1+r)(T+t)]$,
\begin{align*}
 E_a(\rho)
 &=-\frac{\rho^2}{4(T+t)}
       +\frac{\lambda_*(1+a)\rho}{2}
       -\frac{f'(0)(3+a^2)T}{2}+f'(0)(T+t)\\
 &=-\frac{(\rho-\lambda_*(1+a)(T+t))^2}{4(T+t)}
       -\frac{f'(0)(1-a)^2T}{4}
       +f'(0)t\left(1+\frac{(1+a)^2}{4}\right)\\
 &\leq-\frac{\operatorname{dist}(\rho,I_{T+t})^2}{4(T+t)}
       -\frac{f'(0)(1-r)^2T}{4}+2f'(0)R.
\end{align*}
Since $\operatorname{dist}(\rho,I_{T+t})\geq\rho/2$ for
$\rho\geq4\lambda_*(T+t)$,
\begin{equation}\label{eq:final-radial-gaussian-integral}
 \begin{aligned}
 &\int_0^\infty\rho^{N-1}
 \exp\left(-\frac{\operatorname{dist}(\rho,I_{T+t})^2}
                     {4(T+t)}\right)d\rho\\
 &\quad\leq\frac{(4\lambda_*(T+t))^N}{N}
 +\int_0^\infty\rho^{N-1}e^{-\rho^2/(16(T+t))}\,d\rho\\
 &\quad\leq K_{N,f}(T+t)^N.
 \end{aligned}
\end{equation}
Together with $T/2\leq T+t\leq3T/2$, these estimates prove
\eqref{eq:final-uniform-mixed-gap}.

The functions $\underline u,\overline u$ are respectively a 
subsolution and supersolution, since
\[
 f(u_\eta)+f(V_{H_\zeta})-f(u_\eta+V_{H_\zeta})
 \geq0
 \qquad\text{by concavity of }f\text{ and }f(0)=0.
\]
The comparison principle gives
\[
 \begin{aligned}
 0&\leq S_{T+t}[\overline u(-T,\cdot)]-S_{T+t}[\underline u(-T,\cdot)]\\
 &\leq L_{T+t}[(\overline u-\underline u)(-T,\cdot)]
 \rightarrow0
 \end{aligned}
\]
locally uniformly as $T\to\infty$. Since $\underline u\leq u_\eta\oplus V_{H_\zeta}\leq\overline u$,
comparison traps this entire solution between the two evolutions.
Their difference tends to zero. By monotonicity and parabolic estimates,
\[
 \begin{aligned}
 S_{T+t}[\underline u(-T,\cdot)]&\nearrow u_\eta\oplus V_{H_\zeta},\\
 S_{T+t}[\overline u(-T,\cdot)]&\searrow u_\eta\oplus V_{H_\zeta}
 \end{aligned}
 \quad\text{in }C^{1,2}_{\mathrm{loc}}(\R\times\R^N)
 \quad\text{as }T\to\infty.
\]
For $W\in\mathcal E$ with $\underline u\leq W\leq\overline u$,
\[
 S_{T+t}[\underline u(-T,\cdot)]\leq W(t,\cdot)
 \leq S_{T+t}[\overline u(-T,\cdot)].
\]
Letting $T\to\infty$ yields $W=u_\eta\oplus V_{H_\zeta}$.
\end{proof}

{
At a fixed strict gradient gap, the preceding comparison estimates
allow weak approximation of the critical and subcritical measures
simultaneously.
}

\begin{proposition}
\label{prop:final-mixed-continuity}
Fix $0\leq r<1$. Suppose $\eta_n\rightharpoonup\eta$ on
$\mathbb S^{N-1}$ and $\zeta_n\rightharpoonup\zeta$ on
$\overline B_r$, all measures being finite and nonnegative. Then
\[
 u_{\eta_n}\oplus V_{H_{\zeta_n}}
 \rightarrow u_{\eta}\oplus V_{H_\zeta}
 \quad\hbox{in }C^{1,2}_{\mathrm{loc}}(\R\times\R^N).
\]
\end{proposition}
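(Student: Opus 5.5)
The plan is to use the uniqueness characterization of Lemma~\ref{lem:final-uniform-mixed-heat}: any entire solution trapped between $\max\{u_\eta,V_{H_\zeta}\}$ and $\min\{u_\eta+V_{H_\zeta},1\}$ equals $u_\eta\oplus V_{H_\zeta}$. So I would pass to subsequential limits of $W_n:=u_{\eta_n}\oplus V_{H_{\zeta_n}}$ and show that every limit is trapped between these barriers.

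\emph{Step 1 (component convergence).} Weak convergence on the compact sets $\mathbb S^{N-1}$ and $\overline B_r$ gives uniform mass bounds $\sup_n\eta_n(\mathbb S^{N-1})+\zeta_n(\overline B_r)<\infty$. Proposition~\ref{prop:faithfulness-weak-continuity} yields $u_{\eta_n}\to u_\eta$ in $C^{1,2}_{\loc}$, including the case $\eta=0$ with $u_0=0$. The continuity part of Theorem~\ref{thm:general-finite-lift}, on the fixed ball $\overline B_r$, yields $V_{H_{\zeta_n}}\to V_{H_\zeta}$ in $C^{1,2}_{\loc}$. When $\zeta=0$ this uses the bound $0\le V_{H_{\zeta_n}}\le H_{\zeta_n}\to0$.

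\emph{Step 2 (compactness and trapping).} Since $0\le W_n\le1$, interior parabolic estimates give a subsequence $W_{n_j}\to W\in\mathcal E$ in $C^{1,2}_{\loc}$. The elementary bounds \eqref{eq:cone-basic-bounds} give
\[
\max\{u_{\eta_n},V_{H_{\zeta_n}}\}\le W_n\le\min\{u_{\eta_n}+V_{H_{\zeta_n}},1\}
\]
pointwise. Passing to the limit with Step~1, we get $\underline u\le W\le\overline u$ for the barriers built from $\eta,\zeta$.

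\emph{Step 3 (identification).} The last assertion of Lemma~\ref{lem:final-uniform-mixed-heat} gives $W=u_\eta\oplus V_{H_\zeta}$. Since every subsequential limit is the same, the whole sequence converges.

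This route avoids needing the uniform estimate \eqref{eq:final-uniform-mixed-gap} along the sequence. That estimate gives an alternative route, via uniform-in-$n$ control of the pullbacks, if one prefers an argument that does not rely on the uniqueness characterization. The only delicate point I expect is Step~1 in degenerate cases, where $\eta=0$ or $\zeta=0$ while $\eta_n,\zeta_n\ne0$. Those cases are covered by the radial bound \eqref{eq:final-critical-radial} and by $V_{H_{\zeta_n}}\le H_{\zeta_n}$, respectively. Lemma~\ref{lem:final-uniform-mixed-heat} also holds trivially when the limit barriers coincide.
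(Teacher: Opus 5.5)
Your proposal is correct and follows the paper's own proof: component convergence from Proposition~\ref{prop:faithfulness-weak-continuity} and the continuity part of Theorem~\ref{thm:general-finite-lift}, then parabolic compactness of the superpositions, then passage to the limit in the elementary bounds $\max\{u_{\eta_n},V_{H_{\zeta_n}}\}\leq W_n\leq\min\{u_{\eta_n}+V_{H_{\zeta_n}},1\}$. The identification of every subsequential limit then comes from the uniqueness assertion of Lemma~\ref{lem:final-uniform-mixed-heat}, which gives convergence of the whole sequence.
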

\begin{proof}
By Proposition~\ref{prop:faithfulness-weak-continuity} and
Theorem~\ref{thm:general-finite-lift},
\[
 u_{\eta_n}\rightarrow u_\eta,\qquad
 V_{H_{\zeta_n}}\rightarrow V_{H_\zeta}
 \quad\text{in }C^{1,2}_{\mathrm{loc}}(\R\times\R^N).
\]
Since $0\leq u_{\eta_n}\oplus V_{H_{\zeta_n}}\leq1$,
parabolic compactness gives, up to extraction of a subsequence,
\[
 u_{\eta_{n_j}}\oplus V_{H_{\zeta_{n_j}}}\rightarrow W
 \quad\text{in }C^{1,2}_{\mathrm{loc}}(\R\times\R^N),
 \qquad W\in\mathcal E.
\]
Passing to the limit in
\[
 \max\{u_{\eta_n},V_{H_{\zeta_n}}\}
 \leq u_{\eta_n}\oplus V_{H_{\zeta_n}}
 \leq\min\{u_{\eta_n}+V_{H_{\zeta_n}},1\}
\]
yields
\[
 \max\{u_\eta,V_{H_\zeta}\}\leq W
 \leq\min\{u_\eta+V_{H_\zeta},1\}.
\]
Lemma~\ref{lem:final-uniform-mixed-heat} gives
$W=u_\eta\oplus V_{H_\zeta}$. Thus the whole sequence converges.
\end{proof}

\subsection{Proof of the closure theorem}
\label{subsec:final-original-parameters}
\label{subsec:final-native-parameters}

We now apply the preceding results to an arbitrary entire solution.
After fixing one approximation of its noncritical component, we
approximate both measures by finite atomic measures and then choose
a diagonal sequence to recover the full solution.

\begin{proof}[Proof of Theorem~\ref{thm:main-closure}]
For $U\equiv0$ or $U\equiv1$, use
\[
 \xi_f(t-j)\rightarrow0,\qquad \xi_f(t+j)\rightarrow1
 \quad\text{in }C^{1,2}_{\mathrm{loc}}(\R\times\R^N).
\]
By the strong maximum principle, it remains to consider $0<U<1$.

Proposition~\ref{prop:cutoff-critical-subcritical-decomposition} gives $U_*,Z,A_j\in\mathcal C$ with
\[
 U=Z\oplus U_*,\qquad
 A_j\nearrow Z\quad\text{in }C^{1,2}_{\mathrm{loc}}(\R\times\R^N),
\]
\[
 |\nabla\ln A_j|\leq\lambda_*r_j,\quad 0\leq r_j<1
 \quad\text{if }A_j\ne0,
\]
\[
 D_aU_*(t,x)=U_*\left(t+\frac{\ln a}{2f'(0)},x\right)
 \quad\text{for }a>0.
\]
By Theorems~\ref{thm:gradient-gap-finite-lift},
\ref{thm:general-finite-lift}, and
\ref{thm:faithfulness-critical-classification}, there are finite nonnegative
Borel measures $\zeta_j,\eta$ such that
\[
 A_j=V_{H_{\zeta_j}},\qquad
 \operatorname{supp}\zeta_j\subset\overline B_{r_j},\qquad
 U_*=u_\eta,\qquad \operatorname{supp}\eta\subset\mathbb S^{N-1}.
\]
For $A_j=0$, take $\zeta_j=0$ and $r_j=0$; for $U_*=0$, take $\eta=0$.
Corollary~\ref{cor:cone-increasing-superposition} yields
\begin{equation}\label{eq:final-component-addition-limit}
 U_j:=A_j\oplus U_*\nearrow U
 \quad\text{in }C^{1,2}_{\mathrm{loc}}(\R\times\R^N).
\end{equation}
Since $U(0,0)>0$, we may discard finitely many terms so that $U_j\ne0$.

For each fixed $j$, choose finite atomic measures satisfying
\[
 \begin{aligned}
 \zeta_{j,n}&\rightharpoonup\zeta_j,
 &\zeta_{j,n}(\overline B_{r_j})&=\zeta_j(\overline B_{r_j}),\\
 \eta_n&\rightharpoonup\eta,
 &\eta_n(\mathbb S^{N-1})&=\eta(\mathbb S^{N-1}),\\
 \operatorname{supp}\zeta_{j,n}&\subset\overline B_{r_j},
 &\operatorname{supp}\eta_n&\subset\mathbb S^{N-1}.
 \end{aligned}
\]
These are obtained by placing each cell's mass at one point of a finite
Borel partition of diameter at most $1/n$; zero measures remain zero.
Proposition~\ref{prop:final-mixed-continuity} gives
\[
 F_{j,n}:=V_{H_{\zeta_{j,n}}}\oplus u_{\eta_n}
 \rightarrow U_j
 \quad\text{in }C^{1,2}_{\mathrm{loc}}(\R\times\R^N)
 \quad\text{as }n\to\infty.
\]
We now identify each $F_{j,n}$ as a finite HN solution. Fix $(j,n)$
and temporarily write $\zeta=\zeta_{j,n}$ and $\eta=\eta_n$.
Let
\[
 \zeta=a_0\delta_0+\sum_{i=1}^{p}a_i\delta_{\xi_i},
 \qquad \eta=\sum_{\ell=1}^{k}b_j\delta_{\nu_j},
\]
where $a_0\geq0$, $a_i,b_j>0$, $0<|\xi_i|<1$, $|\nu_j|=1$,
and the atoms within each sum are distinct. Assume
$a_0+\sum_i a_i+\sum_jb_j>0$.
Set
\[
 \rho_i:=|\xi_i|,\qquad e_i=\frac{\xi_i}{|\xi_i|},\qquad
 c_i=\lambda_*(|\xi_i|+|\xi_i|^{-1})>c_*,
 \qquad h_i=\frac{\ln a_i}{\lambda_*|\xi_i|}.
\]
Use the normalized homogeneous orbit $\xi_f$ from
\eqref{eq:homogeneous-normalization}. The corresponding profiles are
\[
 v_i(t,x)=\phi_{c_i}(x\cdot e_i+c_it+h_i),\qquad
 v_0(t)=\xi_f\left(t+\frac{\ln a_0}{f'(0)}\right)
 \quad\text{if }a_0>0,
\]
and
\begin{equation}\label{eq:final-original-critical-phase}
 w_j(t,x)=\phi_*\left(x\cdot\nu_j+c_*t+\lambda_*^{-1}\ln b_j\right).
\end{equation}
Here
\[
 \phi_{c_i}(s)\sim e^{\lambda_*|\xi_i|s},\qquad
 \xi_f(s)\sim e^{f'(0)s}\qquad\text{as }s\to-\infty.
\]
In all maxima and sums below, $v_0$ is included only if $a_0>0$,
and absent groups are omitted.

By Theorem~\ref{thm:general-finite-lift},
\[
 \max_i v_i\leq V_{H_\zeta}\leq\min\left\{\sum_i v_i,1\right\}.
\]
For $k\geq1$, the subadditivity of
$g(q)=\phi_*(\lambda_*^{-1}\ln q)$, with $g(0)=0$, gives
\[
 \max_j w_j\leq g(H_\eta)\leq\sum_jw_j.
\]
Comparison in \eqref{eq:canonical-critical-pullback} therefore yields
\[
 \max_j w_j\leq u_\eta\leq\min\left\{\sum_jw_j,1\right\}.
\]
The definition of $\oplus$ and comparison imply
\[
 \max\{\max_i v_i,\max_j w_j\}
 \leq V_{H_\zeta}\oplus u_\eta
 \leq\min\left\{\sum_i v_i+\sum_jw_j,1\right\}.
\]
By Theorem~\ref{thm:packet}, $V_{H_\zeta}\oplus u_\eta$ is the HN solution
associated with the profiles $v_i$ and $w_j$, including $v_0$ when $a_0>0$.
The associated speed--phase pairs are
\[
 \left(c_i,\frac{\ln a_i}{\lambda_*|\xi_i|}\right),\qquad
 \left(c_*,\lambda_*^{-1}\ln b_j\right),
\]
with homogeneous time shift $\ln a_0/f'(0)$ if $a_0>0$.

Here is also the precise original measure corresponding to this
solution. Our normalized critical profile $\phi_*$ agrees with the
physical-variable normalization of \cite{HN2001}. Thus the original
critical phases are $\lambda_*^{-1}\ln b_j$. The original paper assigns phase
$c_*\ln m_j$ to a critical atom of mass $m_j$ and uses
$u_{\alpha\mathfrak m}(t,x)=u_{\mathfrak m}(t+\ln\alpha,x)$;
see \cite[Theorem~1.2(i),(iii) and Section~3.1]{HN2001}.
Consequently the original critical mass is
$m_j=b_j^{1/(2f'(0))}$. This is distinct from the factor
$1/\lambda_*$ in the small-amplitude trace of the canonical angular family.

If $a_0+\sum_i a_i>0$, let $M>0$ be the unique solution of
\begin{equation}\label{eq:final-native-mass-equation}
 \frac{a_0}{M^{f'(0)}}
 +\sum_{i=1}^{p}\frac{a_i}{M^{f'(0)(1+\rho_i^2)}}=1.
\end{equation}
The left side is continuous and strictly decreasing from $+\infty$
to $0$, so $M$ exists uniquely. Define
\begin{equation}\label{eq:final-native-measure}
 \mathfrak m=
 \sum_{\ell=1}^{k}b_j^{1/(2f'(0))}\,\delta_{(\nu_j,c_*)}
 +\sum_{i=1}^{p}a_iM^{1-f'(0)(1+\rho_i^2)}\delta_{(e_i,c_i)}
 +a_0M^{1-f'(0)}\delta_{\infty}.
\end{equation}
When the noncritical list is empty, omit $M$ and the last two terms.
By \eqref{eq:final-native-mass-equation},
\[
 \begin{aligned}
 \mathfrak m\bigl(X\setminus(\mathbb S^{N-1}\times\{c_*\})\bigr)
 &=\sum_{i=1}^p a_iM^{1-f'(0)(1+\rho_i^2)}
   +a_0M^{1-f'(0)}=M,\\
 \mathfrak m(\{(e_i,c_i)\})M^{\lambda(c_i)c_i-1}&=a_i,
 \qquad \mathfrak m(\{\infty\})M^{f'(0)-1}=a_0.
 \end{aligned}
\]
Here $\lambda(c_i)c_i=f'(0)(1+\rho_i^2)$. Thus the parameter
conversion in \cite[Section~3.9, Lemma~3.7]{HN2001} and uniqueness
in Theorem~\ref{thm:packet} give
\[
 V_{H_\zeta}\oplus u_\eta=u_{\mathfrak m},\qquad
 \mathfrak m\in\mathcal M.
\]

The masses entering Lemma~\ref{lem:final-uniform-mixed-heat} are
\[
 \eta(\mathbb S^{N-1})=\sum_jb_j,\qquad
 \zeta(\overline B_1)=a_0+\sum_i a_i,
\]
not the original critical mass
$\sum_j b_j^{1/(2f'(0))}$. Each $\mathfrak m$ is finite and belongs
to $\mathcal M$; no uniform bound on $\mathfrak m(X)$ is required
along the approximating sequence.

Thus every $F_{j,n}$ is a finite HN solution. Choose $n(j)$ such that
\[
 \|F_{j,n(j)}-U_j\|_{C^{1,2}([-j,j]\times\overline B_j(0))}<1/j,
 \qquad F_j:=F_{j,n(j)}.
\]
For every fixed $R\geq1$ and $j\geq R$,
\[
 \begin{aligned}
 \|F_j-U\|_{C^{1,2}([-R,R]\times\overline B_R(0))}
 &\leq\|F_j-U_j\|_{C^{1,2}([-j,j]\times\overline B_j(0))}\\
 &\quad+\|U_j-U\|_{C^{1,2}([-R,R]\times\overline B_R(0))}\\
 &<1/j+\|U_j-U\|_{C^{1,2}([-R,R]\times\overline B_R(0))}
 \rightarrow0.
 \end{aligned}
\]
By \eqref{eq:final-native-measure}, $F_j=u_{\vartheta_j}$ for a nonzero
finite sum of Dirac measures $\vartheta_j$ on
$X=\mathbb S^{N-1}\times[c_*,\infty)\cup\{\infty\}$.
Thus $u_{\vartheta_j}\rightarrow U$ in $C^{1,2}_{\mathrm{loc}}$,
and hence in the topology of \cite{HN2001}.

The reverse inclusion follows from parabolic regularity and passage
to the limit in \eqref{eq:kpp}.
\end{proof}


\begin{thebibliography}{99}
\small

\bibitem{AgmonNirenberg1963}
S.~Agmon and L.~Nirenberg,
\newblock Properties of solutions of ordinary differential equations in a Banach space,
\newblock \emph{Comm. Pure Appl. Math.} \textbf{16} (1963), 121--239.

\bibitem{AronsonWeinberger1978}
D.~G.~Aronson and H.~F.~Weinberger,
\newblock Multidimensional nonlinear diffusions arising in population genetics,
\newblock \emph{Adv. Math.} \textbf{30} (1978), 33--76.

\bibitem{BarenblattZeldovich1971}
G.~I.~Barenblatt and Ya.~B.~Zel'dovich,
\newblock Intermediate asymptotics in mathematical physics,
\newblock \emph{Uspekhi Mat. Nauk} \textbf{26} (1971), 115--129;
English transl., \emph{Russian Math. Surveys} \textbf{26} (1971), 45--61.

\bibitem{BerestyckiHamel2002}
H.~Berestycki and F.~Hamel,
\newblock Front propagation in periodic excitable media,
\newblock \emph{Comm. Pure Appl. Math.} \textbf{55} (2002), 949--1032.

\bibitem{BerestyckiHamel2007}
H.~Berestycki and F.~Hamel,
\newblock Generalized travelling waves for reaction-diffusion equations,
\newblock \emph{Contemp. Math.} \textbf{446} (2007), 101--124.

\bibitem{BerestyckiHamel2012}
H.~Berestycki and F.~Hamel,
\newblock Generalized transition waves and their properties,
\newblock \emph{Comm. Pure Appl. Math.} \textbf{65} (2012), 592--648.

\bibitem{BerestyckiNirenberg1992}
H.~Berestycki and L.~Nirenberg,
\newblock Travelling fronts in cylinders,
\newblock \emph{Ann. Inst. H. Poincar\'e Anal. Non Lin\'eaire} \textbf{9} (1992), 497--572.

\bibitem{BoutillonHamelRoques2026}
N.~Boutillon, F.~Hamel, and L.~Roques,
\newblock Periodic KPP equations: new insights into persistence, spreading, and the role of advection,
\newblock \emph{J. Differential Equations} \textbf{463} (2026), Paper No.~114174.

\bibitem{Bramson1983}
M.~Bramson,
\newblock Convergence of solutions of the Kolmogorov equation to travelling waves,
\newblock American Mathematical Society, Providence, RI, 1983.

\bibitem{ChenGuo2005}
X.~Chen and J.-S.~Guo,
\newblock Existence and uniqueness of entire solutions for a reaction--diffusion equation,
\newblock \emph{J. Differential Equations} \textbf{212} (2005), 62--84.

\bibitem{ChenGuoNinomiya2006}
X.~Chen, J.-S.~Guo, and H.~Ninomiya,
\newblock Entire solutions of reaction--diffusion equations with balanced bistable nonlinearities,
\newblock \emph{Proc. R. Soc. Edinburgh Sect. A} \textbf{136} (2006), 1207--1237.

\bibitem{ChenGuoNinomiyaYao2018}
Y.-Y.~Chen, J.-S.~Guo, H.~Ninomiya, and C.-H.~Yao,
\newblock Entire solutions originating from monotone fronts to the Allen--Cahn equation,
\newblock \emph{Physica D} \textbf{378--379} (2018), 1--19.

\bibitem{CoddingtonLevinson1955}
E.~A.~Coddington and N.~Levinson,
\newblock Theory of Ordinary Differential Equations,
\newblock McGraw--Hill, New York, 1955.

\bibitem{DengDucrot2023}
L.~Deng and A.~Ducrot,
\newblock Existence of multi-dimensional pulsating fronts for KPP equations: a new formulation approach,
\newblock \emph{Calc. Var. Partial Differential Equations} \textbf{62} (2023), Paper No.~134.

\bibitem{Fife1979}
P.~C.~Fife,
\newblock Mathematical aspects of reacting and diffusing systems,
\newblock Springer Science \& Business Media, 2013.

\bibitem{Fisher1937}
R.~A.~Fisher,
\newblock The wave of advance of advantageous genes,
\newblock \emph{Ann. Eugenics} \textbf{7} (1937), 355--369.

\bibitem{Freidlin1995}
M.~Freidlin,
\newblock Wave-front propagation for KPP-type equations,
\newblock \emph{Surveys in Applied Mathematics}, vol.~2, Springer US, Boston, MA, 1995, 1--62.

\bibitem{GuoMorita2005}
J.-S.~Guo and Y.~Morita,
\newblock Entire solutions of reaction--diffusion equations and an application to discrete diffusive equations,
\newblock \emph{Discrete Contin. Dyn. Syst.} \textbf{12} (2005), 193--212.

\bibitem{GuoWang2025}
H.~Guo and K.~Wang,
\newblock Some new bistable transition fronts with changing shape,
\newblock \emph{Math. Ann.} \textbf{392} (2025), 3797--3850.

\bibitem{GuoWangZhou}
H.~Guo, K.~Wang, and M.~Zhou,
\newblock Critical endpoint asymptotics for radial entire solutions of the Fisher--KPP equation,
\newblock in preparation.

\bibitem{HadelerRothe1975}
K.~P.~Hadeler and F.~Rothe,
\newblock Travelling fronts in nonlinear diffusion equations,
\newblock \emph{J. Math. Biol.} \textbf{2} (1975), 251--263.

\bibitem{HamelNadin2022}
F.~Hamel and G.~Nadin,
\newblock Diameters of the level sets for reaction-diffusion equations in nonperiodic slowly varying media,
\newblock \emph{Proc. Amer. Math. Soc.} \textbf{150} (2022), 3549--3564.

\bibitem{HamelRoques2011}
F.~Hamel and L.~Roques,
\newblock Uniqueness and stability properties of monostable pulsating fronts,
\newblock \emph{J. Eur. Math. Soc.} \textbf{13} (2011), 345--390.

\bibitem{HamelRossi2015}
F.~Hamel and L.~Rossi,
\newblock Admissible speeds of transition fronts for non-autonomous
monostable equations,
\newblock \emph{SIAM J. Math. Anal.} \textbf{47} (2015), 3342--3392.

\bibitem{HamelRossi2016}
F.~Hamel and L.~Rossi,
\newblock Transition fronts for the Fisher--KPP equation,
\newblock \emph{Trans. Amer. Math. Soc.} \textbf{368} (2016), 8675--8713.

\bibitem{HamelZhang2025}
F.~Hamel and M.~Zhang,
\newblock KPP transition fronts in a one-dimensional two-patch habitat,
\newblock \emph{Math. Med. Biol.} \textbf{42} (2025), 71--97.

\bibitem{HN1999}
F.~Hamel and N.~Nadirashvili,
\newblock Entire solutions of the KPP equation,
\newblock \emph{Comm. Pure Appl. Math.} \textbf{52} (1999), 1255--1276.

\bibitem{HN2001}
F.~Hamel and N.~Nadirashvili,
\newblock Travelling fronts and entire solutions of the Fisher--KPP equation in $\mathbb R^N$,
\newblock \emph{Arch. Ration. Mech. Anal.} \textbf{157} (2001), 91--163.

\bibitem{HuangShen2009}
J.~Huang and W.~Shen,
\newblock Speeds of spread and propagation for KPP models in time almost
and space periodic media,
\newblock \emph{SIAM J. Appl. Dyn. Syst.} \textbf{8} (2009), 790--821.

\bibitem{Kakutani1941}
S.~Kakutani,
\newblock Concrete representation of abstract $(M)$-spaces (A characterization of the space of continuous functions),
\newblock \emph{Ann. of Math. (2)} \textbf{42} (1941), 994--1024.

\bibitem{Kametaka1976}
Y.~Kametaka,
\newblock On the nonlinear diffusion equation of Kolmogorov--Petrovskii--Piskunov type,
\newblock \emph{Osaka J. Math.} \textbf{13} (1976), 11--66.

\bibitem{Kanel1961}
Ya.~I.~Kanel',
\newblock Some problems involving burning-theory equations,
\newblock \emph{Soviet Math. Dokl.} \textbf{2} (1961), 48--51.

\bibitem{KPP1937}
A.~N.~Kolmogorov, I.~G.~Petrovskii, and N.~S.~Piskunov,
\newblock A study of the diffusion equation with increase in the amount of substance, and its application to a biological problem,
\newblock \emph{Bull. Moscow Univ. Math. Mech.} \textbf{1} (1937), 1--25.

\bibitem{LiangZhao2007}
X.~Liang and X.~Q.~Zhao,
\newblock Asymptotic speeds of spread and traveling waves for monotone
semiflows with applications,
\newblock \emph{Comm. Pure Appl. Math.} \textbf{60} (2007), 1--40.

\bibitem{LiangZhao2010}
X.~Liang and X.~Q.~Zhao,
\newblock Spreading speeds and traveling waves for abstract monostable
evolution systems,
\newblock \emph{J. Funct. Anal.} \textbf{259} (2010), 857--903.

\bibitem{LiangZhou2022}
X.~Liang and T.~Zhou,
\newblock Propagation of KPP equations with advection in one-dimensional almost periodic media and its symmetry,
\newblock \emph{Adv. Math.} \textbf{407} (2022), Paper No.~108568.

\bibitem{LinZhang2019}
F.~Lin and Q.~S.~Zhang,
\newblock On ancient solutions of the heat equation,
\newblock \emph{Comm. Pure Appl. Math.} \textbf{72} (2019), 2006--2028.

\bibitem{Murray1989}
J.~D.~Murray,
\newblock Mathematical Biology,
\newblock Springer-Verlag, Berlin, 1989.

\bibitem{Nadin2009}
G.~Nadin,
\newblock Travelling fronts in space-time periodic media,
\newblock \emph{J. Math. Pures Appl.} \textbf{92} (2009), 232--262.

\bibitem{NadinRossi2017}
G.~Nadin and L.~Rossi,
\newblock Generalized transition fronts for one-dimensional almost periodic Fisher--KPP equations,
\newblock \emph{Arch. Ration. Mech. Anal.} \textbf{223} (2017), 1239--1267.

\bibitem{NinomiyaTaniguchi2025}
H.~Ninomiya and M.~Taniguchi,
\newblock Traveling front solutions of dimension $n$ generate entire solutions of dimension $(n-1)$ in reaction--diffusion equations as the speeds go to infinity,
\newblock \emph{Arch. Ration. Mech. Anal.} \textbf{249} (2025), Paper No.~13.

\bibitem{NolenRuddXin2005}
J.~Nolen, M.~Rudd, and J.~Xin,
\newblock Existence of KPP fronts in spatially-temporally periodic advection
and variational principle for propagation speeds,
\newblock \emph{Dyn. Partial Differ. Equ.} \textbf{2} (2005), 1--24.

\bibitem{Pazy1967}
A.~Pazy,
\newblock Asymptotic expansions of solutions of ordinary differential equations in Hilbert space,
\newblock \emph{Arch. Ration. Mech. Anal.} \textbf{24} (1967), 193--218.

\bibitem{Rothe1981}
F.~Rothe,
\newblock Convergence to pushed fronts,
\newblock \emph{Rocky Mountain J. Math.} \textbf{11} (1981), 617--633.

\bibitem{SalakoShen2019}
R.~B.~Salako and W.~Shen,
\newblock Long-time behavior of random and nonautonomous Fisher--KPP equations. Part II. Transition fronts,
\newblock \emph{Stoch. Dyn.} \textbf{19} (2019), Paper No.~1950046.

\bibitem{Shen2001}
W.~Shen,
\newblock Dynamical systems and traveling waves in almost periodic structures,
\newblock \emph{J. Differential Equations} \textbf{169} (2001), 493--548.

\bibitem{Shen2004}
W.~Shen,
\newblock Traveling waves in diffusive random media,
\newblock \emph{J. Dynam. Differential Equations} \textbf{16} (2004), 1011--1060.

\bibitem{Shen2010}
W.~Shen,
\newblock Variational principle for spreading speeds and generalized
propagating speeds in time almost periodic and space periodic KPP models,
\newblock \emph{Trans. Amer. Math. Soc.} \textbf{362} (2010), 5125--5168.

\bibitem{Shen2011a}
W.~Shen,
\newblock Existence, uniqueness, and stability of generalized traveling waves
in time-dependent monostable equations,
\newblock \emph{J. Dynam. Differential Equations} \textbf{23} (2011), 1--44.

\bibitem{Shen2011b}
W.~Shen,
\newblock Existence of generalized traveling waves in time-recurrent
and space-periodic monostable equations,
\newblock \emph{J. Appl. Anal. Comput.} \textbf{1} (2011), 69--93.

\bibitem{ShigesadaKawasakiTeramoto1986}
N.~Shigesada, K.~Kawasaki, and E.~Teramoto,
\newblock Traveling periodic waves in heterogeneous environments,
\newblock \emph{Theor. Popul. Biol.} \textbf{30} (1986), 143--160.

\bibitem{Stokes1976}
A.~N.~Stokes,
\newblock On two types of moving front in quasilinear diffusion,
\newblock \emph{Math. Biosci.} \textbf{31} (1976), 307--315.

\bibitem{Taniguchi2025}
M.~Taniguchi,
\newblock Polyhedral entire solutions in reaction--diffusion equations,
\newblock \emph{J. Differential Equations} \textbf{429} (2025), 529--565.

\bibitem{Uchiyama1978}
K.~Uchiyama,
\newblock The behavior of solutions of some semilinear diffusion equation for large time,
\newblock \emph{J. Math. Kyoto Univ.} \textbf{18} (1978), 453--508.

\bibitem{Weinberger2002}
H.~F.~Weinberger,
\newblock On spreading speeds and traveling waves for growth and migration
models in a periodic habitat,
\newblock \emph{J. Math. Biol.} \textbf{45} (2002), 511--548.

\bibitem{Widder1975}
D.~V.~Widder,
\newblock The Heat Equation,
\newblock Pure and Applied Mathematics, Vol.~67, Academic Press, New York--London, 1975.

\bibitem{Yagisita2003}
H.~Yagisita,
\newblock Backward global solutions characterizing annihilation dynamics of travelling fronts,
\newblock \emph{Publ. Res. Inst. Math. Sci.} \textbf{39} (2003), 117--164.

\bibitem{Zlatos2012}
A.~Zlato\v{s},
\newblock Transition fronts in inhomogeneous Fisher--KPP reaction-diffusion equations,
\newblock \emph{J. Math. Pures Appl.} \textbf{98} (2012), 89--102.
\end{thebibliography}
\end{document}